\newtheorem{thm}{Theorem}[section]
\newtheorem{prop}[thm]{Proposition}
\newtheorem{lemma}[thm]{Lemma}
\newtheorem{cor}[thm]{Corollary}
\newtheorem{defn}[thm]{Definition}
\newtheorem{conj}[thm]{Conjecture}
\newtheorem{theoremletter}{Theorem}
\theoremstyle{remark}
\newtheorem{rem}[thm]{Remark}
\newcommand{\C}{\mathbb{C}}
\newcommand{\Q}{\mathbb{Q}}
\newcommand{\Z}{\mathbb{Z}}
\newcommand{\W}{\mathbb{W}}
\newcommand{\I}{\mathrm{I}}
\newcommand{\psim}{\psi_{\mbox{-}}}
\newcommand{\chim}{\chi_{\mbox{-}}}
\newcommand{\cLm}{\mathscr{L}_{\mbox{-}}}
\newcommand{\cE}{\mathscr{E}}
\newcommand{\cL}{\mathscr{L}}
\newcommand{\cC}{\mathscr{C}}
\newcommand{\cX}{\mathscr{X}}
\newcommand{\cA}{\mathcal{A}}
\newcommand{\cD}{\mathcal{D}}
\newcommand{\cF}{\mathcal{F}}
\newcommand{\cM}{\mathcal{M}}
\newcommand{\cO}{\mathcal{O}}
\newcommand{\cR}{\mathcal{R}}
\newcommand{\cT}{\mathcal{T}}
\newcommand{\cU}{\mathcal{U}}
\newcommand{\cV}{\mathcal{V}}
\newcommand{\cW}{\mathcal{W}}
\newcommand{\cZ}{\mathcal{Z}}
\newcommand{\gc}{\mathfrak{c}}
\newcommand{\gm}{\mathfrak{m}}
\newcommand{\gp}{\mathfrak{p}}
\DeclareMathOperator{\ad}{ad}
\DeclareMathOperator{\loc}{loc}
\DeclareMathOperator{\End}{End}
\DeclareMathOperator{\Ext}{Ext}
\DeclareMathOperator{\full}{full}
\DeclareMathOperator{\Frob}{Frob}
\DeclareMathOperator{\Fil}{Fil}
\DeclareMathOperator{\Ind}{Ind}
\DeclareMathOperator{\Gal}{Gal}
\DeclareMathOperator{\GL}{GL}
\DeclareMathOperator{\rH}{H}
\DeclareMathOperator{\rZ}{Z}
\DeclareMathOperator{\Hom}{Hom}
\DeclareMathOperator{\ord}{ord}
\DeclareMathOperator{\red}{red}
\DeclareMathOperator{\Spec}{Spec }
\DeclareMathOperator{\spl}{split}
\DeclareMathOperator{\pr}{pr} 
\DeclareMathOperator{\res}{res}
\DeclareMathOperator{\tr}{tr}
\DeclareMathOperator{\tor}{tor}
\DeclareMathOperator{\univ}{univ}
\title[Geometry of the eigencurve at CM points]{{Geometry of the eigencurve at CM points and \\
trivial zeros of  Katz  $\MakeLowercase{p}$-adic $L$-functions}}
\author{Adel Betina and Mladen Dimitrov}
\address{Univ. Vienna, Faculty of Mathematics,  Oskar-Morgenstern-Platz 1, 1090 Wien, Austria}
\email{adelbetina@gmail.com }
\address{Univ. Lille, CNRS, UMR 8524 -- Laboratoire Paul Painlev\'e, 59000 Lille, France.}
\email{mladen.dimitrov@gmail.com }
\begin{document}
\maketitle

\begin{abstract}   The primary goal of this paper is to investigate the geometry of the $p$-adic eigencurve at a
point $f$  corresponding to  a weight one cuspidal theta series $\theta_\psi$  irregular at the prime number $p$. We show that $f$ 
 belongs  to  exactly  three or  four  irreducible components and study their intersection multiplicities. 
In particular, we introduce an anti-cyclotomic $\cL$-invariant $\cLm(\psim)$ and show that the congruence ideal of the CM component $\Theta_\psi$  has a simple zero at $f$ if and only if $\cLm(\psim)$  does not vanish. Further, using Roy's Strong Six Exponential Theorem we show that at least one amongst $\cLm(\psim)$ and $\cLm(\psim^{-1})$ is non-zero. Combined with a divisibility proved by  Hida and Tilouine,  we deduce that the anti-cyclotomic Katz $p$-adic $L$-function of   $\psim$ has a simple (trivial) zero at $s=0$ if $\cLm(\psim)$ is non-zero, which  
can be seen as an  anti-cyclotomic analogue of a  result of Ferrero and Greenberg. 
Finally, we propose a formula  for   the linear term of the two-variable Katz $p$-adic $L$-function of  $\psim$
at $s=0$  extending   a  conjecture of Gross.  
\end{abstract}

\addtocontents{toc}{\setcounter{tocdepth}{0}}

\section*{Introduction}

Our investigations lie at the crossroad between Iwasawa theory and Hida theory. 
Iwasawa theory and its subsequent generalizations study  $p$-adic  $L$-functions of automorphic forms or motives. 
The Iwasawa Main Conjecture postulates that those  generate   the characteristic ideals of  appropriately defined  
Greenberg--Selmer groups. In the few cases where a proof is available, comparing the orders 
of the trivial (or extra) zeros is quite  involved and requires additional arguments.  
On the other hand, Hida has extended the construction from his seminal trilogy \cite{Hidatrilogy1,Hidatrilogy2,Hidatrilogy3}  to congruence modules for $p$-adic families of automorphic forms, and has conjectured that the corresponding characteristic power series is generated by  the adjoint $p$-adic  $L$-function of the family, and that it does not vanish at classical points of weight at least two.   Hida theory has allowed to seek understanding of the Iwasawa Main Conjecture through the study of the geometry of  $p$-adic eigenvarieties, and to detect even finer local and global geometric phenomena such as  intersection numbers  and ramification indices over the weight space.

Let $K$ be an  imaginary quadratic field  in which    $p$ splits as $(p)=\gp\bar\gp$. Let $\psi$ be a   finite order  Hecke character of $K$ which does not descend to $\Q$ and has  conductor relatively prime to $p$. We assume that  
  $\psi(\gp)=\psi(\bar\gp)$, so that  the weight $1$  theta series $\theta_\psi$ attached to $\psi$ has a unique $p$-stabilization, denoted by $f$, 
  and we study the   conjectural relation between  the congruence  power series of a Hida family
containing  $f$ and its adjoint $p$-adic  $L$-function. In the case of the CM family $\Theta_{\psi}$, the latter is  essentially given by the 
Katz anti-cyclotomic  $p$-adic $L$-function $L_p^{-}(\psim,s)$  of $\psim=\psi/\psi^\tau$, where 
 $\tau$ denotes  a complex conjugation, and Katz's $p$-adic  Kronecker Second Limit Formula  implies that   $L_p^{-}(\psim,s)$ has 
 a trivial zero at $s=0$. While the  Iwasawa Main Conjecture relates $L_p^{-}(\psim,s)$ to  the characteristic series of a certain Iwasawa module over the  anti-cyclotomic tower of $K$,   neither the original formulation, nor its proofs by Rubin \cite{rubin} and by Tilouine \cite{tilouine}, provide a formula for the orders of their common trivial zeros.  
 
A main theme of this paper is to describe as precisely as possible the 
local geometry of the eigencurve $\cE$ at $f$ and draw some arithmetic consequences, such as a 
 criterion for $L_p^{-}(\psim,s)$  to have a simple trivial zero. 
Our investigations pushed us to go further and propose the following expression for the leading term
of  the Katz  $2$-variable $p$-adic $L$-function 
\begin{align}\label{linear-term-intro}
L_p(\psim, s_{\gp}, s_{\bar\gp})\overset{?}{=} \big(\log_p(\bar\gp)\cdot s_{\gp}+(\cLm(\psim)+\log_p(\bar\gp)) \cdot s_{\bar\gp}  \big)
 \cdot  L_p^*(\psim,0)+\text {higher order terms},
\end{align}
where  $\cLm(\psim)$ is  the anti-cyclotomic $\cL$-invariant introduced in  Definition~\ref{d:new-L-inv}, while  $L_p^*(\psim,0)$ is related via 
the $p$-adic  Kronecker Second Limit Formula  \eqref{kronecker}
to a  $p$-adic logarithm of an elliptic unit, which is non-zero by the Baker--Brumer Theorem (see \S\ref{Katz-padic} for more details).

 The above formula  lies outside the scope of Gross'  famous  conjecture \cite{gross} on the leading term at $s=0$ of $p$-adic $L$-functions of Artin  Hecke characters. Indeed,  the fixed field of   $\psim$ is not a CM field, unless $\psim$ is  quadratic, 
in which case  we  prove  \eqref{linear-term-intro} up to a scalar in $\Q^\times$ using Gross' factorization formula.    
In the general case, we provide the following evidence. 
\begin{theoremletter} \label{theo-triv-zero}
If $\cLm(\psim) \ne 0$, then $L_p^{-}(\psim,s)=L_p(\psim,s,-s)$ has a simple  (trivial) zero at $s=0$. 
 Moreover, at least one amongst $\cLm(\psim)$  and $\cLm(\psim^\tau)$ is non-zero.
\end{theoremletter}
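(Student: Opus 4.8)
The plan is to prove the two assertions by quite different means. For the simple-zero statement I would feed the local study of $\cE$ at $f$ into a known Iwasawa-theoretic divisibility. The congruence-ideal criterion established in the body of the paper shows that $\cLm(\psim)\neq 0$ if and only if the congruence ideal of $\Theta_\psi$ has a simple zero at $f$, equivalently if and only if the congruence module of $\Theta_\psi$ at $f$ has length $1$. On the analytic side, $L_p^{-}(\psim,s)=L_p(\psim,s,-s)$ already vanishes at $s=0$: the hypothesis $\psi(\gp)=\psi(\bar\gp)$ forces $\psim(\gp)=\psim(\bar\gp)=1$, so the Euler factor at $\gp$ removed in Katz's construction vanishes at $s=0$, and this trivial zero is exactly the content of the $p$-adic Kronecker second limit formula \eqref{kronecker}; thus $\ord_{s=0}L_p^{-}(\psim,s)\geq 1$. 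It then remains to invoke the divisibility of Hida and Tilouine relating $L_p^{-}(\psim,s)$ to the congruence (adjoint) $p$-adic $L$-function of the CM family $\Theta_\psi$: it yields the reverse inequality $\ord_{s=0}L_p^{-}(\psim,s)\leq\ell$, where $\ell$ denotes the length of that congruence module at $f$. When $\cLm(\psim)\neq 0$ one has $\ell=1$, and the two inequalities force $\ord_{s=0}L_p^{-}(\psim,s)=1$. The one point that needs care is to check that the specialisation of the two-variable Katz $L$-function entering the Hida--Tilouine divisibility is indeed its restriction to the anti-cyclotomic line $s_{\gp}=-s_{\bar\gp}$, so that the two orders of vanishing line up; the normalisation fixed in Definition~\ref{d:new-L-inv} is designed precisely to make this bookkeeping transparent.

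For the non-vanishing statement, note that $\psim^\tau=\psim^{-1}$, so the claim is that $\cLm(\psim)$ and $\cLm(\psim^{-1})$ cannot vanish simultaneously, and I would argue by contradiction. If $\psim$ is quadratic, its fixed field is a (biquadratic) CM field and one concludes directly from Gross' factorization formula together with the Ferrero--Greenberg theorem (see the discussion following \eqref{linear-term-intro}); so assume $\psim^\tau\neq\psim$. The first step is to make Definition~\ref{d:new-L-inv} explicit: combining the definition of $\cLm(\psim)$ with Katz's Kronecker limit formula \eqref{kronecker} and the pertinent explicit reciprocity law, one expresses $\cLm(\psim)$ in terms of $p$-adic logarithms of explicit algebraic numbers --- namely (generators of powers of) the primes $\gp$, $\bar\gp$ and elliptic units attached to $\psim$ --- in such a way that the vanishing $\cLm(\psim)=0$ becomes an identity equating a product of two such $p$-adic logarithms to a $\overline{\Q}$-linear combination of such $p$-adic logarithms, up to an algebraic constant. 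Since complex conjugation swaps $\gp$ and $\bar\gp$ and carries the $\psim$-data to the $\psim^{-1}$-data, the vanishing of both $\cLm(\psim)$ and $\cLm(\psim^{-1})$ yields two such identities involving one and the same finite family of $p$-adic logarithms of algebraic numbers.

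The final step is to assemble these two identities --- using the conjugation symmetry --- into a matrix of the multiplicative shape $m_{ij}=x_iy_j$ whose entries lie in the $\overline{\Q}$-vector space spanned by $1$ and by $p$-adic logarithms of algebraic numbers, which is precisely the input for the $p$-adic form of Roy's Strong Six Exponential Theorem; that theorem then forces one of the two tuples $(x_i)$ and $(y_j)$ to be $\Q$-linearly dependent, contradicting the identities once such a dependence has been excluded. Excluding it --- that is, ruling out all accidental multiplicative relations among the elliptic units attached to $\psim$ and $\psim^{-1}$ and the primes $\gp$, $\bar\gp$ --- is where I expect the genuine work to lie; it should follow from the standing hypothesis that $\psi$ does not descend to $\Q$, together with $\psim^\tau\neq\psim$ and $\gp\neq\bar\gp$, but turning this into an airtight argument is the main obstacle. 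The appearance of products of logarithms, and of algebraic (rather than merely logarithmic) terms, is also the reason why the plain Six Exponential Theorem and the Baker--Brumer theorem --- which govern only $\overline{\Q}$-linear relations among $p$-adic logarithms of algebraic numbers --- do not settle the matter, and why the strong form is needed.
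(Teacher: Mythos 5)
Your strategy for the simple-zero part is exactly the paper's: Theorem~\ref{C0-thm} shows $\cLm(\psim)\neq 0$ forces $C^0_\psi=(X)$, and combining with the Hida--Tilouine divisibility $C^0_\psi\subset(\zeta^{-}_{\psim})\subset(X)$ of Theorem~\ref{HT-divisibility} gives $(\zeta^{-}_{\psim})=(X)$, i.e.\ a simple zero. One small misattribution: the bookkeeping about specializing along the anti-cyclotomic line is not what Definition~\ref{d:new-L-inv} is for (that definition sets up the Galois-theoretic $\cL$-invariant); the matching of the anti-cyclotomic direction with the variable in the Hida--Tilouine congruence module happens in \S\ref{Katz-padic}, where $\zeta^{-}_{\psim}$ is built as the push-forward of the Katz measure along $z\mapsto z/z^\tau$.

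For the non-vanishing part your framework (a $2\times 3$ matrix in $\mathcal L$ whose rank drops to $1$ if both $\cLm(\psim)$ and $\cLm(\psim^\tau)$ vanish, fed into Roy's Strong Six Exponentials) is the one used in Proposition~\ref{6expocusp}. Two concrete issues though. First, your shortcut through Gross' factorization and Ferrero--Greenberg in the quadratic case does not obviously close: Ferrero--Greenberg controls the \emph{cyclotomic} $\cL$-invariant $\cL(\psim)=\cL(\epsilon_{K'})$, while the quantity needed is the \emph{anti-cyclotomic} $\cLm(\psim)=\cL(\psim)-2\cL_\gp$, and these differ by the (generically nonzero) term $2\cL_\gp$. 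The paper instead gives a one-line direct argument: with $\psim$ quadratic, $\ord_{v_0}(y_0)\,\cLm(\psim^\tau)=2\log_p\bigl(g(y_0)/g\tau(y_0)\bigr)$, which is nonzero because $g(y_0)/g\tau(y_0)$ has a nontrivial valuation at $g(v_0)$ and hence maps to a non-torsion element of $\cO_{H,v_0}^\times$. Second, the step you flag as the ``main obstacle'' --- excluding $\Q$-dependences among rows and columns --- is in fact light and needs no new transcendence: the two rows are $\Q$-independent because the ratio of the first-column entries is $-\mathscr{S}(\psim)\notin\overline\Q$ by Lemma~\ref{l:slope} (when $\psim\neq\psim^\tau$); and for the columns one expands the first-row entries as $\overline\Q$-linear combinations of $p$-adic logarithms of the multiplicatively independent numbers $g(u)$, $g(y_0)$, $g\tau(y_0)$, so a putative $\Q$-dependence of columns would give a nontrivial $\overline\Q$-linear relation among $\Q$-independent $p$-adic logarithms, contradicting Baker--Brumer. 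So the exclusion is a direct application of Baker--Brumer to an explicit Minkowski-type basis of units and $\gp$-units, not a separate independence theorem that has to be established.
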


Theorem \ref{theo-triv-zero} gives an affirmative answer, for more than half of the anti-cyclotomic characters, to a long standing question raised by Hida--Tilouine in \cite[p.106]{HT2}, as $\ord_{s=0}L_p^{-}(\varphi, s)=1$ for at least one of the  characters $\varphi$ occurring in  each set  $\{\psim,\psim^\tau\}$.  

Let us now briefly describe our approach. 
The second part is proved in  Proposition~\ref{6expocusp}
using the Strong  Six Exponentials Theorem, while the Strong Four Exponentials Conjecture would imply that  
$\cLm(\psim)\cdot \cLm(\psim^\tau)\ne 0$.  
An important  observation made in  Theorem~\ref{existencenon-cm}  is that in addition to the two Hida families  $\Theta_{\psi}$ and $\Theta_{\psi^{\tau}}$ having CM by $K$,  $f$ also belongs to at least one  Hida family without CM by $K$, hence lies in  the closed  analytic subspace $\cE^{\perp}$ of $\cE$, union of irreducible components having no  CM by $K$.
 Denote by  $\cT$, resp. by $\cT^{\perp}$, the completed local ring at $f$ of $\cE$, resp. of  $\cE^{\perp}$.  Both algebras $\cT$ and 
 $\cT^{\perp}$   are  finite and flat over  the completed local ring
  $\varLambda = \overline{\Q}_p\lsem X\rsem $ of the weight space 
  at the point corresponding to the weight of $f$.   Consider   the congruence ideal 
$C^0_\psi=\pi_\psi (\mathrm{Ann}_{\cT}(\ker(\pi_\psi)))$ attached to the $\Theta_{\psi}$-projection $\pi_\psi:\cT \twoheadrightarrow \varLambda$.   Letting  $\zeta^{-}_{\psim}$ denote the $p$-adic inverse Mellin transform of $L_p^{-}(\psim,s)$, seen as an element of $\varLambda$, Hida and Tilouine showed in \cite[Thm.~I]{HT}  that
\[ C^0_\psi\subset (\zeta^{-}_{\psim})\subset (X),\]
  thus providing an  upper bound for $\ord_{s=0}L_p^{-}(\psim, s)$ in terms of the local geometry of  $\cE$ at $f$.

To prove the first part of Theorem~\ref{theo-triv-zero} it then suffices  to show  that $C^0_\psi=(X)$  if and only if  $\cLm(\psim)\neq0$, which is a consequence of the main result of this paper.  

\begin{theoremletter}[Thm.~\ref{C0-thm}]\label{main.geo}
The  $\varLambda$-algebra   $ \cT$ is isomorphic to $\varLambda \times_{\overline{\Q}_p} ({\cT}^\perp \times_{\overline{\Q}_p[X]/(X^{r-1})} \varLambda)$, where 
\begin{itemize}
\item  if $\cLm(\psim^\tau) \cdot  \cLm(\psim)=0$, then $r \geq 3$ and  $\cT^{\perp}= \varLambda[Z]/(Z^2-X^{r})$, 
\item  if $\cLm(\psim^\tau) +  \cLm(\psim)=0$,   then $r=2$ and $ \cT^{\perp}=\overline{\Q}_p\lsem X^{1/e}\rsem \times_{\overline{\Q}_p}\overline{\Q}_p\lsem X^{1/e}\rsem$ for some  $e \geq 2$, 
\item in all other cases we have   $r=2$ and $ \cT^{\perp}=\varLambda \times_{\overline{\Q}_p} \varLambda$. 
\end{itemize}
\end{theoremletter}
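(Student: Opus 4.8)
The plan is to reconstruct the local ring $\cT$ from the four (or three) branches through $f$, namely the two CM branches $\varLambda_\psi\cong\varLambda$ and $\varLambda_{\psi^\tau}\cong\varLambda$ corresponding to $\Theta_\psi$ and $\Theta_{\psi^\tau}$, together with the branch(es) contained in $\cE^\perp$ whose existence is guaranteed by Theorem~\ref{existencenon-cm}. The first step is to show that $\cT$ is the fiber product of $\varLambda_\psi$ with $\cT^\perp\times_{\varLambda_{\psi^\tau}}(\dots)$ over small Artinian quotients; concretely, one analyzes the tangent space and the reducibility locus of the Galois pseudo-representation carried by $\cT$. Deformation theory of the reducible representation $\psi\oplus\psi^\tau$ (equivalently, the Galois representation attached to $\theta_\psi$) shows that the reducible (= CM) locus is cut out by a single equation, and the tangent directions are controlled by the relevant $\Ext^1$ groups between $\psi$ and $\psi^\tau$ in the global and local (at $\gp$ and $\bar\gp$) categories — these $\Ext$ groups are where the $\cL$-invariants $\cLm(\psim)$ and $\cLm(\psim^\tau)$ enter, since by Definition~\ref{d:new-L-inv} these invariants measure the ratio between a global extension class and its restriction to the decomposition group at $\gp$ (resp.\ $\bar\gp$).

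Next I would pin down $\cT^\perp$. Because $f$ is irregular at $p$ (the two Hecke eigenvalues $\psi(\gp)=\psi(\bar\gp)$ coincide), the ordinary eigenvariety near $f$ is not étale over weight space, and the ramification is governed by whether the extension classes generating $\cE^\perp$ are "ordinary" in a split vs.\ non-split way. The dichotomy is: either the non-CM deformations are unobstructed in a way that keeps $\cT^\perp$ étale over $\varLambda$ but with two branches ($\cT^\perp\cong\varLambda\times_{\overline\Q_p}\varLambda$), or one of the local conditions forces a nontrivial ramification $X\mapsto Z^2$ giving $\cT^\perp\cong\varLambda[Z]/(Z^2-X^r)$, or the two non-CM branches themselves get identified to high order, forcing the ramified-with-gluing shape $\overline\Q_p\lsem X^{1/e}\rsem\times_{\overline\Q_p}\overline\Q_p\lsem X^{1/e}\rsem$. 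The trichotomy in the statement matches exactly the three mutually exclusive cases $\cLm(\psim)\cLm(\psim^\tau)=0$, $\cLm(\psim)+\cLm(\psim^\tau)=0$ (with both nonzero), and the generic case; so the bulk of the argument is a bookkeeping of which combination of the two $\cL$-invariants vanishes and translating each vanishing into the corresponding degeneration of the local ring via the tangent/obstruction computation. The number $r$ (and $e$) is read off as the intersection multiplicity of $\Theta_\psi$ with the rest of $\cE$ along the weight direction, computed from the order of vanishing of the relevant reducibility equation.

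The main obstacle is the precise identification of $\cT^\perp$ as a $\varLambda$-algebra — that is, going beyond the tangent space (which only sees $\overline\Q_p[X]/(X^2)$) to control the full local ring, including the integer $r$ and the possibility $e>2$. This requires either (a) an $R=\mathbb{T}$-type statement identifying $\cT^\perp$ with a universal deformation ring for suitably framed non-CM deformations of $\rho_f$, together with an explicit presentation of that ring, or (b) a direct study of the cuspidal eigencurve via overconvergent modular symbols / Coleman families and a computation of the $q$-expansions of the overlapping families to the needed order. I would take route (a): show the non-CM deformation problem is represented, use that $\dim\cT^\perp=1$ and $\cT^\perp$ is reduced (generic smoothness away from $f$, since classical non-CM points are smooth), deduce $\cT^\perp$ is a one-dimensional reduced complete local $\varLambda$-algebra, hence a product of $\overline\Q_p\lsem X^{1/e_i}\rsem$'s glued at the closed point, and finally use the $\cL$-invariant relations — via a comparison of the global Galois cohomology class with its $\gp$- and $\bar\gp$-components, invoking the Hida–Tilouine divisibility $C^0_\psi\subset(\zeta^-_{\psim})$ and, crucially, the non-vanishing input from Roy's Strong Six Exponentials Theorem (Proposition~\ref{6expocusp}) to rule out simultaneous vanishing — to fix the $e_i$ and the gluing, completing the trichotomy. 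Finally, assembling the fiber product over the Artinian quotient $\overline\Q_p[X]/(X^{r-1})$ gives the stated isomorphism for $\cT$.
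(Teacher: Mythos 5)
Your outline is in the right spirit---deformation theory of $\rho_f$ combined with the non-CM component from Theorem~\ref{existencenon-cm} and the non-vanishing from Roy's theorem---but it misses several of the paper's essential mechanisms and introduces a dependency that runs backwards.

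First, the Hida--Tilouine divisibility $C^0_\psi\subset(\zeta^-_{\psim})$ plays no role in establishing Theorem~\ref{main.geo}; that inclusion is the bridge from Theorem~\ref{main.geo} \emph{to} the arithmetic consequence (Theorem~A), not an ingredient of the structure theorem. Invoking it here is circular. The local structure of $\cT$ is determined entirely by Galois-deformation data and the modular facts proven in \S3--\S4.

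Second, the $R=\mathbb{T}$ route you want for $\cT^\perp$ cannot be set up via a deformation problem for $\rho_f$ alone, because the deformation of $\rho_f$ to $\cT$ (or even to $\cT^\perp$) is \emph{not} $p$-ordinary---the weight one form is irregular and the ordinary filtration does not lift. This is precisely what forces the paper to work with the functor $\cD^\perp$ (Definition~\ref{cond2}) classifying \emph{pairs} $(\rho_A,\rho'_A)$ of $\gp$-ordinary $\Gamma_K$-deformations of the two non-split extensions $\rho$ and $\rho'$, matched on traces and on the value of the unramified quotient at $\Frob_\gp$. Your phrase ``deformation theory of the reducible representation $\psi\oplus\psi^\tau$'' also conflates $\rho_f$ (irreducible over $\Gamma_\Q$, semisimple over $\Gamma_K$) with the non-split $\rho$ and $\rho'$ whose deformations carry the arithmetic.

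Third, even granting a DVR $\cR^\perp\simeq\cT^\perp_{\mathrm{split}}$, you give no mechanism for identifying $\cT^\perp$ itself. This is a genuine jump: the $\Gamma_K$-pseudo-character of $\cR^\perp$ has to be extended to $\Gamma_\Q$, and the paper shows (Propositions~\ref{extensionpseudo}, \ref{tildeRtrace} and Theorem~\ref{geomCdag}) that the Hecke algebra $\cT^\perp$ is exactly the quadratic cover $\widetilde\cR^\perp=\cR^\perp[Z]/(Z^2-Y^r)$, with $Z$ detected by traces at $\tau\Gamma_K$ and the sign by the $\epsilon_K$-twist. This is where the shape $\varLambda[Z]/(Z^2-X^r)$ (when $e=1$) or the two-branch gluing (when $r$ is even) actually comes from; ``generic smoothness plus reducedness gives a product of $\overline\Q_p\lsem X^{1/e_i}\rsem$'s'' is far from pinning this down. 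Finally, the gluing index $X^{r-1}$ between $\cT^\perp$ and $\varLambda_\gp$ is the content of Proposition~\ref{m=r-1}: a delicate analysis of the $\gp$-ordinary filtration of $\rho_{\cR^\perp}$ modulo $X^{r+1}$ showing the CM-congruence ideal is exactly $(X^{r-1})$ when $\cLm(\psim)=0$ (and, via Wiles--Lenstra applied with the bound on $J/J^2$ from Proposition~\ref{J}, that it is $(X)$ when $\cLm(\psim)\ne 0$). Your proposal has no mechanism to produce this exponent; ``intersection multiplicity of $\Theta_\psi$ with the rest of $\cE$'' is what needs to be computed, not an available input.
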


If $\cLm(\psim)\neq 0$, then  the first projection in the fiber product description of  $\cT$ above  is given by 
 $\pi_\psi$, hence  both $\zeta_{\psim}^{-}$ and  the characteristic power series of the congruence module attached to $\Theta_\psi$  have a simple zero at $X=0$, as conjectured by Hida--Tilouine \cite[p.192]{HT}. The conjecture was proven by Tilouine \cite{tilouine} and Mazur--Tilouine \cite{mazur-tilouine},  under some technical hypotheses, when  $\psim(\gp) \ne 1$. We would like to emphasize that cases with a trivial zero like ours lie beyond the reach of the methods  used in {\it loc. cit.} as the ordinary deformation functor is not representable and, more importantly,  $C^0_\psi$ and  $C^1_\psi=\ker(\pi_\psi)/\ker(\pi_\psi)^2$ are not equal, as the 
  Hecke algebra $\cT$ is not a local complete intersection.
Theorem  \ref{main.geo} also allows us to  establish a conjecture of Darmon, Lauder and Rotger  \cite{DLR4} on the dimension of the generalized eigenspace of $p$-adic modular forms corresponding to 
a weight one cusform with CM, which was previously only known for  $\psim$  quadratic by the work \cite{lee} of Lee (see Corollary~\ref{DLRconj}).

Let us now explain some of the ideas and techniques  that go in the proof of Theorem~\ref{main.geo}. 
As the $2$-dimensional $\Gamma_\Q$-representation $\rho_f=\Ind^\Q_K \psi$  is locally scalar  at $p$, neither its Mazur's $p$-ordinary deformation functor is representable, 
nor its  deformation to $\cT$ has a  $p$-ordinary filtration. 
  In contrast with the $p$-regular setting considered in \cite{bellaiche-dimitrov},
where $\cT$ is isomorphic to the $p$-ordinary deformation ring of $\rho_f$,  we need to resort to new methods.
First, we divide the difficulties and study  separately  the components having no CM by $K$. 
 As the restriction to $\Gamma_K$ of the $\cT^{\perp}$-valued pseudo-character is generically irreducible and lifts $\psi+\psi^{\tau}$, 
 and as $\Ext_{\Gamma_K}^1(\psi^{\tau},\psi)$ and $\Ext_{\Gamma_K}^1(\psi,\psi^{\tau})$ are both $1$-dimensional  generated  by $\rho$ and $\rho'$, respectively,  a Ribet style argument shows the existence of pair $(\rho_{\cT^{\perp}},\rho_{\cT^{\perp}}')$ of  $\gp$-ordinary $\Gamma_K$-deformations  lifting $(\rho,\rho')$. This leads naturally to a universal deformation $\varLambda$-algebra $\cR^{\perp}$ classifying pairs of $\gp$-ordinary deformations of $(\rho,\rho')$ sharing the  same $\tau$-invariant traces and rank one 
 $\gp$-unramified  quotients. We show that $\cR^{\perp}\simeq\overline{\Q}_p\lsem X^{1/e}\rsem$ is a discrete valuation ring 
 endowed with flat morphism $\cR^{\perp} \hookrightarrow \cT^{\perp}$ of $\varLambda$-algebras
 (see Thm.~\ref{isom-cusp}). Furthermore, the reducibility ideal of the $2$-dimensional pseudo-character carried by $\cR^{\perp}$ is generated by an element of valuation $r \geqslant 2$, with $r=2$ if and only if $\cLm(\psim^\tau) \cdot  \cLm(\psim) \ne 0$. 
In \S\ref{wihtoutCMgenericpoints} we determine all possible extensions to $\Gamma_\Q$ of the pseudo-character carried by $\cR^{\perp}$ 
and deduce  that $\cR^{\perp}[Z]/(Z^{2}-X^{r/e})\simeq \cT^{\perp}$ as  $\varLambda$-algebras. 
With all the components of $\cE$ containing $f$ determined in \S\ref{section3},  \S\ref{CM-Ideal} is devoted to the
study of the CM ideal, measuring the congruences between a component having CM by $K$ and a component without CM by $K$. 
Finally in \S\ref{s:cong-ideal} we determine the  ideal $C^0_\psi$ governing the congruences between  $\Theta_{\psi}$
and all other families. 

While the deformation to $\cT$ is not  $p$-ordinary, its localization at any generic point is,  motivating  the use  in \S\ref{trianglesec}  of a universal ``generically'' $p$-ordinary deformation ring $\cR^{\triangle}$ of $\rho_f$. As $\cT$ is not Gorenstein (see  Prop.~\ref{commalg})
the following  result lies beyond the scope of the  Taylor--Wiles method and its proof resorts to a novel approach to modularity. 

\begin{theoremletter} \label{theorem-C} There exists an isomorphism of non-Gorenstein local $\varLambda$-algebras $\cR^{\triangle}_{\red} \simeq \cT$,  where $\cR^{\triangle}_{\red}$ is the nilreduction of $\cR^{\triangle}$. 
\end{theoremletter}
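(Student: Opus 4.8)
The plan is to construct an explicit surjection $\cR^{\triangle}\twoheadrightarrow\cT$ of local $\varLambda$-algebras, show that it factors through the nilreduction $\cR^{\triangle}_{\red}$, and then prove that the induced map $\cR^{\triangle}_{\red}\to\cT$ is an isomorphism by a combination of a lower bound on $\cT$ (from the component structure already established in Theorem~\ref{main.geo}) and an upper bound on $\cR^{\triangle}_{\red}$ (from a careful analysis of the generically $p$-ordinary deformation problem near $\rho_f=\Ind^\Q_K\psi$). First I would recall that the Galois representation carried by $\cT$ — obtained from the two-variable Hida family Hecke eigensystem — is, after localization at each minimal prime, $p$-ordinary in the triangulable sense, so it defines a point of the functor pro-represented by $\cR^{\triangle}$; the universal property gives a canonical $\varLambda$-algebra map $\varphi:\cR^{\triangle}\to\cT$. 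Because $\cT$ is reduced (it is a product of the regular rings $\varLambda$ and the explicitly described pieces of $\cT^{\perp}$, glued along reduced rings, as in Theorem~\ref{main.geo}), the map $\varphi$ automatically kills the nilradical, yielding $\bar\varphi:\cR^{\triangle}_{\red}\to\cT$. Surjectivity of $\bar\varphi$ is the modularity input: one shows the trace of the universal generically-ordinary deformation already generates $\cT$ over $\varLambda$, using that $\cT$ is generated by Hecke operators $T_\ell$ whose images are traces of Frobenii, together with the $U_p$-eigenvalue which is recovered from the unramified quotient along the triangulation — this is where the generically-ordinary framework is essential, since $\rho_f$ itself is locally scalar at $p$ and carries no honest ordinary filtration.

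The heart of the argument is injectivity of $\bar\varphi$, equivalently matching the two rings on the nose. Here I would exploit Theorem~\ref{main.geo}: it tells us exactly which irreducible components of $\Spec\cT$ pass through $f$ (the two CM components $\Theta_\psi,\Theta_{\psi^\tau}$ and the components inside $\cE^\perp$), their normalizations, and the gluing data. I would show that $\cR^{\triangle}_{\red}$ has \emph{at least} these components by exhibiting, for each, a generically-ordinary deformation of $\rho_f$ over the corresponding ring: the CM ones come from characters of $\Gamma_K$ deforming $\psi$ (resp.\ $\psi^\tau$) with the $\gp$-part chosen as the unramified direction, and the non-CM ones come from the deformations $\rho_{\cT^\perp},\rho'_{\cT^\perp}$ produced in Theorem~\ref{isom-cusp} via the Ribet-style construction, which are $\gp$-ordinary by design. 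Conversely — and this is the genuinely new part — I would bound $\cR^{\triangle}_{\red}$ from above by a tangent-space and reducibility-ideal computation: the reducible locus of the universal pseudo-character over $\cR^{\triangle}$ is controlled by $\Ext^1_{\Gamma_K}(\psi^\tau,\psi)$ and $\Ext^1_{\Gamma_K}(\psi,\psi^\tau)$, both one-dimensional, exactly as for $\cR^{\perp}$, while the irreducible (CM) locus contributes the two copies of $\varLambda$; the generically-ordinary condition at $p$ cuts down the tangent space so that no further components or fat infinitesimal directions survive. Counting shows $\dim_{\overline\Q_p}\mathfrak m_{\cR^{\triangle}_{\red}}/\mathfrak m^2$ and the number and multiplicities of components of $\cR^{\triangle}_{\red}$ agree with those of $\cT$ coming from Theorem~\ref{main.geo}, and since $\bar\varphi$ is a surjection of finite flat $\varLambda$-algebras inducing a bijection on components compatible with the gluing, it must be an isomorphism.

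I expect the main obstacle to be the surjectivity/modularity step together with the verification that the \emph{a priori} ring $\cR^{\triangle}$ is not ``too big'' — precisely because $\cT$ is non-Gorenstein and not a local complete intersection, the usual Taylor--Wiles patching and numerical-criterion arguments are unavailable, so the comparison cannot be reduced to a length count against a congruence module. Instead one must argue directly with the geometry: the delicate point is showing that every generically-ordinary deformation of $\rho_f$ over an Artinian $\varLambda$-algebra is, after reduction, ``seen'' by $\cT$, i.e.\ arises from the eigencurve. I would handle this by splitting along the reducible and irreducible loci of the carried pseudo-character — the irreducible part is governed by Galois characters of $K$ and is transparently CM-modular, while on the reducible part one invokes the explicit description of the relevant $\Ext$-groups and the $\gp$-ordinarity constraint to force the deformation into the family built from $\rho,\rho'$, matching $\cT^\perp$ via Theorem~\ref{isom-cusp}. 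A secondary technical nuisance will be keeping track of $e$ and $r$ uniformly across the three cases of Theorem~\ref{main.geo}, but once the component-by-component matching is in place these are bookkeeping, since the same invariants $\cLm(\psim),\cLm(\psim^\tau)$ govern the reducibility ideal on both sides.
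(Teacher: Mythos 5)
Your core idea is correct and matches the paper's strategy in its essential mechanism: classify minimal-prime quotients of $\cR^{\triangle}$ by whether the carried Galois representation is reducible or irreducible on $\Gamma_K$, recognize the first type as CM (forcing $\cA\simeq\varLambda_\gp$ via $\Ind^\Q_K(\psi\chi_\gp)$ or $\Ind^\Q_K(\psi\chi_\gp^\tau)$) and the second as factoring through $\cR^\perp$ and then through $\cT^\perp$ via Theorem~\ref{geomCdag}, and conclude modularity of both types. But the global assembly in the paper is substantially cleaner than what you propose, and your version has a gap that would need real work to fill.

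The paper's assembly is a pure \emph{dominance} argument: it shows directly that for every minimal prime $\mathfrak{q}$ of $\cR^{\triangle}$, the quotient $\cA=\cR^{\triangle}/\mathfrak{q}$ receives a surjection $\cT\twoheadrightarrow\cA$ \emph{of $\cR^{\triangle}$-algebras}. This last qualifier is the whole point: it means the composite $\cR^{\triangle}\to\cT\to\cA$ equals the natural quotient map, hence $\ker(\cR^{\triangle}\to\cT)\subset\mathfrak{q}$ for every minimal prime, hence the kernel lies in the nilradical; since $\cT$ is reduced the kernel equals the nilradical, and $\cR^{\triangle}_{\red}\simeq\cT$ follows with no further counting. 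In contrast, your plan relies on a component-matching finale (``a surjection of finite flat $\varLambda$-algebras inducing a bijection on components compatible with the gluing, it must be an isomorphism''), which is not a theorem as stated and would require either the degree-over-$\varLambda$ bookkeeping or the same compatibility-of-quotients argument the paper makes — so you would end up re-deriving the paper's dominance step anyway, just with less precision. Moreover, two of your proposed ingredients are unneeded or problematic: the ``lower bound'' step (exhibiting generically-ordinary deformations hitting each component of $\cT$) is automatic from surjectivity of $\cR^{\triangle}\twoheadrightarrow\cT$ and carries no extra information; and the ``upper bound via tangent space'' step is dangerous, because the tangent space of $\cR^{\triangle}$ can strictly contain that of $\cR^{\triangle}_{\red}$ (nilpotents see the tangent space), and a tangent-space bound alone controls neither the number of components nor their gluing. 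Finally you would also need to justify a priori that $\cR^{\triangle}_{\red}$ is finite flat over $\varLambda$, which the paper never needs to establish independently — it falls out as a corollary. If you replace your counting finale with the paper's ``for each minimal prime exhibit a compatible surjection $\cT\twoheadrightarrow\cR^{\triangle}/\mathfrak{q}$'' formulation, your argument becomes the paper's proof, and the tangent-space and lower-bound steps can be deleted.
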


In closing, let us specify the above results in the simplest case of a quadratic character $\psim$  defining a biquadratic extension $H$ of 
$\Q$ in which $p$ splits completely. Denote by $K'$ (resp. $F$) the other imaginary quadratic field (resp. the unique  real quadratic field) contained in $H$. 
In that case we have 
$\cT \simeq  \varLambda \times_{\overline{\Q}_p} \varLambda \times_{\overline{\Q}_p} \varLambda \times_{\overline{\Q}_p} \varLambda$
and, in addition to the families $\Theta_{\psi}$ and  $\Theta_{\psi^{\tau}}=\Theta_{\psi}\otimes \epsilon_F  $ having CM by $K$, $f$ is contained in   families $\Theta_{\psi'}$ and  $\Theta_{\psi'{}^\tau}=\Theta_{\psi'}\otimes \epsilon_F $ having CM by $K'$.  
Both $p$-adic $L$-functions $L_p^{-}(\psim,s)$ and $L_p^{-}(\psim',s)$ then have simple  zeros at $s=0$. 

\vspace{-6mm}
\footnotesize
\tableofcontents
\normalsize
\newpage 

\addtocontents{toc}{\setcounter{tocdepth}{2}}

\section{Slopes and  \texorpdfstring{$\cL$}{L}-invariants}\label{L-invariants}

 We denote by $\Gamma_L=\Gal(\bar{L}/L)$ the absolute Galois group of a perfect field $L$. 
We consider the field of algebraic numbers $\overline{\Q}$ as a subfield of $\C$ which  determines a complex conjugation $\tau \in \Gamma_{\Q}$. We let $\epsilon_{L}$ denote the Dirichlet character of a quadratic extension $L/\Q$.

 Denote by $H$ the splitting field of the anti-cyclotomic character $\psim=\psi/\psi^\tau\ne\mathbf{1}$, 
  where for any  function  $h$ on $\Gamma_K$ we let  $h^{\tau}(\cdot)= h(\tau \cdot \tau)$.
  Recall that  $\psi$  has  conductor relatively prime to $p$ and  $\psim(\gp)=1$, {\it i.e.}, $p$ splits completely in $H$.
We  fix  an embedding $\iota_p: \overline{\Q} \hookrightarrow \overline{\Q}_p$ which determines an embedding $\Gamma_{\Q_p}\hookrightarrow \Gamma_K$ and   a  place  $v_0$ of $H$ above the place  $\gp$ of $K$. We let  $\I_{\gp}$ denote the 
inertia subgroup  of $\Gamma_{K_\gp}\simeq \Gamma_{\Q_p}$.

\subsection{The slope of  \texorpdfstring{$\psim$}{}}\label{subs-slope}
The dihedral group $G=\Gamma_{H/\Q}$ is a semi-direct product of the cyclic group $C=\Gamma_{H/K}$ with $\{1,\tau\}$. 
The product is direct if and only if the character $\psim$ is quadratic, in which case $G$ is the Klein group.

Given any global unit  $u\in \cO_H^\times$ we consider the element 
\[u_{\psim}=\sum_{g\in C}  g^{-1}(u)\otimes \psim(g)\in (\cO_H^\times \otimes \overline \Q)[\psim],   \]
where  $[\psim]$ denotes the $\psim$-isotypic part for the natural left action of $C$. 

Minkowski's proof of Dirichlet's Unit  Theorem shows that the left $G$-module $\cO_H^{\times}\otimes \Q$ is isomorphic
to $(\mathrm{Ind}_{\{1,\tau\}}^{G} \mathbf{1})/(\Q\cdot \mathbf{1})$  (see  \cite[(7)]{bellaiche-dimitrov}), hence the  $2$-dimensional odd representation $\Ind^\Q_K \psim$ occurs with multiplicity one in it. 
By the Frobenius Reciprocity Theorem  $(\cO_H^\times \otimes \overline \Q)[\psim]$ is a line, 
and similarly $(\cO_H^\times \otimes \overline \Q)[\psim^\tau]$ is a line containing $\tau(u_{\psim})=\sum\limits_{g\in C}  \tau g^{-1}(u)\otimes \psim(g)$. Minkowski's Theorem guarantees the existence of $u\in \cO_H^\times$ such that 
$\{g(u)| g\in C\backslash\{1\}\}$ is a basis of $\cO_H^\times \otimes \Q$, called a Minkowski unit,  to which one can additionally impose to be fixed by $\tau$. 
As $\cO_K^\times$ is finite, the only non-trivial relation between 
$\{g(u)| g\in C\}$ is given by the kernel of the relative norm $\mathrm{N}_{H/K}$, hence for any Minkowski unit $u$ one has $u_{\psim}\ne 0$. 
 We record those  useful  facts as a lemma.  
\begin{lemma}\label{l:useful}
There exists $u\in \cO_H^\times$, such that  
\[(\cO_H^{\times}\otimes \overline{\Q})[\psim]=  \overline{\Q} \cdot u_{\psim}, \text{ and  } (\cO_H^{\times}\otimes \overline{\Q})[\psim^\tau]= \overline{\Q} \cdot \tau(u_{\psim}). \] 
\end{lemma}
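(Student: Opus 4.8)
The plan is to deduce both assertions from the $\Q[G]$-module isomorphism $\cO_H^{\times}\otimes\Q\cong(\Ind_{\{1,\tau\}}^{G}\mathbf{1})/(\Q\cdot\mathbf{1})$ recalled above, in three short steps: first bound each isotypic line from above using Frobenius reciprocity, then produce a Minkowski unit $u$ (taken $\tau$-fixed for later convenience), and finally check that $u_{\psim}$ and $\tau(u_{\psim})$ do not vanish. For the first step, since $\rho:=\Ind^{\Q}_{K}\psim$ is odd the matrix $\rho(\tau)$ has eigenvalues $1$ and $-1$, so $\res_{\{1,\tau\}}\rho\cong\mathbf{1}\oplus\epsilon$ with $\epsilon$ the non-trivial character of $\{1,\tau\}$; Frobenius reciprocity then gives $\Hom_{G}(\rho,\Ind_{\{1,\tau\}}^{G}\mathbf{1})\cong\Hom_{\{1,\tau\}}(\res_{\{1,\tau\}}\rho,\mathbf{1})$, a line, and as $\rho\not\cong\mathbf{1}$ it follows that $\rho$ occurs in $\cO_H^{\times}\otimes\Q$ with multiplicity exactly one. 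Restricting the $G$-action to $C$ and using $\res_{C}\rho=\psim\oplus\psim^{\tau}$ shows that $(\cO_H^{\times}\otimes\overline{\Q})[\psim]$ and $(\cO_H^{\times}\otimes\overline{\Q})[\psim^{\tau}]$ are each (at most, hence exactly) one-dimensional, and that $\tau$ interchanges them: if $g\cdot v=\psim(g)v$ for all $g\in C$, then $g\cdot\tau(v)=\tau\big((\tau g\tau)\cdot v\big)=\psim(\tau g\tau)\,\tau(v)=\psim^{\tau}(g)\,\tau(v)$.

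For the second and third steps, pick a Minkowski unit $u\in\cO_H^{\times}$, i.e. one with $\{g(u):g\in C\setminus\{1\}\}$ a $\Q$-basis of $\cO_H^{\times}\otimes\Q$; such a $u$ may moreover be taken $\tau$-invariant, as in \cite{bellaiche-dimitrov}, since $\cO_H^{\times}\otimes\Q$ is generated over $\Q[G]$ by the image of the (evidently $\tau$-fixed) trivial coset of $\Ind_{\{1,\tau\}}^{G}\mathbf{1}$. For such $u$ the full set $\{g(u):g\in C\}$ spans $\cO_H^{\times}\otimes\Q$, so the $C$-equivariant surjection $\Q[C]\twoheadrightarrow\cO_H^{\times}\otimes\Q$, $g\mapsto g(u)$, has one-dimensional kernel (as $\dim\Q[C]=|C|$ and $\dim\cO_H^{\times}\otimes\Q=|C|-1$); that kernel contains $\sum_{g\in C}g$, because $\mathrm{N}_{H/K}(u)=\prod_{g\in C}g(u)$ lies in the finite group $\cO_K^{\times}$ and hence vanishes in $\cO_H^{\times}\otimes\Q$. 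Thus $\cO_H^{\times}\otimes\Q\cong\Q[C]/(\sum_{g}g)$ as $C$-modules, whose scalar extension to $\overline{\Q}$ is the sum of the lines $\overline{\Q}_{\chi}$ over all non-trivial characters $\chi$ of $C$, the $\chi$-part being spanned by $u_{\chi}=\sum_{g\in C}g^{-1}(u)\otimes\chi(g)$. Since $\psim\ne\mathbf{1}$ this forces $u_{\psim}\ne0$, so $(\cO_H^{\times}\otimes\overline{\Q})[\psim]=\overline{\Q}\cdot u_{\psim}$ by the first step; applying $\tau$ and using $\tau(u)=u$ together with the substitution $h=\tau g\tau$ gives $\tau(u_{\psim})=u_{\psim^{\tau}}\ne0$, which therefore spans the line $(\cO_H^{\times}\otimes\overline{\Q})[\psim^{\tau}]$.

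There is no real obstacle here. The one point that deserves a little care is the identification $\cO_H^{\times}\otimes\Q\cong\Q[C]/(\sum_{g}g)$ — equivalently, that the relative-norm relation is the \emph{only} linear relation among the conjugates of a Minkowski unit — since it is precisely this that simultaneously yields the non-vanishing of $u_{\psim}$ and the one-dimensionality of each isotypic line. The minor adjustment needed to make the Minkowski unit literally $\tau$-fixed (rather than $\tau$-fixed only up to roots of unity) is the only other bookkeeping point, and in fact is not needed for the statement of the lemma — for any Minkowski unit $u$ one already has $u_{\psim}\ne0$ and hence $\tau(u_{\psim})\ne0$ — but only for the clean identity $\tau(u_{\psim})=u_{\psim^{\tau}}$ used later.
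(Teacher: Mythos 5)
Your proof is correct and follows the same route as the paper: the $G$-module isomorphism $\cO_H^\times\otimes\Q\simeq(\Ind_{\{1,\tau\}}^G\mathbf 1)/(\Q\cdot\mathbf 1)$, Frobenius reciprocity to bound the isotypic lines, and a Minkowski unit together with the fact that the relative norm gives the only relation among its $C$-conjugates. You simply spell out a few steps the paper leaves implicit (the explicit computation $\res_{\{1,\tau\}}\rho\cong\mathbf 1\oplus\epsilon$, the check that $\tau$ swaps the $\psim$- and $\psim^\tau$-isotypic lines, and the identification $\cO_H^\times\otimes\Q\cong\Q[C]/(\sum_g g)$), and you correctly note that the $\tau$-fixedness of $u$ is not needed for this lemma but only for the later identity $\tau(u_{\psim})=u_{\psim^\tau}$.
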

\noindent The Baker--Brumer Theorem \cite{brumer} demonstrates the injectivity of the  $\overline{\Q}$-linear  homomorphism \begin{align}\label{eq:baker-brumer} 
\log_p : \cO_H^\times\otimes \overline{\Q} \longrightarrow \overline{\Q}_p\text{ , } u \otimes x \mapsto \log_p(u)\cdot \iota_p(x), 
\end{align} 
allowing  to define, for $u$ as in Lemma~\ref{l:useful},  the following $p$-adic regulator, that we call the slope
\begin{align}\label{eq:slope-def}
 \mathscr{S}(\psim)=-\frac{\log_p(u_{\psim})}{\log_p(\tau(u_{\psim}))}.
\end{align} 

\begin{lemma} \label{l:slope}
The slope $\mathscr{S}(\psim)$ does not depend on the choice of $u$ as in Lemma~\ref{l:useful}. 
Moreover $\mathscr{S}(\psim)\notin \overline{\Q}$, unless $\psim$ is quadratic, in which case $\mathscr{S}(\psim)=-1$. 
Finally $\mathscr{S}(\psim^\tau)=\mathscr{S}(\psim)^{-1}$. 
\end{lemma}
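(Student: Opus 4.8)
The plan is to prove the three assertions in turn, each reducing to a linear-algebra statement about the $G$-module $\cO_H^\times\otimes\overline\Q$ together with the injectivity of $\log_p$ from \eqref{eq:baker-brumer}.

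First, independence of the choice of $u$. By Lemma~\ref{l:useful} the line $(\cO_H^\times\otimes\overline\Q)[\psim]$ is one-dimensional, so any two admissible choices $u,u'$ give $u'_{\psim}=\lambda\cdot u_{\psim}$ for some $\lambda\in\overline\Q^\times$; applying $\tau$ (which intertwines the $\psim$- and $\psim^\tau$-isotypic lines) gives $\tau(u'_{\psim})=\tau(\lambda)\cdot\tau(u_{\psim})$. Since $\log_p$ in \eqref{eq:baker-brumer} is $\overline\Q$-linear on $\cO_H^\times\otimes\overline\Q$, the scalar $\lambda$ (resp.\ $\tau(\lambda)$) factors out of numerator and denominator of \eqref{eq:slope-def}. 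One must check $\tau(\lambda)=\iota_p(\tau(\lambda))$ is compatible with how $\log_p$ acts on the $x$-component, i.e.\ that the ratio is literally unchanged; this is immediate because $\lambda$ and $\tau(\lambda)$ are genuine algebraic numbers, hence both numerator and denominator scale by them and they cancel. For the quadratic case one also notes a Minkowski unit fixed by $\tau$ gives $u_{\psim}$ and $\tau(u_{\psim})$ differing only through the change $\psim\mapsto\psim^\tau=\psim$, which is where $\mathscr{S}(\psim)=-1$ will come from.

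Second, the dichotomy $\mathscr{S}(\psim)\in\overline\Q$ iff $\psim$ quadratic. For the quadratic case, $C$ is then a group of order $2$ and $G$ is the Klein four-group; writing $C=\{1,c\}$ one has $u_{\psim}=c(u)\otimes 1 - u\otimes 1$ (up to the fixed normalization with $\tau u = u$) and $\tau(u_{\psim})=\tau c(u)\otimes 1 - u\otimes 1$. The key point is that in the Klein group $\tau c$ and $c$ are related so that $\tau(u_{\psim})=-u_{\psim}$ inside $\cO_H^\times\otimes\overline\Q$ (this uses $\tau u=u$ and $c$ of order $2$), and then $\log_p$ applied to both sides gives $\log_p(\tau(u_{\psim}))=-\log_p(u_{\psim})$, hence $\mathscr{S}(\psim)=-1$. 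Conversely, suppose $\psim$ is not quadratic. If $\mathscr{S}(\psim)=\mu\in\overline\Q$, then $\log_p(u_{\psim})+\mu\cdot\log_p(\tau(u_{\psim}))=\log_p\big(u_{\psim}+\mu\cdot\tau(u_{\psim})\big)=0$; by the injectivity of the Baker--Brumer map this forces $u_{\psim}+\mu\cdot\tau(u_{\psim})=0$ in $\cO_H^\times\otimes\overline\Q$. But $u_{\psim}$ and $\tau(u_{\psim})$ lie in the distinct $C$-isotypic lines $[\psim]$ and $[\psim^\tau]$, which are linearly independent precisely because $\psim\ne\psim^\tau$ (equivalently $\psim$ not quadratic), contradicting $u_{\psim}\ne 0$. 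Hence $\mathscr{S}(\psim)\notin\overline\Q$.

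Third, the symmetry $\mathscr{S}(\psim^\tau)=\mathscr{S}(\psim)^{-1}$. By Lemma~\ref{l:useful} one may use the same Minkowski unit $u$ (chosen $\tau$-fixed) to compute both slopes: $(\cO_H^\times\otimes\overline\Q)[\psim^\tau]=\overline\Q\cdot\tau(u_{\psim})$, and applying $\tau$ once more to $\tau(u_{\psim})$ brings us back to $\tau^2(u_{\psim})=u_{\psim}$ (using that $\tau$ is an involution and that conjugating the defining sum over $C$ by $\tau$ twice returns the original, since $\tau^2=1$ acts trivially on $C$). Therefore $(u)_{\psim^\tau}=\tau(u_{\psim})$ and $\tau\big((u)_{\psim^\tau}\big)=u_{\psim}$, so \eqref{eq:slope-def} applied with $\psim$ replaced by $\psim^\tau$ yields $\mathscr{S}(\psim^\tau)=-\log_p(\tau(u_{\psim}))/\log_p(u_{\psim})=\mathscr{S}(\psim)^{-1}$, using again $\overline\Q$-linearity of $\log_p$ to identify these logarithms with the ones already appearing.

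The main obstacle I anticipate is the bookkeeping in the second assertion: verifying cleanly that $\tau(u_{\psim})=-u_{\psim}$ in the quadratic case (the normalization $\tau u=u$ versus the twist $\psim^\tau=\psim$ must be tracked carefully in the isotypic projector), and conversely making rigorous that $[\psim]$ and $[\psim^\tau]$ are independent lines exactly when $\psim\ne\psim^\tau$ — both hinge on a correct description of how $\tau$ permutes $C$-isotypic components of $\cO_H^\times\otimes\overline\Q$, which follows from the semidirect-product structure $G=C\rtimes\{1,\tau\}$ recalled in \S\ref{subs-slope}. Everything else is a direct consequence of Lemma~\ref{l:useful} and the injectivity of \eqref{eq:baker-brumer}.
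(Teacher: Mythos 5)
There is a genuine error in the quadratic case of your second assertion. You claim that $\tau(u_{\psim})=-u_{\psim}$ when $\psim$ is quadratic, but this is wrong; the correct relation is $\tau(u_{\psim})=+u_{\psim}$. Concretely, with $C=\{1,c\}$, $\psim(c)=-1$, $\tau u=u$ and $\tau c=c\tau$ in the Klein group $G$, one gets $u_{\psim}=u\otimes 1 - c(u)\otimes 1$, hence
\[
\tau(u_{\psim}) = \tau(u) - \tau c(u) = u - c\tau(u) = u - c(u) = u_{\psim},
\]
not $-u_{\psim}$. Had your claim been correct, then plugging $\log_p(\tau(u_{\psim}))=-\log_p(u_{\psim})$ into \eqref{eq:slope-def} yields $\mathscr{S}(\psim)=+1$, contradicting the statement you are trying to prove. (And indeed your own third assertion, applied with $\psim^\tau=\psim$, only pins $\mathscr{S}(\psim)$ down to $\{\pm 1\}$, so the sign must be settled, and it is settled by $\tau(u_{\psim})=u_{\psim}$, not $-u_{\psim}$.) You anticipated this was the delicate point; it is, and the sign you wrote down is incorrect.

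The cleanest fix, which is what the paper does, is to prove the single identity $\tau(u_{\psim})=u_{\psim^\tau}$ for a $\tau$-fixed Minkowski unit $u$ (substituting $g'=\tau g\tau$ in the defining sum and using $\psim(\tau g'\tau)=\psim^\tau(g')$). This one identity then gives \emph{both} $\mathscr{S}(\psim^\tau)=\mathscr{S}(\psim)^{-1}$ in general, \emph{and}, specializing to $\psim^\tau=\psim$, the equality $\tau(u_{\psim})=u_{\psim}$, hence $\mathscr{S}(\psim)=-1$ in the quadratic case. Your part~3 actually already establishes $\tau(u_{\psim})=u_{\psim^\tau}$, so you should deduce the quadratic case from it rather than attempting a separate Klein-group computation. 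The rest of your argument (independence of $u$, and the non-quadratic direction of part~2 via Baker--Brumer injectivity applied to $u_{\psim}+\mu\,\tau(u_{\psim})$) is correct and essentially follows the paper's reasoning.
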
 \label{lem:ell-inv}
\begin{proof} The value of $\mathscr{S}(\psim)$ remains clearly unchanged if we replace $u$ by $g(u)$ for any $g\in C$. 
The independence then follows from the fact that any unit lies in the $\Q$-linear span of these. If 
$\psim^\tau\ne \psim$, then $\mathscr{S}(\psim)\notin \overline{\Q}$ by  \eqref{eq:baker-brumer}. Finally, taking $u$ to be a 
Minkowski unit fixed by $\tau$, one finds that $\tau(u_{\psim})=u_{\psim^\tau}$ which, in view of the definition \eqref{eq:slope-def}, 
simultaneously shows that $\mathscr{S}(\psim^\tau)=\mathscr{S}(\psim)^{-1}$ and further, when   $\psim$ is quadratic, that
$\mathscr{S}(\psim)=-1$.\end{proof}

\subsection{A Galois theoretic interpretation of the slope}\label{def-eta} 

Let   $\eta_{\gp}$  be the unique element of $\ker(\rH^1\left(K, \overline{\Q}_p)\to \rH^1(\I_{\bar{\gp}},\overline{\Q}_p)\right)$ whose image in $\mathrm{im}\left(\rH^1(K_{\gp}, \overline{\Q}_p) \to 
 \rH^1(\I_{\gp}, \overline{\Q}_p)\right)\simeq \Hom (\cO_{K_{\gp}}^\times, \overline{\Q}_p)$
 is given by  $\log_p$. Note that $\eta_{\gp}$ and $\eta_{\bar{\gp}}=\eta_{\gp}^\tau$ form a basis of $\rH^1(K, \overline{\Q}_p)$.

\begin{prop}\label{restinj} The image of the natural  restriction homomorphism
\[\rH^1(K, \psim) \hookrightarrow \mathrm{im}\left(\rH^1(K_{\gp}, \overline{\Q}_p)\times\rH^1(K_{\bar{\gp}}, \overline{\Q}_p) \to 
 \rH^1(\I_{\gp}, \overline{\Q}_p) \times  \rH^1(\I_{\bar{\gp}}, \overline{\Q}_p)\right)\simeq \Hom (\cO_{K_{\gp}}^\times \times\cO_{K_{\bar{\gp}}}^\times, \overline{\Q}_p) \]
 is a line generated by $(\log_p, \mathscr{S}(\psim) \log_p)$. In particular both maps $\rH^1(K, \psim)\to \rH^1(\I_{\gp}, \overline{\Q}_p)$ and
 $\rH^1(K, \psim)\to \rH^1(\I_{\bar{\gp}}, \overline{\Q}_p)$ are injective. 
 \end{prop}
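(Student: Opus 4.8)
The plan is to identify $\rH^1(K,\psim)$ with a subspace of global units via Kummer theory (or rather, a cohomological avatar of the Minkowski unit computation from \S\ref{subs-slope}), and then compute its image under localization at $\gp$ and $\bar\gp$ explicitly in terms of the slope $\mathscr{S}(\psim)$.

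First I would recall that, since $\psim$ is a nontrivial finite-order character cut out by $H/K$, inflation-restriction gives $\rH^1(K,\psim) = \rH^1(H,\overline\Q_p)[\psim]$, the $\psim$-isotypic part for the $C=\Gamma_{H/K}$-action, and this equals $\Hom(\Gamma_H^{\mathrm{ab}},\overline\Q_p)[\psim]$. Because $\psim\neq\mathbf 1$ and $p$ splits completely in $H$ (so there is no contribution from the $p$-parts of class groups or from ramification at $p$ after taking the $\psim$-part, the relevant global obstruction being controlled by the unit group), the group $\rH^1(K,\psim)$ is one-dimensional: indeed this is exactly the statement that $\Ind^\Q_K\psim$ occurs with multiplicity one in $\cO_H^\times\otimes\overline\Q$, combined with the fact that the Selmer-type conditions (unramified away from $p$, with prescribed behaviour at $\gp,\bar\gp$) pin down a single line. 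The key point is that a class in $\rH^1(K,\psim)$ lying in $\ker\big(\rH^1(K,\overline\Q_p)\to\rH^1(\I_{\bar\gp},\overline\Q_p)\big)$-type conditions corresponds, after restriction to $\Gamma_H$, to a homomorphism $\Gamma_H^{\mathrm{ab}}\to\overline\Q_p$ which on the local units at the places above $p$ is governed by $\log_p$, and whose global avatar is (a multiple of) the Minkowski unit $u_{\psim}$ of Lemma~\ref{l:useful}.

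Next, the heart of the argument: compute the two localizations. Fixing the place $v_0 \mid \gp$ of $H$ and using the compatible embeddings $\Gamma_{\Q_p}\hookrightarrow\Gamma_K$, restriction of a class $c\in\rH^1(K,\psim)$ to $\rH^1(K_\gp,\overline\Q_p)$ and then to $\rH^1(\I_\gp,\overline\Q_p)\simeq\Hom(\cO_{K_\gp}^\times,\overline\Q_p)$ is computed by the local component of the corresponding global homomorphism on $\cO_H^\times$ at $v_0$; by the normalization of $\eta_\gp$ (Definition in \S\ref{def-eta}), this component is $\log_p(g^{-1}(u))$ summed against $\psim(g)$, i.e.\ a nonzero scalar multiple of $\log_p(u_{\psim})$. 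Likewise, localizing at $\bar\gp$ amounts to using $\tau$ to move $v_0$ to the conjugate place, producing a scalar multiple of $\log_p(\tau(u_{\psim}))$, with the \emph{same} scalar because the class $c$ is a single element of the $\psim$-line. Taking the ratio of the two local images and invoking the definition $\mathscr{S}(\psim)=-\log_p(u_{\psim})/\log_p(\tau(u_{\psim}))$ (and then rescaling the generator so the $\gp$-component is exactly $\log_p$) shows the image is the line through $(\log_p,\mathscr{S}(\psim)\log_p)$. Injectivity of $\rH^1(K,\psim)\to\rH^1(\I_\gp,\overline\Q_p)$ and $\to\rH^1(\I_{\bar\gp},\overline\Q_p)$ is then immediate: neither coordinate of the generator vanishes, since $\log_p(u_{\psim})\neq0$ and $\log_p(\tau(u_{\psim}))\neq0$ by the Baker--Brumer injectivity \eqref{eq:baker-brumer} applied to the nonzero units $u_{\psim}$, $\tau(u_{\psim})$ (this uses $u_{\psim}\neq 0$ from the Minkowski-unit discussion).

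The main obstacle I anticipate is the bookkeeping in the second step: matching the \emph{global} cohomological localization map with the \emph{idelic/units} picture so that the scalar appearing at $\gp$ and at $\bar\gp$ is provably the same (not just proportional), which is what forces the slope — rather than an unrelated ratio — to appear. This requires being careful that the identification $\rH^1(K,\psim)=\rH^1(H,\overline\Q_p)[\psim]$, the choice of $v_0$, the $\tau$-equivariance $\eta_{\bar\gp}=\eta_\gp^\tau$, and the formula $\tau(u_{\psim})=u_{\psim^\tau}$ are all used consistently with one fixed generator of the $\psim$-line. A secondary technical point is justifying that $\rH^1(K,\psim)$ is exactly one-dimensional and that its image already lands inside the asserted product of inertial images (i.e.\ that the class is automatically ``$\bar\gp$-finite'' in the appropriate sense coming from $\eta_\gp,\eta_{\bar\gp}$ forming a basis of $\rH^1(K,\overline\Q_p)$); this is where the hypothesis that $p$ splits completely in $H$ and that $\psi$ has conductor prime to $p$ gets used to rule out extra classes.
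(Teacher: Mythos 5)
Your overall strategy — inflation–restriction to identify $\rH^1(K,\psim)$ with an isotypic part of $\rH^1(H,\overline{\Q}_p)$, then computing the two localizations at $\gp$ and $\bar\gp$ and relating the ratio to the slope via global units — matches the paper's. But there are two substantive gaps and one sign/convention issue worth flagging.

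First, the dimension count. You assert $\dim\rH^1(K,\psim)=1$ by appealing to multiplicity one of $\Ind^\Q_K\psim$ in $\cO_H^\times\otimes\overline\Q$ plus some "Selmer-type conditions," but there is no Selmer condition imposed on $\rH^1(K,\psim)$ here: it is the full global $\rH^1$. Multiplicity one in units does not by itself bound $\dim\Hom(\Gamma_H,\overline\Q_p)[\psim^\tau]$, since $\Gamma_H^{\mathrm{ab}}\otimes\Q_p$ also receives contributions from class groups and from local units at $p$, and the statement that the $\psim^\tau$-eigenspace is exactly one-dimensional is equivalent (after Shapiro) to Leopoldt's conjecture for $H$. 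The paper explicitly invokes that Leopoldt is known for abelian extensions of imaginary quadratic fields; without that input your argument does not control the dimension.

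Second, and more importantly, the step that pins the ratio of the two local restrictions down to $\mathscr{S}(\psim)$ is not set up correctly. You phrase the localization as if the class $c$ "is" (a multiple of) the unit $u_{\psim}$, and as if the restriction to $\Hom(\cO_{K_\gp}^\times,\overline\Q_p)$ were "the local component of a homomorphism on $\cO_H^\times$." But $c$ is a homomorphism on $\Gamma_H$, not a unit, and the relevant target is $\Hom(\cO_{K_\gp}^\times\times\cO_{K_{\bar\gp}}^\times,\overline\Q_p)$, not $\Hom(\cO_H^\times,\overline\Q_p)$. What actually forces the two local restrictions to be $(\lambda\log_p,\lambda\,\mathscr{S}(\psim)\log_p)$ for the same scalar $\lambda$ is the global class field theory exact sequence (the $\psim^\tau$-isotypic part of the sequence cited from Bella\"iche–Dimitrov): the localized class, viewed on $(\cO_H\otimes\Z_p)^\times$, must restrict to zero on the diagonally embedded global units $\cO_H^\times$, and evaluating that vanishing condition on $u_{\psim}$ gives $\log_p(u_{\psim})+s\log_p(\tau(u_{\psim}))=0$, i.e.\ $s=\mathscr{S}(\psim)$. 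Your proposal gestures at "triviality on global units" but never sets up the exact sequence that produces this constraint, and without it there is no reason the slope (rather than some unrelated ratio, or $\mathscr{S}(\psim^\tau)$) appears. Relatedly, the inflation–restriction identification lands in the $\psim^\tau$-isotypic part of $\Hom(\Gamma_H,\overline\Q_p)$ (since $\psim^\tau=\psim^{-1}$), whereas $u_{\psim}$ lies in the $\psim$-part of $\cO_H^\times\otimes\overline\Q$; writing $[\psim]$ in both places obscures the duality that makes the evaluation pairing nontrivial and risks producing $\mathscr{S}(\psim^\tau)=\mathscr{S}(\psim)^{-1}$ in place of $\mathscr{S}(\psim)$.

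The final injectivity step is fine once the generator is known to be $(\log_p,\mathscr{S}(\psim)\log_p)$, since $\mathscr{S}(\psim)\neq 0$ by Baker–Brumer applied to $u_{\psim}\neq0$.
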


\begin{proof}
The  inflation-restriction exact sequence (see \cite[(8)]{bellaiche-dimitrov}) yields  an isomorphism 
 \begin{align}\label{res-infl}
 \rH^1(K, \psim) \xrightarrow{\sim}  \rH^1(H,\overline{\Q}_p)[\psim^\tau]
= ( \Hom(\Gamma_H, \overline{\Q}_p)\otimes  \psim)^{C}. 
\end{align} 
Using Shapiro's Lemma the vector spaces in \eqref{res-infl} are further isomorphic to \begin{align}\label{dimext}
\rH^1(\Q, \Ind^\Q_K \psim) \xrightarrow{\sim} \rH^1(K,  \psim\oplus \psim^\tau)^{\tau=1}\xrightarrow{\sim} 
( \Hom(\Gamma_H, \overline{\Q}_p)\otimes  \Ind^\Q_K \psim)^{G}.
\end{align} 
As  Leopoldt's Conjecture holds for cyclic extensions of imaginary quadratic fields, 
the dimension of the latter equals   $\dim (\Ind^\Q_K \psim)^{\tau=-1}=1$.

Taking  the $\psim^\tau$-isotypic part of \cite[(6)]{bellaiche-dimitrov} for  the left $C$-action yields  an exact sequence 
\[0 \to \rH^1(H,\overline{\Q}_p)[\psim^\tau]\xrightarrow{\res_{p}}  \Hom (\cO_{H_{\gp}}^\times , \overline{\Q}_p)[\psim^\tau]\times 
\Hom (\cO_{H_{\bar{\gp}}}^\times, \overline{\Q}_p)[\psim^\tau]\to  \Hom (\cO_{H}^\times , \overline{\Q}_p).\]
As  $\Hom(\cO_{H_{\gp}}^\times, \overline{\Q}_p)$, resp.    $\Hom(\cO_{H_{\bar{\gp}}}^\times, \overline{\Q}_p)$, is 
the regular representation of $C$,  its $(\psim^\tau)$-component is $1$-dimensional, generated by  
$\sum_{g \in C} \psim(g) \log_p( g^{-1}\cdot )$, resp. $\sum_{g \in C} \psim(g) \log_p( \tau  g^{-1}\cdot )$. 
It follows that   $\rH^1(K, \psim) \xrightarrow{\sim}  \rH^1(H,\overline{\Q}_p)[\psim^\tau]$  has a basis whose 
 restriction  to $(\cO_{H}\otimes \Z_p)^\times$ is given by 
\[\sum_{g \in C} \psim(g) \log_p( g^{-1}\cdot ) + s \cdot \sum_{g \in C} \psim(g) \log_p( \tau  g^{-1}\cdot ),\] for some  $s \in \overline{\Q}_p$. The triviality on  global units (in particular on $u_{\psim}$) implies,   in view of  Lemma~\ref{l:slope},   that $s= \mathscr{S}(\psim)$.\end{proof}

\subsection{\texorpdfstring{$\cL$}{L}-invariants}\label{L-invariantdefn}
As $\gp$ splits completely in $H$, the set of places of $H$ above $\gp$ consists of  $g(v_0)$ for $g\in C$.
Choose $y_0\in \cO_{H}[\tfrac{1}{v_0}]^\times $ having a non-zero  valuation $\ord_{v_0}(y_0)$ at $v_0$. Then
\begin{align}\label{defn-L-inv}
\cL(\psim^\tau)=-\frac{\sum_{g\in C}  \psim(g) \log_p (g^{-1}(y_0))+\mathscr{S}(\psim) \sum_{g\in C}  \psim(g) \log_p (\tau g^{-1}(y_0))}{\ord_{v_0}(y_0)}
\end{align}
is independent of the choice of $y_0$, as by  Lemma~\ref{l:slope} multiplying $y_0$ by an element of $\cO_{H}^{\times}$ does not affect its value.  Another description of $\cL(\psim^\tau)$  can be given using the exact sequence 
\[1 \to \cO_{H}^\times  \to \cO_{H}[\tfrac{1}{\gp}]^\times \xrightarrow{\ord} \Z[C]\]
of $C$-modules, where the last map is obtained by taking valuations at all places $v$ of $H$ dividing $\gp$. Taking the  $\psim$-isotypic part for the left action of $C$ yields a short exact sequence 
\[0 \to \overline{\Q}\cdot  u_{\psim}  \to (\cO_{H}[\tfrac{1}{\gp}]^\times \otimes \overline{\Q})[\psim] \xrightarrow{\ord_{v_0}}  \overline{\Q} \to 0.\]
Letting $u_{\gp,\psim} \in (\cO_{H}[\tfrac{1}{\gp}]^\times \otimes \overline{\Q})[\psim] $ be any element such that
$\ord_{v_0}(u_{\gp,\psim})\ne 0$, for example $u_{\gp,\psim}= \sum_{g \in C} g^{-1}(y_0)\otimes\psim(g) $, one easily checks that 
\[\cL(\psim^\tau)=-\frac{\log_p(u_{\gp,\psim})+\mathscr{S}(\psim)\log_p(\tau(u_{\gp,\psim}))}{\ord_{v_0}(u_{\gp,\psim})}.\]

Finally taking $u_{\gp}$ to be any non-torsion element of $\cO_K[\tfrac{1}{\gp}]^\times$,  for example $\mathrm{N}_{H/K}(y_0)=\sum_{g \in C} g^{-1}(y_0)$, one defines 
\begin{align}\label{defn-L1}
\cL_{\gp}=- \frac{\log_p(u_{\gp})}{\ord_{\gp}(u_{\gp})}. 
\end{align}

\subsection{A Galois theoretic interpretation of the \texorpdfstring{$\cL$}{L}-invariants}
Denote by  $\eta$ a representative of the canonical generator $[\eta]$ of $ \rH^1(K, \psim)$ described in Proposition~\ref{restinj}. 

\begin{lemma}\label{linvariant1}
One has  $\eta_{\bar\gp}(\Frob_{\gp})=\eta_{\gp}(\Frob_{\bar\gp})= -\cL_{\gp}$.
\end{lemma}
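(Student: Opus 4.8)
The plan is to read off both values from global class field theory. Write $\rH^1(K,\overline{\Q}_p)=\Hom_{\mathrm{cont}}(\Gamma_K^{\mathrm{ab}},\overline{\Q}_p)$; this space is $2$-dimensional because Leopoldt's conjecture holds for the imaginary quadratic field $K$, and every such homomorphism is unramified at each finite place $v\nmid p$ — the pro-$p$ completion of $\cO_{K_v}^\times$ being finite — and trivial at the complex place. In terms of the local reciprocity maps $\rec_v\colon K_v^\times\to\Gamma_{K_v}^{\mathrm{ab}}$, the defining properties of $\eta_\gp$ from \S\ref{def-eta} amount to the following: $\eta_\gp$ is unramified at every place $v\neq\gp$, and $\eta_\gp\circ\rec_{\bar\gp}$ is the unramified homomorphism $x\mapsto\eta_\gp(\Frob_{\bar\gp})\cdot\ord_{\bar\gp}(x)$; while $\eta_\gp\circ\rec_\gp$ restricted to $\cO_{K_\gp}^\times\subset K_\gp^\times\cong\Q_p^\times$ equals $\log_p$.

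The key step is Artin reciprocity, namely that $\sum_v(\eta_\gp\circ\rec_v)$ vanishes on the diagonal copy of $K^\times$. I would evaluate this relation on the element $\tau(u_\gp)\in\cO_K[\tfrac{1}{\bar\gp}]^\times$. Since $\tau(u_\gp)$ is a unit at $\gp$ and at every finite place $v\nmid p$, and $\eta_\gp$ is unramified at each $v\neq\gp$, only the places $\gp$ and $\bar\gp$ contribute, and one obtains
\[\log_p\big(\iota_p(\tau(u_\gp))\big)+\eta_\gp(\Frob_{\bar\gp})\cdot\ord_\gp(u_\gp)=0,\]
using $\ord_{\bar\gp}(\tau(u_\gp))=\ord_\gp(u_\gp)\neq 0$. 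On the other hand, as fractional ideals of $\cO_K$ one has $(u_\gp)\cdot(\tau(u_\gp))=(\gp\bar\gp)^{\ord_\gp(u_\gp)}=(p)^{\ord_\gp(u_\gp)}$, so $u_\gp\,\tau(u_\gp)$ equals $p^{\ord_\gp(u_\gp)}$ up to a root of unity in $\cO_K^\times$; applying $\log_p\circ\iota_p$ and using that $\log_p$ is Iwasawa's branch (so $\log_p(p)=0$) gives $\log_p(\iota_p(u_\gp))+\log_p(\iota_p(\tau(u_\gp)))=0$. Substituting into the displayed identity and comparing with the definition \eqref{defn-L1} yields
\[\eta_\gp(\Frob_{\bar\gp})=\frac{\log_p(\iota_p(u_\gp))}{\ord_\gp(u_\gp)}=-\cL_\gp.\]

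Finally, $\eta_{\bar\gp}=\eta_\gp^\tau$ and complex conjugation carries a decomposition group at $\gp$ to one at $\bar\gp$ while taking Frobenius to Frobenius, whence $\eta_{\bar\gp}(\Frob_\gp)=\eta_\gp(\tau\Frob_\gp\tau^{-1})=\eta_\gp(\Frob_{\bar\gp})=-\cL_\gp$. I expect the only delicate point to be keeping track of normalizations — the choice of geometric versus arithmetic Frobenius in $\rec_v$, and the sign implicit in the isomorphism $\mathrm{im}\big(\rH^1(K_\gp,\overline{\Q}_p)\to\rH^1(\I_\gp,\overline{\Q}_p)\big)\simeq\Hom(\cO_{K_\gp}^\times,\overline{\Q}_p)$ used to pin down $\eta_\gp$ — so that the sign comes out as $-\cL_\gp$ rather than $+\cL_\gp$; the underlying computation is short.
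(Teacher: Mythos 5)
Your proof is correct and follows essentially the same route as the paper: both invoke global Artin reciprocity to show that the sum of the two nonzero local contributions, at $\gp$ and $\bar\gp$, vanishes, and both then use $\log_p(u_\gp)+\log_p(\tau(u_\gp))=0$ (since $u_\gp\tau(u_\gp)\in\pm p^{\Z}$) to extract $-\cL_\gp$. The only cosmetic difference is that you evaluate $\eta_\gp$ on $\tau(u_\gp)\in\cO_K[\tfrac1{\bar\gp}]^\times$ whereas the paper evaluates $\eta_{\bar\gp}$ on $u_\gp\in\cO_K[\tfrac1{\gp}]^\times$, and both deduce the remaining identity from $\eta_{\bar\gp}=\eta_\gp^\tau$.
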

\begin{proof} As $u_{\gp}\bar u_{\gp}\in \Z_p^\times\cdot p^\Z$, 
 using the exact sequence from Global Class Field Theory 
\[0 \to \rH^1(K,\overline{\Q}_p) \to \Hom(K_{\gp}^\times \times  \cO_{K_{\bar\gp}}^\times, \overline{\Q}_p)
\to \Hom( \cO_K[\tfrac{1}{\gp}]^\times,  \overline{\Q}_p)\]
one finds $\ord_{\gp}(u_{\gp}) \cdot \eta_{\bar\gp}(\Frob_{\gp})=\res_{\gp}(\eta_{\bar\gp})(u_{\gp})=- \res_{\bar\gp}(\eta_{\bar\gp})(u_{\gp})=-\log_p(\bar u_{\gp})=\log_p(u_{\gp})$.\end{proof}

As $[\eta] \in \rH^1(K,\psim) $ and $\eta_{\gp}\in \rH^1(K,\overline{\Q}_p)$ both restrict to the same element of 
$\rH^1(\I_{\gp},\overline{\Q}_p)$, their difference  (considered as element of $\rH^1(K_{\gp},\overline{\Q}_p)$) is unramified at $\gp$, 
and the following proposition computes its value on   an arithmetic Frobenius element. 

\begin{prop}\label{linvariant2} One  has  $(\eta-\eta_{\gp})(\Frob_{\gp})= \cL(\psim^\tau) - \cL_{\gp}$. 
\end{prop}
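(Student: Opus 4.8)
The plan is to compute both sides by restricting the relevant cohomology classes to the place $\gp$ and comparing their effect on a uniformizer (or on elements of $\cO_H[\tfrac{1}{\gp}]^\times$) rather than on units. First I would recall that, by Proposition~\ref{restinj}, the canonical class $[\eta]\in\rH^1(K,\psim)$ is characterized by the fact that, after the isomorphism $\rH^1(K,\psim)\xrightarrow{\sim}\rH^1(H,\overline{\Q}_p)[\psim^\tau]$ of \eqref{res-infl}, its restriction to $(\cO_H\otimes\Z_p)^\times$ is $\sum_{g\in C}\psim(g)\log_p(g^{-1}\cdot)+\mathscr{S}(\psim)\sum_{g\in C}\psim(g)\log_p(\tau g^{-1}\cdot)$. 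Since both $\eta$ and $\eta_\gp$ have the same restriction to $\I_\gp$ (namely $\log_p$ under the local identification), their difference $\eta-\eta_\gp$ is unramified at $\gp$, so it makes sense to evaluate it on $\Frob_\gp$; the strategy is to pair this difference with a global element of $\cO_K[\tfrac{1}{\gp}]^\times$ of nonzero valuation at $\gp$, exactly as in the proof of Lemma~\ref{linvariant1}, and read off the Frobenius value from the reciprocity/valuation bookkeeping.

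Concretely, the key steps are as follows. (1) Work with the class $\eta$ realized inside $\rH^1(H,\overline{\Q}_p)[\psim^\tau]$ via the description from Proposition~\ref{restinj}, whose restriction to $\cO_H[\tfrac{1}{\gp}]^\times\otimes\overline{\Q}$ (via the local fields $H_{g(v_0)}$) can be written using the same formula $\sum_{g\in C}\psim(g)\log_p(g^{-1}\cdot)+\mathscr{S}(\psim)\sum_{g\in C}\psim(g)\log_p(\tau g^{-1}\cdot)$. (2) Choose $y_0\in\cO_H[\tfrac{1}{v_0}]^\times$ with $\ord_{v_0}(y_0)\ne0$ as in \S\ref{L-invariantdefn}, and let $u_\gp=\mathrm{N}_{H/K}(y_0)\in\cO_K[\tfrac{1}{\gp}]^\times$; evaluate the class $\eta$ on (the idele attached to) $u_\gp$, using the exact sequence of Global Class Field Theory $0\to\rH^1(K,\overline{\Q}_p)\to\Hom(K_\gp^\times\times\cO_{K_{\bar\gp}}^\times,\overline{\Q}_p)\to\Hom(\cO_K[\tfrac{1}{\gp}]^\times,\overline{\Q}_p)$ from the proof of Lemma~\ref{linvariant1}. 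The contribution at $\gp$ splits into an inertial part, which gives $\ord_\gp(u_\gp)\cdot\log_p(\text{unif.})$ and matches $\res_\gp(\eta_\gp)$ on $u_\gp$, and an unramified part, which gives exactly $\ord_\gp(u_\gp)\cdot(\eta-\eta_\gp)(\Frob_\gp)$. (3) On the other side, compute $\res_\gp(\eta)$ applied to $y_0$ directly from the formula in step (1): the terms with $g=1$ and the restriction-to-units computation reproduce $\sum_{g\in C}\psim(g)\log_p(g^{-1}(y_0))+\mathscr{S}(\psim)\sum_{g\in C}\psim(g)\log_p(\tau g^{-1}(y_0))$, which by \eqref{defn-L-inv} equals $-\cL(\psim^\tau)\cdot\ord_{v_0}(y_0)$. (4) Subtract the $\eta_\gp$-contribution, which by Lemma~\ref{linvariant1} (or directly, since $\res_\gp(\eta_\gp)=\log_p$ on $\cO_{K_\gp}^\times$ and $\eta_{\bar\gp}(\Frob_\gp)=-\cL_\gp$) accounts for the $-\cL_\gp$ term, and divide by the common valuation factor to obtain $(\eta-\eta_\gp)(\Frob_\gp)=\cL(\psim^\tau)-\cL_\gp$.

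The main obstacle I anticipate is the careful normalization of all the local-global pairings: one must be consistent about arithmetic versus geometric Frobenius, about the identification $\rH^1(\I_\gp,\overline{\Q}_p)\simeq\Hom(\cO_{K_\gp}^\times,\overline{\Q}_p)$ used in Proposition~\ref{restinj}, and about how the $\psim^\tau$-isotypic projection interacts with the places $g(v_0)$ of $H$ above $\gp$ (only the term $g=1$ survives when evaluating at $v_0$, so the apparently complicated sum collapses, but tracking the $\psim(g)$-weights correctly is where sign errors hide). A secondary subtlety is justifying that the ``unramified part'' of $\res_\gp(\eta)$ — i.e. its value on $\Frob_\gp$ — is genuinely the coefficient extracted by pairing with a valuation-$1$ element at $\gp$; this is the same mechanism as in Lemma~\ref{linvariant1} and should transfer verbatim, but it relies on the fact that $\eta-\eta_\gp$ is unramified at $\gp$, which must be invoked explicitly. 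Once the bookkeeping in step (2) is set up exactly as in the proof of Lemma~\ref{linvariant1}, steps (3) and (4) are essentially a substitution of the explicit formulas \eqref{defn-L-inv} and \eqref{defn-L1}.
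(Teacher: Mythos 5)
Your proposal has the right overall intuition but a genuine conceptual gap in step~(2), which makes the chain of equalities you outline not actually run. The class $\eta$ lives in $\rH^1(K,\psim)$, not in $\rH^1(K,\overline{\Q}_p)$, so the Global Class Field Theory exact sequence over $K$ that you invoke (the one from the proof of Lemma~\ref{linvariant1}, with $\Hom(K_\gp^\times\times\cO_{K_{\bar\gp}}^\times,\overline{\Q}_p)$ and $\cO_K[\tfrac{1}{\gp}]^\times$) simply does not apply to $\eta$: there is no way to ``evaluate $\eta$ on the idele attached to $u_\gp$'' within that sequence, because $\eta$ is not a $\overline{\Q}_p$-valued global class over $K$. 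The whole point of step~(1) — realizing $\eta$ inside $\rH^1(H,\overline{\Q}_p)[\psim^\tau]$ — is precisely that one must pass to $\Gamma_H$, where $\psim$ becomes trivial, before applying reciprocity. The paper accordingly uses the CFT sequence \emph{over $H$}, namely $0\to\rH^1(H,\overline{\Q}_p)\to\Hom(H_{v_0}^\times\times\prod_{v_0\ne v\mid p}\cO_{H_v}^\times,\overline{\Q}_p)\to\Hom(\cO_H[\tfrac{1}{v_0}]^\times,\overline{\Q}_p)$, and pairs directly with $y_0\in\cO_H[\tfrac{1}{v_0}]^\times$, not with $u_\gp=\mathrm{N}_{H/K}(y_0)$. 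Your step~(3) then compounds this by conflating the purely local restriction $\res_\gp(\eta)$ (a homomorphism $K_\gp^\times\to\overline{\Q}_p$) with the global expression $\sum_{g\in C}\psim(g)\log_p(g^{-1}(y_0))+\mathscr{S}(\psim)\sum_{g\in C}\psim(g)\log_p(\tau g^{-1}(y_0))$, which sums over \emph{all} places of $H$ above $p$, not just $v_0$.

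Once the reciprocity is set up over $H$, the actual computation also requires a cancellation you do not perform explicitly. The global pairing forces $\ord_{v_0}(y_0)\,(\eta-\eta_\gp)(\Frob_\gp)$ to equal $\sum_{1\ne g\in C}(1-\psim(g))\log_p(g^{-1}(y_0))-\mathscr{S}(\psim)\sum_{g\in C}\psim(g)\log_p(\tau g^{-1}(y_0))$; the $g=1$ term drops out because the coefficient $\psim(1)-1=0$ (equivalently, because $\eta-\eta_\gp$ is unramified at $v_0$, so the unit part of $y_0$ there contributes nothing). Re-adding and subtracting the $g=1$ term splits the first sum into $\sum_{g\in C}\log_p(g^{-1}(y_0))$ minus the full $\psim$-weighted sum; the latter together with the $\mathscr{S}(\psim)$-sum is $-\cL(\psim^\tau)\ord_{v_0}(y_0)$ by \eqref{defn-L-inv}, while the former equals $\log_p(\mathrm{N}_{H/K}(y_0))=-\cL_\gp\ord_{v_0}(y_0)$ by \eqref{defn-L1} and the fact that $p$ splits completely in $H$. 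Your step~(4) asserts the $-\cL_\gp$ term ``is accounted for'' by the $\eta_\gp$-contribution without exhibiting this identity, which is the actual content of the final step. In summary: keep step~(1), discard step~(2) in favour of the CFT sequence over $H$ applied to $y_0$, and make the arithmetic in steps~(3)--(4) explicit as above.
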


\begin{proof} Restricting to $\Gamma_H$ allows us to see $(\eta-\eta_{\gp})$ in 
$\rH^1(H,\overline{\Q}_p)$ and one has  to compute its value at $\Frob_{v_0}$. 
  By the exact sequence from Global Class Field Theory 
\[0 \to \rH^1(H,\overline{\Q}_p) \to \Hom(H_{v_0}^\times \times \prod\limits_{v_0\ne v\mid p} \cO_{H_v}^\times, \overline{\Q}_p)
\to \Hom( \cO_H[\tfrac{1}{v_0}]^\times,  \overline{\Q}_p)\]
and by Proposition~\ref{restinj} and its proof, the restriction of $\eta$ to $(\cO_{H}\otimes \Z_p)^\times$ is given by 
\[\sum_{g \in C} \psim(g) \log_p( g^{-1}\cdot ) + \mathscr{S}(\psim) \sum_{g \in C} \psim(g) \log_p( \tau  g^{-1}\cdot ).\]
Therefore, for   $y_0 \in \cO_H[\tfrac{1}{v_0}]^\times$  such that $\ord_{v_0}(y_0)\ne 0$, one has 
\[\ord_{v_0}(y_0) \res_{v_0}(\eta-\eta_{\gp})(\Frob_{\gp})=\sum_{1\ne g \in C} (1- \psim(g)) \log_p( g^{-1}(y_0))- \mathscr{S}(\psim) \sum_{g \in C} \psim(g) \log_p( \tau  g^{-1}(y_0)).\]
The desired formula then follows from the definition of $\cL(\psim^\tau)$ in \eqref{defn-L-inv}, using the computation 
\[\sum_{g \in C}  \log_p( g^{-1}(y_0))= \log_p( \mathrm{N}_{H/K}(y_0))= -\cL_{\gp} \ord_{v_0}(y_0), \]
the last equality  following from the fact that $p$ splits completely in $H$.\end{proof}

\subsection{An interlude on the Six Exponentials Theorem}

We first recall some standard results and conjectures in  $p$-adic transcendence. 
As in the previous section  $\log_p: \overline{\Q}{}_p^{\times}\to \overline{\Q}_p$ is the standard $p$-adic logarithm sending $p$ to $0$ and we use the same notation for its pre-composition with $\iota_p:\overline{\Q}^\times \hookrightarrow \overline{\Q}_p^\times$. 

Denote by $\mathcal{L}\subset \overline{\Q}_p$ the $\overline{\Q}$-vector space generated by $1$ and  elements of 
$\log_p(\overline{\Q}{}^\times)$. 

\begin{thm}[Baker--Brumer \cite{brumer}]\label{thm:BB} 
If $\lambda_1, ....., \lambda_n \in \mathcal{L}$ are linearly independent over $\Q$, then they are linearly independent over $\overline{\Q}$.
\end{thm}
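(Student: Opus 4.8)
The content of Theorem~\ref{thm:BB} is Brumer's $p$-adic analogue of Baker's theorem on linear forms in logarithms, so the plan is to reduce to that statement and then outline the transcendence argument behind it. The reduction is linear algebra: if $\lambda_1,\dots,\lambda_n\in\mathcal{L}$ are linearly independent over $\Q$, pick finitely many $\alpha_1,\dots,\alpha_m\in\overline{\Q}^\times$ such that $1,\log_p\alpha_1,\dots,\log_p\alpha_m$ are linearly independent over $\Q$ and their $\Q$-span contains every $\lambda_i$; writing $\lambda_i=a_{i0}+\sum_k a_{ik}\log_p\alpha_k$ with $a_{ik}\in\Q$, a relation $\sum_i c_i\lambda_i=0$ with $c_i\in\overline{\Q}$ would, granting that $1,\log_p\alpha_1,\dots,\log_p\alpha_m$ are linearly independent over $\overline{\Q}$, force $\sum_i c_i a_{ik}=0$ for all $k$; the rational matrix $(a_{ik})$ would then have a non-trivial kernel over $\overline{\Q}$, hence also over $\Q$, contradicting the $\Q$-independence of the $\lambda_i$. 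So it suffices to prove that if $\alpha_1,\dots,\alpha_m\in\overline{\Q}^\times$ have $\log_p\alpha_1,\dots,\log_p\alpha_m$ linearly independent over $\Q$, then $1,\log_p\alpha_1,\dots,\log_p\alpha_m$ are linearly independent over $\overline{\Q}$.

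I would prove this last assertion by contradiction, along the lines of Baker's method in its $p$-adic form. Suppose $\beta_0+\sum_{j=1}^m\beta_j\log_p\alpha_j=0$ with $\beta_j\in\overline{\Q}$ not all zero; the $\Q$-independence of the $\log_p\alpha_j$ forces $\beta_j\ne0$ for some $j\geq1$, so after renumbering and rescaling we may assume $\log_p\alpha_m=\beta_0+\sum_{j<m}\beta_j\log_p\alpha_j$. Replacing each $\alpha_i$ by a fixed high $p$-power --- which only rescales the logarithms, preserving the relation and both independence properties --- we may assume the $\log_p\alpha_i$ are $p$-adically small enough that $z\mapsto\alpha_i^z:=\exp_p(z\log_p\alpha_i)$ is analytic, and an isometry, on a large disc. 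Fix a number field $L$ containing all the $\alpha_i$ and $\beta_j$, with the place $v\mid p$ induced by $\iota_p$.

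The core is a construction followed by an extrapolation loop. By Siegel's lemma over $\cO_L$, for suitable large parameters one builds a non-zero polynomial $P$ with coefficients $p(\lambda_0,\dots,\lambda_m)\in\cO_L$ of controlled height such that $\Phi(z)=\sum p(\lambda_0,\dots,\lambda_m)\,z^{\lambda_0}\alpha_1^{\lambda_1 z}\cdots\alpha_m^{\lambda_m z}$ vanishes to high order at $z=1,2,\dots,h$; the relation among the $\log_p\alpha_j$ makes $\alpha_m^z$ a monomial in $z\mapsto\exp_p(\beta_0 z)$ and the $\alpha_j^z$ with $j<m$, which both lets the number of free coefficients of $P$ outrun the vanishing conditions and keeps the relevant ``derivative'' values of $\Phi$ at the integers algebraic, in $L$, and of controlled height. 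One then applies a $p$-adic Schwarz (maximum modulus) estimate: since $\Phi$ is analytic on the disc and vanishes to high order at many of its points, these values are $v$-adically far smaller than their height permits unless they vanish, so the product formula (Liouville's inequality) forces them to vanish on a range of integers much larger than $\{1,\dots,h\}$. Iterating this a bounded (logarithmic) number of times, each round trading order of vanishing for a longer range, one reaches a stage where $\Phi$ and enough of its derivatives vanish at $z=1,\dots,N$ for $N$ enormous.

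To conclude: as $\log_p\alpha_1,\dots,\log_p\alpha_m$ are linearly independent over $\Q$, distinct multi-indices $(\lambda_1,\dots,\lambda_m)$ yield distinct ``frequencies'' $\mu=\sum_j\lambda_j\log_p\alpha_j$, so the functions $z^{\lambda_0}\exp_p(\mu z)$ occurring in $\Phi$ are linearly independent; a non-zero analytic function on the disc has only boundedly many zeros there (a Weierstrass-degree bound in the parameters), so once $N$ exceeds that bound the accumulated vanishing forces $\Phi\equiv0$, hence $P=0$, contradicting the construction. The delicate point, and the main obstacle, is precisely the extrapolation/amplification loop: one must balance the Siegel parameters, the radius of the disc, the orders of vanishing, and the degree and height bounds so that ``small $v$-adic value $\Rightarrow$ zero'' holds at every round. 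This quantitative bookkeeping, with the $p$-adic Schwarz lemma replacing its complex counterpart, is exactly the content of Brumer's argument \cite{brumer}.
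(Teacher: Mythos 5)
The paper states Theorem~\ref{thm:BB} without proof, as a citation of Brumer's article \cite{brumer}, so there is no argument in the paper for you to match against; your proposal must be judged on its own merits, and its first reduction contains a real gap. You claim one can choose $\alpha_1,\dots,\alpha_m\in\overline{\Q}^\times$ with $1,\log_p\alpha_1,\dots,\log_p\alpha_m$ linearly independent over $\Q$ whose $\Q$-span contains all the $\lambda_i$; this requires each $\lambda_i$ to be a $\Q$-linear combination of $1$ and $p$-adic logarithms of algebraic numbers. However, the paper defines $\mathcal{L}$ as the $\overline{\Q}$-vector space spanned by $1$ and $\log_p(\overline{\Q}^\times)$, so the coefficients may be algebraic irrational, and an element such as $\sqrt{2}\,\log_p 3$ lies in $\mathcal{L}$ but in no $\Q$-span of algebraic logarithms. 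Worse, for the $\overline{\Q}$-span the statement as literally written is false: take $\lambda_1=1+\log_p 2$ and $\lambda_2=\sqrt{2}\,\lambda_1$. Both lie in $\mathcal{L}$, they are linearly independent over $\Q$ (since $\lambda_1\ne 0$ by Brumer and $\sqrt{2}\notin\Q$), yet $\sqrt{2}\,\lambda_1-\lambda_2=0$ is a nontrivial $\overline{\Q}$-linear relation. So the step ``rational matrix has equal rank over $\Q$ and over $\overline{\Q}$'' cannot be invoked, because no such rational matrix exists in general.

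What Brumer actually proves, and what the paper uses in its applications (the injectivity of \eqref{eq:baker-brumer}, and the invocation of Theorem~\ref{thm:BB} at the end of the proof of Proposition~\ref{6expocusp}, both of which concern $p$-adic logarithms of genuine algebraic units combined with rational integer coefficients), is the following: if $\alpha_1,\dots,\alpha_m\in\overline{\Q}^\times$ are such that $\log_p\alpha_1,\dots,\log_p\alpha_m$ are linearly independent over $\Q$, then $1,\log_p\alpha_1,\dots,\log_p\alpha_m$ are linearly independent over $\overline{\Q}$. Equivalently, the stated theorem becomes correct if $\mathcal{L}$ is taken to be the $\Q$-vector space generated by $1$ and $\log_p(\overline{\Q}^\times)$. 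For that version your reduction is exactly right, and the subsequent outline of Baker's method in the $p$-adic setting --- auxiliary function built via Siegel's lemma, $p$-adic Schwarz (maximum modulus) estimate, Liouville inequality, extrapolation, zero estimate --- is a fair description of Brumer's argument, though, as you yourself flag, the quantitative balance that makes each extrapolation round close is the entire substance of \cite{brumer} and is not supplied here.
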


\begin{conj}[Strong Four Exponentials Conjecture]\label{conj:4exp} 
A matrix $\left(\begin{smallmatrix} \lambda_{11} & \lambda_{12} \\ \lambda_{21} & \lambda_{22} \end{smallmatrix} \right)$ 
with entries in $\mathcal{L}$  whose  lines and  whose columns are linearly independent over $\Q$ has rank $2$. 
\end{conj}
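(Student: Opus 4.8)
\emph{A remark before starting: the statement is the Strong Four Exponentials Conjecture, so no proof is known.} What follows is the strategy one \emph{would} pursue, namely the classical auxiliary‑function (Gel'fond--Schneider--Schneider--Lang) method in its non‑archimedean form, the very engine behind the Six Exponentials Theorem and Roy's Strong Six Exponentials Theorem, together with the precise point where it fails. The plan is to argue by contradiction: suppose the matrix $\left(\begin{smallmatrix}\lambda_{11}&\lambda_{12}\\\lambda_{21}&\lambda_{22}\end{smallmatrix}\right)$ has rank $\le 1$, so that $\lambda_{11}\lambda_{22}=\lambda_{12}\lambda_{21}$ while no row and no column is zero and neither the rows nor the columns are $\Q$‑proportional. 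Writing each $\lambda_{ij}$ as a $\overline{\Q}$‑linear combination of $1$ and finitely many $\log_p(\alpha)$ with $\alpha\in\overline{\Q}^\times$, one would first normalise the data: by Theorem~\ref{thm:BB} one may assume the logarithms occurring are $\Q$‑linearly independent, hence $\overline{\Q}$‑linearly independent, which cuts down the number of ``unknown'' logarithms and puts the relation $\lambda_{11}\lambda_{22}=\lambda_{12}\lambda_{21}$ into a shape suited to the construction below.

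Next one builds an auxiliary function. Fixing a large integer parameter and suitable sub‑parameters, one constructs a non‑zero polynomial $P(T_1,T_2)\in\overline{\Q}[T_1,T_2]$ of controlled bidegree, whose coefficients have controlled $p$‑adic size, such that the $p$‑adic analytic functions of exponential type attached to the first column $(\lambda_{11},\lambda_{21})$ — and, symmetrically, to the second column — vanish to high order along an arithmetic progression in $\Z_p$; Siegel's Lemma produces such a $P$ provided the number of vanishing conditions is strictly smaller than the number of coefficients. A zero estimate, or equivalently a determinant/interpolation argument in the spirit of Roy, then shows that one of the quantities forced to be $p$‑adically small is in fact a \emph{non‑zero} algebraic number of controlled height, and a Liouville‑type size inequality yields the contradiction. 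Under the rank‑one hypothesis the two exponential data are tied together by $\lambda_{11}\lambda_{22}=\lambda_{12}\lambda_{21}$, which is exactly the coincidence that lets one super‑impose the vanishing conditions coming from the two columns.

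The hard part — and the reason the statement is still a conjecture — is the bookkeeping between ``number of vanishing conditions'' and ``number of free coefficients''. For a rectangular grid of $pq$ exponentials organised as a $p\times q$ block, the method closes precisely when $(p-1)(q-1)>1$: this holds for $\{p,q\}=\{2,3\}$, which is why the (Strong) Six Exponentials Theorem is a theorem, but in the present $2\times 2$ situation one obtains only $(p-1)(q-1)=1$, i.e.\ equality rather than the strict inequality needed to beat the zero estimate, and no device is known to recover the missing margin. Consequently this conjecture is invoked in the paper only conditionally (it would upgrade the conclusion of Proposition~\ref{6expocusp} to $\cLm(\psim)\cdot\cLm(\psim^\tau)\ne 0$), whereas the unconditional transcendence input actually used is Roy's Strong Six Exponentials Theorem, applied to a $2\times 3$ matrix of $p$‑adic logarithms of units, where the count $(2-1)(3-1)=2>1$ does close.
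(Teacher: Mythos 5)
You correctly recognize that this is the Strong Four Exponentials Conjecture, which has no known proof; the paper states it as a conjecture (without proof, and without citing any source claiming one) and invokes it only conditionally, in the final sentence of Proposition~\ref{6expocusp} and in the discussion after Corollary~\ref{simple-zero}, to pass from ``at least one of $\cLm(\psim),\cLm(\psim^\tau)$ is non-zero'' to ``both are non-zero.'' Your account of why the Gel'fond--Schneider auxiliary-function machinery closes for a $2\times 3$ matrix (the Strong Six Exponentials Theorem, which the paper does use unconditionally via Roy's theorem) but falls just short for $2\times 2$ --- the zero-estimate versus Siegel-lemma count gives equality rather than the needed strict inequality --- is the standard and accurate explanation of the obstruction; there is nothing to compare here since the paper offers no argument for the conjecture itself.
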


We will need the following Strong version of the Six Exponentials Theorem, proved by Roy \cite{Roy}
(see also  \cite{Wal}). 

\begin{thm}[Strong Six Exponentials Theorem]\label{thm:6exp} 
A matrix $L=\left(\begin{smallmatrix} \lambda_{11} & \lambda_{12} & \lambda_{13}  \\ \lambda_{21} & \lambda_{22} & \lambda_{23} \end{smallmatrix} \right)$ 
with entries in $\mathcal{L}$  whose  lines and whose columns are linearly independent over $\Q$ has rank $2$. 
\end{thm}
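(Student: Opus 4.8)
This is a theorem of Roy \cite{Roy} in $p$-adic transcendence (see also \cite{Wal}); rather than reproduce it I will indicate the argument I would run, which follows the pattern of the complex case.

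\emph{Reduction.} I would argue by contradiction, assuming $\rank(L)\leqslant 1$. The $\Q$-linear independence of the columns excludes $L=0$, so $\rank(L)=1$ and I may write $\lambda_{ij}=u_iv_j$ for $p$-adic numbers $u_1,u_2$ and $v_1,v_2,v_3$, all nonzero (a vanishing $u_i$, resp.\ $v_j$, would make a row, resp.\ a column, vanish, contradicting the $\Q$-independence hypotheses). Now the two rows $u_i\cdot(v_1,v_2,v_3)$ are $\Q$-linearly dependent if and only if $au_1+bu_2=0$ for some $(a,b)\in\Q^2\setminus\{0\}$, and the three columns $v_j\cdot(u_1,u_2)$ are $\Q$-linearly dependent if and only if $\sum_jc_jv_j=0$ for some $(c_1,c_2,c_3)\in\Q^3\setminus\{0\}$. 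So the hypotheses say precisely that $u_1,u_2$ are $\Q$-linearly independent and $v_1,v_2,v_3$ are $\Q$-linearly independent, and the theorem reduces to: there exist no such $u_i$ and $v_j$ with all six products $u_iv_j$ lying in $\mathcal{L}$. Finally, each $u_iv_j$ is a $\overline{\Q}$-linear combination of $1$ and of finitely many $\log_p(\alpha_t)$, $\alpha_t\in\overline{\Q}^\times$; using Theorem~\ref{thm:BB} I would replace that spanning set by a $\Q$-linearly independent --- hence, again by Theorem~\ref{thm:BB}, $\overline{\Q}$-linearly independent --- one, and fix a number field $E$ containing the $\alpha_t$ and all the $\overline{\Q}$-coefficients that appear.

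\emph{The transcendence estimate.} This is the heart, and I would run Laurent's method of interpolation determinants in the $p$-adic form refined by Roy. For a large integer $T$, one forms a square matrix of size $N\asymp T^2$ whose columns are indexed by $\asymp T^2$ points of a lattice built from $u_1,u_2$ and whose rows are indexed by monomials of bounded degree in $u_1,u_2$ and in the $p$-adic exponentials of the linear forms in the $u_iv_j$ that occur (suitably scaled so as to lie in the domain of convergence of $\exp_p$); by the reduction step every entry of the resulting determinant $\Delta$ lies in $E$. One then confronts two bounds. On one hand $\Delta\neq 0$: the exponential monomials entering the matrix are $\Q$-linearly independent as $p$-adic analytic functions precisely because $u_1,u_2$ and $v_1,v_2,v_3$ are each $\Q$-linearly independent, and a zero estimate upgrades this to non-vanishing of $\Delta$; being a nonzero element of $E$, $\Delta$ then satisfies a product-formula lower bound for $|\Delta|_p$ in terms of its house. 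On the other hand, a $p$-adic Schwarz-lemma argument --- the columns of $\Delta$ being the values at $\asymp T^2$ lattice points of analytic data vanishing to high order --- forces $|\Delta|_p$ to be extremely small. For $T$ large these two bounds are incompatible: the gain in the Schwarz estimate (reflecting the three ``$v$''-directions) outweighs the arithmetic cost (reflecting the two ``$u$'' plus three ``$v$''-directions) exactly because $2\cdot 3>2+3$, which is the numerology underlying every Six Exponentials statement, and exactly what degenerates for the $2\times 2$ matrix of Conjecture~\ref{conj:4exp}. Because $\Delta$ is assembled directly from the $\lambda_{ij}$, and not from exponentials of prescribed algebraic numbers, the contradiction delivers the \emph{strong} conclusion, on $\rank(L)$ itself.

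\emph{Main obstacle.} The reduction is routine linear algebra plus Baker--Brumer; the whole weight lies in the transcendence estimate, i.e.\ in choosing the parameters so that the zero estimate certifies $\Delta\neq 0$ while the $p$-adic Schwarz bound prevails --- this is exactly the content of Roy's work. An alternative would be to deduce the statement from the $p$-adic analytic subgroup theorem (due to W\"ustholz) applied to a suitable product of copies of $\mathbb{G}_a$ and $\mathbb{G}_m$, but that machinery is of comparable depth.
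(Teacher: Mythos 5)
The paper gives no proof of this theorem: it is stated as an external result and attributed to Roy \cite{Roy} (with \cite{Wal} as a secondary reference), so there is no internal argument to compare against. Your reduction of the matrix statement to the bilinear form $\lambda_{ij}=u_iv_j$ (with $u_1,u_2$ and $v_1,v_2,v_3$ each $\Q$-linearly independent) is correct and is indeed the standard equivalent formulation of the strong six exponentials statement, and your sketch of Laurent--Roy interpolation determinants together with a $p$-adic zero estimate and Schwarz lemma is a fair outline of the actual method; but, as you yourself flag, this is a pointer to Roy's work rather than a self-contained proof, which is exactly the level at which the paper invokes the result.
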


\subsection{Non-vanishing of \texorpdfstring{$\cL$}{L}-invariants}

The following $\cL$-invariant will play a prominent role in the paper, as will be intimately related to the local geometry of the eigencurve at $f$. 

\begin{defn}\label{d:new-L-inv}
The anti-cyclotomic $\cL$-invariant of $\psim$ is  $\cLm(\psim) = \cL(\psim) - 2\cL_{\gp}$.
\end{defn}

\begin{rem} One can give an intrinsic definition of $\cLm(\psim)$. 
 Consider the `anti-cyclotomic' basis $\{\ord_{\gp}, \eta_{\gp}- \eta_{\bar\gp} \}$ of $\Hom(\Gamma_{K_\gp},\overline\Q_p)$.
 According to Lemma~\ref{linvariant1}  and Proposition~\ref{linvariant2} one has 
\[\res_{\gp}(\eta)=(\cL(\psim^\tau) - \cL_{\gp})\cdot \ord_{\gp} + \eta_{\gp}= \cLm(\psim^\tau) \cdot \ord_{\gp} +  (\eta_{\gp}-\eta_{\bar\gp}),\]
 hence, analogously to \cite[\S1.3]{DDP},  the anti-cyclotomic $\cL$-invariant  $\cLm(\psim^\tau)$ is (the opposite of) the slope of the line generated by $\eta$ is this local basis. 
 \end{rem}

\begin{prop}\label{6expocusp}
 At least one amongst $\cLm(\psim)$  and $\cLm(\psim^\tau)$ is non-zero.
 If either $\psim$ is quadratic, or the Four Exponentials Conjecture~\ref{conj:4exp} holds, then both  are non-zero. 
\end{prop}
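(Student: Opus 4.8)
The plan is to reduce both assertions to the $p$-adic transcendence results stated above (the Baker--Brumer Theorem~\ref{thm:BB}, the Strong Six Exponentials Theorem~\ref{thm:6exp}, and, for the stronger conclusion, the Four Exponentials Conjecture~\ref{conj:4exp}) applied to a suitable matrix of elements of $\mathcal{L}$. First I would unwind the definitions: by Definition~\ref{d:new-L-inv} one has $\cLm(\psim)=\cL(\psim)-2\cL_{\gp}$ and $\cLm(\psim^\tau)=\cL(\psim^\tau)-2\cL_{\gp}$, and by \eqref{defn-L-inv}, \eqref{defn-L1} and Lemma~\ref{l:slope} (using $\mathscr{S}(\psim^\tau)=\mathscr{S}(\psim)^{-1}$) these are explicit $\overline{\Q}$-linear combinations of the $p$-adic logarithms $\log_p(g^{-1}(y_0))$ and $\log_p(\mathrm{N}_{H/K}(y_0))=\log_p(u_\gp)$, with coefficients involving $1$ and $\mathscr{S}(\psim)$, divided by $\ord_{v_0}(y_0)$. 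Thus each $\cLm$ is, up to the nonzero rational $\ord_{v_0}(y_0)$, an element of $\mathcal{L}+\mathscr{S}(\psim)\cdot\mathcal{L}$.

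The quadratic case is immediate and I would dispose of it first: when $\psim$ is quadratic, $\mathscr{S}(\psim)=-1$ by Lemma~\ref{l:slope}, so all the quantities involved lie in $\mathcal{L}$, and one must show $\cL(\psim)\ne 2\cL_{\gp}$ — equivalently a certain nontrivial $\Q$-linear combination of logarithms of the multiplicatively independent $\gp$-units $g^{-1}(y_0)$, $g\in C$, is nonzero. Multiplicative independence (coming from the $S$-unit theorem, with $S=\{v\mid\gp\}$, and the fact that $\cO_K[\tfrac1\gp]^\times$ has rank one while $\cO_H[\tfrac1\gp]^\times$ has larger rank, as recorded in the exact sequences of \S\ref{L-invariantdefn}) together with Baker--Brumer forces linear independence over $\overline{\Q}$, hence over $\Q$ with whatever algebraic coefficients appear; the combination is nontrivial because the coefficient of $\log_p(y_0)=\log_p(g^{-1}(y_0))|_{g=1}$ does not cancel. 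The same Baker--Brumer argument shows in fact that \emph{both} $\cLm(\psim)$ and $\cLm(\psim^\tau)$ are nonzero in this case.

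For the general (non-quadratic) case, $\mathscr{S}(\psim)\notin\overline{\Q}$ by Lemma~\ref{l:slope}, so $\cLm(\psim)$ and $\cLm(\psim^\tau)$ are $\overline{\Q}$-linear forms in $1$ and $\mathscr{S}(\psim)$, say $\cLm(\psim)=a+b\,\mathscr{S}(\psim)$ and $\cLm(\psim^\tau)=c+d\,\mathscr{S}(\psim)$ with $a,b,c,d\in\mathcal{L}$, where I compute $a,b,c,d$ explicitly from the formulas above — in particular $b$ is proportional to $\sum_{g}\psim(g)\log_p(\tau g^{-1}(y_0))/\ord_{v_0}(y_0)$ and $c$ to $\sum_g\psim(g)\log_p(g^{-1}(y_0))/\ord_{v_0}(y_0)-2\cL_\gp$, etc. Now form the $2\times 3$ matrix
\begin{equation*}
L=\begin{pmatrix} 1 & \mathscr{S}(\psim) & 0 \\ a & b\,\mathscr{S}(\psim) & c+d\,\mathscr{S}(\psim)\end{pmatrix}
\end{equation*}
— more precisely I would choose the three columns of a $2\times3$ matrix over $\mathcal{L}$ so that its two lines being $\Q$-linearly dependent encodes exactly the simultaneous vanishing $\cLm(\psim)=\cLm(\psim^\tau)=0$, and so that its columns are $\Q$-linearly independent. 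Since $\mathscr{S}(\psim)\notin\overline{\Q}$, lines that are dependent over $\Q$ would have to be proportional, which after clearing denominators reads $\cLm(\psim)=\cLm(\psim^\tau)=0$; conversely if both vanish one reads off a $\Q$-dependence among the columns but not the lines, contradicting the Strong Six Exponentials Theorem~\ref{thm:6exp} once the column-independence is checked. The column-independence is where the real work sits: it amounts to showing that the relevant logarithms $\log_p(g^{-1}(y_0))$, $\log_p(\tau g^{-1}(y_0))$, $\log_p(u_\gp)$ together with $1$ satisfy no spurious $\Q$-linear relations beyond the norm relation already accounted for — again a consequence of Baker--Brumer combined with the explicit structure of $\cO_H[\tfrac1\gp]^\times$ as a $G$-module (the multiplicity-one statement recalled in \S\ref{subs-slope}).

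\textbf{Main obstacle.} The delicate point is arranging the $2\times3$ matrix so that \emph{both} ``lines $\Q$-independent'' and ``columns $\Q$-independent'' are equivalent to statements one can actually verify — the former to ``not ($\cLm(\psim)=\cLm(\psim^\tau)=0$)'', which is what we want to contradict, and the latter to a genuine multiplicative-independence fact about $\gp$-units in $H$ that follows from Minkowski/Dirichlet and the Baker--Brumer Theorem. Getting a clean contradiction requires Six (not Four) Exponentials because we only have three natural logarithm quantities and two $\cL$-invariants; the Four Exponentials Conjecture, if available, would let one truncate to a $2\times2$ matrix and conclude $\cLm(\psim)\cdot\cLm(\psim^\tau)\ne0$ rather than merely that their product... — that is, that they are \emph{individually} nonzero, which is the stronger statement recorded in the proposition.
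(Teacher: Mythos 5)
Your overall strategy matches the paper's: reduce the first assertion to the Strong Six Exponentials Theorem applied to a suitably constructed $2\times 3$ matrix, handle the quadratic case separately, and derive the stronger conclusion from the Four Exponentials Conjecture by discarding a column. However, there is a genuine gap in the central step, and a flaw in the quadratic case.

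The matrix $L=\left(\begin{smallmatrix} 1 & \mathscr{S}(\psim) & 0 \\ a & b\,\mathscr{S}(\psim) & c+d\,\mathscr{S}(\psim)\end{smallmatrix}\right)$ you write down does \emph{not} have entries in $\mathcal{L}$, so Theorem~\ref{thm:6exp} simply cannot be applied to it. The slope $\mathscr{S}(\psim)=-\log_p(u_{\psim})/\log_p(\tau(u_{\psim}))$ is a \emph{ratio} of two logarithms, not a $\overline{\Q}$-linear combination of $1$ and logarithms; and the entries $b\,\mathscr{S}(\psim)$, $d\,\mathscr{S}(\psim)$ are products of two genuinely transcendental quantities. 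You note that you would ``more precisely choose the three columns over $\mathcal{L}$,'' but that choice is the entire technical content of the proof, and it is not obvious. The move that makes everything land in $\mathcal{L}$ is to \emph{clear the denominator} $\log_p(\tau(u_{\psim}))$ of $\mathscr{S}(\psim)$: the first column becomes $(\log_p(u_{\psim}),\,\log_p(u_{\psim^\tau}))$ and, crucially, the conditions $\cLm(\psim^\tau)=0$ and $\cLm(\psim)=0$ each translate into ``(a certain first-row entry) $=-\mathscr{S}(\psim)\cdot$ (a certain second-row entry),'' so that the simultaneous vanishing of both $\cL$-invariants says exactly that row~$1$ equals $-\mathscr{S}(\psim)$ times row~$2$, i.e., the matrix has rank~$1$. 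Concretely, the paper's matrix is
\[\begin{pmatrix}  \log_p(u_{\psim})  &\log_p(u_{\gp,\psim}) -2 \log_p( \mathrm{N}_{H/K}(y_0))  &\log_p(u_{\bar\gp,\psim})   \\
 \log_p(u_{\psim^\tau}) & \log_p( u_{\bar\gp,\psim^\tau}) & \log_p(u_{\gp,\psim^\tau})-2 \log_p( \mathrm{N}_{H/K}(y_0))
 \end{pmatrix},\]
all of whose entries are $\overline{\Q}$-linear combinations of $p$-adic logarithms of algebraic numbers, hence lie in $\mathcal{L}$. Row-independence over $\Q$ then comes from the first column alone (Lemma~\ref{l:slope}: $\mathscr{S}(\psim)\notin\overline{\Q}$), and column-independence is the Baker--Brumer argument you sketched, carried out with care about which conjugates of $u$ and $y_0$ are actually multiplicatively independent (the norm relations $\mathrm{N}_{H/K}(u)=1$ and $\mathrm{N}_{H/\Q}(y_0)\in p^{\Z}$ eliminate two of them).

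The quadratic case is also not quite right as you propose it. There the slope is $-1$, and after substituting the norm relation $\log_p(\mathrm{N}_{H/\Q}(y_0))=0$ the expression for $\ord_{v_0}(y_0)\,\cLm(\psim^\tau)$ collapses to $2\log_p(g(y_0)/g\tau(y_0))$: the coefficient of $\log_p(y_0)$ that you claim ``does not cancel'' in fact cancels, and the conjugates $g^{-1}(y_0)$, $g\in C$, are \emph{not} multiplicatively independent (they satisfy a norm relation), so invoking multiplicative independence plus Baker--Brumer across the whole set is not a valid step. The correct and simpler argument is that $g(y_0)/g\tau(y_0)$ has nonzero valuation at $g(v_0)$, hence is a non-torsion element of $H^\times$, hence its $\iota_p$-image is a non-torsion element of $\cO_{H,v_0}^\times\simeq\Z_p^\times$ and its $p$-adic logarithm is nonzero; no transcendence input is needed.
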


\begin{proof} Recall that $C=\Gal(H/K)$ and that   $y_0 \in \cO_H[\tfrac{1}{v_0}]^\times$ is  such that $\ord_{v_0}(y_0)\ne 0$.  
One has 
\begin{align*}
 &\ord_{v_0}(y_0)\cLm(\psim^\tau)=  \res_{v_0}(\eta-\eta_{\gp}+\eta_{\bar{\gp}})(y_0)=\\
 &= \!\!\sum_{g \in C-\{1\}} (1-\psim(g)) \log_p( g^{-1}(y_0))- \mathscr{S}(\psim) \sum_{g \in C} \psim(g) \log_p( \tau  g^{-1}(y_0))  - \sum_{g \in C} \log_p( \tau  g^{-1}(y_0))=\\ &=- \log_p(u_{\gp,\psim})- \mathscr{S}(\psim) \log_p( \tau (u_{\gp,\psim}))  + 2 \log_p( \mathrm{N}_{H/K}(y_0)).  \end{align*}
 
Suppose first that $\psim$ is quadratic and denote by $g$ the non-trivial element of $C$ so that 
$G=\Gal(H/\Q)=\{1,g,\tau,g\tau\}$. To see that  
$\ord_{v_0}(y_0) \cLm(\psim^\tau)= 2 \log_p\left(\tfrac{g(y_0)}{g\tau (y_0)}\right)$ does not vanish, it suffices to show  that
$\iota_p\left(\tfrac{g(y_0)}{g\tau (y_0)}\right)\in \cO_{H,v_0}^\times$ is a non-torsion element. This is true as 
$\tfrac{g(y_0)}{g\tau (y_0)}\in H^\times$ has a non-zero valuation at $g(v_0)$.

  Consider next the case $\psim\ne \psim^\tau$. A computation similar to the one above shows that   
\[\ord_{v_0}(y_0)\cLm(\psim)= - \log_p(u_{\gp,\psim^\tau})- \mathscr{S}(\psim^\tau) \log_p( \tau (u_{\gp,\psim^\tau}))  +2 \log_p(  \mathrm{N}_{H/K}(y_0)).\]

It suffices to show that  Roy's Six Exponentials Theorem~\ref{thm:6exp} applies to the matrix 
\[\begin{pmatrix}  \log_p(u_{\psim})  &\log_p(u_{\gp,\psim}) -2 \log_p( \mathrm{N}_{H/K}(y_0))  &\log_p(u_{\bar\gp,\psim})   \\ 
 \log_p(u_{\psim^\tau}) & \log_p( u_{\bar\gp,\psim^\tau}) & \log_p(u_{\gp,\psim^\tau})-2 \log_p( \mathrm{N}_{H/K}(y_0))   
 \end{pmatrix} \]
where $u_{\psim^\tau}= \tau(u_{\psim})$, $ u_{\bar\gp,\psim^\tau} =\tau (u_{\gp,\psim})$ and 
$ u_{\bar\gp,\psim}  =\tau (u_{\gp,\psim^\tau})$.  By looking at the first column entries, Lemma~\ref{l:slope} implies
that the rows are linearly independent over $\Q$. It remains to show that the columns
are linearly independent over $\Q$ as well, for which we will only look at the first row entries. 
Using the facts that $\log_p(\mathrm{N}_{H/K}(u))=0$ and $\log_p(\mathrm{N}_{H/\Q}(y_0))=0$, a linear relation over $\Q$ between those $3$ entries would give integers $a$, $b$ and $c$, not all zero, such that
\[ \sum_{1\ne g \in C} (\psim(g)-1)\left(a\cdot \log_p(g^{-1}(u))+ b \cdot\log_p(g^{-1}(y_0)\right)+
\sum_{g \in C} (c\cdot\psim(g)+b) \log_p(g^{-1}\tau(y_0))=0. \]
The numbers $\{\tau(y_0)\}\cup \{ g(u), g(y_0),g \tau(y_0) |  1\ne g \in C\}$ are (multiplicatively) linearly independent over $\Q$ and are all sent by $\iota_p$ to $\cO_{H,v_0}^\times\simeq\Z_p^\times$, hence their $p$-adic logarithms are linearly independent over $\Q$. This contradicts Baker--Brumer's Theorem~\ref{thm:BB}.  

The last claim follows by applying the  Four Exponentials Conjecture~\ref{conj:4exp} to the minors obtained by removing the second (or the third) column of the above matrix.\end{proof}

\section{Galois deformations}\label{deformation}

Recall from \S\ref{def-eta} the unique cohomology class $[\eta]\in \rH^1(K, \psim)$ whose restriction to $\I_{\gp}$ is given by the $p$-adic logarithm. As $\psim-\mathbf{1}$ is a basis of the coboundaries, 
fixing, once and for all, an element $\gamma_0\in \Gamma_K$  such that $\psim(\gamma_0) \ne 1$, we let $\eta: \Gamma_K \to \overline{\Q}_p$ be the  unique representative of $[\eta]$   such that  $\eta(\gamma_0)=0$. 

We let $\rho=\left( \begin{smallmatrix} \psi& \eta\psi^\tau\\ 0 & \psi^\tau \end{smallmatrix}\right):\Gamma_K \to \GL_2(\overline{\Q}_p)$ in the canonical basis  $(e_1,e_2)$ of  $\overline{\Q}{}_p^2$.  The unique $\Gamma_{K_{\gp}}$-stable  
filtration of  $V_{\rho}=\overline{\Q}_p^2$ with unramified quotient is  given by 
 \begin{align}\label{filtration-rho}
0 \to F_{\rho}=\overline{\Q}{}_p e_1\to V_\rho \to V_\rho/F_{\rho} \to 0.
\end{align}

\subsection{Ordinary deformations of \texorpdfstring{$\rho$}{}}\label{ord-def}

Let $\cC$ be the category of complete Noetherian local $\overline{\Q}_p$-algebras $A$ with 
maximal ideal $\gm_A$ and residue field $\overline{\Q}{}_ p$, where the morphisms are local homomorphisms of 
$\overline{\Q}_p$-algebras.

Consider the functor $\cD^{\univ}_{\rho}$ associating to $A$ in $\cC$ the set of lifts  $\rho_A:\Gamma_K \to \GL_2(A)$ of $\rho$  
modulo strict equivalence and satisfying 
\begin{align}\label{tau-inv}
\tr \rho_A(\tau g \tau)=\tr \rho_A(g) \text{ for any }g \in \Gamma_K.
\end{align}

As $\psim$ and $[\eta]$ are both non-trivial, the centralizer of the image of $\rho$ consists only of scalar matrices, 
hence $\cD^{\univ}_{\rho}$ is representable by a universal deformation ring $\cR^{\univ}_{\rho}$ (see   \cite{Mazur}).

Let $\cD_{\Fil}$ be the functor assigning to $A$ in $\cC$ the set of free  $A$-submodules $F_A\subset A^2$ 
such that $F_A \otimes_{A} \overline{\Q}_p= F_{\rho}$ (such $F_A$ is  necessarily a  direct rank $1$ summand of $A^2$). It is representable by the strict completed local ring $\cR_{\Fil}\simeq \overline{\Q}_p\lsem U\rsem $ of the projective  space $\mathbb{P}^1$ at the point corresponding to the line $F_{\rho} \subset \overline{\Q}{}_p^2$. The universal submodule  $F_{\cR_{\Fil}} \subset \cR_{\Fil}^2$ has basis given by $e_1+U  e_2$.
\begin{defn}\label{cond1}
The functor $\cD^{\ord}_{\rho}$   assigns to $A\in  \cC$ the set of tuples $(\rho_A,F_A)$ such that 
\begin{enumerate}[wide]
\item $\rho_A:\Gamma_K \to \GL_2(A)$ is a continuous representation such that
 $\rho_A \mod \gm_A = \rho$, and  $\tr \rho_A(\tau g \tau)=\tr \rho_A(g)$ for all
 $g \in \Gamma_K$, and 
\item $F_A \subset A^2$ is a free direct factor  over $A$ of rank $1$  which is $\Gamma_{K_{\gp}}$-stable and 
such that $\Gamma_{K_{\gp}}$ acts  on $A^2/ F_A$ by an unramified character, denoted $\chi_A$, 
 \end{enumerate}
 modulo the strict equivalence relation  $[(\rho_A,F_A)]=[(P\rho_AP^{-1},P\cdot F_A)]$ with $P\in 1+M_2(\gm_A)$. 
\end{defn}

One  has $F_A \otimes_{A} \overline{\Q}_p= F_{\rho}$ as the latter is the unique $\Gamma_{K_{\gp}}$-stable line in $\overline{\Q}_p^2$ with  unramified quotient. 
As in \cite[\S1.2]{BDPozzi}, given a representative of the universal deformation of $\rho$ to $\cR^{\univ}_{\rho}$, 
there exists a monic natural transformation $\cD^{\ord}_{\rho} \hookrightarrow \cD^{\univ}_{\rho}\times\cD_{\Fil}$.  Moreover, it has been shown in {\it loc. cit.} that the composition $\cD^{\ord}_{\rho} \hookrightarrow \cD^{\univ}_{\rho}\times\cD_{\Fil} \to \cD^{\univ}_{\rho}$ is again monic  and is independent of the above choice of representative. Therefore  $\cD^{\ord}_{\rho}$ is a sub-functor of $\cD^{\univ}_{\rho}$ defined by a closed condition  guaranteeing its representability. In particular, $(\rho_A,F_A) \in \cD^{\ord}_{\rho} (A)$ is characterised uniquely by $\rho_A$ alone, {\it i.e.} when the ordinary filtration exists, then it is unique. For this reason, and because  $\chi_A$ plays a major role in this paper, we will  denote the  point $(\rho_A,F_A)$ of $\cD^{\ord}_{\rho} (A)$ by $(\rho_A,\chi_A)$.  

\begin{lemma}\label{extdet}
\begin{enumerate}[wide]
 
\item The functor $\cD^{\ord}_{\rho}$ is representable by a quotient $\cR^{\ord}_{\rho}$ of $\cR^{\univ}_{\rho}$ and a universal 
ordinary deformation  $\rho_{\cR}:\Gamma_K \to \GL_2(\cR^{\ord}_\rho)$.

\item The determinant of $\rho_{\cR}$ extends in two different ways to a character   $\Gamma_{\Q} \to (\cR^{\ord}_\rho)^{\times}$.
\end{enumerate}
\end{lemma}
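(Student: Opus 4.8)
The plan is to dispatch part (i) essentially by reference to the discussion preceding the statement, and then to prove part (ii) by showing that the determinant is $\tau$-invariant and invoking an elementary lemma on extending characters from $\Gamma_K$ to $\Gamma_\Q$.

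For (i) I would observe that the composite monic natural transformation $\cD^{\ord}_{\rho}\hookrightarrow\cD^{\univ}_{\rho}\times\cD_{\Fil}\to\cD^{\univ}_{\rho}$ recalled above exhibits $\cD^{\ord}_{\rho}$ as a subfunctor of the representable functor $\cD^{\univ}_{\rho}$ cut out by a closed condition, following \cite[\S1.2]{BDPozzi}; since a closed subfunctor of a representable functor is representable (Mazur--Schlessinger formalism), $\cD^{\ord}_{\rho}$ is represented by the corresponding quotient $\cR^{\ord}_{\rho}$ of $\cR^{\univ}_{\rho}$, and $\rho_{\cR}$ is the pushforward along $\cR^{\univ}_{\rho}\twoheadrightarrow\cR^{\ord}_{\rho}$ of the universal deformation of $\rho$.

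For (ii), the first step will be to check that $\delta:=\det\rho_{\cR}\colon\Gamma_K\to(\cR^{\ord}_{\rho})^{\times}$ satisfies $\delta(\tau g\tau)=\delta(g)$ for all $g\in\Gamma_K$. Since $2$ is invertible in the $\overline{\Q}_p$-algebra $\cR^{\ord}_{\rho}$, one has $2\det\rho_{\cR}(g)=\tr\rho_{\cR}(g)^{2}-\tr\rho_{\cR}(g^{2})$; applying this to $\tau g\tau$, using $(\tau g\tau)^{2}=\tau g^{2}\tau$, and invoking condition \eqref{tau-inv} for both $g$ and $g^{2}$ gives the claim (which modulo the maximal ideal is just the $\tau$-invariance of $\det\rho=\psi\psi^{\tau}$). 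The second step is the elementary fact that a continuous $\tau$-invariant character $\delta$ of $\Gamma_K$ with values in $\cR^{\times}$, for $\cR\in\cC$, extends to $\Gamma_\Q$ in exactly two ways: taking complex conjugation $\tau\in\Gamma_\Q$ as representative of the nontrivial element of $\Gamma_{K/\Q}$, so that $\tau^{2}=1$ and every element of $\Gamma_\Q$ is uniquely $g$ or $g\tau$ with $g\in\Gamma_K$, any extension $\tilde\delta$ has $\tilde\delta(\tau)^{2}=1$, hence $\tilde\delta(\tau)\in\{\pm1\}$ (the only units of the local ring $\cR$, with residue characteristic $0$, squaring to $1$); and conversely, for each $c\in\{\pm1\}$ the assignment $\tilde\delta(g)=\delta(g)$, $\tilde\delta(g\tau)=c\,\delta(g)$ is a homomorphism $\Gamma_\Q\to\cR^{\times}$, the only non-formal check being $\tilde\delta\big((g_1\tau)(g_2\tau)\big)=\delta\big(g_1(\tau g_2\tau)\big)=\delta(g_1)\delta(g_2)=\tilde\delta(g_1\tau)\tilde\delta(g_2\tau)$, which uses precisely the $\tau$-invariance of $\delta$. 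The two extensions differ by the inflation of $\epsilon_K\ne\mathbf{1}$, hence are distinct, and since any two differ by an element of $\Hom(\Gamma_{K/\Q},\cR^{\times})=\{\mathbf{1},\epsilon_K\}$ they are all of them.

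I do not anticipate a real obstacle here: (i) is bookkeeping on top of the representability input imported from \cite{BDPozzi}, and the substance of (ii) is only the observation that the trace-symmetry \eqref{tau-inv} forces $\det\rho_{\cR}$ to descend to $\Gamma_\Q$. The one point requiring a little care is the exact count of extensions and the fact that the two are genuinely different, which reduces to $\Hom(\Gamma_{K/\Q},\cR^{\times})$ having order two.
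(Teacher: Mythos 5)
Your proof is correct and takes essentially the same route as the paper's: part (i) defers to the representability argument of \cite[\S1.2]{BDPozzi}, and part (ii) derives $\tau$-invariance of $\det\rho_{\cR}$ from the identity $2\det\rho_{\cR}(g)=\tr\rho_{\cR}(g)^{2}-\tr\rho_{\cR}(g^{2})$ and condition \eqref{tau-inv}, then extends by sending $\tau$ to $\pm1$. You just spell out the two-extension count in more detail than the paper, which also records an alternative, section-free argument (using the uniqueness of the $\Z_p$-extension of $K$ on which $\tau$ acts trivially) that you do not need.
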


\begin{proof} (i) The argument is exactly the same as in \cite[\S1.2]{BDPozzi}.

(ii) As $2 \det(\rho_{\cR}(g))=\tr(\rho_{\cR}(g))^{2} - \tr(\rho_{\cR}(g^{2}))$, we deduce that 
$\det(\rho_{\cR})$ is a $\tau$-invariant character of $\Gamma_K$,  hence can be extended to a character of 
$\Gamma_{\Q}$ by sending the order $2$ element $\tau$ either to $1$ or to $-1$. 
Alternatively, one can observe that the character $\det(\rho_{\cR})\det(\rho)^{-1}:\Gamma_K\to 1+\gm_{\cR^{\ord}_\rho}$
extends uniquely,  because $\cR^{\ord}_\rho$ is complete and $K$ has a unique $\Z_p$-extension on which $\tau$ acts trivially (this argument is more general as it does not use the existence of a section 
of  $\Gamma_{\Q}\to\Gamma_{K/\Q}$ given by the complex conjugation $\tau$).\end{proof}

As the two extensions in Lemma~\ref{extdet}(ii)  differ by the quadratic character $\epsilon_K$ of $\Gamma_{K/\Q}$, exactly one of them is odd, and we denote it $\det(\rho_{\cR}): \Gamma_{\Q} \to \cR^{\ord}_{\rho}$. It reduces modulo the maximal ideal to 
$\det( \Ind^\Q_K \psi)= (\psi\circ\mathrm{Ver})\cdot \epsilon_K : \Gamma_{\Q} \to \overline{\Q}{}_p^\times$, 
where $\mathrm{Ver}: \Gamma_{\Q}^\mathrm{ab}\to \Gamma_{K}^\mathrm{ab}$ denotes the transfer homomorphism. 
As $\varLambda\in \cC$ is the universal deformation ring of that character  (see \cite[\S6]{bellaiche-dimitrov}) the natural transformation 
$\rho_A\mapsto \det(\rho_A)$, endows $\cR^{\ord}_{\rho}$ with a $\varLambda$-algebra structure.

\subsection{Reducible and CM deformations of \texorpdfstring{$\rho$}{}}\label{red-def}

\begin{defn}\label{defn:red}\

 \begin{enumerate}[wide]

\item Let  $\cD^{\red}_{\rho}$ be the subfunctor of $\cD^{\ord}_{\rho}$ consisting of $\Gamma_K $-reducible deformations. 

\item Let $\cD^K_{\rho}$ be the subfunctor of $\cD^{\ord}_{\rho}$ consisting of deformations of $\rho$ having a free  $\Gamma_K $-quotient of rank $1$  which is unramified at $\gp$. 

\end{enumerate}
\end{defn}

\begin{lemma}\label{lemma-red}\ \begin{enumerate}[wide]

\item The functor $\cD^{\red}_{\rho}$ is representable by a quotient $\cR^{\red}_{\rho}$ of $\cR^{\ord}_{\rho}$. 
\item The functor $\cD^K_{\rho}$ is representable by a quotient $\cR^K_{\rho}=\cR^{\ord}_{\rho}/(c(g); g\in \Gamma_K )$, where $\rho_{\cR}(g)=\left(\begin{smallmatrix} a(g)&b(g)\\ c(g)& d(g)\end{smallmatrix}\right)$ in an ordinary basis. \end{enumerate}
\end{lemma}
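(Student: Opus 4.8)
The plan is to prove each part of Lemma~\ref{lemma-red} by identifying the natural closed condition cutting out the respective subfunctor, then invoking the general representability criterion already in force for $\cD^{\ord}_{\rho}$.

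For part (i), the key observation is that reducibility of a deformation $\rho_A$ of an \emph{already reducible} representation $\rho$ (which is upper-triangular in the basis $(e_1,e_2)$) is detected by the vanishing of the lower-left matrix entry after conjugation into upper-triangular form; more intrinsically, since the residual representation $\rho$ has $F_\rho=\overline{\Q}_p e_1$ as an invariant line, a deformation $\rho_A$ is $\Gamma_K$-reducible if and only if $A^2$ admits a $\Gamma_K$-stable free rank-one direct summand reducing to $F_\rho$. I would first record that any such summand, if it exists, is unique — because $\psim$ and $\psim^\tau$ are distinct characters and $[\eta]\neq 0$, so the residual representation is non-split and has $F_\rho$ as its unique invariant line, and uniqueness lifts along the pro-nilpotent extension $A\twoheadrightarrow\overline{\Q}_p$ by the usual lifting-the-idempotent / Hensel argument. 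Consequently the condition ``$\rho_A$ is $\Gamma_K$-reducible'' is equivalent to ``the ordinary filtration $F_A$ is $\Gamma_K$-stable (not merely $\Gamma_{K_\gp}$-stable)'', which one checks is a closed condition: writing the universal ordinary deformation $\rho_{\cR}$ in the ordinary basis as $\left(\begin{smallmatrix} a&b\\ c&d\end{smallmatrix}\right)$, reducibility is exactly the vanishing of the ideal generated by $\{c(g): g\in\Gamma_K\}$, and the quotient $\cR^{\red}_\rho=\cR^{\ord}_\rho/(c(g):g\in\Gamma_K)$ pro-represents $\cD^{\red}_\rho$. (One should note that $c$ is not a homomorphism, so the ideal is the one generated by all its values; but it is a $1$-cocycle with values in $b^{-1}\cdot(\text{scalars})$ modulo lower-order terms, so finitely many $g$ suffice by Noetherianity, which is the standard reducibility-ideal formalism of Bellaïche--Chenevier.)

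For part (ii), I would argue in parallel. A deformation $(\rho_A,F_A)\in\cD^{\ord}_\rho(A)$ lies in $\cD^K_\rho(A)$ iff $A^2$ has a $\Gamma_K$-stable free rank-one \emph{quotient} that is $\gp$-unramified. Since $A^2/F_A$ is already the $\gp$-unramified quotient as a $\Gamma_{K_\gp}$-module, and since — again by uniqueness of the residual invariant line and its complementary quotient line, together with the pro-nilpotence of $\gm_A$ — any $\Gamma_K$-stable rank-one quotient reducing correctly must coincide with $A^2/F_A$, the condition becomes: the $\Gamma_{K_\gp}$-stable quotient $A^2/F_A$ is in fact $\Gamma_K$-stable, equivalently $F_A$ is a $\Gamma_K$-submodule — wait, that is not quite the same as reducibility because one wants the \emph{quotient} stable, which for rank-one summands of a rank-two module is the same as the sub being stable. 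The distinction from part (i) is the extra requirement that the quotient be unramified at $\gp$, which is automatic from the ordinary structure; so in the ordinary basis the condition is once more $c(g)=0$ for all $g\in\Gamma_K$, giving $\cR^K_\rho=\cR^{\ord}_\rho/(c(g):g\in\Gamma_K)$ as stated. The content of the lemma is thus that this is genuinely a closed subfunctor of the representable $\cD^{\ord}_\rho$, whence representable by the displayed quotient, and that the ordinary basis is the right coordinate system in which to write the defining ideal.

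The main obstacle I anticipate is not any single hard estimate but rather making the uniqueness-of-filtration argument airtight in the relative setting: one must be sure that for $(\rho_A,F_A)\in\cD^{\ord}_\rho(A)$ there is no \emph{second} $\Gamma_K$-stable rank-one direct summand reducing to $F_\rho$ that fails to be $\gp$-ordinary, so that ``$\Gamma_K$-reducible'' and ``the $\gp$-ordinary line is $\Gamma_K$-stable'' really do coincide and parts (i) and (ii) yield the \emph{same} ideal. This is where one uses crucially that $H^0(\Gamma_K,\psim)=H^0(\Gamma_K,\psim^\tau)=0$ (as $\psim\neq\mathbf 1$) and that $\operatorname{Ext}^1_{\Gamma_K}(\psi^\tau,\psi)$ is one-dimensional spanned by the class $[\eta]$, so that the residual object is rigid enough that the standard obstruction-theoretic ``lift the unique complement'' induction goes through; I would invoke the deformation-theoretic lemmas of \cite{BDPozzi} for the precise formalism rather than redo it. A secondary point to be careful about is finite generation of the reducibility ideal, handled by the Bellaïche--Chenevier reducibility-ideal machinery together with the Noetherianity of $\cR^{\ord}_\rho$.
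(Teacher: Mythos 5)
Your proof of part (ii) is essentially correct and matches the paper's: once the $\gp$-ordinary filtration $F_A$ is fixed, the uniqueness of the $\Gamma_{K_\gp}$-stable rank-one summand with unramified quotient forces any $\Gamma_K$-stable submodule with $\gp$-unramified quotient to equal $F_A$, and in an ordinary basis the condition ``$F_A$ is $\Gamma_K$-stable'' is exactly the vanishing of $(c(g); g\in\Gamma_K)$.

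For part (i), however, there is a genuine gap: you claim that ``$\rho_A$ is $\Gamma_K$-reducible'' is equivalent to ``the ordinary filtration $F_A$ is $\Gamma_K$-stable,'' and conclude that parts (i) and (ii) are cut out by the \emph{same} ideal. This is false, and the paper explicitly flags it — see the Remark immediately following Lemma~\ref{lemma-red}, together with Propositions~\ref{prop:td} and \ref{dim t_R}, which show $\dim t^{\red}_{\rho}=\dim t^{\ord}_{\rho}=2$ while $\dim t^{K}_{\rho}=1$. The two functors $\cD^{\red}_{\rho}$ and $\cD^{K}_{\rho}$ therefore do not coincide, and they cannot define the same ideal of $\cR^{\ord}_{\rho}$. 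The point you try to use — that $\rho$ is indecomposable so its invariant line is unique, and $\rH^0(\Gamma_K,\psim)=\rH^0(\Gamma_K,\psim^\tau)=0$ — only controls the \emph{residual} picture. It guarantees that a $\Gamma_K$-stable rank-one summand of $A^2$, if it exists, reduces to $F_\rho$; it does \emph{not} force that summand to coincide with the ordinary line $F_A$, which is singled out by the entirely different requirement of being $\Gamma_{K_\gp}$-stable with unramified quotient. Concretely, at the level of $\overline{\Q}_p[\epsilon]$-points, every ordinary first-order deformation is reducible (so $t^{\red}_\rho=t^{\ord}_\rho$), but the $\Gamma_K$-stable line is $\overline{\Q}_p[\epsilon]\, e_1$ while the ordinary line is $\overline{\Q}_p[\epsilon]\,(e_1+\epsilon\alpha e_2)$ for an $\alpha$ determined by the ordinarity condition, and these agree only on the codimension-one subspace $t^K_\rho$ where $\alpha=0$.

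The correct route to part (i), which the paper takes, is different in an essential way: since $\rho(\gamma_0)$ is diagonal with distinct eigenvalues, Hensel's lemma lets one strictly conjugate $\rho_{\cR}(\gamma_0)$ into diagonal form, and in \emph{that} basis (not the ordinary basis) the reducibility locus is cut out by the vanishing of the lower-left entries; one then invokes the Bella\"{\i}che--Chenevier reducibility-ideal formalism as in \cite[Lem.~1.3]{BDPozzi} to conclude representability by a quotient. The off-diagonal entries in the ``diagonal-at-$\gamma_0$'' basis and in the ``ordinary'' basis generate genuinely different ideals, which is precisely what the subsequent sections of the paper exploit.
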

\begin{proof} 
(i) As $\rho(\gamma_0)$ is a diagonal matrix with  distinct eigenvalues,  Hensel's lemma implies that 
$\rho_{\cR}(\gamma_0)$ can be strictly conjugated to a diagonal matrix as well. The assertion then follows from exactly the same arguments as those in the proof of   \cite[Lem.~1.3]{BDPozzi}. 

(ii) The statement is clear as the  $\Gamma_K$-filtration necessarily coincides with the 
ordinary filtration. 
\end{proof}

Finally, for $? \in \{\univ, \ord,\red,K\}$ we  let  $\cD^{?}_{\rho,0}$ denote the sub-functor of  $\cD^{?}_{\rho}$ classifying deformations with fixed determinant, represented by $\cR^{?}_{\rho,0}=\cR^{?}_{\rho}/\gm_{\varLambda}\cR^{?}_{\rho}$ (see   \S\ref{ord-def}).

\begin{rem}
While $\rho$ is reducible and ordinary for the same filtration \eqref{filtration-rho}, this is not necessarily  true for
deformations in $\cD^{\red}_{\rho}$. In fact, we show in \S\ref{tangent} that 
\[\dim \cD^K_{\rho}(\overline{\Q}_p[\epsilon])=1 < \dim \cD^{\red}_{\rho}(\overline{\Q}_p[\epsilon])=\dim \cD^{\ord}_{\rho}(\overline{\Q}_p[\epsilon])=2.\]

\end{rem}

\subsection{Non-CM deformations of \texorpdfstring{$\rho$}{}}\label{daggerfunct}

We recall the indecomposable representation
 $\rho= \left(\begin{smallmatrix} \psi & \eta\psi^\tau \\ 0 &\psi^\tau \end{smallmatrix}\right)$  from \S\ref{ord-def}
 and its universal ordinary deformation $\rho_{\cR}$. 
 
 We can do a similar construction with $\psim^\tau$ instead of $\psim$. Namely, we let  
 $\eta':\Gamma_K\to\overline{\Q}_p$ be the unique representative of $[\eta'] \in \rH^1(K,\psim^\tau)$ 
whose restriction to  $\I_{\gp}$ is given by the $p$-adic logarithm and such that $\eta'(\gamma_0)=0$. Then we consider the  indecomposable representation 
\[\rho' =
\left(\begin{smallmatrix} \psi^\tau & \eta'\psi \\ 0 &\psi\end{smallmatrix}\right):\Gamma_K\to \GL_2(\overline{\Q}_p)\]
and denote by  $\cD^{\ord}_{\rho' }$ its ordinary deformation functor, representable by a  universal deformation ring $\cR^{\ord}_{\rho' }$. By definition the universal deformation $\rho'_{\cR}$ has a $\Gal(K/\Q)$-invariant trace  and 
 admits a rank $1$  unramified  $\Gamma_{K_{\gp}}$-quotient, denoted $\chi'_{\cR}$. 

We highlight that as  $[\eta^{\tau}]$ and $[\eta']$ are proportional (in fact $\eta^{\tau}=\mathscr{S}(\psim) \eta'+\eta^{\tau}(\gamma_0) \tfrac{\psim-1}{\psim(\gamma_0)-1}$), it follows that   
$\rho'\simeq \rho^\tau$. However  $\rho'_{\cR}$ need {\it not}  be isomorphic to  $\rho _{\cR}^\tau$
as the former admits a $\Gamma_{K_{\gp}}$-unramified quotient, while the latter 
admits a $\Gamma_{K_{\bar\gp}}$-unramified quotient, and not necessarily {\it vice versa}.

The following definition (similar to \cite[Def.1.6]{BDPozzi})  is an attempt to describe Galois theoretically the local ring at $f$ of the closed analytic subspace $\cE^{\perp}$ of $\cE^{\ord}$  defined by the union of irreducible components without CM by $K$.  As we will later  show in Theorem~\ref{existencenon-cm} one indeed has $f \in \cE^{\perp}$.

\begin{defn}\label{cond2}
Let $\cD^{\perp}$ be the functor assigning to  $A \in \cC$ the set of strict equivalence classes of pairs $((\rho_A, \chi_A),(\rho'_A, \chi'_A))$ in $\cD^{\ord}_{\rho}(A)\times \cD^{\ord}_{\rho'} (A)$ such that $\tr(\rho_A)=\tr(\rho'_A )$ and
  $\chi_A(\Frob_{\gp})= \chi'_A(\Frob_{\gp})$.
\end{defn}
The definition of $\cD^{\perp}$ is meant to  exclude $\varLambda$-adic deformations of $\rho$ having CM by $K$. 
Note that the $\tau$-conjugate of any $\gp$-ordinary deformation of $\rho$ is a $\bar \gp$-ordinary deformation of $\rho'$ which is  
 not necessarily $\gp$-ordinary, so it does not in general define a point of our functor.

The functor $\cD^{\perp}$ is representable by a $\varLambda$-algebra $\cR^{\perp}$, quotient
 of $ \cR^{\ord}_{\rho} \widehat{\otimes} \cR^{\ord}_{\rho'} $ by the ideal 
\[ \left(\tr (\rho_{\cR})(g) \otimes 1 - 1\otimes \tr(\rho'_{\cR})(g),
\chi_{\cR}(\Frob_{\gp})\otimes 1 - 1\otimes \chi'_{\cR}(\Frob_{\gp}); g\in \Gamma_K 
\right). \]

Let $\rho_{\cR^\perp}$ (resp. $\rho'_{\cR^\perp}$) be the $\gp$-ordinary deformation of $\rho$ (resp. $\rho' $) obtained by  functoriality from the natural homomorphism $\cR_{\rho}^{\ord} \rightarrow \cR^{\perp}$ (resp. $\cR_{\rho' }^{\ord} \rightarrow \cR^{\perp}$).
\begin{lemma}\label{lem-surj}
The natural homomorphisms $\cR^{\ord}_{\rho} \to \cR^{\perp}$ and $\cR^{\ord}_{\rho'} \to \cR^{\perp}$ are 
surjective. Moreover  $\rho'_{\cR^\perp}$ and   $\rho _{\cR^\perp}^\tau$ are conjugated by an element of $\GL_2(\cR^\perp)$, in particular
$\rho _{\cR^\perp}^\tau$ is $\gp$-ordinary. 
\end{lemma}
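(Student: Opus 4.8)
The plan is to prove the two assertions separately. For the surjectivity of $\cR^{\ord}_{\rho} \to \cR^{\perp}$ (the argument for $\cR^{\ord}_{\rho'} \to \cR^{\perp}$ being symmetric), I would use the topological Nakayama lemma: it suffices to show that the induced map on cotangent spaces, or equivalently the map on tangent spaces $\cD^{\perp}(\overline{\Q}_p[\epsilon]) \hookrightarrow \cD^{\ord}_{\rho}(\overline{\Q}_p[\epsilon])$, has the property that its source surjects — that is, every first-order ordinary deformation of $\rho$ satisfying the $\tau$-invariance of the trace is the first component of a point of $\cD^{\perp}$. Concretely, given $(\rho_\epsilon, \chi_\epsilon)\in\cD^{\ord}_{\rho}(\overline{\Q}_p[\epsilon])$, I must produce a matching $(\rho'_\epsilon,\chi'_\epsilon)\in \cD^{\ord}_{\rho'}(\overline{\Q}_p[\epsilon])$ with $\tr \rho'_\epsilon = \tr\rho_\epsilon$ and $\chi'_\epsilon(\Frob_\gp)=\chi_\epsilon(\Frob_\gp)$. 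The natural candidate is to conjugate $\rho_\epsilon$ by $\tau$: since $\tr(\rho_\epsilon^\tau)=\tr(\rho_\epsilon)$ already holds by the $\tau$-invariance hypothesis, and since $\rho^\tau \simeq \rho'$ (as noted in the paragraph preceding Definition~\ref{cond2}), the representation $\rho_\epsilon^\tau$ is a deformation of $\rho'$; its only issue is that it is a priori $\bar\gp$-ordinary rather than $\gp$-ordinary. One then checks that on the ring $\overline{\Q}_p[\epsilon]$ a $\bar\gp$-ordinary deformation which already has a $\gp$-stable line reducing to $F_\rho$ is in fact $\gp$-ordinary, essentially because the relevant obstruction lives in $\rH^1(\I_\gp,\overline{\Q}_p)$ and Proposition~\ref{restinj} pins down the image of $\rH^1(K,\psim)$ there; and the equality of the unramified-quotient Frobenius traces can be arranged after a suitable adjustment by the one-dimensional space of unramified classes. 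This first-order surjectivity then lifts to surjectivity of the complete local rings by Nakayama, using that $\cR^{\perp}$ is a quotient of $\cR^{\ord}_{\rho}\widehat\otimes\cR^{\ord}_{\rho'}$ and that $\cR^{\ord}_{\rho'} \to \cR^{\perp}$ is likewise surjective, so the image of $\cR^{\ord}_{\rho}$ already generates $\cR^{\perp}$ topologically.

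For the second assertion, once we know $\cR^{\ord}_{\rho'} \twoheadrightarrow \cR^{\perp}$, the deformation $\rho'_{\cR^\perp}$ is by construction the pushforward of the universal $\rho'_{\cR}$, hence a $\gp$-ordinary deformation of $\rho'$ to $\cR^\perp$ with $\Gal(K/\Q)$-invariant trace. On the other hand, $\rho_{\cR^\perp}^\tau$ is obtained by conjugating $\rho_{\cR^\perp}$ by $\tau$; since $\rho_{\cR^\perp}$ has $\tau$-invariant trace (condition (i) in Definition~\ref{cond1}), so does $\rho_{\cR^\perp}^\tau$, and the defining relation $\tr(\rho_{\cR})(g)\otimes 1 = 1\otimes\tr(\rho'_{\cR})(g)$ of $\cR^\perp$ gives $\tr\rho_{\cR^\perp}^\tau = \tr\rho_{\cR^\perp} = \tr\rho'_{\cR^\perp}$ as $\cR^\perp$-valued functions on $\Gamma_K$. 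Both $\rho'_{\cR^\perp}$ and $\rho_{\cR^\perp}^\tau$ lift $\rho' \simeq \rho^\tau$, which has scalar centralizer, so a standard pseudo-character argument (e.g. Carayol/Nyssen, or the Brauer–Nesbitt type statement for deformations with scalar-centralizer residual representation) shows that two deformations of $\rho'$ to a local $\overline{\Q}_p$-algebra with equal traces are conjugate by an element of $1+M_2(\gm_{\cR^\perp})\subset\GL_2(\cR^\perp)$. Consequently $\rho_{\cR^\perp}^\tau$ is conjugate to $\rho'_{\cR^\perp}$, and in particular inherits its $\gp$-ordinary filtration, so $\rho_{\cR^\perp}^\tau$ is $\gp$-ordinary.

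The main obstacle is the first-order surjectivity step, specifically verifying that the $\tau$-conjugate of a $\gp$-ordinary first-order deformation of $\rho$, which is naturally $\bar\gp$-ordinary, can be modified within its strict equivalence class to become genuinely $\gp$-ordinary while preserving $\tr\rho_\epsilon$ and matching the unramified quotient at $\gp$. This is exactly the point where the local geometry at $p$ and the explicit description of the restriction maps on $\rH^1(K,\psim)$ from Proposition~\ref{restinj} and Lemma~\ref{linvariant1} enter, and one must track carefully that the adjustment by unramified classes needed to match $\chi_\epsilon(\Frob_\gp)$ does not destroy the ordinarity or the trace condition. The rest is formal: Nakayama for the ring-level surjectivity, and the scalar-centralizer conjugacy lemma for the statement about $\rho_{\cR^\perp}^\tau$.
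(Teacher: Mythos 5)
Your argument for the second assertion is essentially correct and matches the paper's: the generation-by-traces/scalar-centralizer argument applied to $\rho'_{\cR^\perp}$ and a suitable constant-matrix conjugate of $\rho_{\cR^\perp}^\tau$ (needed so that it reduces literally to $\rho'$ rather than merely to something isomorphic to $\rho^\tau$) shows they define the same point of $\cD^{\univ}_{\rho'}(\cR^\perp)$, hence are strictly conjugate. The first assertion, however, has a genuine gap.

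You invoke Nakayama and propose to check that the forgetful map $t^{\perp}=\cD^{\perp}(\overline{\Q}_p[\epsilon]) \to t^{\ord}_{\rho}=\cD^{\ord}_{\rho}(\overline{\Q}_p[\epsilon])$ is \emph{surjective}. That is the wrong criterion and, worse, false. Surjectivity of $\cR^{\ord}_{\rho}\to\cR^{\perp}$ corresponds, via Nakayama, to surjectivity on cotangent spaces, which dualizes to \emph{injectivity} of $t^{\perp}\to t^{\ord}_{\rho}$. The map is certainly not surjective: Proposition~\ref{dim t_R} gives $\dim t^{\ord}_{\rho}=2$ while Theorem~\ref{cuspidal-tangent} gives $\dim t^{\perp}=1$. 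The obstruction is precisely the $U_p$-matching condition $\chi_\epsilon(\Frob_{\gp})=\chi'_\epsilon(\Frob_{\gp})$ in Definition~\ref{cond2}: taking $\rho'_\epsilon$ to be a $\tau$-conjugate of $\rho_\epsilon$ matches the traces for free, but the resulting $\gp$-unramified $\Frob_{\gp}$-eigenvalue agrees with $\chi_\epsilon(\Frob_{\gp})$ only along the line $\alpha\cLm(\psim^\tau)=\beta\cLm(\psim)$ inside the plane $t^{\ord}_{\rho}$, not on all of it. So one cannot, as you attempt, complete an arbitrary $(\rho_\epsilon,\chi_\epsilon)$ to a point of $\cD^{\perp}(\overline{\Q}_p[\epsilon])$, and the "first-order surjectivity step" you flag as the main obstacle is indeed where the plan breaks.

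The paper's proof of surjectivity sidesteps tangent spaces altogether. By \cite[Cor.~1.1.4(ii)]{kisin}, $\cR^{\univ}_{\rho}$ and $\cR^{\univ}_{\rho'}$, and hence their quotients $\cR^{\ord}_{\rho}$ and $\cR^{\ord}_{\rho'}$, are topologically generated over $\overline{\Q}_p$ by the traces of their universal deformations. In $\cR^{\perp}$ the defining relations force $\tr(\rho_{\cR})(g)$ and $\tr(\rho'_{\cR})(g)$ to have the same image for every $g\in\Gamma_K$, so the images of $\cR^{\ord}_{\rho}$ and $\cR^{\ord}_{\rho'}$ in $\cR^{\perp}$ coincide; since they jointly generate $\cR^{\perp}$ by construction, each one alone already surjects. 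Note that the trace-generation input is the same tool you would need to establish the correct (injectivity) form of the tangent-space criterion, so one does not avoid it by arguing at first order.
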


\begin{proof} The first claim follows from similar arguments that have already been used in \cite[Lem.~1.7]{BDPozzi}. 
In fact, as $\dim  \rH^1(K, \psim)=  \dim  \rH^1(K, \psim^\tau)=1$ by Proposition~\ref{restinj}, one can apply \cite[Cor.~1.1.4(ii)]{kisin}
 to prove that 
the algebras $\cR^{\univ}_{\rho}$ and $\cR^{\univ}_{\rho'}$ (and hence any of their quotients) are generated over $\bar\Q_p$ by the traces of their universal deformations. 

As highlighted before, the representations   
$\left(\begin{smallmatrix} \mathscr{S}(\psim) & 
\tfrac{\eta^{\tau}(\gamma_0)}{1-\psim(\gamma_0)} \\ 0 &1 \end{smallmatrix}\right)^{-1}
\rho _{\cR^\perp}^\tau \left(\begin{smallmatrix} \mathscr{S}(\psim) & 
\tfrac{\eta^{\tau}(\gamma_0)}{1-\psim(\gamma_0)} \\ 0 &1 \end{smallmatrix}\right)$ and $\rho'_{\cR^\perp}$
both reduce to $\rho'$ and share the same trace, hence  by {\it loc. cit.} they correspond to  the same point of $\cD^{\univ}_{\rho'}(\cR^\perp)$ and thus they are 
 conjugated by an element of $1+ M_2(\gm_{\cR^\perp})$.\end{proof}

The local ring $\cR^{\perp}_{0}=\cR^{\perp}/\gm_{\varLambda}\cR^{\perp}$ represents the subfunctor $\cD^{\perp}_{0}$ of $\cD^{\perp}$ consisting  of deformations with fixed determinant.

\subsection{Tangent spaces}\label{tangent}
Using  results from Galois Cohomology, Class Field Theory and Transcendence Theory we will now  
describe the  tangent spaces of the functors introduced in the previous subsections and 
compute their dimensions.  
 
 Let $\overline{\Q}_p[\epsilon]=\overline{\Q}_p \lsem X\rsem/(X^2)$ be the $\overline{\Q}_p$-algebra of dual numbers.
There is a  natural injection 
\[ t^{\univ}_{\rho}=\cD^{\univ}_{\rho}(\overline{\Q}_p[\epsilon]) \hookrightarrow \rH^1(K, \ad(\rho)),\]
inducing  an injection $t^{\univ}_{\rho,0}=\cD^{\univ}_{\rho,0}(\overline{\Q}_p[\epsilon]) \hookrightarrow \rH^1(K, \ad^0(\rho))$, where $\ad(\rho)$, resp. $\ad^0(\rho)$,  is the adjoint representation 
on $\End_{\overline{\Q}_p}(V_{\rho})$, resp. on its trace zero elements $\End^0_{\overline{\Q}_p}(V_{\rho})$. We have a natural decomposition of $\Gamma_K$-modules $\ad \rho \simeq \ad \rho^0 \oplus \mathbf{1}$.
 For $? \in\{\ord,\red,K\}$,  the  tangent space 
$t^{?}_{\rho}=\cD^{?}_{\rho}(\overline{\Q}_p[\epsilon])$, resp. $t^{?}_{\rho,0}=\cD^{?}_{\rho,0}(\overline{\Q}_p[\epsilon])$, is  thus naturally a subspace of $\rH^1(K, \ad(\rho))$, resp. $\rH^1(K, \ad^0(\rho))$. Let 
\begin{align}\label{eq:ses-W}
0 \to W_{\rho}\to \End_{\overline{\Q}_p}(V_{\rho}) \to \Hom_{\overline{\Q}_p}(F_{\rho}, V_{\rho}/F_\rho) \simeq \psim^{\tau}\to 0
\end{align}
be the short exact sequence 
of $\overline{\Q}_p[\Gamma_{K}]$-modules arising from  the fact that $\rho$ is reducible and   let  $W_\rho^0=W_\rho\cap \End^0_{\overline{\Q}_p}(V_{\rho})$.  As $\rho=\left( \begin{smallmatrix} \psi& \eta\psi^\tau\\ 0 & \psi^\tau \end{smallmatrix}\right)$, the subspace $W_\rho$ of $\End_{\overline{\Q}_p}(V_{\rho})= \mathrm{M}_2(\overline{\Q}_p)$ consists of upper triangular matrices  $\left( \begin{smallmatrix} a& b\\ 0 & d \end{smallmatrix}\right)$.  Consider  the short exact sequences  of $\overline{\Q}_p[\Gamma_{K}]$-modules
\begin{align}\label{eq:ses-W'}
\begin{split}
0 \to & W'_{\rho}\to W_{\rho} \xrightarrow{d} \overline{\Q}_p \to 0, \text{ and }  \\ 
0 \to &  W'_\rho{}^0\simeq\psim  \to W_\rho  \xrightarrow{(a,d)} \overline{\Q}{}_p^2 \to 0. 
\end{split}
\end{align}

\begin{lemma}\label{a=dtau} Let $a,d \in \Hom(\Gamma_K,\overline{\Q}_p)$ and $s \in \{\pm 1\}$. 
If  $\psi a + s  \psi^\tau d= s  \psi^\tau a^{\tau} + \psi d^\tau$ as functions on $\Gamma_K$, then 
$d=a^\tau$. 
\end{lemma}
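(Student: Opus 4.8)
The plan is to reduce the functional identity $\psi a + s\psi^\tau d = s\psi^\tau a^\tau + \psi d^\tau$ of maps $\Gamma_K\to\overline{\Q}_p$ to a cohomological statement about $\rH^1(K,\psim)$ and $\rH^1(K,\psim^\tau)$, and then invoke the one-dimensionality from Proposition~\ref{restinj}. First I would rewrite the identity in the form $\psi(a-d^\tau) = s\psi^\tau(a^\tau - d)$, i.e. $\psi(a-d^\tau) = s\psi^\tau(a-d^\tau)^\tau$. Set $b = a - d^\tau \in \Hom(\Gamma_K,\overline{\Q}_p)$; the claim $d = a^\tau$ is equivalent to $b = 0$. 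The identity becomes $\psi\cdot b = s\,\psi^\tau\cdot b^\tau$ as functions on $\Gamma_K$. Multiplying both sides by $\psi^{-1}$ and using $\psim = \psi/\psi^\tau$, this reads $b = s\,\psim^{-1}\cdot b^\tau = s\,\psim^\tau\cdot b^\tau$.

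Next I would observe that $\psim^\tau\cdot b^\tau$ is, up to the sign $s$, exactly the operation of first twisting $b$ into a cocycle valued in $\psim^\tau$ (via the cocycle relation for a character twist) and then applying $\tau$. More precisely, since $b\in\Hom(\Gamma_K,\overline{\Q}_p)$ is an additive cocycle with trivial coefficients, $\psim\cdot b$ is a cocycle valued in $\psim$: indeed for $g,h\in\Gamma_K$ we check $(\psim b)(gh) = \psim(gh)b(gh) = \psim(g)\psim(h)(b(g)+b(h))$, which modulo coboundaries represents a class, but what we actually need is simpler: I would split $b$ according to whether $\psi b$ defines a genuine $1$-cocycle. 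The cleaner route: apply $\tau$ to the relation $b = s\,\psim^\tau b^\tau$ to get $b^\tau = s\,\psim\, b$ (using $(\psim^\tau)^\tau = \psim$ and $(b^\tau)^\tau = b$), and substitute back to obtain $b = s^2\,\psim^\tau\psim\, b = b$, which is consistent but not yet a contradiction. So instead I would argue directly: the relation $b^\tau = s\,\psim\,b$ shows that $\psi^\tau b = \psi^\tau\cdot s^{-1}\psim^{-1}b^\tau = s\,\psi\,b^\tau\cdot\psi^{-1}\psi^\tau\cdot(\psi^\tau/\psi)$... rather than chase signs, the conceptual point is that $c := \psi b \in \rH^1(K,\psim^\tau)$ (one checks it is a cocycle) satisfies $c = s\,c^\tau$, i.e. $c$ lies in the $\tau$-eigenspace for eigenvalue $s$.

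The main obstacle is the sign analysis, so let me be careful. The representation $\Ind^\Q_K\psim$ has $\tau$-eigenvalues $+1$ and $-1$ each with multiplicity one on the relevant cohomology, and by the proof of Proposition~\ref{restinj} (via \eqref{dimext}), $\rH^1(K,\psim)\xrightarrow{\sim}\rH^1(K,\psim\oplus\psim^\tau)^{\tau=1}$ is one-dimensional, corresponding to the $\tau=-1$ subspace of $\Ind^\Q_K\psim$. Since $\psim \ne \mathbf{1}$, the space $\Hom(\Gamma_K,\overline{\Q}_p)$ with its $\tau$-action decomposes, and the upshot is that a nonzero $b$ with $\psi b = s\,\psi^\tau b^\tau$ would produce a class in $\rH^1(K,\psim\oplus\psim^\tau)$ with the wrong $\tau$-parity for one of the two summands, forcing it to vanish. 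Concretely: if $b\neq 0$, then $\psim b$ and its $\tau$-translate give a nonzero element of $(\Hom(\Gamma_H,\overline{\Q}_p)\otimes\Ind^\Q_K\psim)^{G}$ that is $\tau$-symmetric or $\tau$-antisymmetric opposite to what \eqref{dimext} and Leopoldt (for cyclic extensions of imaginary quadratic fields) allow, hence $b = 0$ and $d = a^\tau$. I would also note the degenerate cases: if $\psim$ is quadratic then $\psim = \psim^\tau$ and $s=\pm1$ must be handled by the same eigenspace count, which still works because $\Ind^\Q_K\psim$ remains two-dimensional with distinct $\tau$-eigenvalues. The hard part is packaging this cleanly without reproving Proposition~\ref{restinj}; I expect the final proof to be just a few lines once the substitution $b = a - d^\tau$ is made and the relation is recognized as an eigenspace condition contradicted by the dimension count, so I would keep it terse.
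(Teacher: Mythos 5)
Your opening reduction is exactly right: the identity is equivalent to $\psim(a-d^\tau)=s(a^\tau-d)$, i.e.\ $\psim b = s\,b^\tau$ for $b := a-d^\tau$, and the goal is $b=0$. From there, however, your argument has a genuine gap. The step ``$c := \psi b \in \rH^1(K,\psim^\tau)$ (one checks it is a cocycle)'' is false: if $b\in\Hom(\Gamma_K,\overline{\Q}_p)$ is a homomorphism, then $\psi b$ (or $\psim b$) does \emph{not} satisfy the twisted cocycle condition $c(gh)=c(g)+\chi(g)c(h)$ for any nontrivial character $\chi$, so there is no class in $\rH^1(K,\psim)$ or $\rH^1(K,\psim^\tau)$ being produced. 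Consequently, the eigenspace/parity contradiction you want to extract from Proposition~\ref{restinj} and \eqref{dimext} never gets off the ground --- you would need to explain how $b$ gives a class in $\rH^1(K,\psim\oplus\psim^\tau)$, and no such construction is given. A related symptom is that your proposal nowhere invokes the hypothesis that $\psim$ is unramified at $p$ (i.e.\ that $\cond\psi$ is prime to $p$), which is precisely what makes the lemma go.

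The paper's proof is much more elementary and does hinge on that hypothesis. Rewriting the relation as $\psim b = s\,b^\tau$ and restricting to the inertia subgroups $\I_{\gp}$ and $\I_{\bar\gp}$, where $\psim$ is trivial because it is unramified at $p$, one sees that $b$ and $s\,b^\tau$ agree on both. Since the joint restriction $\Hom(\Gamma_K,\overline{\Q}_p)\hookrightarrow \Hom(\I_{\gp},\overline{\Q}_p)\times\Hom(\I_{\bar\gp},\overline{\Q}_p)$ is injective (there are no everywhere-unramified $\Z_p$-extensions of $K$, as $\cC\ell_K$ is finite), this forces $b = s\,b^\tau$ on all of $\Gamma_K$. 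Substituting back gives $(\psim-1)\,b = 0$; since $\psim\ne\mathbf{1}$, the homomorphism $b$ vanishes on the complement of $\ker\psim$, and then additivity (compose a general $g$ with some $h\notin\ker\psim$) forces $b=0$ everywhere. No cohomology of $\psim$ is needed. You should abandon the attempted twist-to-a-cocycle construction and run this local argument instead.
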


\begin{proof}
 The relation can be rewritten as $\psim (a-d^\tau)= s  (a^{\tau}-d)$. 
As $\psim$ is unramified at $p$, it follows that the elements $(a-d^\tau)$ and $s  (a^{\tau}-d)$ of $\Hom(\Gamma_K,\overline{\Q}_p)$
coincide  on both inertia subgroups $\I_{\gp}$ and $\I_{\bar{\gp}}$, hence they are equal. However $\psim\ne \mathbf{1}$,  hence  $a-d^\tau=0$. \end{proof}

\begin{defn}\label{def:tau-inv}
Let $\Hom(\Gamma_K, \overline{\Q}{}_p^2)^\tau$  be the subspace  of  $ \Hom(\Gamma_K, \overline{\Q}{}_p^2)$
consisting of elements $(a,d)$ such that $d=a^{\tau}$, and let 
 $\rH^1(K,W_\rho)^\tau$ be the inverse image of $\Hom(\Gamma_K, \overline{\Q}{}_p^2)^\tau$ under the morphism $  (a_*,d_*): \rH^1(K,W_\rho) \to  \Hom(\Gamma_K, \overline{\Q}{}_p^2)$  coming by functoriality from \eqref{eq:ses-W'}. 
 
 Put  $\rH^1(K,W_\rho^0)^\tau=\rH^1(K,W_\rho)^\tau \cap \rH^1(K,W_\rho^0)$.
\end{defn}

\begin{prop} \label{prop:td} We have $t^{\red}_{\rho}=t^{\ord}_{\rho}\simeq \rH^1(K,{W_\rho})^{\tau}$ and  $t^{\red}_{\rho,0}=t^{\ord}_{\rho,0}\simeq \rH^1(K,{W^0_\rho})^{\tau}$.

\end{prop}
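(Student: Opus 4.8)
The plan is to establish the two isomorphisms by identifying the tangent space of the ordinary deformation functor inside $\rH^1(K,\ad\rho)$ and then cutting it out by the $\tau$-invariance and the ordinarity conditions. First I would recall that $t^{\ord}_{\rho}$ sits inside $\rH^1(K,\ad\rho)$ as the set of cocycles $\phi$ whose associated deformation $\rho(1+\epsilon\phi)$ (i) has $\tau$-invariant trace and (ii) admits a $\Gamma_{K_{\gp}}$-stable rank one submodule lifting $F_\rho$ with unramified quotient. Condition (ii) at the level of dual numbers says precisely that the class $\res_{\gp}(\phi)$, when pushed to $\rH^1(K_\gp, \Hom(F_\rho, V_\rho/F_\rho))=\rH^1(K_\gp,\psim^\tau)$, vanishes \emph{and} that the resulting upper-triangular cocycle has lower-right (i.e.\ quotient) entry unramified; equivalently $\res_{\gp}(\phi)$ lands in the image of $\rH^1(K_\gp,W_\rho)$ with the further constraint that its $d$-component is unramified at $\gp$. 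Since $\psim^\tau$ is unramified at $p$, the map $\rH^1(K,\psim^\tau)\to\rH^1(\I_{\gp},\psim^\tau)$ is the relevant one, and by Proposition~\ref{restinj} (applied to $\psim^\tau$) it is injective, so the vanishing of $\res_{\I_\gp}$ of the off-diagonal class forces $\phi$ to be globally valued in $W_\rho$. This is the step identifying $t^{\ord}_\rho$ with a subspace of $\rH^1(K,W_\rho)$.

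Next I would add the trace condition. Writing $\phi=\left(\begin{smallmatrix} a & b \\ c & d\end{smallmatrix}\right)$ as a cocycle valued in $\ad\rho=\mathrm{M}_2$ twisted appropriately, the constraint $\tr\rho_A(\tau g\tau)=\tr\rho_A(g)$ at first order becomes the relation $\psi a + \psi^\tau d = \psi^\tau a^\tau + \psi d^\tau$ (in the trace-free-plus-scalar decomposition $\ad\rho\simeq\ad^0\rho\oplus\mathbf 1$, the scalar part contributes the case $s=1$ and the trace-zero part a corresponding identity). By Lemma~\ref{a=dtau} this is exactly the statement $d=a^\tau$, i.e.\ $(a,d)\in\Hom(\Gamma_K,\overline{\Q}_p^2)^\tau$ in the notation of Definition~\ref{def:tau-inv}. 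Therefore $t^{\ord}_\rho$ is the preimage of $\Hom(\Gamma_K,\overline{\Q}_p^2)^\tau$ under $(a_*,d_*)$, which is by definition $\rH^1(K,W_\rho)^\tau$; and the determinant-fixing version cuts out the scalar part, giving $\rH^1(K,W_\rho^0)^\tau$. One then checks that with $d=a^\tau$ imposed, the unramified-at-$\gp$ condition on $d$ from the previous paragraph is automatic, since $a=d^\tau$ and $d$ unramified at $\bar\gp$ is forced by the $\bar\gp$-analogue; here one should double-check that no extra condition survives, which is where I would be most careful.

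Finally I would prove $t^{\red}_\rho=t^{\ord}_\rho$. A deformation in $\cD^{\ord}_\rho(\overline{\Q}_p[\epsilon])$ is automatically reducible: a first-order ordinary deformation has an ordinary filtration $F_A\subset A^2$, and one must show the \emph{quotient} line $V_\rho/F_\rho$ also lifts globally as a $\Gamma_K$-submodule, i.e.\ the cocycle's $c$-entry vanishes. But we have just shown $\phi$ is valued in $W_\rho$ (upper triangular), so $c=0$ identically, hence the deformation is reducible; thus $\cD^{\ord}_\rho(\overline{\Q}_p[\epsilon])\subseteq\cD^{\red}_\rho(\overline{\Q}_p[\epsilon])$, and the reverse inclusion is trivial since $\rho$ is reducible-and-ordinary for the same filtration \eqref{filtration-rho}. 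The main obstacle I anticipate is the bookkeeping in the first paragraph: carefully translating the ``$\Gamma_{K_\gp}$-stable rank one summand with unramified quotient lifts $F_\rho$'' condition into the precise pair of statements (off-diagonal class dies on $\I_\gp$; diagonal quotient class unramified at $\gp$) and then invoking Proposition~\ref{restinj} for $\psim^\tau$ to upgrade ``dies on $\I_\gp$'' to ``globally in $W_\rho$'' without losing or gaining a dimension. Everything after that is formal manipulation with Lemma~\ref{a=dtau} and the exact sequences \eqref{eq:ses-W}, \eqref{eq:ses-W'}.
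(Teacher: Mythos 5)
Your first two steps match the paper: using the exact sequence \eqref{eq:ses-W} together with the vanishing of $\ker\bigl(\rH^1(K,\psim^\tau)\to\rH^1(K_\gp,\overline{\Q}_p)\bigr)$ (Prop.~\ref{restinj}) to force the cocycle into $\rH^1(K,W_\rho)$, and then Lemma~\ref{a=dtau} to translate the $\tau$-invariant trace condition into $d=a^\tau$, are exactly the paper's arguments. The inclusion $t^{\red}_\rho\subseteq t^{\ord}_\rho$ also is, as you say, built into Definition~\ref{defn:red}.

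Where the proposal goes wrong is the claim that, once $d=a^\tau$, the $\gp$-ordinarity imposes no further condition because ``$d$ unramified at $\gp$ is automatic.'' There is no such constraint and no such automaticity: the functor $\cD^{\ord}_\rho$ imposes no $\bar\gp$-ordinarity, so the phrase ``forced by the $\bar\gp$-analogue'' has no content. Worse, if the ordinarity constraint really were ``$d$ is unramified at $\gp$'' in the original basis, you would be cutting $\rH^1(K,W_\rho)^\tau$ down by the one-dimensional condition $\res_{\I_\gp}(d)=0$, and you would land on the $1$-dimensional CM tangent space $t^K_\rho$ of \eqref{cmequa}, not the $2$-dimensional $t^{\ord}_\rho$ of Proposition~\ref{dim t_R}. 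The correct point is that the ordinary filtration is allowed to move at first order: $F_A=\langle e_1+\epsilon\alpha e_2\rangle$ for an undetermined $\alpha\in\overline{\Q}_p$, and in that basis the quotient character on $\Gamma_{K_\gp}$ becomes $\psi(1+\epsilon(d-\alpha\eta))$. Because $\eta_{|\I_\gp}=\log_p$ spans the image of $\rH^1(K,\overline{\Q}_p)\to\rH^1(\I_\gp,\overline{\Q}_p)$, there is always a (unique) $\alpha$ with $d_{|\I_\gp}=\alpha\eta_{|\I_\gp}$, which makes the quotient unramified. That is the paper's argument, and it is the one that shows the ordinarity condition adds nothing beyond $\phi\in\rH^1(K,W_\rho)^\tau$. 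Your ``equivalently, the $d$-component is unramified at $\gp$'' in the first paragraph is exactly the step that drops this $\alpha$-freedom and needs to be repaired.
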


 \begin{proof}  By Definition~\ref{defn:red} we have  $t^{\red}_{\rho} \subset t^{\ord}_{\rho}$ . 
 To prove the opposite inclusion, we represent  an infinitesimal ordinary deformation   $[ \rho_\epsilon] \in t^{\ord}_{\rho}$ by a cocycle 
  \begin{align}
 \rho_\epsilon=\left(1+\epsilon \left(\begin{smallmatrix}a & b \\ c & d \end{smallmatrix}\right) \right)\rho, 
 \text{ where } \left[\left(\begin{smallmatrix}a & b \\ c & d \end{smallmatrix}\right)\right]\in \rH^1(K, \ad(\rho)). 
 \end{align}

As $\rH^0(K,\psim^\tau)=\{0\}$, we obtain from \eqref{eq:ses-W}  the following long exact sequence
\begin{align}\label{exact-adjoint}
0 \to \rH^1(K,W_\rho) \longrightarrow \rH^1(K,\ad \rho) \xrightarrow{c_*} \rH^1(K,\psim^\tau)\to \rH^2(K,W_\rho),
\end{align}
where the  map $\ad \rho\to\psim^\tau$   is given by  $\left[\left(\begin{smallmatrix}a & b \\ c & d \end{smallmatrix}\right) \right]\mapsto[c]$. 
 By ordinarity (see   \cite[Proof of Prop.~2.1]{BDPozzi}), the image of $t^{\ord}_{\rho} \subset \rH^1(K,\ad \rho)$ under the morphism $c_*$ lands in 
\[\ker \left(\rH^1(K, \psim^\tau)\to \rH^1(K_{\gp}, \overline{\Q}_p) \right),\] 
 which is trivial by Proposition~\ref{restinj}. Thus  $t^{\ord}_{\rho} = t^{\red}_{\rho} \subset \rH^1(K,{W_\rho})$. 
 
 By \eqref{exact-adjoint} any element of  $\rH^1(K,{W_\rho})$  
  can be represented by an infinitesimal reducible lift
\[\rho_\epsilon= \left(1+\epsilon  \begin{pmatrix} a & b\\ 0 & d \end{pmatrix} \right )\rho=
\begin{pmatrix}   \psi(1+\epsilon a)  & \psi^\tau(\eta  + \epsilon (a \eta + b ))\\  0 & \psi^\tau(1 +\epsilon d)\end{pmatrix}. \]
If the lift comes from an element of $t^{\ord}_{\rho}$, because of the relation $\tr \rho_\epsilon=\tr\rho^\tau_\epsilon$
(see \eqref{tau-inv}), Lemma~\ref{a=dtau} implies that $a=d^{\tau}$, hence  $t^{\ord}_{\rho} \subset \rH^1(K,{W_\rho})^{\tau}$ (see  Definition~\ref{def:tau-inv}). 
 The same argument also shows that $t^{\red}_{\rho,0}=t^{\ord}_{\rho,0}\subset \rH^1(K,{W^0_\rho})^{\tau}$.

 It remains to show the  inclusion $ \rH^1(K,{W_\rho})^\tau \subset t^{\ord}_{\rho}$. As observed above, any element of 
 $\rH^1(K,{W_\rho})^\tau$   can be represented by an infinitesimal reducible lift 
 \begin{align}\label{rho-adjoint}
\rho_\epsilon=\left(1+\epsilon  \begin{pmatrix} d^\tau & * \\ 0 & d \end{pmatrix} \right )\rho=
\begin{pmatrix}  \psi(1+\epsilon d^\tau)  & \eta \psi^\tau + \epsilon\cdot *\\ 0 & \psi^\tau(1 +\epsilon d)\end{pmatrix},
\end{align}
 where $d \in \rH^1(K, \overline{\Q}_p)$.
 It is sufficient to show that $\rho_\epsilon$ is  $\gp$-ordinary, {\it i.e.}, find $\alpha\in \overline{\Q}_p$ such that the line of $\overline{\Q}_p[\epsilon]^2$ generated by  $e_{1}+ \epsilon \alpha  e_{2}$  is $\Gamma_{K_{\gp}}$-stable  
with  unramified quotient.  A direct computation shows that the restriction of $\rho_\epsilon$ to
$\Gamma_{K_{\gp}}$ in the {\it ordinary} basis $(e_{1}+ \epsilon \alpha  e_2,e_2)$ equals
\[
\left( \begin{matrix} 1 & 0\\ -\epsilon \alpha & 1 \end{matrix}\right)
\left( \begin{matrix} \psi(1+\epsilon d^\tau) & \eta \psi + \epsilon \cdot *\\ 0 & \psi(1 +\epsilon d) \end{matrix}\right)
\left( \begin{matrix} 1 & 0\\ \epsilon \alpha & 1 \end{matrix}\right)=
\left( \begin{matrix}  \psi(1+\epsilon ( d^\tau +\alpha \eta))  &  \eta \psi + \epsilon \cdot * \\ 0 & \psi(1+\epsilon ( d- \alpha \eta)) \end{matrix}\right).
\]
As $\eta_{|\I_{\gp}}=\log_{\gp}$ generates the image of the restriction map $ \rH^1(K, \overline{\Q}_p)\to \rH^1(\I_{\gp}, \overline{\Q}_p)$,
there exists a unique $\alpha \in \overline{\Q}_p$ such that $d_{|\I_{\gp}}= \alpha\eta_{|\I_{\gp}}$, for which the 
character $\chi_\epsilon= \psi(1+\epsilon ( d- \alpha \eta))$ will be unramified at $\gp$. This completes the proof.\end{proof}

 Let  $\rH^1(K,W^0_\rho) \xrightarrow{d_*} \rH^1(K,\overline{\Q}_p)$ be  morphism induced by \eqref{eq:ses-W'} sending  $\left[\left(\begin{smallmatrix}-d & b \\ 0 & d \end{smallmatrix}\right)\right]$  to  $[d]$.

\begin{lemma}\label{H2}\
\begin{enumerate}[wide]
\item  One has $\dim_{\overline{\Q}_p} \rH^2(K, \psim)=0$.
\item  The map $\rH^1(K,W^0_\rho) \xrightarrow{d_*}  \rH^1(K,\overline{\Q}_p)$ is an isomorphism.
\end{enumerate}
\end{lemma}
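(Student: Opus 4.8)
The plan is to prove (i) directly from Galois cohomology and then derive (ii) from (i) by the long exact sequence, arranging for $d_*$ to be squeezed between two zero maps.

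For (i): since $\psim\ne\mathbf{1}$ we have $\rH^0(K,\psim)=0$, and $\dim_{\overline{\Q}_p}\rH^1(K,\psim)=1$ by Proposition~\ref{restinj}. I would then invoke the global Euler characteristic formula for the one-dimensional continuous $\overline{\Q}_p$-representation $\psim$ of $\Gamma_K$: since $K$ is imaginary quadratic, with a unique archimedean place, which is complex, it reads $\dim\rH^0(K,\psim)-\dim\rH^1(K,\psim)+\dim\rH^2(K,\psim)=-1$, whence $\dim_{\overline{\Q}_p}\rH^2(K,\psim)=0$. Alternatively, and in the spirit of \eqref{res-infl}, one has $\rH^2(K,\psim)\simeq\rH^2(H,\overline{\Q}_p)[\psim^\tau]$ — the restriction--inflation spectral sequence degenerates since $\#\Gal(H/K)$ is invertible in $\overline{\Q}_p$ — and $\rH^2(G_{H,S},\overline{\Q}_p)=0$ because Leopoldt's Conjecture holds for the cyclic extension $H$ of the imaginary quadratic field $K$.

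For (ii): I would take the short exact sequence of $\overline{\Q}_p[\Gamma_K]$-modules $0\to\psim\xrightarrow{\iota}W^0_\rho\xrightarrow{d}\overline{\Q}_p\to 0$, the trace-zero part of the first exact sequence in \eqref{eq:ses-W'}, and write out its long exact cohomology sequence
\[\rH^0(K,W^0_\rho)\to\rH^0(K,\overline{\Q}_p)\xrightarrow{\delta}\rH^1(K,\psim)\xrightarrow{\iota_*}\rH^1(K,W^0_\rho)\xrightarrow{d_*}\rH^1(K,\overline{\Q}_p)\xrightarrow{\delta_1}\rH^2(K,\psim).\]
Surjectivity of $d_*$ is immediate from (i): $\rH^2(K,\psim)=0$ forces $\delta_1=0$. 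For injectivity it suffices that $\iota_*=0$, equivalently that $\delta$ is surjective; since $\rH^0(K,\overline{\Q}_p)$ and $\rH^1(K,\psim)$ are both lines, it is enough to check $\delta\ne 0$. For this I would argue $\rH^0(K,W^0_\rho)=(W^0_\rho)^{\Gamma_K}\subseteq\rH^0(K,\ad^0\rho)=0$, the last vanishing because the centralizer of $\rho$ is scalar (as recalled in \S\ref{ord-def}) while a nonzero scalar matrix has nonzero trace; hence $\delta$ is injective, therefore an isomorphism of lines, so $\iota_*=0$ and $d_*$ is injective — and thus an isomorphism. As a concrete cross-check, in the basis $\left(\begin{smallmatrix}0&1\\0&0\end{smallmatrix}\right),\left(\begin{smallmatrix}1&0\\0&-1\end{smallmatrix}\right)$ of $W^0_\rho$ the action of $g\in\Gamma_K$ is $\left(\begin{smallmatrix}\psim(g)&-2\eta(g)\\0&1\end{smallmatrix}\right)$, so the class of this extension is $-2[\eta]\ne 0$, recovering $\delta\ne 0$.

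I do not expect a genuine obstacle. The only non-formal ingredient is part (i), handled in one line by the global Euler characteristic formula once $\dim\rH^1(K,\psim)=1$ is known; part (ii) is pure diagram chasing together with the already established fact that $\rho$ has scalar centralizer (equivalently, that the extension $W^0_\rho$ is a nonzero multiple of the canonical class $[\eta]$).
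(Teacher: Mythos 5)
Your proof is correct and takes essentially the same route as the paper: the global Euler characteristic formula for (i), then the long exact cohomology sequence of $0\to\psim\to W^0_\rho\to\overline{\Q}_p\to 0$ for (ii). The only difference is that you explicitly verify $\rH^0(K,W^0_\rho)=0$ (via the scalar-centralizer observation, or the cocycle computation), a step the paper leaves tacit when it writes the five-term exact sequence beginning $0\to\rH^0(K,\overline{\Q}_p)\xrightarrow{\sim}\rH^1(K,\psim)$.
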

\begin{proof} 
(i) By the  global Euler characteristic formula the dimension of $\rH^2(K, \psim)$ equals 
\[\dim \rH^1(K,\psim)-\dim \rH^0(K,\psim)+ \dim \rH^0(\C,\overline{\Q}_p)-[K:\Q]\dim(\psim)=1-0+1-2=0.\]

(ii) As $\rH^2(K,\psim)=\{0\}$ and $\dim\rH^1(K, \psim)=1$,  the map $d_*$  in the long exact sequence
\[ 0 \to \rH^0(K,\overline{\Q}_p) \xrightarrow{\sim} \rH^1(K, \psim) \to \rH^1(K,W_\rho^0) \xrightarrow{d_*}  \rH^1(K,\overline{\Q}_p) \to 0\]
coming from  \eqref{eq:ses-W'}  is necessarily an isomorphism.\end{proof}

\begin{prop}\label{dim t_R} One has $\dim  t^{\ord}_{\rho}=2$, $\dim t^{\ord}_{\rho,0}=\dim t^K_{\rho}=1$   and  $t^K_{\rho,0}=\{ 0\}$.
\end{prop}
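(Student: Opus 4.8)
The plan is to compute each dimension from the tangent space descriptions established in Proposition~\ref{prop:td} and the cohomology vanishing of Lemma~\ref{H2}. First I would handle $\dim t^{\ord}_{\rho}$. By Proposition~\ref{prop:td} we have $t^{\ord}_{\rho}\simeq \rH^1(K,W_\rho)^{\tau}$, so I would use the exact sequence \eqref{eq:ses-W'}, namely $0\to W'_\rho\to W_\rho\xrightarrow{d}\overline{\Q}_p\to 0$ together with $W'_\rho\simeq \psim$ (from the trace-zero version, suitably untwisted), to get a handle on $\rH^1(K,W_\rho)$. The cleaner route is to pass through the trace-zero sub-functor: Lemma~\ref{H2}(ii) gives $\rH^1(K,W^0_\rho)\xrightarrow{\sim}\rH^1(K,\overline{\Q}_p)$, which is $2$-dimensional since $\{\eta_\gp,\eta_{\bar\gp}\}$ is a basis by \S\ref{def-eta}. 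The $\tau$-condition cutting out $\rH^1(K,W^0_\rho)^{\tau}$ inside $\rH^1(K,W^0_\rho)$ amounts, via the isomorphism $d_*$, to requiring the class $d\in\rH^1(K,\overline{\Q}_p)$ to satisfy the constraint imposed by $a=d^\tau$ in \eqref{rho-adjoint}; but in the trace-zero case $a=-d$, so the condition becomes $d=-d^\tau$, i.e. $d$ lies in the $\tau=-1$ part of $\rH^1(K,\overline{\Q}_p)$, which is $1$-dimensional (spanned by $\eta_\gp-\eta_{\bar\gp}$). Hence $\dim t^{\ord}_{\rho,0}=\dim\rH^1(K,W^0_\rho)^{\tau}=1$.

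Next, for $\dim t^{\ord}_{\rho}=2$: one has the decomposition $\ad\rho\simeq\ad^0\rho\oplus\mathbf 1$, and correspondingly $W_\rho=W^0_\rho\oplus\overline{\Q}_p\cdot\mathrm{Id}$, compatibly with the $\tau$-action and with the determinant/$\varLambda$-structure. The extra summand $\rH^1(K,\overline{\Q}_p)^{\tau=+1}$ (the "cyclotomic-type" direction coming from deforming the determinant along $\varLambda$) is again $1$-dimensional, spanned by $\eta_\gp+\eta_{\bar\gp}$. Adding the two contributions gives $\dim t^{\ord}_{\rho}=1+1=2$. I would phrase this as: $t^{\ord}_{\rho}\simeq t^{\ord}_{\rho,0}\oplus t_\varLambda$ where $t_\varLambda\simeq\gm_\varLambda/\gm_\varLambda^2$ is $1$-dimensional, consistent with $\cR^{\ord}_\rho$ being a $\varLambda$-algebra and $\cR^{\ord}_{\rho,0}=\cR^{\ord}_\rho/\gm_\varLambda\cR^{\ord}_\rho$.

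For $\dim t^K_\rho=1$ and $t^K_{\rho,0}=\{0\}$: by Lemma~\ref{lemma-red}(ii), $\cR^K_\rho=\cR^{\ord}_\rho/(c(g))$, and in the infinitesimal picture the condition $c=0$ is automatic for ordinary deformations (we already saw $t^{\ord}_\rho\subset\rH^1(K,W_\rho)$, i.e.\ the lower-left entry already vanishes). So the real content of "having a free rank-one $\Gamma_K$-quotient unramified at $\gp$" is that the deformation be \emph{reducible with the complementary line giving an unramified-at-$\gp$ quotient character}, which for the upper-triangular shape \eqref{rho-adjoint} forces the quotient $\psi^\tau(1+\epsilon d)$ to be $\gp$-unramified, i.e.\ $d_{|\I_\gp}=0$. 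Via the identification of $\rH^1(K,W_\rho)^\tau$ with pairs governed by a single class $d\in\rH^1(K,\overline{\Q}_p)$ (with $a=d^\tau$), imposing $d_{|\I_\gp}=0$ means $d\in\ker(\rH^1(K,\overline{\Q}_p)\to\rH^1(\I_\gp,\overline{\Q}_p))=\overline{\Q}_p\cdot\eta_{\bar\gp}$, a $1$-dimensional space; hence $\dim t^K_\rho=1$. Intersecting with the trace-zero (fixed determinant) condition forces additionally $d+d^\tau=0$, i.e.\ $d\in\overline{\Q}_p(\eta_\gp-\eta_{\bar\gp})$; but $\eta_{\bar\gp}$ is not a multiple of $\eta_\gp-\eta_{\bar\gp}$ (since $\eta_\gp,\eta_{\bar\gp}$ are independent), so the intersection is $\{0\}$, giving $t^K_{\rho,0}=\{0\}$.

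The main obstacle I anticipate is bookkeeping the various $\tau$-equivariance conditions consistently across the three short exact sequences \eqref{eq:ses-W}, \eqref{eq:ses-W'} and the decomposition $\ad\rho=\ad^0\rho\oplus\mathbf 1$ — in particular making sure the sign $s\in\{\pm1\}$ from Lemma~\ref{a=dtau} is tracked correctly (it is $+1$ for $\ad\rho$ and enters differently for $\ad^0\rho$), and being careful that the "unramified-at-$\gp$ quotient" in Definition~\ref{def:tau-inv} refers to the class $d$ and not $a=d^\tau$, so that the relevant kernel is $\overline{\Q}_p\cdot\eta_{\bar\gp}$ rather than $\overline{\Q}_p\cdot\eta_\gp$. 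Once these identifications are set up using Proposition~\ref{restinj} and the explicit basis $\{\eta_\gp,\eta_{\bar\gp}\}$, all four assertions follow by elementary linear algebra.
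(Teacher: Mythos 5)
Your proof is correct and follows essentially the same route as the paper: identify $t^{\ord}_{\rho}$ with $\rH^1(K,\overline{\Q}_p)$ via $d_*$ (using Lemma~\ref{H2}(ii) and the splitting $W_\rho=W^0_\rho\oplus\overline{\Q}_p$), note the fixed-determinant part lands in the anti-cyclotomic line $\rH^1(K,\overline{\Q}_p)^{\tau=-1}$, and cut out $t^K_\rho$ as $\ker(\res_\gp)=\overline{\Q}_p\cdot\eta_{\bar\gp}$. One small caution: the remark that ``$c=0$ is automatic'' conflates the original basis with the ordinary basis $(e_1+\epsilon\alpha e_2,e_2)$ from Lemma~\ref{lemma-red}(ii) (infinitesimally the ordinary-basis $c$ equals $-\epsilon\alpha(\psi-\psi^\tau)$, which vanishes precisely when $\alpha=0$), but your eventual reduction to the condition $d_{|\I_\gp}=0$ is the correct one and coincides with the paper's description via $\res_\gp$.
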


\begin{proof} Using the direct sum decomposition $W_\rho=W_\rho^0\oplus \overline{\Q}_p$, Lemma~\ref{H2}(ii)
implies that 
\[(a_*-d_*, a_*+d_*): \rH^1(K,W_\rho)\xrightarrow{\sim} \rH^1(K,\overline{\Q}_p)\oplus \rH^1(K,\overline{\Q}_p)\]
is an  isomorphism, from which one deduces the  isomorphism
\begin{align}\label{sole-d}
d_*: t^{\ord}_{\rho}=\rH^1(K,W_\rho)^{\tau}\xrightarrow{\sim}
 \rH^1(K,\overline{\Q}_p)^{\tau=-1}\oplus \rH^1(K,\overline{\Q}_p)^{\tau=1}= \rH^1(K,\overline{\Q}_p).
 \end{align} 
 In view of  Proposition~\ref{prop:td}, we deduce  $\dim t^{\red}_{\rho}=\dim t^{\ord}_{\rho}=\dim \rH^1(K,\overline{\Q}_p)=2$. 
 Moreover, as $d_*$ 
 induces an isomorphism between $\rH^1(K,W_\rho^0)^{\tau}$
and the anti-cyclotomic line  $\rH^1(K,\overline{\Q}_p)^{\tau=-1}$, one deduces that $\dim t^{\red}_{\rho,0}=\dim t^{\ord}_{\rho,0}=1$. 

By definition $t^K_{\rho}= \ker( t^{\red}_{\rho}\simeq  \rH^1(K,\overline{\Q}_p) \xrightarrow{\res_{\gp}} \Hom(\I_{\gp},\overline{\Q}_p))
$, hence $\dim t^K_{\rho}=1$. Finally 
 \begin{align}\label{cmequa}
t^K_{\rho,0}= \ker\left(t^{\red}_{\rho,0}\simeq  \rH^1(K,\overline{\Q}_p)^{\tau=-1}  \xrightarrow{\res_{\gp}} \Hom(\I_{\gp},\overline{\Q}_p)\right)=\{ 0 \}. 
\end{align} \qedhere        \end{proof}

Let $t^{\perp}=\cD^{\perp}(\overline{\Q}_p[\epsilon])$, resp.  $t^{\perp}_0=\cD^{\perp}_{0}(\overline{\Q}_p[\epsilon])$, be the tangent, resp. 
relative  tangent,  space of the functor $\cD^{\perp}$. Recall the $\cL$-invariants introduced in Definition~\ref{d:new-L-inv}
\[\cLm(\psim^\tau)=(\eta-\eta_{\gp}+\eta_{\bar{\gp}})(\Frob_{\gp})  \text{ and } \cLm(\psim)=(\eta'-\eta_{\gp}+\eta_{\bar{\gp}})(\Frob_{\gp}).\]

\begin{thm}\label{cuspidal-tangent} 
\begin{enumerate}[wide]
We have $\dim t^{\perp}=1$. Moreover,  $t^{\perp}_0=\{0\}$ if and only if $\cLm(\psim)+\cLm(\psim^\tau) \ne 0$. In particular,  $t^{\perp}_0=\{0\}$ if  $\psim$ is quadratic.
\end{enumerate}
\end{thm}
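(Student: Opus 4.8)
The plan is to describe $t^{\perp}$ explicitly as a subspace of $t^{\ord}_{\rho}\times t^{\ord}_{\rho'}$ using the parametrisations from Proposition~\ref{prop:td}. Recall from \eqref{sole-d} that $d_*$ identifies $t^{\ord}_{\rho}$ with $\rH^1(K,\overline{\Q}_p)=\overline{\Q}_p\eta_{\gp}\oplus\overline{\Q}_p\eta_{\bar\gp}$, an element $[\rho_\epsilon]$ being represented by $\bigl(1+\epsilon\left(\begin{smallmatrix} d^\tau & * \\ 0 & d\end{smallmatrix}\right)\bigr)\rho$ with $d=x_1\eta_{\gp}+x_2\eta_{\bar\gp}$; likewise $t^{\ord}_{\rho'}$ is parametrised by $d'\in\rH^1(K,\overline{\Q}_p)$, with $[\rho'_\epsilon]$ represented by $\bigl(1+\epsilon\left(\begin{smallmatrix} (d')^\tau & * \\ 0 & d'\end{smallmatrix}\right)\bigr)\rho'$. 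As $\cD^{\perp}$ is a subfunctor of $\cD^{\ord}_{\rho}\times\cD^{\ord}_{\rho'}$ we have $t^{\perp}\subseteq t^{\ord}_{\rho}\times t^{\ord}_{\rho'}$, and the plan is to rewrite its two defining conditions in terms of the pair $(d,d')$.

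First I would treat the trace condition $\tr(\rho_\epsilon)=\tr(\rho'_\epsilon)$. A direct computation gives $\tr(\rho_\epsilon)=(\psi+\psi^\tau)+\epsilon(\psi d^\tau+\psi^\tau d)$ and $\tr(\rho'_\epsilon)=(\psi+\psi^\tau)+\epsilon(\psi d'+\psi^\tau (d')^\tau)$, so the condition reads $\psi w+\psi^\tau w^\tau=0$ with $w=d^\tau-d'$ (note that $d-(d')^\tau=w^\tau$). Applying Lemma~\ref{a=dtau} with $s=-1$ — equivalently, restricting to $\I_{\gp}$ and $\I_{\bar\gp}$, where $\psi,\psi^\tau$ are trivial, using the injectivity of $\rH^1(K,\overline{\Q}_p)\hookrightarrow\rH^1(\I_{\gp},\overline{\Q}_p)\times\rH^1(\I_{\bar\gp},\overline{\Q}_p)$ to get $w^\tau=-w$, and then invoking $\psim\ne\mathbf{1}$ and the fact that $\Gamma_K\setminus\Gamma_H$ generates $\Gamma_K$ — forces $w=0$, i.e. $d'=d^\tau$. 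Thus $t^{\perp}$ embeds into $\rH^1(K,\overline{\Q}_p)$ via $d$, with $d'=d^\tau=x_2\eta_{\gp}+x_1\eta_{\bar\gp}$.

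Next I would impose $\chi_\epsilon(\Frob_{\gp})=\chi'_\epsilon(\Frob_{\gp})$. By the proof of Proposition~\ref{prop:td}, since $\eta_{|\I_{\gp}}=(\eta_{\gp})_{|\I_{\gp}}=\log_p$ and $(\eta')_{|\I_{\gp}}=\log_p$, the unramified quotient characters are $\chi_\epsilon=\psi^\tau\bigl(1+\epsilon(d-x_1\eta)\bigr)$ and $\chi'_\epsilon=\psi\bigl(1+\epsilon(d'-x_2\eta')\bigr)$; and since $\psi(\Frob_{\gp})=\psi^\tau(\Frob_{\gp})$ (because $\psim(\gp)=1$) the matching condition reduces to $(d-x_1\eta)(\Frob_{\gp})=(d'-x_2\eta')(\Frob_{\gp})$. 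Evaluating at $\Frob_{\gp}$ using $\eta_{\bar\gp}(\Frob_{\gp})=-\cL_{\gp}$ (Lemma~\ref{linvariant1}) and the definitions $\cLm(\psim^\tau)=(\eta-\eta_{\gp}+\eta_{\bar\gp})(\Frob_{\gp})$, $\cLm(\psim)=(\eta'-\eta_{\gp}+\eta_{\bar\gp})(\Frob_{\gp})$, one computes
\[(d-x_1\eta)(\Frob_{\gp})=-x_1\cLm(\psim^\tau)-(x_1+x_2)\cL_{\gp},\qquad (d'-x_2\eta')(\Frob_{\gp})=-x_2\cLm(\psim)-(x_1+x_2)\cL_{\gp},\]
the undetermined value $\eta_{\gp}(\Frob_{\gp})$ cancelling within each line. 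Hence the condition becomes $x_1\cLm(\psim^\tau)=x_2\cLm(\psim)$, so that $t^{\perp}\simeq\{(x_1,x_2)\in\overline{\Q}_p^2:x_1\cLm(\psim^\tau)=x_2\cLm(\psim)\}$, which is a line by Proposition~\ref{6expocusp} since at least one of $\cLm(\psim),\cLm(\psim^\tau)$ is non-zero; therefore $\dim t^{\perp}=1$.

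Finally, for $t^{\perp}_0$ I would add the fixed-determinant constraint. Since $\det(\rho_\epsilon)=\psi\psi^\tau\bigl(1+\epsilon(d+d^\tau)\bigr)$, it reads $d+d^\tau=0$, i.e. $x_1=-x_2$ (the analogous constraint on $\rho'_\epsilon$ being automatic as $d'=d^\tau$); intersecting with $x_1\cLm(\psim^\tau)=x_2\cLm(\psim)$ gives $x_1\bigl(\cLm(\psim)+\cLm(\psim^\tau)\bigr)=0$. Hence $t^{\perp}_0=\{0\}$ exactly when $\cLm(\psim)+\cLm(\psim^\tau)\ne0$. When $\psim$ is quadratic one has $\psim^\tau=\psim$ (the group $\Gamma_{H/\Q}$ is then the Klein four-group, so $\tau$ acts trivially on $C$) and $\cLm(\psim)\ne0$ by Proposition~\ref{6expocusp}, whence the sum equals $2\cLm(\psim)\ne0$ and $t^{\perp}_0=\{0\}$. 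I expect the bookkeeping in the $\chi$-matching step — checking that the contributions of $\cL_{\gp}$ and of $\eta_{\gp}(\Frob_{\gp})$ cancel, leaving precisely $x_1\cLm(\psim^\tau)=x_2\cLm(\psim)$ — to be the main point requiring care; the trace-matching rigidity $d'=d^\tau$ is the other delicate point.
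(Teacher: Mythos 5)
Your proposal is correct and follows essentially the same route as the paper: you identify $t^{\perp}$ inside $t^{\ord}_{\rho}\times t^{\ord}_{\rho'}$ via the $d_*$-parametrisation from Proposition~\ref{prop:td}, use Lemma~\ref{a=dtau} to force $d'=d^{\tau}$, compute the ordinary characters $\chi_\epsilon,\chi'_\epsilon$ and extract from $\chi_\epsilon(\Frob_\gp)=\chi'_\epsilon(\Frob_\gp)$ the equation $x_1\cLm(\psim^\tau)=x_2\cLm(\psim)$, then invoke Proposition~\ref{6expocusp} for the dimension count and add the determinant constraint $x_1+x_2=0$ for $t^{\perp}_0$. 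The small notational divergence (your $\chi_\epsilon=\psi^\tau(1+\epsilon(d-x_1\eta))$ versus the paper's $\psi(\cdots)$) is immaterial since these are characters of $\Gamma_{K_\gp}$ on which $\psi=\psi^\tau$.
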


\begin{proof}  By definition 
\[t^{\perp}=\cD^{\perp}(\overline{\Q}_p[\epsilon])=\left\{[(\rho_\epsilon,\chi_\epsilon), (\rho'_\epsilon,\chi'_\epsilon )] \in t^{\ord}_{\rho} \times t^{\ord}_{\rho'} \mid \tr(\rho_\epsilon)= \tr(\rho' _\epsilon), \chi_\epsilon(\Frob_{\gp})=\chi'_\epsilon(\Frob_{\gp})\right\}.\]
By \eqref{sole-d}, $d_*$ identifies $t^{\ord}_{\rho}$ with $\rH^1(K,\overline{\Q}_p)$, hence  $[\rho_\epsilon]$ is uniquely determined  by an element  $d\in  \rH^1(K,\overline{\Q}_p)$ as follows (see \eqref{rho-adjoint})
\begin{align}\label{eq:infinitesimal}
 \rho_\epsilon =\begin{pmatrix}
\psi (1+\epsilon d^\tau) &  \psi^\tau \eta + \epsilon \cdot * \\ 0 &\psi^\tau (1+\epsilon d) \end{pmatrix}. 
\end{align}
Write $d=\alpha \eta_{\gp} + \beta \eta_{\bar{\gp}} $ with $\alpha, \beta \in \overline{\Q}_p$ in the basis $\{\eta_{\gp},\eta_{\bar{\gp}}\}$
of $\rH^1(K, \overline{\Q}_p)$ (see \S\ref{def-eta}). Note that
 \begin{align}\label{dag-trace}
  \tr(\rho_\epsilon)= \psi + \psi^\tau+\epsilon(\psi d^{\tau} +  \psi^\tau d)  \text{ and }   \det(\rho_\epsilon)= \psi \psi^\tau (1+\epsilon(d+ d^{\tau})).
\end{align}

Analogously, an element of $t^{\ord}_{\rho'}$ can be represented by an infinitesimal reducible lift 
\[ \rho'_\epsilon =\begin{pmatrix}
\psi^\tau (1+\epsilon d'{}^\tau) &  \psi\eta' + \epsilon \cdot *  \\ 0 &\psi (1+\epsilon d') \end{pmatrix},  \]
 uniquely determined  by  $d'\in \rH^1(K,\overline{\Q}_p)$.  
As $\tr \rho_\epsilon'=\tr \rho_{\epsilon}$, Lemma~\ref{a=dtau} implies that $d'=d^\tau$. 

By the end proof of Proposition~\ref{prop:td}, one has $\chi_\epsilon= \psi(1 + \epsilon (d-\alpha \eta))$, hence 
 \begin{align}\label{U_p equation1}  
 \chi_\epsilon(\Frob_{\gp})=(1 + \epsilon (d-\alpha \eta)(\Frob_{\gp})) \psi(\gp)= (1 + \epsilon [\alpha (\eta_{\gp}-\eta)(\Frob_{\gp}) + \beta \eta_{\bar{\gp}}(\Frob_{\gp})])\psi(\gp). \end{align}
Analogously, from $d'=d^\tau=\beta \eta_{\gp} +  \alpha\eta_{\bar{\gp}} $, one deduces $\chi'_\epsilon= \psi(1 + \epsilon (d^\tau-\beta \eta'))$
hence 
\[ \chi'_\epsilon(\Frob_{\gp})=(1 + \epsilon (d^\tau-\beta \eta')(\Frob_{\gp})) \psi(\gp)= (1 + \epsilon (\beta (\eta_{\gp}-\eta')(\Frob_{\gp}) + \alpha \eta_{\bar{\gp}}(\Frob_{\gp})))\psi(\gp).\]

Comparing with  \eqref{U_p equation1} shows that 
\[ t^{\perp}\simeq\{(\alpha,\beta) \in \overline{\Q}{}_p^2\, | \,\beta (\eta_{\gp}-\eta' )(\Frob_{\gp}) + \alpha \eta_{\bar{\gp}}(\Frob_{\gp}) =
 \alpha (\eta_{\gp}-\eta)(\Frob_{\gp}) + \beta \eta_{\bar{\gp}}(\Frob_{\gp})\}. \]
Using Lemma~\ref{linvariant1} and Proposition~\ref{linvariant2} the tangent space can be  rewritten in terms of $\cL$-invariants: \begin{align}\label{U_pL}
t^{\perp}\simeq\{(\alpha,\beta) \in \overline{\Q}{}_p^2\, | \, \alpha \cLm(\psim^\tau) =\beta \cLm(\psim)\}.
\end{align}
By Proposition~\ref{6expocusp},  $\cLm(\psim^\tau)$ and  $\cLm(\psim)$ cannot be both zero. Hence $\dim t^{\perp}=1$.

(ii)  It follows from  \eqref{dag-trace} that the tangent space  $t^{\perp}_0$ is parametrized by $(\alpha,\beta) \in t^{\perp}$ such that  $\alpha + \beta=0 $. Hence $t^{\perp}_0=\{0\}$ if and only if $\cLm(\psim)+\cLm(\psim^\tau)\ne 0$.\end{proof}

Taking $(\alpha,\beta)$ from the above computations as  coordinates of the plane  $t^{\ord}_{\rho}$, 
the lines $t^{K}_{\rho}$ , $t^{\perp}$ and $t^{\ord}_{\rho,0}$ can be  drawn as  follows
\begin{center} 
\begin{tikzpicture}
\draw[thick,] (0,0) -- (5,0) node[anchor=north east]  {{\small $\beta$-line}};
\draw[thick,] (0,0) -- (0,2) node[anchor=west ] {{\small $\alpha$-line}};
\draw[thick,] (0,0) -- (5,2) node[anchor=north west] {{\small $t^{\perp}:\alpha \cL_{-}(\psim^{\tau})=\beta \cL_{-}(\psim)$}};
\draw[thick,] (0,0) -- (2,-1.5) node[anchor=south east] {};
\draw[thick,] (0,0) -- (0,-1.5) node[anchor=south east] {};
\draw[thick,] (0,0) -- (-4,0) node[anchor=north ] {{\small $t_{\rho}^{K}:\alpha=0$  (CM line)}};
\draw[thick,] (0,0) -- (-3.75,-1.5) node[anchor=north east] {};
\draw[thick,] (0,0) -- (-2,2) node[anchor=north east] {{\small $t^{\ord}_{\rho,0}:\alpha + \beta=0$}};
\end{tikzpicture}
 
 \end{center}
 
\begin{cor}\label{system2}  We have $t^{\ord}_{\rho}=t^{\perp} \oplus t^K_{\rho}$ if and only if $\cLm(\psim) \ne 0$.
\end{cor}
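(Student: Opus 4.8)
The plan is to realise both $t^{\perp}$ and $t^K_{\rho}$ as explicit lines inside the plane $t^{\ord}_{\rho}$ and then note that a direct sum decomposition of a $2$-dimensional space into two lines through the origin amounts precisely to these two lines being distinct. Concretely, I would keep the coordinates $(\alpha,\beta)$ on $t^{\ord}_{\rho}$ introduced in the proof of Theorem~\ref{cuspidal-tangent}, so that under $d_*\colon t^{\ord}_{\rho}\xrightarrow{\sim}\rH^1(K,\overline{\Q}_p)$ the corresponding class is $d=\alpha\eta_{\gp}+\beta\eta_{\bar\gp}$. In these coordinates \eqref{cmequa} together with the description of $t^K_{\rho}$ in Proposition~\ref{dim t_R} (and the figure preceding the present statement) gives $t^K_{\rho}=\{(\alpha,\beta):\alpha=0\}$, while \eqref{U_pL} gives $t^{\perp}\simeq\{(\alpha,\beta):\alpha\cLm(\psim^\tau)=\beta\cLm(\psim)\}$. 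Since by Theorem~\ref{cuspidal-tangent} both of these are genuine lines, the assertion $t^{\ord}_{\rho}=t^{\perp}\oplus t^K_{\rho}$ is equivalent to $t^{\perp}\neq t^K_{\rho}$.

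Next I would split into two cases. If $\cLm(\psim)\neq0$, the vector $(\cLm(\psim),\cLm(\psim^\tau))$ lies on $t^{\perp}$ and has nonzero first coordinate, so it is not on $t^K_{\rho}$; hence $t^{\perp}\neq t^K_{\rho}$ and the sum is direct. If $\cLm(\psim)=0$, then Proposition~\ref{6expocusp} forces $\cLm(\psim^\tau)\neq0$, so the defining relation of $t^{\perp}$ collapses to $\alpha\cLm(\psim^\tau)=0$, i.e.\ $\alpha=0$, whence $t^{\perp}=t^K_{\rho}$ and the sum is not direct. Together these yield both implications.

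I do not anticipate any serious difficulty: the only point needing attention is that in the degenerate case $\cLm(\psim)=0$ one must invoke Proposition~\ref{6expocusp} (equivalently $\dim t^{\perp}=1$ from Theorem~\ref{cuspidal-tangent}) to ensure $\cLm(\psim^\tau)\neq0$, so that $t^{\perp}$ is indeed the line $\{\alpha=0\}$ rather than the whole plane. Everything else is two-dimensional linear algebra, already visible in the figure above.
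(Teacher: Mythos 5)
Your proof is correct and follows essentially the same approach as the paper's: both work in the $(\alpha,\beta)$-coordinates introduced in the proof of Theorem~\ref{cuspidal-tangent}, identify $t^K_\rho$ as the line $\{\alpha=0\}$ and $t^\perp$ as the line $\{\alpha\cLm(\psim^\tau)=\beta\cLm(\psim)\}$, and observe that the decomposition holds iff these lines differ. The paper's proof stops there, leaving the final two-line linear-algebra check implicit, whereas you carry it out explicitly (including the appeal to Proposition~\ref{6expocusp} to guarantee $\cLm(\psim^\tau)\ne 0$ when $\cLm(\psim)=0$), but there is no difference in substance or route.
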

\begin{proof}  As in the proof of Theorem~\ref{cuspidal-tangent} one can use 
$(\alpha,\beta) \in \overline{\Q}{}_p^2$ as coordinates on $t^{\ord}_{\rho}$. By \eqref{U_pL}  the equation defining $t^{\perp}$ is 
$\alpha \cLm(\psim^\tau) =\beta \cLm(\psim)$, while by \eqref{cmequa} the equation defining $t^K_{\rho}$ is  $\alpha=0$.\end{proof}

\section{Components of the  eigencurve containing \texorpdfstring{$f$}{f}} \label{section3}

The weight  $1$ theta series $\theta_\psi$ has level  $N=D\cdot \mathrm{N_{K/\Q}}(\gc_\psi) $, 
where $-D$ is the fundamental discriminant of $K$ and $\gc_\psi$ is the conductor of $\psi$. 

The $p$-adic cuspidal  eigencurve $\cE$  of tame level $N$ is endowed with a flat and locally finite morphism
 $\kappa: \cE \rightarrow \cW$ of reduced rigid analytic spaces over $\Q_{p}$, where the weight space $\cW$ represents continuous homomorphisms from  $\Z_p^{\times}\times (\Z /N \Z)^{\times}$ to $\mathbb{G}_m$. 
Locally  $\cO_\cE$ is generated as $\cO_\cW$-algebra by the  Hecke operators $U_p$ and $T_\ell$ for $\ell \nmid Np$.  
There exists a universal $2$-dimensional pseudo-character $\tau_{\cE}:\Gamma_{\Q} \rightarrow  \cO(\cE)$
unramified at  $\ell \nmid Np$ and sending $\Frob_\ell$ on $T_\ell$ (see \cite[(15)]{bellaiche-dimitrov}).  It is defined by interpolation, so that
its specialization at any classical point of $\cE(\overline{\Q}_p)$ equals the trace of the semi-simple $p$-adic Galois representation $\Gamma_{\Q} \to \GL_2(\overline{\Q}_p)$ attached to the corresponding eigenform.

If $\psim$ is trivial then the corresponding theta series $\theta_\psi$  coincides with the Eisenstein series attached to  $\psi$ and $\psi\epsilon_K $, and the local structure of the eigencurve at such points has been extensively studied in \cite{BDPozzi}. 
 If $\psim(\gp)\neq1$ then  $\theta_\psi$ is a cuspform whose  $p$-stabilizations
\[\theta_\psi(z) - \psi(\bar{\gp}) \theta_\psi(pz) \text{ and  } \theta_\psi(z) - \psi(\gp) \theta_\psi(pz)\] 
belong each  to a unique Hida family  necessarily having  CM by $K$ (see \cite[Cor.~1.2]{bellaiche-dimitrov}).

We henceforth assume that $\psim$ is non-trivial but $\psim(\gp)=1$,  and we let $f$ denote  the unique $p$-stabilization of the cuspform $\theta_\psi$.

\subsection{Components with CM by K containing $f$}\label{cm-chars}
The reciprocity map  in  Class Field Theory, denoted by  $\mathrm{Art}$,  is normalized so that it sends uniformizers   to 
arithmetic Frobenii.  We let $\varepsilon_{\mathrm{cyc}}:\Gamma_\Q \twoheadrightarrow  \Z_p^{\times}$ be the $p$-adic cyclotomic character, $\omega_p$ be the Teichm\"{u}ller character and
$\varepsilon_p=\varepsilon_{\mathrm{cyc}} \omega_p^{-1} :\Gamma_\Q \twoheadrightarrow 1+p^\nu \Z_p$, where $\nu=2$ if $p=2$, and $\nu=1$ otherwise. We recall the universal cyclotomic character 
$\chi_p:\Gamma_\Q \to \bar\Q_p\lsem 1+p^\nu \Z_p\rsem^{\times} \simeq \varLambda^\times$ as well as the character
$\eta_{\mathbf{1}}\in\rH^1(\Q,\bar\Q_p)$ from  \cite[\S2.4]{BDPozzi}. 

As  $(p)=\gp\bar\gp$ splits in $K$, we have a short exact sequence of  groups 
\[1\to  \left(\cO_{K,\gp}^\times/\cO_{K}^\times\right)^{(p)} \to \cC\ell_K^{(p)}(\gp^{\infty})\to \cC\ell_K^{(p)}\to 1,\]
where the exponent $(\cdot)^{(p)}$ denotes the $p$-primary part of an abelian group. 
Letting  $\W_{\gp}$ denote the maximal torsion-free quotient   of $\cC \ell_K(\gp^{\infty})$, seen as the Galois group of  the unique $\gp$-ramified $\Z_p$-extension of $K$, for any prime to $p$ ideal $\gc$ of $K$, there exists an isomorphism 
\[\mathrm{Art}: \cC\ell_K^{(p)}(\gp^{\infty}\gc)_{/\tor} \xrightarrow{\sim} \W_{\gp}\]
sending $1+p^\nu\Z_p\subset \cO_{K,\gp}^\times = \Z_p^\times$ to the  inertia subgroup $\I_{\gp} \subset \W_{\gp} $ at $\gp$. 
The quotient $\W_{\gp}/\I_{\gp}$ is cyclic of order $p^h$, for some $h\in\Z_{\geqslant 1}$, and we let 
 $w_{\gp} \in \W_{\gp}$ be the  topological generator such that $w_{\gp}^{p^h}=\mathrm{Art}(1+p^\nu)\in\I_{\gp} $. 
 A $p$-adic avatar of a Hecke character of infinity type $(-1,0)$ is given by the  continuous character valued in the ring of integers $\cO$ of a sufficiently large  $p$-adic field
  \begin{align}\label{eq:epsilon} 
\varepsilon_{\gp} : \Gamma_K \twoheadrightarrow \W_{\gp} \to \cO^{\times},
\end{align} 
where the second map sends $w_{\gp}$ to   $z_{\gp}\in 1+\gm_\cO$ such that   
 $z_{\gp}^{p^h}=(1+p^\nu)^{-1}$.
 
The homomorphism $\eta_{\gp}:\Gamma_K \to \overline{\Q}_p$ from \S\ref{def-eta} factors through $\W_{\gp}$ and its restriction to 
$\I_{\gp}$ is given by $\log_p\circ \mathrm{Art}^{-1}$, hence $\eta_{\gp}=-\log_p\circ\varepsilon_{\gp}$. 
One can  check that  $\varepsilon_{\gp}\circ \mathrm{Ver}=\varepsilon_p$ and  
$\eta_{\gp}\circ \mathrm{Ver}= \eta_{\mathbf{1}}$, where $\mathrm{Ver}: \Gamma_{\Q}^\mathrm{ab}\to \Gamma_{K}^\mathrm{ab}$ is the transfer homomorphism.

 The completed strict  local ring $\varLambda_\gp$ of $\cO \lsem \W_{\gp} \rsem$
  at the augmentation ideal 
 $\ker\left(\cO \lsem \W_{\gp} \rsem \to \cO\right)$  is naturally isomorphic to $\varLambda$.
 Indeed, the inclusion   $\cO \lsem \I_{\gp} \rsem\hookrightarrow \cO \lsem \W_{\gp}\rsem$ can be identified  to the generically \'etale map  $\cO \lsem X\rsem\hookrightarrow\cO \lsem  W\rsem$ sending $X$ to $(1+W)^{p^h}-1$.
 The resulting map
\begin{align}\label{eq:Lambda-iso}
\varLambda\xrightarrow{\sim} \varLambda_\gp
\end{align}
is an isomorphism allowing  to consider  the   universal character
  $\chi_{\gp}:\Gamma_K \twoheadrightarrow \W_{\gp} \to \cO \lsem \W_{\gp} \rsem^{\times} \hookrightarrow \varLambda^{\times}_\gp$ as $\varLambda$-valued. It follows that $\chi_{\gp}\circ \mathrm{Ver}=\chi_p$ and moreover (compare with \cite[(25)]{BDPozzi}) one has 
  \begin{align}\label{eq:univ-coates-wiles}
  \chi_{\gp}\equiv 1 - \frac{\eta_{\gp}}{\log_p(1+p^{\nu})}X\pmod{X^2}.  
\end{align}
Similarly, we define the universal character $\chi_{\bar\gp}=\chi_\gp^{\tau}:\Gamma_K \to \varLambda_\gp^\times$ unramified away from $\bar\gp$.

  Recall the CM Hida family $\Theta_{\psi}$ whose 
 specialization  in weight   $k\in \Z_{\geq 1}$ is given by the classical theta series  $\theta_{\psi,k}$
corresponding to the Hecke character $\lambda_{\psi,k} : \cC\ell_K(\gp^{\infty}\gc_\psi)\to  
\cO \lsem \W_{\gp} \rsem^{\times} \to \overline{\Q}{}_p^{\times}$ of infinity type $(1-k,0)$
obtained as the composed map of  the algebra homomorphism  attached to the group homomorphism 
$\varepsilon_{\gp}^{k-1}:\W_{\gp} \rightarrow \overline{\Q}{}_p^{\times}$ and the $\psi$-projection
  $ \cC\ell_K(\gp^{\infty}\gc_\psi) \to  \cO \lsem \W_{\gp} \rsem^\times$ sending 
$(z,w)\in  \cC\ell_K(\gp^{\infty}\gc_\psi)_{\tor}\times \W_{\gp}=\cC\ell_K(\gp^{\infty}\gc_\psi)$ to $\psi(z,w)[w]$.

\begin{defn} \label{d:CM}
The CM by $K$ part $\cE^K$ of $\cE$ is  the closed locus where 
$\tau_{\cE}(g)$ vanishes for all $g\in \Gamma_{\Q}\backslash \Gamma_K$. We will say that an  
irreducible component  $\cZ$ of $\cE$  has CM by $K$, if $\cZ\subset \cE^K$. 
\end{defn}

As $\rho_f=\Ind^\Q_K \psi$ is irreducible, the localization of the pseudo-character $\tau_{\cE}$ at $f$ yields 
\begin{align}\label{eq:rhoT}
\rho_{\cT}:\Gamma_{\Q} \to \GL_2(\cT)
\end{align} 
 deforming $\rho_f$ and such that $ \tr(\rho_{\cT})(\Frob_\ell)=T_\ell$ for all primes  $\ell \nmid Np$ (see \cite{nyssen}).

\begin{prop}\label{Tcmgor} Let $\cT^K$ be the completed local ring of $\cE^K$ at $f$. There is an isomorphism of local $\varLambda$-algebras $\cT^K\xrightarrow{\sim}\varLambda_\gp \times_{\overline{\Q}_p} \varLambda_{\gp}$. Moreover, the   
Galois representations of  the two components $\Theta_{\psi}$ and $\Theta_{\psi^\tau}$ of $\cE^K$ containing $f$ are   $\Ind^\Q_K (\psi\chi_{\gp})$ and $\Ind^\Q_K (\psi^\tau \chi_{\gp})$, respectively.   
\end{prop}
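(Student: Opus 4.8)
\textbf{Proof plan for Proposition~\ref{Tcmgor}.}
The plan is to identify $\cE^K$ near $f$ with the Hida--Hecke algebra for CM forms, for which the Galois side is governed by $\Gamma_K$-characters rather than $\Gamma_\Q$-representations. First I would recall that any classical point of $\cE^K$ near $f$ corresponds to a theta series $\theta_\varphi$ with $\varphi$ a Hecke character of $K$ lifting $\psi$ (this is the content of the cited \cite{bellaiche-dimitrov} analysis: $\rho_f=\Ind_K^\Q\psi$ and the CM locus parametrizes induced representations). So the pseudo-character $\tau_{\cE}$ restricted to $\cE^K$ is of the form $\Ind_K^\Q(\cdot)$, and the essential task is to compute the space of $\gp$-ordinary deformations of $\psi$ as a character of $\Gamma_K$ whose induction stays ordinary at $p$ and has tame level dividing $N$. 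Because $\psi$ has conductor prime to $p$ and $\psim(\gp)=1$, the only ramification allowed at $p$ for such a deformation is along the $\Z_p$-extension $\W_{\gp}$ (resp. $\W_{\bar\gp}$), so the universal such deformation of $\psi$ is $\psi\chi_{\gp}$, valued in $\varLambda_\gp\simeq\varLambda$ via \eqref{eq:Lambda-iso}; likewise $\psi^\tau\chi_{\gp}$ (equivalently $\psi\chi_{\bar\gp}$) gives the second family. The two induced representations $\Ind_K^\Q(\psi\chi_{\gp})$ and $\Ind_K^\Q(\psi^\tau\chi_{\gp})$ are visibly non-isomorphic as $\varLambda$-valued deformations (they are distinguished by which of $\gp,\bar\gp$ carries the ramification in the ordinary filtration), so they cut out two distinct irreducible components, each with completed local ring $\varLambda_\gp$.

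Next I would pin down the fiber product structure. The two components $\Theta_\psi$ and $\Theta_{\psi^\tau}$ both pass through $f$, so $\cT^K$ surjects onto $\varLambda_\gp\times\varLambda_\gp$; conversely the map to this product is injective because $\cE^K$ has (at worst) these two components through $f$ and $\cT^K$ is reduced (as $\cE$ is reduced). Thus $\cT^K$ sits inside $\varLambda_\gp\times\varLambda_\gp$ as the subring of pairs congruent modulo the intersection multiplicity. To see that the congruence is exactly the maximal-ideal-level one — i.e. that $\cT^K\simeq\varLambda_\gp\times_{\overline{\Q}_p}\varLambda_\gp$ — I would argue on tangent spaces: the relative tangent space $t^K_{\rho,0}=\{0\}$ by Proposition~\ref{dim t_R}, which forces the two components to be transverse over the weight space at $f$, so they meet only at the closed point with the reduced (i.e. $\overline{\Q}_p$) fiber product. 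Equivalently, specializing $\rho_{\cT^K}$ at $X=0$ recovers $\rho_f=\Ind_K^\Q\psi$ with no infinitesimal CM-to-CM deformation fixing the determinant, which is precisely $t^K_{\rho,0}=0$.

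The main obstacle I anticipate is the second assertion — that the full list of CM components through $f$ is exactly $\{\Theta_\psi,\Theta_{\psi^\tau}\}$ and nothing more — together with the sharp (reduced) fiber product. Ruling out further CM components amounts to showing that every $\gp$- or $\bar\gp$-ordinary deformation of $\psi$ (as a $\Gamma_K$-character, with the tame level constraint) factors through $\W_{\gp}$ or $\W_{\bar\gp}$ respectively; this uses that $\psi$ is finite order with conductor prime to $p$, that the relevant local deformation condition at $p$ is given by the $\Z_p$-extension, and a class-group/Leopoldt input ensuring $\rH^1(K,\overline{\Q}_p)$ is spanned by $\eta_{\gp},\eta_{\bar\gp}$ (already recorded in \S\ref{def-eta}). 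For the reducedness of the fiber product, the crucial point is again $t^K_{\rho,0}=\{0\}$ from Proposition~\ref{dim t_R}: it guarantees that $\varLambda_\gp\to\cT^K\otimes_{\cT^K}\overline{\Q}_p$ has no nilpotents contributed by the CM locus, so the intersection of the two components over $\Spec\varLambda$ is the reduced point, giving the displayed isomorphism $\cT^K\xrightarrow{\sim}\varLambda_\gp\times_{\overline{\Q}_p}\varLambda_\gp$.
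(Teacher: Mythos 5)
Your classification of the CM components is essentially the paper's argument: using $\rho_A\simeq\rho_A\otimes\epsilon_K$ to induce from a $\Gamma_K$-character, and then using $\gp$-ordinarity plus the determinant condition to force $\psi_A\psi^{-1}\in\{\chi_{\gp},\chi_{\bar\gp}\}$. That part is sound and matches the paper.

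The gap is in the last step, where you invoke $t^K_{\rho,0}=\{0\}$ to conclude that the fiber product is taken over $\overline{\Q}_p$. The vanishing of $t^K_{\rho,0}$ only says that each individual CM component is \'etale over the weight space (this is exactly how Corollary~\ref{cor:def-red} uses it: $\varLambda\to\cR^K_\rho$ is an isomorphism). It says nothing about how the \emph{two} components meet each other: for any $n\geq1$, the ring $\varLambda_\gp\times_{\overline{\Q}_p[X]/(X^n)}\varLambda_\gp$ has two components each isomorphic to $\varLambda_\gp$ and each \'etale over $\varLambda$, and moreover its relative cotangent space over $\varLambda$ is $1$-dimensional regardless of $n$, so a tangent-space computation cannot detect $n$. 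Note also that $\cD^K_\rho$ is a deformation functor for the specific $\Gamma_K$-representation $\rho$ with its $\gp$-ordinary filtration; the families $\Theta_\psi$ and $\Theta_{\psi^\tau}$ define points of $\cD^K_\rho$ and $\cD^K_{\rho'}$ separately, and there is no map $\cR^K_\rho\to\cT^K$ through which a single tangent-space vanishing could control the intersection of the two components inside $\cT^K$. What you actually need is to show that the subalgebra generated over $\varLambda$ by the Hecke operators is the full fiber product, and this requires producing an element of $\cT^K$ whose two coordinates differ at first order in $X$. The paper does this directly: the image of $T_\ell$ for a split prime $\ell=\lambda\bar\lambda$ is $\bigl((\chi_\gp\psi+\chi_{\bar\gp}\psi^\tau)(\lambda),(\chi_\gp\psi^\tau+\chi_{\bar\gp}\psi)(\lambda)\bigr)$, and by \eqref{eq:univ-coates-wiles} the two coordinates are distinct mod $X^2$ exactly because $\eta_\gp\psi+\eta_{\bar\gp}\psi^\tau\neq\eta_\gp\psi^\tau+\eta_{\bar\gp}\psi$, which holds since $\psim\neq\mathbf{1}$. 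Without some replacement for this mod-$X^2$ computation your argument does not rule out a higher-order tangency.

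A small additional imprecision: $\cT^K$ cannot ``surject onto $\varLambda_\gp\times\varLambda_\gp$'' as you write, since a local ring cannot surject onto a ring with two maximal ideals; presumably you mean it surjects onto each factor.
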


\begin{proof} 
Let $\cZ$  be an irreducible component of $\cE$ having CM by $K$ and containing $f$, and denote by 
 $A=\cO_{\cZ,f}^\wedge$ its  completed local $\varLambda$-algebra. 
As $\rho_f$ is irreducible, the pseudo-character $\tau_A:\Gamma_{\Q}\to A$ comes from a representation 
$\rho_A:\Gamma_{\Q}\to \GL_2(A)$, which in view of  Definition~\ref{d:CM} (and the uniqueness of the lifting) is such that 
$\rho_A\simeq \rho_A\otimes \epsilon_K$.  
By slightly adapting the arguments of \cite[Lem.~3.2]{DIH} to local rings in the category $\cC$ we obtain that
$\rho_A=\Ind^\Q_K \psi_A$, where $\psi_A:\Gamma_K \to A^{\times}$ is a character lifting, say, $\psi$ (otherwise it lifts $\psi^\tau$ and the argument is the same). By $\gp$-ordinarity either the character $\psi_A\psi^{-1}: \Gamma_K\to 1+ \gm_A$ or its $\tau$-conjugate factors through $\W_{\gp}$. Moreover one has $\chi_p=\det(\Ind^\Q_K(\psi_A\psi^{-1}))\cdot \epsilon_K=(\psi_A\psi^{-1})\circ\mathrm{Ver}$, hence 
$\psi_A\psi^{-1}=\chi_{\gp}$ or $\chi_{\bar\gp}$. 
So far we have shown that there is an injection  
\[\cT^K\hookrightarrow \varLambda_{\gp} \times_{\overline{\Q}_p} \varLambda_{\gp}, \,\,
T_\ell\mapsto \left((\chi_{\gp}\psi+\chi_{\bar\gp}\psi^\tau)(\lambda),(\chi_{\gp}\psi^\tau+\chi_{\bar\gp}\psi)(\lambda)\right), \]
where $\ell\nmid Np$ is a prime splitting in $K$ as  $(\ell)=\lambda\bar\lambda$. 
To show that it is an isomorphism, it suffices to see that $\chi_{\gp}\psi+\chi_{\bar\gp}\psi^\tau$ and 
$\chi_{\gp}\psi^\tau+\chi_{\bar\gp}\psi$ are not congruent mod $X^2$, which in view of  \eqref{eq:univ-coates-wiles}
amounts to show that $\eta_{\gp}\psi+\eta_{\bar\gp}\psi^\tau\ne \eta_{\gp}\psi^\tau+\eta_{\bar\gp}\psi$. This is clear as $\psim \ne \mathbf{1}$.\end{proof}

 \subsection{Existence of a component without CM by K containing  \texorpdfstring{$f$}{f}}\label{nonCMsection}
The full eigencurve $\cE_{\full}$ is locally generated over $\cE$ by the Hecke operators $U_q$ for  $q \mid N$. The resulting 
morphism $\cE_{\full} \to \cE$  is locally finite, surjective, compatible with all the structures, and not an  isomorphism when $N>1$. 
Hida's $p$-ordinary Hecke algebra of tame level $N$ is a canonical integral model for the ordinary locus of $\cE_{\full}$ which is 
the admissible open of $\cE_{\full}$ defined by  $|U_p|_p=1$, and whose irreducible components correspond to Hida families of tame level $N$. 
In this language Proposition~\ref{Tcmgor} asserts that $\Theta_{\psi}$ and $\Theta_{\psi^{\tau}}$ are the only 
Hida families containing $f$ and having CM by $K$.  

Let $\cU$ be an affinoid subdomain of $\cW$ containing $\kappa(f)$ and small enough to ensure the existence of the finite type, projective
$\cO(\cU)$-module $S^{\dagger,0}_\cU(N)$ of $p$-ordinary families of cuspidal overconvergent modular forms of tame level $N$.    
By construction, there exists an affinoid neighbourhood  $\cV$ of $f$ in $\cE_{\full}$ such that $\cU=\kappa(\cV)$, and $\cO(\cV)$ is the finite $\cO(\cU)$-subalgebra of $\mathrm{End}_{\cO(\cU)}(S^{\dagger,0}_\cU(N))$ generated by  $T_\ell$ for $\ell \nmid Np$ and $U_q$ for $q \mid Np$ (see \cite[\S7.1]{coleman-mazur}). Taking the first  coefficient in $q$-expansions at the cusp $\infty$
yields a natural isomorphism  of  $\cO(\cU)$-modules 
(see Hida  \cite[\S2]{hida2} and Coleman \cite[Prop.~B.5.6]{coleman}):\begin{align} \label{duality}
S^{\dagger,0}_\cU(N)\xrightarrow{\sim} \Hom_{\cO(\cU)-\mathrm{mod}}(\cO(\cV), \cO(\cU)), \,\,\, 
\mathcal{G}\mapsto (T\mapsto a_1(T\cdot \mathcal{G})). 
\end{align}

By   \cite[Prop.~7.1]{bellaiche-dimitrov}, $\cE$ and $\cE_{\full}$ are locally isomorphic at $f$. 
In particular, $U_q\in \cT $ for all  $q \mid N$. 
It follows then from \eqref{duality}  (see also \cite[Proof of Prop.~1.1]{DLR1}) that \begin{align}\label{hidadualitywt1}
\Hom_{\overline{\Q}_p-\mathrm{mod}}(\cT/\gm_{\varLambda}\cdot\cT,\overline{\Q}_p) \simeq S_{1}^{\dagger}(N)\lsem f\rsem. \end{align} 
where $\cT/\gm_{\varLambda}\cdot\cT$ is the local ring of the fiber $\kappa^{-1}(\kappa(f))$ at $f$,  and $S_{1}^{\dagger}(N)\lsem f\rsem$ is the generalized eigenspace attached to $f$ inside the space of weight $1$, level $N$, ordinary  $p$-adic modular forms.

Let  $\cT_{\spl}$  be the sub-algebra  of $\cT$ generated over $\varLambda$ by  $T_\ell$ for
 $\ell\nmid Np$  split in $K$.

\begin{thm}\label{existencenon-cm}
\begin{enumerate}[wide]
\item The operator $U_p$ does not belong to  $\cT_{\spl}$.
\item There exists an irreducible component $\cZ$ of $\cE^{\ord}$  containing $f$ and without CM by $K$.
\end{enumerate}
\end{thm}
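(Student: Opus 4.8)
The plan is to prove the two assertions in tandem, deducing (ii) from (i) together with the tangent-space computations already carried out in \S\ref{tangent}. First I would prove (i). Suppose for contradiction that $U_p\in\cT_{\spl}$. Since $\cE$ and $\cE_{\full}$ are locally isomorphic at $f$ by \cite[Prop.~7.1]{bellaiche-dimitrov}, and $\cT$ is generated over $\varLambda$ by $U_p$, the operators $T_\ell$ for $\ell\nmid Np$, and the $U_q$ for $q\mid N$, the hypothesis $U_p\in\cT_{\spl}$ would force $\cT$ to be ``too CM-symmetric''. Concretely, the Galois representation $\rho_{\cT}\colon\Gamma_\Q\to\GL_2(\cT)$ from \eqref{eq:rhoT} restricted to $\Gamma_K$ carries a $\gp$-ordinary (hence a $\bar\gp$-ordinary, via $\tau$) deformation structure; the traces of $\rho_{\cT}$ at split primes $\ell=\lambda\bar\lambda$ record $\psi_{\cT}(\lambda)+\psi_{\cT}(\bar\lambda)$-type data, while $U_p$ records the unramified quotient character $\chi_{\cT}(\Frob_\gp)$. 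I would show that the full subalgebra generated by the traces at split primes together with $U_p$ satisfies a relation that is $\tau$-symmetric in a way that forces $\rho_{\cT}$ to become induced from $\Gamma_K$ over all of $\cT$, i.e. $\cT=\cT^K$, contradicting e.g. the dimension count $\dim t^{\perp}=1$ from Theorem~\ref{cuspidal-tangent} which exhibits a deformation of $\rho$ to $\overline{\Q}_p[\epsilon]$ that is not CM by $K$ (equivalently, contradicting that $\cT\ne\varLambda_\gp\times_{\overline{\Q}_p}\varLambda_\gp=\cT^K$). The cleanest route is probably: if $U_p\in\cT_{\spl}$, then $\rho_{\cT}|_{\Gamma_K}$ and its $\tau$-twist are both $\gp$-ordinary deformations of $\rho$ with the same traces at split primes and the same unramified quotient at $\gp$, so by the Chebotarev density argument together with the rigidity of pseudo-characters they coincide, making $\rho_{\cT}$ genuinely $\tau$-invariant and hence $\cT$ a quotient of $\cT^K$, which is impossible since $f$ admits non-CM infinitesimal deformations.

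Next I would deduce (ii) from (i). By \eqref{hidadualitywt1} the generalized eigenspace $S_1^\dagger(N)\lsem f\rsem$ is dual to $\cT/\gm_{\varLambda}\cdot\cT$, and by Proposition~\ref{Tcmgor} the CM part contributes a local factor isomorphic to $\overline{\Q}_p\times_{\overline{\Q}_p}\overline{\Q}_p$ (the special fiber of $\varLambda_\gp\times_{\overline{\Q}_p}\varLambda_\gp$), which is $1$-dimensional over $\overline{\Q}_p$. If $f$ belonged to no component without CM by $K$, then $\cE^{\ord}$ would locally at $f$ be contained in $\cE^K$, so $\cT=\cT^K$. But then $\cT$ would be generated over $\varLambda$ by the traces at split primes of the induced representation $\Ind_K^\Q\psi_{\cT}$ together with $U_p=\chi_\gp(\Frob_\gp)$, and in fact $U_p$ would be expressible through those traces because on each CM component $\chi_\gp(\Frob_\gp)\psi(\gp)$ is recovered from the Hecke eigenvalues at split primes generating $\varLambda_\gp$; this would give $U_p\in\cT_{\spl}$, contradicting (i). Hence there must exist an irreducible component $\cZ$ of $\cE^{\ord}$ through $f$ with $\cZ\not\subset\cE^K$, i.e. without CM by $K$.

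I expect the main obstacle to be part (i), and within it the step of ruling out $U_p\in\cT_{\spl}$ \emph{integrally} rather than just at the level of the special fiber. One must be careful that $\cT_{\spl}$ is the $\varLambda$-subalgebra generated by split $T_\ell$'s, so the argument has to control the $\varLambda$-adic Galois deformation $\rho_{\cT}$ and not merely $\rho_f$. The key technical input is that $\rho_{\cT}|_{\Gamma_K}$ is reducible only on a proper closed subscheme (it is generically irreducible, as will be used repeatedly later — cf. the discussion preceding Definition~\ref{cond2}), so a $\tau$-symmetry at split primes really does propagate to a global $\tau$-equivariance of the pseudo-character, forcing CM by $K$ everywhere. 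Establishing that propagation rigorously — i.e. that matching traces at split primes plus matching $U_p$ implies $\rho_{\cT}\simeq\rho_{\cT}\otimes\epsilon_K$ over $\cT$ — is the crux; once it is in place, the contradiction with the existence of the non-CM tangent direction supplied by Theorem~\ref{cuspidal-tangent} (equivalently, with $\cT\ne\cT^K$, which itself needs $\dim t^{\ord}_\rho=2>1$) closes the argument, and (ii) follows formally.
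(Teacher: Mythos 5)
Your argument for part (ii) is essentially the paper's: assume $\cT=\cT^K$, identify the image of $U_p$ in $\cT^K=\varLambda_\gp\times_{\overline{\Q}_p}\varLambda_\gp$ with the diagonal element $\psi(\gp)\chi_{\bar\gp}(\gp)\in\varLambda^\times$ (using $\psim(\gp)=1$), and contradict part (i). That step is fine.

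Part (i), however, has a genuine gap, and it is the crux. Your proposed route is Galois-theoretic: from $U_p\in\cT_{\spl}$ you want to conclude $\rho_\cT\simeq\rho_\cT\otimes\epsilon_K$, hence $\cT=\cT^K$, and then contradict this using the non-CM tangent direction from Theorem~\ref{cuspidal-tangent}. Neither half works. First, knowing the traces of $\rho_\cT$ at primes split in $K$ together with $U_p$ does not determine the $\Gamma_{\Q}$-pseudo-character: $\epsilon_K(\Frob_\ell)=1$ at split $\ell$, so the traces you control are insensitive to twisting by $\epsilon_K$, while the traces at inert primes (which distinguish $\rho_\cT$ from $\rho_\cT\otimes\epsilon_K$) are simply not in $\cT_{\spl}$. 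Moreover the ``$\tau$-twist'' $\rho_{\cT|\Gamma_K}^\tau$ is $\bar\gp$-ordinary, not $\gp$-ordinary — the paper explicitly warns of this asymmetry right before Definition~\ref{cond2} — so you cannot invoke a rigidity argument to identify $\rho_{\cT|\Gamma_K}$ with its $\tau$-conjugate. Second, and more seriously, the step ``contradiction with $\cT\neq\cT^K$'' (equivalently, invoking non-CM infinitesimal deformations) is circular. At this point in the paper one does \emph{not} yet know $\cT\neq\cT^K$; that is precisely what (ii) establishes, and it is deduced \emph{from} (i). The computation $\dim t^{\perp}=1$ in Theorem~\ref{cuspidal-tangent} lives in the Galois deformation world ($\cR^{\perp}$), and by itself gives only an \emph{upper} bound on the Krull dimension of $\cR^{\perp}$; it does not show that any such infinitesimal deformation is modular, i.e.\ comes from a point of $\cE$. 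The paper states explicitly that the modular input of Theorem~\ref{existencenon-cm} is what is used to show $\dim\cR^{\perp}\geqslant 1$. Using the tangent space to prove the theorem would invert the logical order.

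The paper's actual proof of (i) is a short modular computation with $q$-expansions, not a Galois argument: $(U_p-\psi(\gp))\theta_\psi=\psi(\gp)f\neq0$ while $(U_p-\psi(\gp))f=0$, so $U_p$ has a nontrivial Jordan block on the two-dimensional subspace $\overline{\Q}_p f\oplus\overline{\Q}_p\theta_\psi\subset S_1^{\dagger}(N)\lsem f\rsem$. On the other hand $f$ and $\theta_\psi$ share all Hecke eigenvalues away from $p$, so every $T_\ell$ with $\ell\nmid Np$ (in particular every split $\ell$) acts there as a scalar, and $\varLambda$ acts by $\overline{\Q}_p$ on the fiber; thus $\cT_{\spl}$ acts semisimply (by scalars) on that subspace, while $U_p$ does not. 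This forces $U_p\notin\cT_{\spl}$. You should replace your Galois argument for (i) by this concrete computation; the circularity then disappears.
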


\begin{proof} (i) A direct computation shows that $(U_p-\psi(\gp))(\theta_\psi)=\psi(\gp) f$. Moreover, $\theta_\psi$ and $f$ sharing
the same Hecke eigenvalues away from $U_p$, the  space spanned by those two forms is contained in  $S_{1}^{\dagger}(N)\lsem f\rsem$. The above computation also shows that  the action of $U_p$ on that space  is {\it not} semi-simple, while   $\cT_{\spl}$ acts on it semi-simply (diagonally).

(ii) Assume that any component of $\cE$ containing $f$ has CM by $K$, {\it i.e.}, $ \cT= \cT^K$. 
Then, by Proposition~\ref{Tcmgor},  $\cT=\varLambda_\gp \times_{\overline{\Q}_p} \varLambda_{\gp}$ and $\cT/\gm_{\varLambda}\cdot\cT \simeq \overline{\Q}_p[X]/(X^2)$,
 hence by \eqref{hidadualitywt1} we have 
\[\dim_{\overline{\Q}_p} S_{1}^{\dagger}(N)\lsem f\rsem= \dim_{\overline{\Q}_p} (\cT/\gm_{\varLambda}\cdot\cT)=2,\]
in particular $\{ f, \theta_\psi \}$ is a basis of $S_{1}^{\dagger}(N)\lsem f\rsem$. 
On the other hand, Proposition~\ref{Tcmgor} also implies (under the identification $\varLambda_\gp=\varLambda$) that the image of $U_p$ in $\cT^K$ equals $((\psi \chi_{\bar\gp})(\gp),(\psi^{\tau} \chi_{\bar\gp})(\gp))$ which belongs to  $\varLambda^\times$ embedded diagonally in  $\cT^K$ as $\psim(\gp)=1$. Hence $(U_p-\psi(\gp))\in \gm_{\varLambda}$ and therefore $U_p$ acts as $\psi(\gp)$  on 
$S_{1}^{\dagger}(N)\lsem f\rsem$,  yielding a contradiction with the fact, established in (i), that its action on that space is not semi-simple.\end{proof}

\begin{rem} Theorem~\ref{existencenon-cm}(ii) not only answers a question raised in \cite[\S7.4(1)]{dim-ghate} about the existence of a Hida family without CM by $K$ containing a weight $1$ form having CM by $K$, but it further asserts that this is systematically the case:  any CM weight $1$ cuspform irregular at $p$ belongs to such a family. 
\end{rem} 

 \begin{defn}
Let  $\cE^{\perp}$ be the closed  analytic subspace of $\cE$, union of irreducible components having no  CM by $K$.  
\end{defn}

Theorem~\ref{existencenon-cm} implies that $f\in \cE^{\perp}$.  As well known  from Hida theory, $\cT$ is equidimensional of dimension $1$, 
hence so is  the completed local ring 
$\cT^{\perp}$  of $\cE^{\perp}$ at $f$, which can also  be seen as the largest non-CM by $K$ quotient of $\cT$. 
We will use this modular input to prove that the Krull dimension of $\cR^\perp$  is at least $1$ 
(recall that by Theorem \ref{cuspidal-tangent} its dimension is at most $1$). 

The specialization of  \eqref{eq:rhoT} by the surjective homomorphism $\pi^\perp:\cT\to \cT^\perp$,   yields a deformation 
$\rho_{\cT}^{\perp}: \Gamma_{\Q} \to \GL_2(\cT^{\perp})$ of $\rho_f$. Next, we show that after conjugating
$\rho_{\cT^{\perp}|\Gamma_K}$ by a matrix in the total field of fractions $Q(\cT^{\perp})$, we  obtain $\cT^\perp$-valued deformations of 
both $\rho$ and  $\rho'$.

\begin{prop} \label{rhott} Denote $\chi_\cT^{\perp}$  the unramified character of $\Gamma_{K_{\gp}} \subset \Gamma_K$  sending  $\Frob_{\gp}$ to $U_p$. There exists a deformation $\rho_{\cT^\perp}: \Gamma_K  \to \GL_2(\cT^{\perp})$ of $\rho$ such that  $\rho_{\cT^{\perp} \mid \Gamma_{K_{\gp}}}=\left(\begin{smallmatrix} * & * \\ 0 & \chi_\cT^{\perp} \end{smallmatrix}\right)$, and $\tr \rho_{\cT^\perp}=\tr(\rho^{\perp}_{\cT \mid \Gamma_K})$. Similarly, there exists a deformation $\rho'_{\cT^\perp}:\Gamma_K  \to \GL_2(\cT^{\perp})$ of ${\rho'}  $ such that 
$\tr(\rho'_{\cT^\perp})= \tr( \rho_{\cT^\perp})$ and $\rho'{}_{\!\!\cT^{\perp} \mid \Gamma_{K_{\gp}}}=\left(\begin{smallmatrix} * & * \\ 0 &  \chi_\cT^{\perp} \end{smallmatrix}\right)$, hence there is a $\varLambda$-algebra homomorphism  $\varphi^{\perp}:\cR^{\perp} \to \cT^{\perp}$.
\end{prop}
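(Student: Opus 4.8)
The plan is to run a Ribet-style lattice argument, in the spirit of \cite{BDPozzi}, and then install the $\gp$-ordinary filtration using Hida theory.

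First I would record the global input. Since $f\in\cE^{\perp}$, the representation $\rho^{\perp}_{\cT}$ has no CM by $K$, so $\rho^{\perp}_{\cT}\otimes Q(\cT^{\perp})$ is generically irreducible and not induced from $K$; hence $\rho^{\perp}_{\cT\mid\Gamma_K}\otimes Q(\cT^{\perp})$ is irreducible, while its residual semisimplification is $\psi\oplus\psi^{\tau}$, the two characters being distinct modulo $\gm_{\cT^{\perp}}$. A Ribet-style argument then produces $P\in\GL_2(Q(\cT^{\perp}))$ such that $\rho_{\cT^{\perp}}:=P\,\rho^{\perp}_{\cT\mid\Gamma_K}\,P^{-1}$ is valued in $\GL_2(\cT^{\perp})$ and reduces modulo $\gm_{\cT^{\perp}}$ to a non-split extension of $\psi^{\tau}$ by $\psi$; as $\rH^1(K,\psim)$ is one-dimensional (Proposition~\ref{restinj}), such an extension is $\GL_2(\overline{\Q}_p)$-conjugate to $\rho$, so after a further upper-triangular conjugation, lifted to $\GL_2(\cT^{\perp})$, I may arrange $\rho_{\cT^{\perp}}\equiv\rho\pmod{\gm_{\cT^{\perp}}}$. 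Exchanging the roles of $\psi$ and $\psi^{\tau}$ and using $\dim\rH^1(K,\psim^{\tau})=1$ gives in the same way $\rho'_{\cT^{\perp}}\equiv\rho'\pmod{\gm_{\cT^{\perp}}}$. Both $\rho_{\cT^{\perp}}$ and $\rho'_{\cT^{\perp}}$ are $\GL_2(Q(\cT^{\perp}))$-conjugates of $\rho^{\perp}_{\cT\mid\Gamma_K}$, so $\tr\rho_{\cT^{\perp}}=\tr\rho'_{\cT^{\perp}}=\tr(\rho^{\perp}_{\cT\mid\Gamma_K})$, and since $\rho^{\perp}_{\cT}$ is a representation of $\Gamma_{\Q}$ this common trace satisfies $\tr\rho_{\cT^{\perp}}(\tau g\tau)=\tr\rho_{\cT^{\perp}}(g)$ for all $g\in\Gamma_K$.

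Next I would install the ordinary structure at $\gp$. By Hida's theory the underlying Hida family is $p$-ordinary, so $\rho^{\perp}_{\cT\mid\Gamma_{\Q_p}}$ has over $Q(\cT^{\perp})$ a unique $\Gamma_{\Q_p}$-stable line $L^{\mathrm{ord}}$ with unramified quotient on which $\Frob_p$ acts by $U_p$; through $\Gamma_{\Q_p}\hookrightarrow\Gamma_K$ this becomes a $\Gamma_{K_{\gp}}$-stable line with $\Frob_{\gp}$ acting by $U_p$ on the quotient. Set $F_{\cT^{\perp}}:=L^{\mathrm{ord}}\cap T$, where $T=P^{-1}(\cT^{\perp})^2$ is the $\Gamma_K$-stable free lattice underlying $\rho_{\cT^{\perp}}$. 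This is a $\Gamma_{K_{\gp}}$-stable rank-one submodule whose image in $T/\gm_{\cT^{\perp}}T\simeq V_{\rho}$ is a nonzero $\Gamma_{K_{\gp}}$-stable subspace, hence equals $F_{\rho}=\overline{\Q}_p e_1$ by the uniqueness in \eqref{filtration-rho}; a Nakayama argument then exhibits $F_{\cT^{\perp}}$ as a free rank-one direct summand $\cT^{\perp}e_1$ of $T$ for a suitable basis $(e_1,e_2)$. In that basis $\rho_{\cT^{\perp}\mid\Gamma_{K_{\gp}}}=\left(\begin{smallmatrix}*&*\\0&\chi_\cT^{\perp}\end{smallmatrix}\right)$, the lower-right entry being the character by which $\Gamma_{K_{\gp}}$ acts on $T/F_{\cT^{\perp}}\hookrightarrow Q(\cT^{\perp})^2/L^{\mathrm{ord}}$, that is, the unramified character with $\Frob_{\gp}\mapsto U_p$. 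The identical reasoning, applied to $\rho'_{\cT^{\perp}}$ and the \emph{same} line $L^{\mathrm{ord}}$ (using the analogous uniqueness of the $\Gamma_{K_{\gp}}$-stable line of $\rho'_{\mid\Gamma_{K_{\gp}}}$), yields $\rho'_{\cT^{\perp}\mid\Gamma_{K_{\gp}}}=\left(\begin{smallmatrix}*&*\\0&\chi_\cT^{\perp}\end{smallmatrix}\right)$ with the same unramified quotient character.

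Finally, $((\rho_{\cT^{\perp}},\chi_\cT^{\perp}),(\rho'_{\cT^{\perp}},\chi_\cT^{\perp}))$ defines a point of $\cD^{\perp}(\cT^{\perp})$: it lies in $\cD^{\ord}_{\rho}(\cT^{\perp})\times\cD^{\ord}_{\rho'}(\cT^{\perp})$, the traces agree, and the unramified quotient characters agree on $\Frob_{\gp}$. The universal property of $\cR^{\perp}$ then supplies $\varphi^{\perp}\colon\cR^{\perp}\to\cT^{\perp}$, which is a $\varLambda$-algebra homomorphism because $\det\rho_{\cT^{\perp}}=\det(\rho^{\perp}_{\cT\mid\Gamma_K})$ extends to the odd character $\det\rho^{\perp}_{\cT}$ of $\Gamma_{\Q}$, which is the image of the universal character under the structure map $\varLambda\to\cT^{\perp}$. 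I expect the main obstacle to be the Ribet-type step: one must use $f\in\cE^{\perp}$ essentially, to guarantee that $\rho^{\perp}_{\cT\mid\Gamma_K}$ is generically irreducible, and one must check that the residually non-split lattice can be taken free over $\cT^{\perp}$, i.e.\ of the form $P^{-1}(\cT^{\perp})^2$ with $P\in\GL_2(Q(\cT^{\perp}))$; by contrast the compatibility of this lattice with the ordinary line $L^{\mathrm{ord}}$ is automatic once $\rho_{\cT^{\perp}}$ reduces to $\rho$, thanks to the uniqueness statement in \eqref{filtration-rho}.
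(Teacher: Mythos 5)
Your strategy coincides with the paper's: the same Ribet-style lattice step (the paper cites \cite[Cor.~2]{bellaiche-chenevier} where you say ``Ribet-style''), the same observation that absence of CM by $K$ forces generic $\Gamma_K$-irreducibility, and the same plan of installing the $\gp$-ordinary filtration afterwards. The conclusion about $\varphi^{\perp}$ being a $\varLambda$-algebra map via determinants is also identical.

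The gap is in the ordinariness step. You set $F_{\cT^{\perp}}:=L^{\mathrm{ord}}\cap T$, assert that its image in $T/\gm_{\cT^{\perp}}T\simeq V_{\rho}$ is a \emph{nonzero} $\Gamma_{K_{\gp}}$-stable subspace, and then invoke ``a Nakayama argument'' to obtain a free rank-one direct summand. Neither assertion follows from what you have written. First, Nakayama applied to $T/F_{\cT^{\perp}}$ gives only that this quotient is generated by one element, i.e.\ is of the form $\cT^{\perp}/J$; to conclude $J=0$ (hence freeness and the direct-summand property) you must add that $T/F_{\cT^{\perp}}$ is generically free of rank $1$ \emph{and} that $\cT^{\perp}$ is reduced — $\cT^{\perp}$ is not a domain, and a finitely generated, torsion-free, generically-rank-one module over a reduced local ring such as $\varLambda\times_{\overline{\Q}_p}\varLambda$ can fail to be monogenic (e.g.\ its maximal ideal). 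Second, the nonvanishing of the reduction of $F_{\cT^{\perp}}$ is not automatic from the definition as an intersection; it is precisely what needs proof. The paper avoids both issues by arguing in the opposite logical direction: starting from the exact sequence of $\cT^{\perp}[\Gamma_{K_{\gp}}]$-modules as in \cite[(16)]{BDPozzi}, it uses that $\rho_{\mid\I_{\gp}}$ has infinite unipotent image with a unique stable line to see that the $\I_{\gp}$-coinvariants of $V_{\rho}$ are one-dimensional, whence the last term of the sequence is monogenic by Nakayama, and then deduces freeness from ``generically free of rank one and $\cT^{\perp}$ reduced''. This simultaneously produces the free rank-one unramified quotient and shows, \emph{a posteriori}, that your $F_{\cT^{\perp}}$ reduces to $F_{\rho}$. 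You need to run the argument in this order, with the reducedness of $\cT^{\perp}$ and the shape of $\rho_{\mid\I_{\gp}}$ as the essential inputs.
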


\begin{proof} We first observe that for any localization $L$ of $\cT^\perp$ at a minimal prime, 
the resulting representation $\rho_{L}: \Gamma_{\Q} \to \GL_2(L)$ remains irreducible when restricted to $\Gamma_K$. 
In fact, if  $\Hom_{\Gamma_K}(\rho_{L},  \psi_L)\ne \{0\}$ for some character $\psi_L: \Gamma_{K} \to L^\times$, the Frobenius Reciprocity Theorem 
would imply that  $\Hom_{\Gamma_{\Q}}(\rho_{L}, \Ind^\Q_K \psi_L)\ne \{0 \}$, thus contradicting the assumption that the above minimal prime does not correspond to a component having  CM by $K$. 
 Proposition~\ref{restinj} then allows us to  apply \cite[Cor.~2]{bellaiche-chenevier} to find an adapted basis $(e_1^{\perp},e_2^{\perp})$ (resp. $(e_1^{\perp'},e_2^{\perp'})$) of $Q(\cT^{\perp})^2$ such that the representation $\rho_{Q(\cT^{\perp})}: \Gamma_K  \to \GL_2(Q(\cT^{\perp}))$ takes values in $\GL_2(\cT^{\perp})$ and reduces to $\rho$ (resp. $\rho' $) modulo the maximal ideal of $\cT^{\perp}$.  The resulting representations, denoted  $\rho_{\cT^\perp}$ and $\rho'_{\cT^\perp}$, share the same trace and determinant.

For the ordinariness statement at $\gp$, we write an exact sequence of $\cT^{\perp}[\Gamma_{K_{\gp}}]$-modules as in  \cite[(16)]{bellaiche-dimitrov} and adapt the argument as follows: the fact that    $\rho_{\mid \I_{\gp}}$ (resp. $\rho'_{\mid \I_{\gp}}$) has an infinite image and  admits a unique $\I_{\gp}$-stable line,   shows that the last term of that exact sequence is a monogenic $\cT^{\perp}$-module, hence it is free, as it is generically free of rank $1$ and $\cT^{\perp}$ is reduced.\end{proof}

As $\cR^{\perp}$ is  generated over $\varLambda$ by the trace of its universal deformation (see 
Lemma~\ref{lem-surj}), 
the Chebotarev Density Theorem and Proposition~\ref{rhott} imply that the image  of the $\varLambda$-algebra homomorphism 
$\varphi^{\perp}:\cR^{\perp} \to \cT^{\perp}$ equals   the image  $\cT^\perp_{\spl}$ of $\cT_{\spl}$ in $\cT^\perp$. 
 
\begin{thm}\label{isom-cusp}  The $\varLambda$-algebra $\cR^{\perp}$ is a discrete valuation ring isomorphic to $\cT^{\perp}_{\spl}$  and  $\rho_{\cR^\perp}$ is irreducible.  The structural 
homomorphism $\varLambda\to \cR^{\perp}$ is \'etale if and only if  $\cLm(\psim)+\cLm(\psim^\tau)\ne 0$.
\end{thm}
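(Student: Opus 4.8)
The plan is to recover the full structure of $\cR^{\perp}$ as a local $\overline{\Q}_p$-algebra from two inputs already available: the line $t^{\perp}=\cD^{\perp}(\overline{\Q}_p[\epsilon])$ of Theorem~\ref{cuspidal-tangent}, which bounds $\dim\cR^{\perp}$ from above, and the modular map $\varphi^{\perp}\colon\cR^{\perp}\twoheadrightarrow\cT^{\perp}_{\spl}$ coming from Proposition~\ref{rhott} together with the Chebotarev argument following it, which will give the matching lower bound. By Theorem~\ref{existencenon-cm} the ring $\cT^{\perp}$ is non-zero; by Hida theory it is finite flat over the discrete valuation ring $\varLambda$, reduced, and equidimensional of dimension $1$, so $\varLambda\hookrightarrow\cT^{\perp}$. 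Hence $\cT^{\perp}_{\spl}$, a $\varLambda$-subalgebra of $\cT^{\perp}$ which is a $\varLambda$-submodule of the finite $\varLambda$-module $\cT^{\perp}$ and contains $\varLambda$, is finite over $\varLambda$ and one-dimensional, so surjectivity of $\varphi^{\perp}$ forces $\dim\cR^{\perp}\geqslant 1$. Since $\dim_{\overline{\Q}_p}\gm_{\cR^{\perp}}/\gm_{\cR^{\perp}}^{2}=\dim t^{\perp}=1$, the ideal $\gm_{\cR^{\perp}}$ is principal and $\cR^{\perp}$ is a quotient of $\overline{\Q}_p\lsem T\rsem$; the only such quotient of dimension $\geqslant 1$ being $\overline{\Q}_p\lsem T\rsem$ itself, $\cR^{\perp}$ is a discrete valuation ring, and a surjection from this one-dimensional domain onto the one-dimensional ring $\cT^{\perp}_{\spl}$ must be an isomorphism $\varphi^{\perp}\colon\cR^{\perp}\xrightarrow{\sim}\cT^{\perp}_{\spl}$.

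For the irreducibility of $\rho_{\cR^{\perp}}$ I would transport the question through this isomorphism. As $\cR^{\perp}$ is a domain it suffices to show that $\rho_{\cR^{\perp}}\otimes_{\cR^{\perp}}Q(\cR^{\perp})$ is an irreducible $\Gamma_{K}$-representation. By the universal property defining $\varphi^{\perp}$, the trace of $\rho_{\cR^{\perp}}$ is carried to that of the representation $\rho_{\cT^{\perp}}$ of Proposition~\ref{rhott}. Localizing $\cT^{\perp}$ at a minimal prime $\gq$ gives a field $L$; since $\cT^{\perp}$ is integral over the normal domain $\cT^{\perp}_{\spl}$ one has $\gq\cap\cT^{\perp}_{\spl}=(0)$, so $Q(\cR^{\perp})$ embeds into $L$, and over $L$ the representation $\rho_{\cT^{\perp}}\otimes L$ is irreducible by the argument opening the proof of Proposition~\ref{rhott}. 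If $\rho_{\cR^{\perp}}\otimes Q(\cR^{\perp})$ were reducible, its trace, hence that of $\rho_{\cT^{\perp}}\otimes L$, would be a sum of two characters of $\Gamma_{K}$; since the trace determines the semisimplification in characteristic zero, this contradicts the irreducibility just recalled. A short self-contained variant uses $\gp$-ordinarity of \emph{both} partners $\rho_{\cR^{\perp}}$ and $\rho'_{\cR^{\perp}}$ to see that a reducible deformation in $\cD^{\perp}$ would have both Jordan--Hölder characters, and hence $\det\rho_{\cR^{\perp}}$, unramified at $\gp$, contradicting the $\gp$-ramification of $\det\rho_{\cR^{\perp}}$ forced by \eqref{eq:univ-coates-wiles} and the injectivity of $\varLambda\to\cR^{\perp}$.

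Finally I would read off the étaleness criterion. The structural map $\varLambda\to\cR^{\perp}$ is local and sends a uniformizer $X$ of $\varLambda$ to a non-zero element of $\gm_{\cR^{\perp}}$, non-zero because its image in $\cT^{\perp}_{\spl}\subseteq\cT^{\perp}$ is non-zero; thus $X=u\,T^{e}$ for a unit $u\in(\cR^{\perp})^{\times}$ and an integer $e\geqslant 1$, so that $\cR^{\perp}$ is free of rank $e$ over $\varLambda$ and $\cR^{\perp}_{0}=\cR^{\perp}/\gm_{\varLambda}\cR^{\perp}\cong\overline{\Q}_p\lsem T\rsem/(T^{e})$. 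Now $\varLambda\to\cR^{\perp}$ is étale exactly when it is an isomorphism (both are complete local with residue field $\overline{\Q}_p$), i.e. when $e=1$, i.e. when $\cR^{\perp}_{0}=\overline{\Q}_p$, i.e. when the tangent space $t^{\perp}_{0}=\cD^{\perp}_{0}(\overline{\Q}_p[\epsilon])$ of $\cR^{\perp}_{0}$ vanishes. By Theorem~\ref{cuspidal-tangent} this is equivalent to $\cLm(\psim)+\cLm(\psim^{\tau})\neq 0$, which finishes the argument.

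The step I expect to be the real obstacle is the Krull dimension lower bound $\dim\cR^{\perp}\geqslant 1$: it is not visible from Galois deformation theory alone and genuinely requires the modular inputs — the non-vanishing of $\cT^{\perp}$ (Theorem~\ref{existencenon-cm}), its finiteness over $\varLambda$, and the comparison morphism of Proposition~\ref{rhott}. Once the tangent space is known to be a line, everything else is formal commutative algebra.
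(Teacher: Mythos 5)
Your argument is correct and follows the paper's own proof step for step: the one-dimensionality of $t^{\perp}$ from Theorem~\ref{cuspidal-tangent}, the surjection $\cR^{\perp}\twoheadrightarrow\cT^{\perp}_{\spl}$ coming from Proposition~\ref{rhott} with the ensuing Krull-dimension lower bound, the conclusion that $\cR^{\perp}$ is a discrete valuation ring isomorphic to $\cT^{\perp}_{\spl}$, the irreducibility via the minimal primes of $\cT^{\perp}$, and the reading of \'etaleness off the vanishing of $t^{\perp}_{0}$. Your ``self-contained variant'' for irreducibility is a genuine and worthwhile addition the paper does not spell out: using the $\gp$-ordinarity of \emph{both} $\rho_{\cR^{\perp}}$ and $\rho'_{\cR^{\perp}}$ together with the uniqueness of the decomposition of the common trace (in the spirit of Lemma~\ref{unicity-decomp}) one identifies the two Jordan--H\"older characters of a hypothetical reducible deformation with the two unramified-at-$\gp$ ordinary quotient characters, forcing $\det\rho_{\cR^{\perp}}$ to be unramified at $\gp$ and hence the $\varLambda$-structure map to kill $X$, which contradicts $\varLambda\hookrightarrow\cR^{\perp}$. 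This has the small aesthetic advantage of staying inside deformation theory once the isomorphism with $\cT^{\perp}_{\spl}$ is known, whereas the paper simply recycles the Frobenius-reciprocity argument already written out in the proof of Proposition~\ref{rhott}.
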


\begin{proof}
By Theorem~\ref{cuspidal-tangent} the Zariski tangent space $t^{\perp}$ of $\cR^{\perp}$ has dimension $1$. 
As $ \cT^{\perp}_{\spl}$ is finite and flat over  $\varLambda$, it follows that its Krull dimension is $1$ and that the surjective 
$\varLambda$-algebra homomorphism  $\cR^{\perp} \twoheadrightarrow\cT^{\perp}_{\spl}$ is an isomorphism of 
regular local rings of dimension $1$, {\it i.e.} discrete valuation rings.  The irreducibility of $\rho_{\cR^\perp}$ then follows from the proof of  Proposition~\ref{rhott}.
 The last claim follows directly from 
Theorem~\ref{cuspidal-tangent} where one gives a necessary and sufficient condition for  the vanishing of the relative tangent space $t^{\perp}_0$ of $\cR^{\perp}$ over $\varLambda$.\end{proof}

\subsection{The reducibility ideal of \texorpdfstring{$\cR^\perp$}{}} \label{reducibility-ideal}

We recall that $\rho_{\cR^\perp}$ (resp. $\rho'_{\cR^\perp}$) is the $\gp$-ordinary deformation of $\rho$ (resp. $\rho' $) obtained by  functoriality from the natural surjection $\cR_{\rho}^{\ord} \twoheadrightarrow \cR^{\perp}$ (resp. $\cR_{\rho' }^{\ord} \twoheadrightarrow \cR^{\perp}$) from Lemma~\ref{lem-surj}. In order to determine $\cT^\perp$ we need to study the possible extensions of 
$\tr(\rho_{\cR^\perp})=\tr(\rho'_{\cR^\perp})$ to $\Gamma_{\Q}$. For this we first observe that the reducibility ideal (see \cite[\S1.5]{bellaiche-chenevier-book}) of the pseudo-character $\tr(\rho_{\cR^\perp})=\tr(\rho'_{\cR^\perp})$ is non-zero, as   $\rho_{\cR^\perp}$ is  irreducible by Theorem~\ref{isom-cusp}. We recall that $\cR^{\perp}$ is a discrete valuation ring and is flat over $\varLambda$. 

\begin{defn}\label{d:ideal-red}
\begin{enumerate}[wide]
\item Let $e\geqslant 1$ be  the ramification index of $\cR^{\perp}$ over $\varLambda$, and let $Y$ denote a uniformizer of $\cR^{\perp}$, so that  $\cR^{\perp}= \overline{\Q}_p \lsem Y\rsem \simeq \overline{\Q}_p \lsem X^{1/e}\rsem$.  
\item Let $r\geqslant 1$ be the valuation of the reducibility ideal  of $\tr(\rho_{\cR^\perp})$. 
\end{enumerate}
\end{defn}

We state for later use a lemma whose elementary proof uses the fact that
 $\rho$ has non-isomorphic Jordan-H\"older factors, and that $\cR^{\perp}/(Y^{r})$ is a complete local ring, hence Henselian. 
\begin{lemma}\label{unicity-decomp}
For  $m\leqslant r$, $\tr(\rho_{\cR^\perp})\mod{Y^m}$ can be uniquely written as sum of two characters. 
\end{lemma}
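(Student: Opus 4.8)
\textbf{Proof plan for Lemma~\ref{unicity-decomp}.} The plan is to prove existence first, then uniqueness, working over the Artinian quotient $R_m = \cR^{\perp}/(Y^m)$ for $m \leqslant r$. By definition of $r$ as the valuation of the reducibility ideal, the pseudo-character $t_m := \tr(\rho_{\cR^\perp}) \bmod Y^m$ is reducible, so by the theory of reducibility ideals (\cite[Prop.~1.5.1]{bellaiche-chenevier-book}) there exist characters $\psi_1, \psi_2 : \Gamma_K \to R_m^\times$ with $t_m = \psi_1 + \psi_2$. Since $R_m$ is local Artinian with residue field $\overline{\Q}_p$, reducing modulo $\gm_{R_m}$ gives $\psi + \psi^\tau = \bar\psi_1 + \bar\psi_2$ as characters $\Gamma_K \to \overline{\Q}_p^\times$, and since $\psi \neq \psi^\tau$ (because $\psim \neq \mathbf{1}$), after relabelling we may assume $\bar\psi_1 = \psi$ and $\bar\psi_2 = \psi^\tau$. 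This settles existence.

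For uniqueness, suppose $t_m = \psi_1 + \psi_2 = \phi_1 + \phi_2$ are two decompositions, normalised as above so that $\psi_1, \phi_1$ both reduce to $\psi$ and $\psi_2, \phi_2$ both reduce to $\psi^\tau$. I would argue by induction on $m$: the case $m = 1$ is trivial since $R_1 = \overline{\Q}_p$. Assume the claim for $m-1$, so $\psi_i \equiv \phi_i \pmod{Y^{m-1}R_m}$; write $\phi_i = \psi_i(1 + Y^{m-1}c_i)$ for functions $c_i : \Gamma_K \to \overline{\Q}_p$ (using that $Y^{m-1}R_m \cong \overline{\Q}_p$ as a module), where each $c_i$ is a cocycle, i.e.\ $c_i \in \rH^1(K, \overline{\Q}_p)$. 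The equation $\psi_1 + \psi_2 = \phi_1 + \phi_2$ modulo $Y^m$ becomes $\psi \cdot c_1 + \psi^\tau \cdot c_2 = 0$ as functions on $\Gamma_K$ valued in the residue field. Evaluating on inertia $\I_{\gp}$ and $\I_{\bar\gp}$, where $\psi$ and $\psi^\tau$ are both trivial (as $\psi$ has conductor prime to $p$), gives $c_1 + c_2 = 0$ on $\I_{\gp} \cup \I_{\bar\gp}$; since $\eta_{\gp}, \eta_{\bar\gp}$ restrict to a basis of $\rH^1$-classes detected by these inertia groups (Proposition~\ref{restinj}), we get $c_1 = -c_2$ as elements of $\rH^1(K, \overline{\Q}_p)$. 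Substituting back, $(\psi - \psi^\tau)\cdot c_1 = 0$ as a function on $\Gamma_K$; picking $\gamma$ with $\psi(\gamma) \neq \psi^\tau(\gamma)$ — such $\gamma$ exists since $\psim \neq \mathbf{1}$ — forces $c_1(\gamma) = 0$, but this alone does not give $c_1 = 0$. Instead I would use the multiplicativity: $c_1$ and $c_2 = -c_1$ are cocycles twisting $\psi$ and $\psi^\tau$ respectively, and the relation $(\psi-\psi^\tau) c_1 = 0$ must hold identically, which by the same inertia argument as in Lemma~\ref{a=dtau} (the function $\psim \cdot c_1 - c_1$ vanishes on both inertia subgroups, hence is the zero element of $\Hom(\Gamma_K, \overline{\Q}_p)$, and then $\psim \neq \mathbf{1}$ forces $c_1 = 0$) gives $c_1 = c_2 = 0$, i.e.\ $\psi_i = \phi_i$ modulo $Y^m$.

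The main obstacle I anticipate is the bookkeeping in the inductive step: one must carefully track that the characters $\psi_i$ take values in $R_m^\times$ rather than in $\overline{\Q}_p^\times$, and that the "difference cocycle" $c_i$ genuinely lands in $\rH^1(K, \overline{\Q}_p)$ (not a larger group), which relies on $Y^{m-1}R_m$ being a one-dimensional $\overline{\Q}_p$-vector space annihilated by $\gm_{R_m}$ — this is where $R_m$ being the quotient of a discrete valuation ring by a power of the uniformizer is used. Alternatively, and perhaps more cleanly, one can invoke Henselianity of the complete local ring $\cR^{\perp}/(Y^r)$ directly: the two Jordan--Hölder characters $\psi, \psi^\tau$ of $\rho$ are distinct, so the associated idempotents in the (possibly non-commutative) reduced envelope lift uniquely by Hensel, pinning down the decomposition; this is the slicker route suggested by the remark before the lemma that "$\rho$ has non-isomorphic Jordan--Hölder factors" and that "$\cR^{\perp}/(Y^r)$ is Henselian", and I would present that as the main argument with the cocycle computation relegated to a remark.
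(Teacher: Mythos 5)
Your proposal is correct, and in fact you end up recovering both possible proofs: the hands-on inductive computation, and the Hensel/idempotent argument, the latter being precisely what the paper has in mind (the lemma in the paper is preceded only by the remark that its ``elementary proof uses the fact that $\rho$ has non-isomorphic Jordan--H\"older factors, and that $\cR^{\perp}/(Y^r)$ is a complete local ring, hence Henselian''; the proof itself is not written out). Your inductive argument is sound: the cocycle identity $\psim c_1 = -c_2$ on $\Gamma_K$, restricted to $\I_{\gp}$ and $\I_{\bar\gp}$ where $\psim$ is trivial, forces $c_1 = -c_2$ as homomorphisms (because a homomorphism $\Gamma_K\to\overline{\Q}_p$ vanishing on both inertia subgroups at $p$ is zero), after which $(\psim-1)c_1 = 0$ pointwise and $\psim\ne\mathbf{1}$ give $c_1 = 0$, \emph{via the group-homomorphism argument}: $c_1$ vanishes on the complement of $\ker\psim$, and since it is additive it must then vanish on $\ker\psim$ as well. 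One small imprecision worth correcting: you write that ``$\psim\cdot c_1 - c_1$ \dots is the zero element of $\Hom(\Gamma_K,\overline{\Q}_p)$''; the pointwise product $\psim c_1$ of a multiplicative character with an additive homomorphism is not itself additive, so $\psim c_1 - c_1$ does not live in $\Hom(\Gamma_K,\overline{\Q}_p)$. The correct chain is that $c_1 + c_2\in\Hom(\Gamma_K,\overline{\Q}_p)$ vanishes on inertia, hence is zero, and then the purely group-theoretic step above finishes. (The proof of Lemma~\ref{a=dtau} in the paper compresses this same step.) Your final remark that the cleaner route is via Hensel lifting of the idempotents attached to the distinct residual characters $\psi\ne\psi^\tau$ matches the authors' stated intent; spelling out either route is a legitimate filling-in of the omitted proof.
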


We recall from \S\ref{deformation} the element $\gamma_0\in \Gamma_K$   such that $\psim(\gamma_0) \ne 1$ and $\eta(\gamma_0)=0$ and we write  $\rho_{\cR^\perp}=\left(\begin{smallmatrix} a & b \\ c & d \end{smallmatrix}\right)$ in a basis $(e_1,e_2)$ where $\rho_{\cR^\perp}(\gamma_0)=\left(\begin{smallmatrix} * & 0 \\ 0 & * \end{smallmatrix}\right)$. 

It follows from Proposition~\ref{prop:td} that $r\geqslant 2$.   
Much of what  follows  will depend on $r$, so we first give a numerical criterion for its  lowest (conjecturally  only possible) value.

\begin{lemma}\label{reducn=2} One has $r = 2$ if and only if  $\cLm(\psim)\cdot\cLm(\psim^\tau) \ne 0$.
\end{lemma}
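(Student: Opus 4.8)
The plan is to compute the reducibility ideal of $\tr(\rho_{\cR^\perp})$ directly using the matrix coefficients of $\rho_{\cR^\perp}=\left(\begin{smallmatrix} a & b \\ c & d \end{smallmatrix}\right)$ in the basis diagonalizing $\rho_{\cR^\perp}(\gamma_0)$. Since $\rho_{\cR^\perp}$ reduces to the reducible-but-indecomposable $\rho=\left(\begin{smallmatrix} \psi & \eta\psi^\tau \\ 0 & \psi^\tau\end{smallmatrix}\right)$, we have $c\in\gm_{\cR^\perp}=(Y)$, while $b$ is a unit (since $b$ reduces to $\eta\psi^\tau$, which is nonzero at $\gamma_0$... actually $\eta(\gamma_0)=0$, so one must be slightly careful and instead use that $b$ is a unit because $[\eta]\neq 0$, so $b$ does not vanish identically; one picks $g$ with $\eta(g)\neq 0$ and notes $b(g)$ is a unit). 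By the general theory of reducibility ideals for a $2$-dimensional pseudo-character carried by a GMA (\cite{bellaiche-chenevier-book}, \S1.5), the reducibility ideal is generated by the products $b(g)c(g')$ for $g,g'\in\Gamma_K$; since $b$ can be taken to be a unit for suitable $g$, the reducibility ideal is exactly the ideal $(c(g);\,g\in\Gamma_K)\subset\cR^\perp$, which is principal as $\cR^\perp$ is a DVR. So $r = \min_{g\in\Gamma_K}\val_Y(c(g))$, and the task is to show this minimum equals $2$ precisely when $\cLm(\psim)\cdot\cLm(\psim^\tau)\neq 0$.

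The next step is to identify the first-order term of $c$. We already know $r\geqslant 2$ from Proposition~\ref{prop:td} (equivalently: $t^{\red}_\rho=t^{\ord}_\rho$, i.e. every first-order deformation is reducible, so $c\equiv 0\pmod{Y^2}$ after a suitable base change; more precisely $c\in(Y^2)$ because the mod-$Y^2$ deformation has trivial image under $c_*$ by Proposition~\ref{restinj}). So write $c\equiv \gamma\cdot\bar c\pmod{Y^3}$ where $\bar c:\Gamma_K\to\overline{\Q}_p$ and $\gamma=Y^2$; the claim becomes: $\bar c\not\equiv 0$ iff both $\cL$-invariants are nonzero, and more precisely $\bar c=0$ iff $r\geqslant 3$. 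To compute $\bar c$, I would pass to the second-order quotient $\cR^\perp/(Y^3)\simeq\overline{\Q}_p[Y]/(Y^3)$ and analyze $\rho_{\cR^\perp}\bmod Y^3$ as an ordinary deformation of $\rho$, together with its $\tau$-twisted avatar $\rho'_{\cR^\perp}\bmod Y^3$ which is an ordinary deformation of $\rho'$. The key constraints are: (a) the $\tau$-invariance $\tr\rho_{\cR^\perp}=\tr\rho'_{\cR^\perp}$, (b) the $\gp$-ordinarity of both $\rho_{\cR^\perp}$ and $\rho'_{\cR^\perp}$ (the common unramified quotient character $\chi^\perp_\cT$ evaluated at $\Frob_\gp$), and (c) that $\cR^\perp$ is genuinely $1$-dimensional (not just the tangent space). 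Parametrizing the first-order part by $(\alpha,\beta)$ as in the proof of Theorem~\ref{cuspidal-tangent} — where $\alpha\cLm(\psim^\tau)=\beta\cLm(\psim)$ defines $t^\perp$ — the obstruction to lifting a reducible first-order deformation to a reducible second-order deformation is a cup-product expression in $\rH^2(K,\psim)$; but $\rH^2(K,\psim)=0$ by Lemma~\ref{H2}(i), so there is no cohomological obstruction. Hence $\bar c=0$ would force the second-order deformation to stay reducible, and the whole family $\cR^\perp$ would then be reducible (by induction, using $\rH^2(K,\psim)=0$ at every stage), contradicting the irreducibility of $\rho_{\cR^\perp}$ from Theorem~\ref{isom-cusp} — \emph{unless} $\cLm(\psim)=0$ or $\cLm(\psim^\tau)=0$, in which case the tangent direction $t^\perp$ is the CM line $t^K_\rho$ (when $\cLm(\psim^\tau)=0$, the line is $\alpha$ arbitrary, $\beta=0$... one checks against Corollary~\ref{system2}) and the deformation can genuinely stay reducible to higher order while $\cR^\perp$ itself is still $1$-dimensional and the universal deformation is eventually irreducible only at order $\geqslant 3$.

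The cleanest route, and the one I would actually write: reduce everything to a computation in $\cR^\perp/(Y^3)$. By $\rH^2(K,\psim)=0$, a deformation of $\rho$ to $\overline{\Q}_p[Y]/(Y^3)$ is reducible if and only if its restriction to $\overline{\Q}_p[Y]/(Y^2)$ is reducible \emph{and} a certain secondary obstruction class in $\rH^1(K,\psim)$ vanishes — and by Proposition~\ref{restinj} this class is detected by its restrictions to $\I_\gp$ and $\I_{\bar\gp}$. Writing out $\rho_{\cR^\perp}\bmod Y^3$ in upper-triangular-mod-$Y^2$ form $\left(\begin{smallmatrix} \psi(1+Yd^\tau+\cdots) & \psi^\tau\eta+\cdots \\ Y^2\bar c & \psi^\tau(1+Yd+\cdots)\end{smallmatrix}\right)$, the $\gp$-ordinarity condition (existence of the $\Gamma_{K_\gp}$-stable line with unramified quotient, matching $\chi^\perp_\cT$) forces a relation; similarly the $\bar\gp$-ordinarity of $\rho'_{\cR^\perp}$ (equivalently $\rho_{\cR^\perp}^\tau$, via Lemma~\ref{lem-surj}) forces the $\tau$-conjugate relation. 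Combining the two, together with $\tau$-invariance of the trace, pins down $\bar c\big|_{\I_\gp}$ and $\bar c\big|_{\I_{\bar\gp}}$ as explicit multiples of $\log_p$, with coefficients that are (up to nonzero scalars) $\cLm(\psim)$ and $\cLm(\psim^\tau)$ respectively — this is exactly where the formulas $\cLm(\psim^\tau)=(\eta-\eta_\gp+\eta_{\bar\gp})(\Frob_\gp)$ and $\cLm(\psim)=(\eta'-\eta_\gp+\eta_{\bar\gp})(\Frob_\gp)$ enter, fed through Lemma~\ref{linvariant1} and Proposition~\ref{linvariant2} as in the proof of Theorem~\ref{cuspidal-tangent}. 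Then $\bar c\not\equiv 0$ (i.e. $r=2$) holds iff at least one of $\bar c\big|_{\I_\gp}$, $\bar c\big|_{\I_{\bar\gp}}$ is nonzero iff $\cLm(\psim)\cdot\cLm(\psim^\tau)\neq 0$ — here one uses that by Proposition~\ref{6expocusp} they are never both zero, so the product is nonzero iff neither factor vanishes, matching the case structure. The main obstacle I anticipate is bookkeeping: carefully tracking the second-order cross terms in the conjugation that puts $\rho_{\cR^\perp}$ into ordinary form at $\gp$, and checking that the coefficient emerging on $\bar c\big|_{\I_{\bar\gp}}$ really is a nonzero scalar times $\cLm(\psim)$ and not some other combination — this is the step where a sign or a factor could hide, so I would cross-check it against the tangent-space computation in Theorem~\ref{cuspidal-tangent}, which must be the $Y\to Y^2$ shadow of the same identity.
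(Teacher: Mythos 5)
Your overall strategy is the right one and matches the paper's: express the reducibility ideal as $(c(g);g\in\Gamma_K)$ (a uniformizer power since $\cR^\perp$ is a DVR), reduce modulo $Y^3$, use the $\gp$-ordinary filtration to compute the second-order cross term $\bar c$ on $\Gamma_{K_\gp}$, and invoke Proposition~\ref{restinj} to test whether $\bar c$ vanishes. That much is essentially the paper's argument, and you correctly identify the tangent-space parametrization $(\alpha,\beta)$ with $\alpha=\cLm(\psim)$, $\beta=\cLm(\psim^\tau)$ after rescaling $Y$.

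However, the final step of your plan contains a genuine error. You predict that $\bar c|_{\I_\gp}$ and $\bar c|_{\I_{\bar\gp}}$ are multiples of $\log_p$ with coefficients (up to nonzero scalars) $\cLm(\psim)$ and $\cLm(\psim^\tau)$ \emph{respectively}, and you then assert that ``at least one of $\bar c|_{\I_\gp},\,\bar c|_{\I_{\bar\gp}}$ nonzero'' is equivalent to $\cLm(\psim)\cdot\cLm(\psim^\tau)\neq 0$, appealing to Proposition~\ref{6expocusp}. Both halves of this are wrong. First, the two restrictions cannot carry independent coefficients: $[\bar c]$ lies on the one-dimensional line $\rH^1(K,\psim^\tau)$, so $\bar c|_{\I_{\bar\gp}}$ is a fixed (slope-determined) nonzero multiple of $\bar c|_{\I_\gp}$, and checking either one alone suffices. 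Second, if the coefficients really were $\cLm(\psim)$ and $\cLm(\psim^\tau)$ separately, then Proposition~\ref{6expocusp} (at least one of them nonzero) would force $\bar c\neq 0$ and hence $r=2$ unconditionally, contradicting the lemma you are trying to prove. The logical slip is that ``at least one factor nonzero'' is an \emph{or}, whereas ``product nonzero'' is an \emph{and}; Proposition~\ref{6expocusp} does not bridge this gap and is irrelevant here.

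What the computation actually yields is that the product, not the individual $\cL$-invariants, appears as an overall multiplicative factor. Conjugating $\rho_{\cR^\perp}\bmod Y^3$ into the $\gp$-ordinary basis $\bigl(e_1+(\cLm(\psim)Y+Y^2\cdot *)e_2,\,e_2\bigr)$ and imposing that the lower-left entry vanishes on $\Gamma_{K_\gp}$, one finds (using the identities $\eta|_{\Gamma_{K_\gp}}=\cLm(\psim^\tau)\ord_\gp+\eta_\gp-\eta_{\bar\gp}$ and $\eta'|_{\Gamma_{K_\gp}}=\cLm(\psim)\ord_\gp+\eta_\gp-\eta_{\bar\gp}$ implicit in Lemma~\ref{linvariant1} and Proposition~\ref{linvariant2}) that
\[
c(g)=\cLm(\psim)\cdot\cLm(\psim^\tau)\cdot\eta'(g)\qquad\text{for all }g\in\Gamma_{K_\gp}.
\]
Since $\eta'|_{\Gamma_{K_\gp}}\neq 0$, the vanishing of $c|_{\Gamma_{K_\gp}}$ is then \emph{directly} equivalent to $\cLm(\psim)\cdot\cLm(\psim^\tau)=0$, with no appeal to Proposition~\ref{6expocusp} needed. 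You should redo the bookkeeping with this target formula in mind; the side remark about an ``induction using $\rH^2(K,\psim)=0$'' to show the whole family stays reducible is not needed either (and as phrased it would show $r=\infty$, not $r\geqslant 3$).
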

\begin{proof} By \eqref{eq:infinitesimal} one has 
\begin{align}\label{eq:modY3}
\rho_{\cR^\perp} \equiv \left(\begin{smallmatrix} \psi + \psi (\beta \eta_{\gp} + \alpha \eta_{\bar\gp}) Y+  Y^2\cdot * & \psi^{\tau}\eta +  Y\cdot *  \\ \psi c  Y^2 &  \psi^{\tau} + \psi^{\tau} (\alpha \eta_{\gp} + \beta \eta_{\bar\gp})   Y+Y^2 \cdot * \end{smallmatrix}\right) \pmod{Y^3},
\end{align}
 where  $c\in \mathrm{Z}^1(\Gamma_K, \psim^\tau)$ is such that $c(\gamma_0)=0$. 
 By \eqref{U_pL}, after rescaling the uniformizer $Y$ by an element of $\overline{\Q}{}_p^{\times}$, we may and do assume that $\alpha=\cLm(\psim)$  and  $\beta=\cLm(\psim^\tau)$. 
 By Proposition~\ref{extensionpseudo}
one has $r \geqslant 3$ if and only if $c=0$ which in view of the injectivity of the restriction 
$\rH^1(K, \psim^{\tau}) \rightarrow \rH^1(K_{\gp}, \overline{\Q}_p)$ (see Proposition~\ref{restinj}) is also equivalent to 
$c_{|\Gamma_{K_{\gp}}}=0$. We will now use the $\gp$-ordinarity of  $\rho_{\cR^\perp}$ and show that the vanishing of
$c_{|\Gamma_{K_{\gp}}}$ is equivalent to $\cLm(\psim)\cdot\cLm(\psim^\tau)=0$. 
 
By the proof of Proposition~\ref{prop:td}  the $\gp$-ordinary filtration modulo $(Y^3)$ is generated by a vector of the form 
 $e_1 + (\cLm(\psim) Y  + Y^2\cdot *) e_2 $, and a direct computation using \eqref{eq:modY3} shows that 
\[c(g)= \cLm(\psim)\cdot\left(\cLm(\psim)\cdot (\eta+\eta_{\bar\gp} -\eta_{\gp})(g)-\cLm(\psim^\tau)\cdot (\eta_{\bar\gp}-\eta_{\gp})(g)  \right)= \cLm(\psim)\cdot\cLm(\psim^\tau)\cdot \eta'(g)\]
for all $g \in \Gamma_{K_{\gp}}$, which  completes the proof, as $\eta'_{|\Gamma_{K_{\gp}}}\ne 0$.\end{proof}

Let   $\iota$ be the automorphism sending $Z$ to $-Z$ of the local $\varLambda$-algebra 
\begin{align}\label{eq:R-tilde}
\widetilde{\cR}^{\perp}=\cR^{\perp}[Z]/(Z^2-Y^r)= \begin{cases}
\cR^{\perp} \times_{\overline{\Q}_p} \cR^{\perp}, & r=2 \\
 \varLambda \times_{\varLambda/(X^{\frac{r}{2}})} \varLambda, & r \geq 4 \text{ even,  } \\
  \varLambda[Z]/(Z^2-X^{r}), & r \geq 3 \text{ odd}. \end{cases}
  \end{align}

Note that  the latter ring $\widetilde{\cR}^{\perp}\simeq \overline{\Q}_p\lsem X \rsem [X^{\frac{r}{2}}]$ is not normal.

We write 
 $\rho'_{\cR^\perp}=\left(\begin{smallmatrix} a' & b' \\ c' & d' \end{smallmatrix}\right)$ in a basis  where $\rho'_{\cR^\perp}(\gamma_0)=\left(\begin{smallmatrix} * & 0 \\ 0 & * \end{smallmatrix}\right)$.

\begin{prop}\label{extensionpseudo}  The reducibility ideal equals $(c(g); g \in \Gamma_K)=(c'(g); g \in \Gamma_K)=(Y^r)$. Moreover  
$ \widetilde\rho^\perp_{\cR}= \left(\begin{smallmatrix} Z& 0 \\ 0& 1 \end{smallmatrix}\right)  \rho_{\cR^\perp} 
\left(\begin{smallmatrix} Z^{-1} & 0 \\ 0& 1 \end{smallmatrix}\right)$ 
extends to a representation $\widetilde\rho^\perp_{\cR}:\Gamma_{\Q}\to \GL_2(\widetilde\cR^{\perp})$  
reducing  to $\rho_f$ modulo the maximal ideal of $\widetilde\cR^{\perp}$. Finally 
$\iota(\widetilde\rho^{\perp}_{\cR}) =\left(\begin{smallmatrix} -1 & 0 \\ 0& 1 \end{smallmatrix}\right)
(\widetilde\rho^{\perp}_{\cR} \otimes \epsilon_K)
\left(\begin{smallmatrix} -1 & 0 \\ 0& 1 \end{smallmatrix}\right)$. 
\end{prop}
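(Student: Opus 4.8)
The plan is to proceed in four stages. First, I would establish that the reducibility ideal of $\tr(\rho_{\cR^\perp})$, which by Definition~\ref{d:ideal-red}(ii) has valuation $r$, coincides with the ideal $(c(g);g\in\Gamma_K)$ generated by the lower-left entries in the chosen basis. The inclusion $(c(g))\subseteq(\text{red. ideal})$ is automatic since reducing $\rho_{\cR^\perp}$ modulo $(c(g))$ makes it upper-triangular, hence its trace becomes a sum of two characters. For the reverse, I would use Lemma~\ref{unicity-decomp}: if $J$ is the reducibility ideal, then $\tr(\rho_{\cR^\perp})\bmod J$ splits as $\chi_1+\chi_2$, and by the uniqueness of the decomposition these must be congruent to $\psi$ and $\psi^\tau$; one then conjugates to put $\rho_{\cR^\perp}\bmod J$ in upper-triangular form, which forces $c(g)\in J$ for all $g$. (Here one uses that $\rho(\gamma_0)$ is diagonal with distinct eigenvalues, so Hensel lifts the decomposition of $\gamma_0$.) The same argument applied to $\rho'_{\cR^\perp}$ gives $(c'(g);g\in\Gamma_K)=(Y^r)$ as well. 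Since $\cR^\perp=\overline{\Q}_p\lsem Y\rsem$ is a DVR, $J=(Y^r)$.

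Second, I would verify that $\widetilde\rho^\perp_{\cR}$ is well-defined with values in $\GL_2(\widetilde\cR^\perp)$. Writing $\rho_{\cR^\perp}=\left(\begin{smallmatrix} a&b\\c&d\end{smallmatrix}\right)$, conjugation by $\mathrm{diag}(Z,1)$ gives $\left(\begin{smallmatrix} a & bZ \\ cZ^{-1} & d\end{smallmatrix}\right)$. By Step~1, $c\in(Y^r)=(Z^2)$ in $\widetilde\cR^\perp$, so $cZ^{-1}\in(Z)\subset\widetilde\cR^\perp$ is integral; the other entries are clearly integral. Thus $\widetilde\rho^\perp_{\cR}:\Gamma_K\to\GL_2(\widetilde\cR^\perp)$ is a genuine representation lifting $\rho$, with the same trace and determinant as $\rho_{\cR^\perp}$.

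Third — the key step — I would show $\widetilde\rho^\perp_{\cR}$ extends to $\Gamma_\Q$. The trace $T:=\tr(\rho_{\cR^\perp})=\tr(\rho'_{\cR^\perp})$ is, by construction of $\cR^\perp$ (Definition~\ref{cond2} and the $\tau$-invariance built into $\cD^{\ord}_\rho$), invariant under $g\mapsto\tau g\tau$; together with $\det$ it therefore extends to a $2$-dimensional pseudo-character of $\Gamma_\Q$ valued in $\cR^\perp\subset\widetilde\cR^\perp$, reducing to $\tr(\rho_f)=\tr(\Ind_K^\Q\psi)$ modulo the maximal ideal. Since $\rho_f$ is irreducible, by the theory of pseudo-characters (as in \cite{nyssen} or \cite{bellaiche-chenevier-book}) there is a representation $R:\Gamma_\Q\to\GL_2(\widetilde\cR^\perp)$ realizing this pseudo-character and lifting $\rho_f$. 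Its restriction $R_{|\Gamma_K}$ has the same trace as $\widetilde\rho^\perp_{\cR}$, and both reduce to $\rho$ (which is indecomposable with trivial endomorphisms, so multiplicity-free); hence $R_{|\Gamma_K}$ and $\widetilde\rho^\perp_{\cR}$ are conjugate by an element of $1+M_2(\gm_{\widetilde\cR^\perp})$ — here one must be slightly careful since $\widetilde\cR^\perp$ need not be a domain when $r$ is odd, but one can argue over each component or use that $\rho$ has no nontrivial automorphisms together with henselianity. Replacing $R$ by that conjugate, $R$ is the desired extension of $\widetilde\rho^\perp_{\cR}$.

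Finally, the relation with $\iota$ and $\epsilon_K$: the automorphism $\iota$ sends $Z\mapsto-Z$, so $\iota(\widetilde\rho^\perp_{\cR})=\left(\begin{smallmatrix}-1&0\\0&1\end{smallmatrix}\right)\widetilde\rho^\perp_{\cR}\left(\begin{smallmatrix}-1&0\\0&1\end{smallmatrix}\right)$ already on $\Gamma_K$, by the explicit conjugation formula. For $g\in\Gamma_\Q\setminus\Gamma_K$, one compares $\iota(\widetilde\rho^\perp_{\cR}(g))$ with $\left(\begin{smallmatrix}-1&0\\0&1\end{smallmatrix}\right)(\widetilde\rho^\perp_{\cR}(g)\otimes\epsilon_K)\left(\begin{smallmatrix}-1&0\\0&1\end{smallmatrix}\right)=-\left(\begin{smallmatrix}-1&0\\0&1\end{smallmatrix}\right)\widetilde\rho^\perp_{\cR}(g)\left(\begin{smallmatrix}-1&0\\0&1\end{smallmatrix}\right)$: both are extensions to $\Gamma_\Q$ of the same $\Gamma_K$-representation $\iota(\widetilde\rho^\perp_{\cR})_{|\Gamma_K}$, and they have the same trace on $\Gamma_\Q$ because $\tr(\widetilde\rho^\perp_{\cR}(g))\in\cR^\perp$ is $\iota$-fixed while $g\notin\Gamma_K$ forces this trace to vanish mod $\gm$ — in fact $\tr(\widetilde\rho^\perp_{\cR})=\tr(R)$ lands in the $\iota$-invariants, so $\iota(\tr R)=\tr R$, and on $\Gamma_\Q\setminus\Gamma_K$ the two candidate extensions differ by $\epsilon_K$, whose trace-twist is caught by the sign. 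Irreducibility of $\rho_f$ then upgrades this equality of traces to the asserted equality of representations. I expect Step~3 — producing the extension to $\Gamma_\Q$ and controlling the conjugation over the possibly non-reduced, non-domain ring $\widetilde\cR^\perp$ — to be the main obstacle; the cleanest route is to work with the normalization or componentwise and then descend, using that all relevant conjugating matrices lie in $1+M_2(\gm)$.
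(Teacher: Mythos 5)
Your Steps~1 and~2 are in the right spirit — in particular Step~1 is essentially the paper's argument (the paper appeals to the general fact that the reducibility ideal of a GMA is the ideal $(b(g)c(g'))$ and then uses $(b(g);g)=\cR^\perp$, while you argue directly via Lemma~\ref{unicity-decomp}; both are fine, although you have the labels on your two inclusions swapped: reducing mod $(c(g))$ gives $J\subseteq(c(g))$, not the other way around).

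The gap is in Step~3, and it is not a technicality — it is the entire content of the second assertion. You claim that because $\tr(\rho_{\cR^\perp})$ is invariant under $g\mapsto\tau g\tau$, it ``therefore extends to a $2$-dimensional pseudo-character of $\Gamma_\Q$ valued in $\cR^\perp$''. This is false on two counts. First, a $\tau$-invariant trace on $\Gamma_K$ does not automatically come from a pseudo-character on $\Gamma_\Q$: one must in addition prescribe the values on $\Gamma_\Q\setminus\Gamma_K$ compatibly with the pseudo-character identities, and the obstruction to doing so (the ``sign'' of the intertwiner $M$ with $\rho^\tau_{\cR^\perp}=M\rho_{\cR^\perp}M^{-1}$, i.e.\ whether the scalar $M^2$ is a square) is nontrivial in general. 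Second, the extension that does exist is \emph{not} valued in $\cR^\perp$: Proposition~\ref{tildeRtrace} and the proof of Theorem~\ref{C0-thm}(ii) show that $\tr\bigl(\widetilde\rho^\perp_{\cR}(\gamma\tau)\bigr)\in Z\cdot\cR^\perp$ for $\gamma\in\Gamma_K$, and these traces topologically generate all of $\widetilde\cR^\perp$ over $\varLambda$, which strictly contains $\cR^\perp$. So the extension genuinely lives over the larger (quadratic, possibly non-normal) ring $\widetilde\cR^\perp=\cR^\perp[Z]/(Z^2-Y^r)$, and asserting its existence is what you are trying to prove, not an input you can invoke from pseudo-character theory. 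The same confusion contaminates your final step, where you again assume the trace of $\widetilde\rho^\perp_{\cR}$ lands in $\cR^\perp=\bigl(\widetilde\cR^\perp\bigr)^\iota$.

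The paper's route is to work at the level of the pair $(\rho_{\cR^\perp},\rho'_{\cR^\perp})$: one checks that $\bigl(\begin{smallmatrix}0&1\\Y^r&0\end{smallmatrix}\bigr)\rho^\tau_{\cR^\perp}\bigl(\begin{smallmatrix}0&Y^{-r}\\1&0\end{smallmatrix}\bigr)$ and its $\rho'$-analogue, after conjugation by fixed upper-triangular matrices over $\overline{\Q}_p$, define the \emph{same} $\cR^\perp$-point of $\cD^\perp$ as $(\rho_{\cR^\perp},\rho'_{\cR^\perp})$ (same $\tau$-invariant trace, same determinant, and all are $\gp$-ordinary by Lemma~\ref{lem-surj}). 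This produces an explicit $P\in\GL_2(\cR^\perp)$ conjugating $\rho_{\cR^\perp}$ to its $\tau$-twist; reducing mod $Y^r$ forces $P$ to be upper-triangular there, which is exactly what makes $M=\bigl(\begin{smallmatrix}0&1\\Z&0\end{smallmatrix}\bigr)P\bigl(\begin{smallmatrix}Z^{-1}&0\\0&1\end{smallmatrix}\bigr)$ integral over $\widetilde\cR^\perp$. Schur's lemma at a generic point then shows $M^2$ is scalar, and because $M$ is visibly non-scalar mod $\gm$, one can normalize $\tr(M)=0$, $\det(M)=-1$, so $M^2=1$ and $\widetilde\rho^\perp_{\cR}(\tau):=M$ gives the extension. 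You should redo Step~3 along these lines — producing $M$ explicitly from the structure of $\cD^\perp$ rather than appealing to an unproved extension principle for pseudo-characters.
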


\begin{proof} Recall that $(Y^r)$ is the smallest ideal modulo which  $\tr(\rho_{\cR^\perp})=\tr(\rho'_{\cR^\perp})$ can be written as a sum of two characters.  A direct computation shows that $(Y^r)=(c(g)b(g'); g,g' \in \Gamma_K)=(c'(g)b'(g'); g,g' \in \Gamma_K)$ implying the first claim, as   $(b(g'); g' \in \Gamma_K)=(b'(g');g' \in \Gamma_K)=\cR^{\perp}$. 
 
 As  $\dim \rH^1(K,\psim)=\dim \rH^1(K,\psim^\tau)=1$, there exist upper triangular matrices $P_0$ and $P_1$ in $\GL_2(\overline{\Q}_p)$
 such that the  pair of $\cR^\perp$-valued $\Gamma_K$-representations 
\[\left(P_0
\left(\begin{smallmatrix} 0 & 1 \\ Y^{r}& 0 \end{smallmatrix}\right)  \rho^\tau_{\cR^\perp} \left(\begin{smallmatrix} 0 & Y^{-r} \\ 1 & 0 \end{smallmatrix}\right)
P_0^{-1}, P_1
\left(\begin{smallmatrix} 0 & 1 \\ Y^{r}& 0 \end{smallmatrix}\right)  \rho'{}^\tau_{\!\!\cR^\perp} \left(\begin{smallmatrix} 0 & Y^{-r} \\ 1 & 0 \end{smallmatrix}\right)  
P_1^{-1}\right)\]
 reduces to $(\rho,\rho')$ modulo $\gm_{\cR^{\perp}}$. We will now check that it  defines the same  $\cR^\perp$-point of $\cD^{\perp}$ as  $(\rho_{\cR^\perp}, \rho'_{\cR^\perp})$.  In fact, all representations are $\cR^\perp$-valued and share the same $\tau$-invariant trace and determinant, and 
  by  Lemma~\ref{lem-surj} they are all $\gp$-ordinary (with same unramified character). Hence the two pairs are strictly conjugated, in particular  there 
  exists $P\in \GL_2(\cR^\perp)$  such that for all $g\in \Gamma_K$ we have
  \begin{align}\label{eq:tau-conj}
 \left(\begin{smallmatrix} d^\tau(g) & Y^{-r}c^\tau(g) \\ Y^{r} b^\tau(g) & a^\tau(g) \end{smallmatrix}\right) =
\left(\begin{smallmatrix} 0 & 1 \\ Y^{r}& 0 \end{smallmatrix}\right)
 \rho_{\cR^\perp}(\tau g\tau) \left(\begin{smallmatrix} 0 & Y^{-r} \\ 1 & 0 \end{smallmatrix}\right)=
P \rho_{\cR^\perp}( g)P^{-1} =P \left(\begin{smallmatrix} a(g) & b(g) \\ c(g) & d(g) \end{smallmatrix}\right) P^{-1}.
\end{align}
As $d^\tau\not\equiv d \pmod{Y}$, reducing the above equation modulo $Y^r$ yields $P\in 
\left(\begin{smallmatrix} * & * \\ 0& * \end{smallmatrix}\right) \pmod{Y^r}$. 
 Letting    $M=\left(\begin{smallmatrix} 0 & 1 \\ Z& 0 \end{smallmatrix}\right) P \left(\begin{smallmatrix}Z^{-1} & 0 \\ 0& 1 \end{smallmatrix}\right)\in \GL_2(\widetilde\cR^{\perp})$ the equation   \eqref{eq:tau-conj} can be rewritten as 
 \begin{align}\label{eq:tau-conj-bis}
\widetilde\rho^\perp_{\cR}(\tau g\tau)= M \widetilde\rho^\perp_{\cR}(g) M^{-1},
\end{align}
where the representation $ \widetilde\rho^\perp_{\cR}= \left(\begin{smallmatrix} Z & 0 \\ 0& 1 \end{smallmatrix}\right)  \rho_{\cR^\perp} 
\left(\begin{smallmatrix} Z^{-1} & 0 \\ 0& 1 \end{smallmatrix}\right):\Gamma_K\to \GL_2(\widetilde\cR^{\perp})$  is 
 generically absolutely  irreducible  (as $\rho_{\cR^\perp}$ is), while  reduces  to $\psi\oplus\psi^\tau$ modulo the maximal  ideal. 
As   $\tau^2=1$, Schur's lemma applied to  a generic point of $\widetilde\cR^{\perp}$ implies that 
 $M^2$ is scalar. Reducing \eqref{eq:tau-conj-bis} modulo the maximal ideal of $\widetilde\cR^{\perp}$ shows that $M$ is not scalar, hence necessarily $\tr(M)=0$. After rescaling $M$ one can assume that $\det(M)=-1$. Then  $M^2=1$ and letting $\widetilde\rho^\perp_{\cR}(\tau)=M$ (or $-M$) yields the desired extension.  Note  that $ \rho_f$ is the unique extension of 
$\psi\oplus\psi^\tau$ to $\Gamma_{\Q}$. 

The last point is a direct computation.\end{proof}

Denote by  $\rho^{\perp}_{\cR}$ the push-forward of $\widetilde\rho^\perp_{\cR}$ along the projection 
$\widetilde\cR^{\perp} \twoheadrightarrow \cR^{\perp}$ sending  $Z$ to  $Y^{\frac{r}{2}}$, when $r$ is even, and let
$\rho^{\perp}_{\cR} =\widetilde\rho^{\perp}_{\cR}$, when $r$ is odd. 
By the Frobenius Reciprocity Theorem (applied over a finite extension of the fraction field of the
integral domain $\cR^{\perp}$), $\rho^{\perp}_{\cR}$  and $\rho^{\perp}_{\cR} \otimes \epsilon_K$ are the only possible 
extensions of $ \rho_{\cR^\perp}$ to $\Gamma_{\Q}$.

\begin{cor}\label{coefficientsring}
The   pseudo-characters $\tr(\rho^{\perp}_{\cR})$  and $\tr(\rho^{\perp}_{\cR} \cdot \epsilon_K)$ are the only possible 
extensions of $\tr(\rho_{\cR^\perp})$ to $\Gamma_{\Q}$. 
Moreover  $\tr(\widetilde\rho^{\perp}_{\cR}) \cdot \epsilon_K=\iota\circ \tr(\widetilde\rho^{\perp}_{\cR})$.
\end{cor}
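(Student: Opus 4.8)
The plan is to reduce the classification of extensions of the pseudo-character $\tr(\rho_{\cR^\perp})$ to the classification of extensions of the representation $\rho_{\cR^\perp}$ already carried out just before the statement, and then to verify the identity $\tr(\widetilde\rho^{\perp}_{\cR})\cdot\epsilon_K = \iota\circ\tr(\widetilde\rho^{\perp}_{\cR})$ by a short computation. First I would recall that $\cR^{\perp}$ is a discrete valuation ring (Theorem~\ref{isom-cusp}) with fraction field $F$, and that $\rho_{\cR^\perp}$ is irreducible; moreover its restriction to $\Gamma_K$ is still absolutely irreducible over $F$ by the proof of Proposition~\ref{rhott} (the argument there, via Frobenius reciprocity and the fact that the relevant minimal primes are not CM by $K$, applies to the generic point of $\cR^{\perp}$). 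Hence any $F$-valued pseudo-character extending $\tr(\rho_{\cR^\perp})$ comes from a genuine representation $\Gamma_\Q\to\GL_2(F)$ extending $\rho_{\cR^\perp}$: indeed an extension of the trace gives, by \cite{nyssen} or directly by the theory of pseudo-characters in the residually irreducible case, a representation, and since $\rho_{\cR^\perp}$ is absolutely irreducible two such representations with the same trace are conjugate.

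Next I would invoke the Frobenius Reciprocity computation already stated: since $\rho_{\cR^\perp} = \Ind^\Q_K(\cdot)$ is not literally induced but its restriction to $\Gamma_K$ is irreducible and self-$\tau$-conjugate up to twist, any extension $\sigma$ of $\rho_{\cR^\perp}$ to $\Gamma_\Q$ satisfies $\sigma\simeq\sigma\otimes\epsilon_K$ or $\sigma$ is itself induced from $\Gamma_K$; in either case, comparing with the two explicit extensions $\rho^{\perp}_{\cR}$ and $\rho^{\perp}_{\cR}\otimes\epsilon_K$ constructed in Proposition~\ref{extensionpseudo} and in the paragraph following it, one sees these are the only two possibilities (they differ by the quadratic character $\epsilon_K$, exactly as in the situation of Lemma~\ref{extdet}). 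Passing to traces, $\tr(\rho^{\perp}_{\cR})$ and $\tr(\rho^{\perp}_{\cR}\cdot\epsilon_K)$ are therefore the only extensions of $\tr(\rho_{\cR^\perp})$, proving the first assertion. One should note that these two traces are genuinely distinct: they agree on $\Gamma_K$ but differ on $\Gamma_\Q\setminus\Gamma_K$, since $\epsilon_K$ is nontrivial there and $\tr(\rho^{\perp}_{\cR})$ does not vanish identically on that coset (its reduction is $\tr\rho_f$, which is nonzero on elements where $\epsilon_K=-1$).

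For the final identity, I would argue directly from the last display of Proposition~\ref{extensionpseudo}, namely
\[
\iota(\widetilde\rho^{\perp}_{\cR}) = \left(\begin{smallmatrix} -1 & 0 \\ 0 & 1 \end{smallmatrix}\right)(\widetilde\rho^{\perp}_{\cR}\otimes\epsilon_K)\left(\begin{smallmatrix} -1 & 0 \\ 0 & 1 \end{smallmatrix}\right).
\]
Taking traces of both sides and using that trace is conjugation-invariant gives $\tr(\iota(\widetilde\rho^{\perp}_{\cR})) = \tr(\widetilde\rho^{\perp}_{\cR}\otimes\epsilon_K) = \tr(\widetilde\rho^{\perp}_{\cR})\cdot\epsilon_K$, and since $\iota$ is a ring automorphism of $\widetilde\cR^{\perp}$ we have $\tr(\iota(\widetilde\rho^{\perp}_{\cR})) = \iota\circ\tr(\widetilde\rho^{\perp}_{\cR})$, which is exactly the claimed equality. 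The main (and only mild) obstacle is the first step: making sure that an extension of the \emph{pseudo-character} really lifts to an extension of the \emph{representation}, which requires the generic absolute irreducibility of $\rho_{\cR^\perp}|_{\Gamma_K}$ — but this is already available from the proof of Proposition~\ref{rhott}, so the corollary follows essentially formally once that input is cited.
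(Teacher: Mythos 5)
Your overall strategy matches the paper's: lift the pseudo-character extension to a representation over the fraction field of the integral domain $\cR^{\perp}$ (using generic absolute irreducibility of $\rho_{\cR^\perp}|_{\Gamma_K}$), classify representation extensions via Frobenius Reciprocity, then take traces, and read off the final identity from the last display of Proposition~\ref{extensionpseudo}. The trace computation $\tr\bigl(\iota(\widetilde\rho^{\perp}_{\cR})\bigr)=\iota\circ\tr(\widetilde\rho^{\perp}_{\cR})=\tr(\widetilde\rho^{\perp}_{\cR})\cdot\epsilon_K$ is correct and exactly what the paper intends.

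Two concrete flaws, neither fatal to the conclusion but both worth fixing. First, the Frobenius Reciprocity paragraph is internally confused: $\rho_{\cR^\perp}$ is a $\Gamma_K$-representation, so neither ``$\rho_{\cR^\perp}=\Ind^\Q_K(\cdot)$'' nor ``its restriction to $\Gamma_K$'' parses, and the stated dichotomy ``$\sigma\simeq\sigma\otimes\epsilon_K$ or $\sigma$ is induced'' is vacuous (for a two-dimensional irreducible $\Gamma_\Q$-representation these two conditions are equivalent) and in any case fails here since $\sigma|_{\Gamma_K}$ is irreducible. The clean version, implicit in the paper, is: over a finite extension of $\mathrm{Frac}(\cR^\perp)$, an extension $\sigma$ is irreducible and $\Hom_{\Gamma_\Q}\bigl(\sigma,\Ind^\Q_K(\sigma|_{\Gamma_K})\bigr)=\Hom_{\Gamma_K}(\sigma|_{\Gamma_K},\rho_{\cR^\perp})\neq 0$, while $\Ind^\Q_K(\rho^\perp_\cR|_{\Gamma_K})\simeq\rho^\perp_\cR\oplus(\rho^\perp_\cR\otimes\epsilon_K)$ by the projection formula, so $\sigma$ is isomorphic to one of these two summands. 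Second, your parenthetical justification that the two traces are distinct because ``$\tr\rho_f$ is nonzero on elements where $\epsilon_K=-1$'' is false: $\rho_f=\Ind^\Q_K\psi$, so $\tr\rho_f$ vanishes identically on $\Gamma_\Q\setminus\Gamma_K$. Distinctness (which the Corollary does not actually assert and does not need) would instead follow from the fact that $\rho^\perp_\cR$ does not have CM by $K$, since $\rho_{\cR^\perp}$ is irreducible over the generic point, so $\rho^\perp_\cR\not\simeq\rho^\perp_\cR\otimes\epsilon_K$ and hence their traces differ somewhere on $\Gamma_\Q\setminus\Gamma_K$ after inverting $Y$.
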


 \subsection{Components without CM by K containing $f$}\label{wihtoutCMgenericpoints}

Theorem~\ref{existencenon-cm} gives a  Hida family $\cF$ containing $f$ and having no CM by $K$ . 
As $f\otimes \epsilon_K=f$, the  twist $\cF \otimes \epsilon_K$ of $\cF$ is another such family. 

\begin{prop}\label{tildeRtrace}
The local ring $\widetilde\cR^{\perp}$ is topologically generated over $\varLambda$ by $\widetilde\tau_{\cR}^{\perp}=\tr (\widetilde\rho^\perp_{\cR}(\Gamma_{\Q}))$.
\end{prop}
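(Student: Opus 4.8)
The plan is to leverage what we already know: by Lemma~\ref{lem-surj} (applied to $\cR^\perp$) and the proof of Proposition~\ref{rhott}, the ring $\cR^\perp$ is topologically generated over $\varLambda$ by the traces $\tr(\rho_{\cR^\perp}(g))$ for $g\in\Gamma_K$; indeed this is precisely how \cite[Cor.~1.1.4(ii)]{kisin} is invoked. So the only new content here is that adjoining the extra element $Z$ (equivalently, the extra Galois elements in $\Gamma_\Q\smallsetminus\Gamma_K$) does not require going outside the subring generated by $\widetilde\tau^\perp_{\cR}=\tr(\widetilde\rho^\perp_{\cR})$. Concretely, let $\mathcal{B}\subset\widetilde\cR^\perp$ be the closed $\varLambda$-subalgebra generated by $\tr(\widetilde\rho^\perp_{\cR}(\Gamma_\Q))$. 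Since $\Gamma_K\subset\Gamma_\Q$, and since $\widetilde\rho^\perp_{\cR}{}_{|\Gamma_K}=\left(\begin{smallmatrix}Z&0\\0&1\end{smallmatrix}\right)\rho_{\cR^\perp}\left(\begin{smallmatrix}Z^{-1}&0\\0&1\end{smallmatrix}\right)$ has the same trace as $\rho_{\cR^\perp}$, the subalgebra $\mathcal{B}$ contains $\tr(\rho_{\cR^\perp}(\Gamma_K))$, hence contains $\cR^\perp$ by the above. It therefore remains to show $Z\in\mathcal{B}$, for then $\mathcal{B}\supset\cR^\perp[Z]=\widetilde\cR^\perp$.

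To extract $Z$, I would exhibit an element $g\in\Gamma_\Q\smallsetminus\Gamma_K$ whose trace under $\widetilde\rho^\perp_{\cR}$ visibly involves $Z$. Write $\widetilde\rho^\perp_{\cR}(\tau)=M$ with $M=\left(\begin{smallmatrix}0&1\\Z&0\end{smallmatrix}\right)P\left(\begin{smallmatrix}Z^{-1}&0\\0&1\end{smallmatrix}\right)$ as in the proof of Proposition~\ref{extensionpseudo}, where $P\in\GL_2(\cR^\perp)$ is upper triangular modulo $Y^r$. A cleaner route: pick $h\in\Gamma_K$ with $\rho_{\cR^\perp}(h)=\left(\begin{smallmatrix}a(h)&b(h)\\c(h)&d(h)\end{smallmatrix}\right)$ and $b(h)$ a unit in $\cR^\perp$ (such $h$ exists since $(b(g);g\in\Gamma_K)=\cR^\perp$, noting $b$ reduces to $\eta\psi^\tau\not\equiv0$), so that the conjugated representation $\widetilde\rho^\perp_{\cR}(h)=\left(\begin{smallmatrix}a(h)&Z b(h)\\Z^{-1}c(h)&d(h)\end{smallmatrix}\right)$. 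Now $\tr\big(\widetilde\rho^\perp_{\cR}(\tau h)\big)$ and $\tr\big(\widetilde\rho^\perp_{\cR}(\tau)\big)=\tr(M)=0$: the first, expanded using $M$, is a $\cR^\perp$-linear combination of the matrix entries of $\widetilde\rho^\perp_{\cR}(h)$ with coefficients the entries of $M$, and since $M$'s off-diagonal entries are $1$ and $Z\cdot(\text{unit})$ while the relevant entry $Zb(h)$ of $\widetilde\rho^\perp_{\cR}(h)$ carries a factor $Z$, one gets a relation of the shape $\tr(\widetilde\rho^\perp_{\cR}(\tau h))=Z^{2}\cdot(\text{unit of }\cR^\perp)+(\text{element of }\cR^\perp)$. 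Since $Z^2=Y^r\in\cR^\perp$, this only confirms $\tr(\widetilde\rho^\perp_{\cR}(\tau h))\in\cR^\perp$, which is not enough — so I would instead compare $\tr(\widetilde\rho^\perp_{\cR}(\tau h))$ with $\tr(\widetilde\rho^\perp_{\cR}(\tau h'))$ for a second element, or better, use an element $g\in\Gamma_\Q$ of the form $g=\tau h$ together with the relation $\tr(g)\tr(\tau)-\tr(g\tau)=\dots$; the point is that genuinely odd-degree traces do appear.

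The cleanest argument, and the one I would ultimately write, is this. The subalgebra $\mathcal{B}$ is stable under the automorphism $\iota$ of $\widetilde\cR^\perp$ \emph{precomposed appropriately}? No — rather, use Corollary~\ref{coefficientsring}: $\tr(\widetilde\rho^\perp_{\cR})\cdot\epsilon_K=\iota\circ\tr(\widetilde\rho^\perp_{\cR})$, so $\mathcal{B}$ is $\iota$-stable. Hence $\widetilde\cR^\perp/\mathcal{B}$-considerations aside, $\mathcal{B}$ is a $\varLambda$-subalgebra of $\widetilde\cR^\perp$ containing $\cR^\perp$ and stable under $\iota$; since $\iota(Z)=-Z$ and $\widetilde\cR^\perp=\cR^\perp\oplus\cR^\perp Z$ is the $\iota$-eigenspace decomposition, $\mathcal{B}=\cR^\perp\oplus(\mathcal{B}\cap\cR^\perp Z)$, and $\mathcal{B}\cap\cR^\perp Z$ is a nonzero $\cR^\perp$-submodule — nonzero because, as just computed, some $\tr(\widetilde\rho^\perp_{\cR}(g))$ with $g\notin\Gamma_K$ has nonzero $Z$-component (equivalently, $\widetilde\rho^\perp_{\cR}$ does not factor through $\cR^\perp$, which holds since it restricts nontrivially to $\Gamma_\Q/\Gamma_K$ via the sign $\epsilon_K$). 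Thus $\mathcal{B}\cap\cR^\perp Z=\mathfrak{a}Z$ for a nonzero ideal $\mathfrak{a}\subset\cR^\perp$; but $\cR^\perp$ is a DVR, so $\mathfrak{a}=(Y^k)$ for some $k$, and then $Z^2=Y^r$ together with $(Y^kZ)^2=Y^{2k+r}\in\mathcal{B}$, combined with $\mathcal{B}$ being a \emph{ring} closed under multiplication, forces — via $Y^kZ\cdot Y^kZ=Y^{2k+r}$ and more usefully $Y^kZ\in\mathcal{B}$, $Y^{k}Z\cdot(Y^{-k}\text{-nonsense})$ — hmm. The actual finish: $\mathcal{B}\ni Y^kZ$ and $\mathcal{B}\ni\cR^\perp$, so $\mathcal{B}\supseteq\cR^\perp[Y^kZ]$; the normalization of $\cR^\perp[Y^kZ]$ inside $\widetilde\cR^\perp$ is $\widetilde\cR^\perp$ iff $k=0$. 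Since by Proposition~\ref{extensionpseudo} the extension $\widetilde\rho^\perp_{\cR}$ reduces to the \emph{irreducible} $\rho_f$ modulo the maximal ideal, $\widetilde\rho^\perp_{\cR}$ is residually irreducible, and by a standard pseudocharacter-lifting fact (a pseudocharacter of a residually irreducible representation over a Henselian local ring comes from a representation, and one recovers all matrix entries up to simultaneous conjugation from the trace — \cite{nyssen}, or \cite[\S1.5]{bellaiche-chenevier-book}) the trace $\widetilde\tau^\perp_{\cR}$ generates the full subalgebra of $M_2(\widetilde\cR^\perp)$ generated by the image, whose "coefficient ring" is all of $\widetilde\cR^\perp$ — giving $k=0$ and $Z\in\mathcal{B}$.

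\smallskip
\textbf{Main obstacle.} The delicate point is the very last step: upgrading "the subalgebra generated by traces contains $\cR^\perp$ and \emph{some} multiple of $Z$" to "it contains $Z$ itself." For $\Gamma_K$ this was handled by Kisin's result using $\dim\rH^1(K,\psim)=1$; for $\Gamma_\Q$ one must rerun the residual-irreducibility argument for $\widetilde\rho^\perp_{\cR}$ (valid since $\rho_f$ is irreducible) and verify that the relevant $\rH^1$ condition or the Bellaïche–Chenevier generation criterion still applies over the non-normal ring $\widetilde\cR^\perp$. Concretely I expect to invoke: a pseudocharacter $T:\Gamma_\Q\to\widetilde\cR^\perp$ lifting an (absolutely) irreducible residual representation comes from a genuine representation $\Gamma_\Q\to\GL_2(\widetilde\cR^\perp)$ whose image generates $\widetilde\cR^\perp$ over $\varLambda$ as soon as the residual representation's image generates $M_2(\overline\Q_p)$ — which it does, $\rho_f$ being irreducible. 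That packages the whole thing, but one must be careful that $\widetilde\cR^\perp$ being non-reduced-looking (it is actually reduced here, $\overline\Q_p\lsem X\rsem[X^{r/2}]$, an integral domain when $r$ is odd and a fibre product when $r$ is even) does not break the lifting lemma; checking this compatibility is the real work.
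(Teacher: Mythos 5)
Your reduction — \(\mathcal{B}\supset\cR^\perp\) by Theorem~\ref{isom-cusp}, so it suffices to catch \(Z\), and \(\mathcal{B}\) is \(\iota\)-stable by Corollary~\ref{coefficientsring}, hence splits as \(\cR^\perp\oplus(\mathcal{B}\cap\cR^\perp Z)=\cR^\perp\oplus Y^k Z\cR^\perp\) — is a sound start and parallels part of what the paper does in the even-\(r\) case. But the last step is a genuine gap: the assertion that because \(\rho_f\) is irreducible the traces ``generate the full coefficient ring'' is not a standard pseudocharacter fact, and it cannot be one, for it is precisely the statement being proved. Nyssen's theorem (or Carayol's) says that a pseudocharacter over a Henselian local ring lifting an irreducible residual trace comes \emph{from} a representation over that ring; it says nothing about the subring generated by the traces being the whole ring. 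In fact the correct way to exploit Nyssen is the opposite of what you do: apply it with coefficients in the trace subalgebra \(A=\mathcal{B}\) itself (not in \(\widetilde\cR^\perp\)), obtaining a representation \(\rho_A=\left(\begin{smallmatrix}a&b\\c&d\end{smallmatrix}\right):\Gamma_\Q\to\GL_2(A)\) with \(\tr\rho_A=\widetilde\tau^\perp_\cR\) and \(\rho_A(\gamma_0)\) diagonal, and then analyze the ideals \((b(g))_{g\in\Gamma_K}\) and \((c(g))_{g\in\Gamma_K}\) of \(A\).

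Concretely, the paper shows \((b(g))=(c(g))\) in \(A\) by reducing the identity \(\rho_A(\tau g\tau)=\rho_A(\tau)\rho_A(g)\rho_A(\tau)\) modulo one of them (using that \(\rho_A(g),\rho_A(\tau g\tau)\) become upper triangular and \(\rho_A(\gamma_0)\) is diagonal), while \((b(g)c(g'))=(Y^r)\) since this is the reducibility ideal. When \(r\) is odd, \(\widetilde\cR^\perp=\overline\Q_p\lsem Y^{1/2}\rsem\)-integral-domain considerations force \((b(g))=(Y^{r/2})\), and any generator \(b(g_0)\) together with \(\cR^\perp\subset A\) yields \(A=\widetilde\cR^\perp\). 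When \(r\) is even, \(\widetilde\cR^\perp\simeq\cR^\perp\times_{\overline\Q_p[Y]/(Y^{r/2})}\cR^\perp\), so \(A=\cR^\perp\times_{\overline\Q_p[Y]/(Y^s)}\cR^\perp\) for some \(s\geq r/2\); here the \(\iota\)-equivariance \(\iota(\widetilde\rho^\perp_\cR)=\left(\begin{smallmatrix}-1&0\\0&1\end{smallmatrix}\right)(\widetilde\rho^\perp_\cR\otimes\epsilon_K)\left(\begin{smallmatrix}-1&0\\0&1\end{smallmatrix}\right)\equiv\widetilde\rho^\perp_\cR\pmod{Y^s}\) from Proposition~\ref{extensionpseudo} is applied to a \(g_0\) with \(b_1(g_0)=-b_2(g_0)\in Y^{r/2}\overline\Q_p\lsem Y\rsem^\times\) to get \(-b_1(g_0)\equiv b_1(g_0)\pmod{Y^s}\), forcing \(s=r/2\). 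So the real work — which your proposal replaces by an unavailable ``standard fact'' — is the explicit manipulation of the off-diagonal entries of a Nyssen representative over \(A\), together with the parity case split.

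Your exploratory computation of \(\tr(\widetilde\rho^\perp_\cR(\tau h))\) also contains an error: with \(M=\left(\begin{smallmatrix}0&1\\Z&0\end{smallmatrix}\right)P\left(\begin{smallmatrix}Z^{-1}&0\\0&1\end{smallmatrix}\right)\) and \(P\) upper triangular mod \(Y^r\), the \emph{off}-diagonal entries of \(M\) are units of \(\cR^\perp\) and its \emph{diagonal} entries lie in \(Z\cR^\perp\), so every such trace lands in \(Z\cdot\cR^\perp\) (not ``\(Z^2\cdot\text{unit}+\cR^\perp\)''); even so, establishing that the resulting coefficients generate the unit ideal would require essentially the paper's ideal computation, not a trace-generation principle.
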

\begin{proof} Let $A$ be the subring of $\widetilde\cR^{\perp}$ topologically generated  over $\varLambda$ by  $\widetilde\tau_{\cR}^{\perp}(\Gamma_{\Q})$. As $A\supset \cR^{\perp}$ (see Theorem~\ref{isom-cusp}), it suffices to show that $A$ contains the local parameter $Z$. By  \cite[Thm.~1]{nyssen}  there exists a deformation $\rho_A=\left(\begin{smallmatrix} a &b\\ c&d \end{smallmatrix}\right):\Gamma_{\Q} \to \GL_2(A)$ of $\rho_f$ such that $\tr(\rho_A)=\widetilde\tau_{\cR}^{\perp}$ and $\rho_A(\gamma_0)$ is diagonal. 
 Then the reducibility ideal of $\widetilde\tau^{\perp}_{\cR\mid \Gamma_K}=\tr \rho_{\cR^\perp}$ equals  $(Y^r)=(b(g)c(g'); g,g'\in \Gamma_K)$. We first prove that the ideals $(b(g); g\in \Gamma_K)$ and $(c(g); g\in \Gamma_K)$ of $A$ are equal by showing the double inclusion. To show $(b(g); g\in \Gamma_K)\subset (c(g); g\in \Gamma_K)$ one reduces 
the equality  $\rho_A(\tau g \tau)= \rho_A(\tau)\rho_A(g)\rho_A(\tau)$ for a given $g\in  \Gamma_K$  modulo $(c(g); g\in \Gamma_K)$ 
and uses the fact that $\rho_A(g)$ and $\rho_A(\tau g \tau)$ are upper triangular and that  $\rho_A(\gamma_0)$ is diagonal.

Assume first that  $r$ is odd. In that case $\widetilde\cR^{\perp}$ is an integral domain, which is  finite, flat and ramified over $\cR^{\perp}$.
It suffices then to show that $A$ contains an element of $Y^{\frac{r}{2}}\cdot\overline{\Q}_p \lsem Y^{\frac{1}{2}}\rsem^\times$, and 
any  $b(g_0)\in A$ generating $(b(g); g\in \Gamma_K)=(Y^{\frac{r}{2}})$ would do.

Assume next that $r$ is even. In that case $\widetilde\cR^{\perp} \simeq \cR^{\perp} \times_{\overline{\Q}_p[Y]/(Y^{\frac{r}{2}})} \cR^{\perp}$, and $A \subset \widetilde\cR^{\perp} $ surjects on both  components $\cR^{\perp}$. Hence $A=\cR^{\perp} \times_{\overline{\Q}_p[Y]/(Y^{s})} \cR^{\perp}$ with $s \geq \frac{r}{2}$, and by Proposition~\ref{extensionpseudo} one has
\[\left(\begin{smallmatrix} -1 & 0 \\ 0& 1 \end{smallmatrix}\right)(\widetilde\rho^{\perp}_{\cR} \otimes \epsilon_K)\left(\begin{smallmatrix} -1 & 0 \\ 0& 1 \end{smallmatrix}\right) = \iota(\widetilde\rho^{\perp}_{\cR}) \equiv \widetilde\rho^{\perp}_{\cR}    \pmod{Y^s},\]
where $\iota$ is the involution of $\widetilde\cR^{\perp}$ exchanging its two components.   Writing $b(g)=(b_1(g),b_2(g))\in A$,  there exists $g_0\in \Gamma_K$ such that $b_1(g_0)=-b_2(g_0)\in Y^{\frac{r}{2}}\cdot\overline{\Q}_p \lsem Y\rsem^\times$. The above congruence implies that $-b_1(g_0)\equiv b_1(g_0)\pmod{Y^s}$, hence $s=\frac{r}{2}$.\end{proof}

As $\cT^{\perp}$ carries a pseudo-character of $\Gamma_{\Q}$, extending  the $\cR^\perp\simeq\cT^{\perp}_{\spl}$-valued
pseudo-character of $\Gamma_K$, Corollary~\ref{coefficientsring} implies that given any  minimal prime ideal $\gp$ of $\cT^\perp$ the resulting pseudo-character  $\Gamma_{\Q}\to \cT^\perp/\gp$ factors through $\tr(\widetilde\rho^{\perp}_{\cR})$. 
Together with  Proposition~\ref{tildeRtrace} this implies the existence of a  $\varLambda$-algebra homomorphism  $\widetilde\varphi^{\perp}:  \widetilde\cR^{\perp}\to \cT^{\perp}$ fitting in the commutative diagram
\[
\xymatrix{
\Gamma_K  \ar@{^{(}->}[d] \ar^{\tau_{\cR}^{\perp}}[r]  &  \cR^\perp   \ar@{^{(}->}[d] \ar^{\sim}[r]  &   \cT^{\perp}_{\spl} \ar@{^{(}->}[d]   \\
\Gamma_{\Q}  \ar^{\widetilde\tau_{\cR}^{\perp}}[r]  &   \widetilde\cR^{\perp} \ar^{\widetilde\varphi^{\perp}}[r]  &   \cT^{\perp}.
}\]
As by definition $\widetilde\varphi^{\perp}\circ \widetilde\tau_{\cR}^{\perp}$ is surjective, so is $\widetilde\varphi^{\perp}$. 
Both algebras being reduced, it suffices to check injectivity on generic points, which holds as 
$\widetilde\cR^{\perp}$ is a free $\cR^\perp$-module of rank $2$, while $\cT^{\perp}$ has rank $\geqslant 2$ over $\cT^{\perp}_{\spl}$.  As a consequence we  determine the $\varLambda$-algebras structure of $\cT^{\perp}$.

\begin{thm}\label{geomCdag}
There is an isomorphism of  $\varLambda$-algebras $\widetilde\varphi^{\perp}: \widetilde\cR^{\perp}\xrightarrow{\sim}  \cT^{\perp}$
extending $\varphi^{\perp}: \cR^{\perp}\xrightarrow{\sim}  \cT_{\spl}^{\perp}$ and 
under which  $\iota$ is sent to the involution of $\cT^{\perp}$ mapping $T_\ell$ to $\epsilon_K(\ell) T_\ell$, for $\ell\nmid Np$. 
\end{thm}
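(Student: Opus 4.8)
The plan is to package together the facts already established in the paragraph preceding the statement and to add the single new ingredient, the compatibility with the involution $\iota$. First I would recall that the $\varLambda$-algebra homomorphism $\widetilde\varphi^{\perp}\colon\widetilde\cR^{\perp}\to\cT^{\perp}$ constructed there is surjective — because the pseudo-character $\widetilde\varphi^{\perp}\circ\widetilde\tau^{\perp}_{\cR}$ has image generating $\cT^{\perp}$ — and restricts on the subring $\cR^{\perp}\subset\widetilde\cR^{\perp}$ to the isomorphism $\varphi^{\perp}\colon\cR^{\perp}\xrightarrow{\sim}\cT^{\perp}_{\spl}$ of Theorem~\ref{isom-cusp}. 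For injectivity I would pass to generic fibres over $\cR^{\perp}=\cT^{\perp}_{\spl}$: tensoring with the fraction field $Q(\cR^{\perp})$ of this discrete valuation ring turns $\widetilde\varphi^{\perp}$ into a surjection of $Q(\cR^{\perp})$-algebras whose source $\widetilde\cR^{\perp}\otimes_{\cR^{\perp}}Q(\cR^{\perp})$ has dimension exactly $2$ (as $\widetilde\cR^{\perp}$ is $\cR^{\perp}$-free of rank $2$ in all three cases of \eqref{eq:R-tilde}) and whose target $\cT^{\perp}\otimes_{\cT^{\perp}_{\spl}}Q(\cT^{\perp}_{\spl})$ has dimension at least $2$ (the rank of $\cT^{\perp}$ over $\cT^{\perp}_{\spl}$ is $\geqslant 2$, as recalled before the statement). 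A surjection of $Q(\cR^{\perp})$-algebras from a $2$-dimensional one onto one of dimension $\geqslant 2$ is an isomorphism, and since $\widetilde\cR^{\perp}$ is $\cR^{\perp}$-torsion-free it embeds into its own generic fibre; hence $\widetilde\varphi^{\perp}$ is injective, and therefore an isomorphism of $\varLambda$-algebras extending $\varphi^{\perp}$.

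It then remains to identify $\sigma:=\widetilde\varphi^{\perp}\circ\iota\circ(\widetilde\varphi^{\perp})^{-1}$. Since $\iota$ is a $\varLambda$-algebra automorphism of order $2$, $\sigma$ is an involution of the $\varLambda$-algebra $\cT^{\perp}$. The point is that $\widetilde\varphi^{\perp}$ was built so as to transport $\widetilde\tau^{\perp}_{\cR}=\tr(\widetilde\rho^{\perp}_{\cR})$ to the natural $\cT^{\perp}$-valued pseudo-character of $\Gamma_{\Q}$; in particular $\widetilde\varphi^{\perp}\bigl(\tr(\widetilde\rho^{\perp}_{\cR})(\Frob_\ell)\bigr)=T_\ell$ for every prime $\ell\nmid Np$. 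Applying $\widetilde\varphi^{\perp}$ to the identity $\tr(\widetilde\rho^{\perp}_{\cR})\cdot\epsilon_K=\iota\circ\tr(\widetilde\rho^{\perp}_{\cR})$ of Corollary~\ref{coefficientsring}, evaluated at $\Frob_\ell$, and using $\epsilon_K(\Frob_\ell)=\epsilon_K(\ell)$, gives $\sigma(T_\ell)=\epsilon_K(\ell)\,T_\ell$. Thus $\sigma$ is exactly the claimed involution of $\cT^{\perp}$; equivalently, it is the automorphism induced by the twist $\cF\mapsto\cF\otimes\epsilon_K$ of Hida families, which is legitimate because $f\otimes\epsilon_K=f$, so that twisting permutes the irreducible components of $\cE^{\perp}$ through $f$.

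I do not expect a genuine obstacle here: all the substantial inputs — the construction and surjectivity of $\widetilde\varphi^{\perp}$, the structure of $\widetilde\cR^{\perp}$ in \eqref{eq:R-tilde}, the rank bound $[\cT^{\perp}:\cT^{\perp}_{\spl}]\geqslant 2$, and the twist/extension statements of Proposition~\ref{extensionpseudo} and Corollary~\ref{coefficientsring} — are already in place. The only step deserving a moment's care is the injectivity argument, where one must compare ranks after inverting a uniformizer of $\cR^{\perp}$ (equivalently, at the generic points of $\Spec\cT^{\perp}$), together with the harmless check from \eqref{eq:R-tilde} that $\widetilde\cR^{\perp}$ is $\cR^{\perp}$-free of rank $2$; a secondary routine point is to note that $T_\ell\mapsto\epsilon_K(\ell)T_\ell$ indeed extends to an algebra automorphism of $\cT^{\perp}$, which is automatic once $\sigma$ has been exhibited as a conjugate of $\iota$.
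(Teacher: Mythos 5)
Your proposal is correct and follows essentially the same route as the paper: surjectivity of $\widetilde\varphi^{\perp}$ via Proposition~\ref{tildeRtrace}, injectivity by comparing ranks at the generic point of the DVR $\cR^{\perp}$ (rank $2$ for $\widetilde\cR^{\perp}$ against rank $\geqslant 2$ for $\cT^{\perp}$, using reducedness/torsion-freeness), and the involution statement by pushing the identity $\tr(\widetilde\rho^{\perp}_{\cR})\cdot\epsilon_K=\iota\circ\tr(\widetilde\rho^{\perp}_{\cR})$ of Corollary~\ref{coefficientsring} through $\widetilde\varphi^{\perp}$ and evaluating at Frobenii. The paper leaves the involution compatibility implicit; you have simply made it explicit, which is a useful addition rather than a divergence.
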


In summary, we have shown that $f$ belongs to exactly $3$ or $4$  irreducible
components of $\cE$ depending on the parity of $r$, corresponding to the Hida families $\Theta_{\psi}$, $\Theta_{\psi^{\tau}}$, $\cF$ and  $\cF \otimes \epsilon_K $ containing $f$ (the last two being Galois conjugates if $r$ is odd).

\section{Geometry of the  eigencurve and congruence ideals} \label{section4}

\subsection{Iwasawa cohomology and the CM ideal}\label{CM-Ideal}

Let $K_{\infty}^-$ be the anti-cyclotomic $\Z_p$-extension of $K$ and
\[\chim=\frac{\chi_{\gp}}{\chi_{\gp}^\tau} :\Gamma_K   \twoheadrightarrow \Gal(K_{\infty}^-/K) \to  \varLambda^{\times}_\gp\]
be the universal $p$-adic anti-cyclotomic character. By \eqref{eq:univ-coates-wiles}, it satisfies the congruence 
 \begin{align}\label{eq:univ-coates-wiles2}
\chim \equiv 1 + \frac{\eta_{\bar{\gp}}-\eta_{\gp}}{\log_p(1+p^{\nu})} X\pmod{X^2}. 
\end{align}

 Put $\Psi=\chim \psim$ and for $n\in \Z_{\geqslant 1}$ let $\Psi_n=\Psi \mod{X^n}$.  
Finally, let  $\Gamma_K^{Np}$ be the Galois group of the maximal extension of $K$ unramified outside  $Np$.  

\begin{prop} \label{def-red-rho}
\begin{enumerate}[wide]
\item 
The $\varLambda_\gp$-module $\rH^1(\Gamma_K^{Np}, \Psi^{\pm 1})$ is free of rank $1$ and  
for all $n\in \Z_{\geqslant 1}$   the following natural map is an isomorphism 
\[ \rH^1(\Gamma_K^{Np}, \Psi^{\pm 1})\otimes_{\varLambda_\gp} \varLambda_{\gp}/(X^n) \xrightarrow{\sim} \rH^1(\Gamma_K^{Np}, \Psi_n^{\pm 1}).\]
\item  Assume that $\cLm(\psim) \ne 0$. Then for every $n\geqslant 1$,  the natural restriction map is injective
\[ \rH^1(\Gamma_K^{Np}, \Psi_{n}^{-1} ) \to \rH^1(\Gamma_{K_{\gp}}, \Psi_{n}^{-1}).\]
\end{enumerate}
\end{prop}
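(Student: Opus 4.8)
The plan is to prove (i) by $X$-adic dévissage from the residual facts in Lemma~\ref{H2} and Proposition~\ref{restinj}, and to prove (ii) by induction on $n$, the inductive step being governed by a local connecting homomorphism at $\gp$ whose image is the anti-cyclotomic line and which, precisely when $\cLm(\psim)\ne0$, meets $\res_\gp(\rH^1(K,\psim^\tau))$ only in $0$.

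For (i), recall that $\psim^\tau=\psim^{-1}$ and $\chim\equiv1\pmod X$, so $\Psi\equiv\psim\pmod X$ and $\Psi^{-1}\equiv\psim^\tau\pmod X$; since $\psim\ne\mathbf1$ there is $g\in\Gamma_K$ with $\Psi^{\pm1}(g)-1\in\varLambda_\gp^\times$, whence $\rH^0(\Gamma_K^{Np},\Psi^{\pm1})=0$ and $\rH^0(\Gamma_K^{Np},\Psi_n^{\pm1})=0$ for all $n$. Plugging this into the long exact sequence of $0\to\Psi^{\pm1}\xrightarrow{X}\Psi^{\pm1}\to\Psi_1^{\pm1}\to0$ shows that $X$ is injective on the finitely generated $\varLambda_\gp$-module $\rH^1(\Gamma_K^{Np},\Psi^{\pm1})$, hence it is free over the discrete valuation ring $\varLambda_\gp$; the same sequence gives an injection $\rH^2(\Gamma_K^{Np},\Psi^{\pm1})/X\hookrightarrow\rH^2(\Gamma_K^{Np},\psim^{\pm1})$, and the target is $0$ by Lemma~\ref{H2}(i) (and the identical Euler-characteristic computation for $\psim^\tau$, giving $\dim\rH^2(K,\psim^\tau)=\dim\rH^1(K,\psim^\tau)-1=0$), so Nakayama yields $\rH^2(\Gamma_K^{Np},\Psi^{\pm1})=0$. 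Then the long exact sequence of $0\to\Psi^{\pm1}\xrightarrow{X^n}\Psi^{\pm1}\to\Psi_n^{\pm1}\to0$ collapses to the desired isomorphism $\rH^1(\Gamma_K^{Np},\Psi^{\pm1})\otimes_{\varLambda_\gp}\varLambda_\gp/(X^n)\xrightarrow{\sim}\rH^1(\Gamma_K^{Np},\Psi_n^{\pm1})$; for $n=1$ this identifies $\rH^1(\Gamma_K^{Np},\Psi^{\pm1})/X$ with $\rH^1(K,\psim^{\pm1})$, which is a line by Proposition~\ref{restinj}, so $\rH^1(\Gamma_K^{Np},\Psi^{\pm1})$ is free of rank one.

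For (ii), the base case $n=1$ asserts that $\rH^1(K,\psim^\tau)\to\rH^1(K_\gp,\overline{\Q}_p)$ is injective; this holds because $\psim^\tau$ is trivial on $\Gamma_{K_\gp}$ ($p$ splits completely in $H$) and, by Proposition~\ref{restinj}, the composite $\rH^1(K,\psim^\tau)\to\rH^1(K_\gp,\overline{\Q}_p)\to\rH^1(\I_\gp,\overline{\Q}_p)$ is already injective. For the inductive step I use the short exact sequence $0\to\Psi_1^{-1}\xrightarrow{X^n}\Psi_{n+1}^{-1}\to\Psi_n^{-1}\to0$ of $\varLambda_\gp[\Gamma_K^{Np}]$-modules and the commutative ladder formed by its long exact cohomology sequences for $\Gamma_K^{Np}$ and for $\Gamma_{K_\gp}$. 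Because $\rH^0(\Gamma_K^{Np},\Psi_n^{-1})=0$ and $\rH^2(\Gamma_K^{Np},\psim^\tau)=0$ (established above), the global row is short exact $0\to\rH^1(K,\psim^\tau)\to\rH^1(\Gamma_K^{Np},\Psi_{n+1}^{-1})\to\rH^1(\Gamma_K^{Np},\Psi_n^{-1})\to0$; a diagram chase, using the inductive hypothesis to kill the image in degree $n$, shows that any class in $\ker(\res_\gp)$ in degree $n+1$ comes from some $y\in\rH^1(K,\psim^\tau)$ whose restriction $\res_\gp(y)$ lies in the kernel of $\rH^1(\Gamma_{K_\gp},\psim^\tau)\to\rH^1(\Gamma_{K_\gp},\Psi_{n+1}^{-1})$, i.e. in the image of the local connecting map $\partial\colon\rH^0(\Gamma_{K_\gp},\Psi_n^{-1})\to\rH^1(\Gamma_{K_\gp},\psim^\tau)=\Hom(\Gamma_{K_\gp},\overline{\Q}_p)$. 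So the step is done once one checks that $\mathrm{im}(\partial)\cap\res_\gp(\rH^1(K,\psim^\tau))=\{0\}$ and that $\res_\gp$ is injective on $\rH^1(K,\psim^\tau)$.

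The main obstacle — and the only place where $\cLm(\psim)\ne0$ enters — is the computation of these two lines in $\Hom(\Gamma_{K_\gp},\overline{\Q}_p)$. Since $\psim$ is trivial on $\Gamma_{K_\gp}$, on $\Gamma_{K_\gp}$ one has $\Psi^{-1}=\chim^{-1}$, which by \eqref{eq:univ-coates-wiles2} is congruent to $1+\tfrac{\eta_\gp-\eta_{\bar\gp}}{\log_p(1+p^\nu)}X$ modulo $X^2$; a direct computation then gives $\rH^0(\Gamma_{K_\gp},\Psi_n^{-1})=X^{n-1}\varLambda_\gp/(X^n)$ and shows $\partial$ carries a generator to a nonzero multiple of the restriction of $\eta_\gp-\eta_{\bar\gp}$ to $\Gamma_{K_\gp}$, so that $\mathrm{im}(\partial)$ is the anti-cyclotomic line $\overline{\Q}_p\cdot(\eta_\gp-\eta_{\bar\gp})$ in the basis $\{\ord_\gp,\eta_\gp-\eta_{\bar\gp}\}$ of $\Hom(\Gamma_{K_\gp},\overline{\Q}_p)$. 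On the other hand, the Remark following Definition~\ref{d:new-L-inv}, applied with $\psim$ replaced by $\psim^\tau$, gives that $\res_\gp(\rH^1(K,\psim^\tau))$ is spanned by $\res_\gp(\eta')=\cLm(\psim)\cdot\ord_\gp+(\eta_\gp-\eta_{\bar\gp})$, whose $\ord_\gp$-component is nonzero exactly when $\cLm(\psim)\ne0$. Under that hypothesis the two are distinct lines in a plane, hence intersect in $0$; thus $\res_\gp(y)=0$, so $y=0$ by the case $n=1$, and $\ker(\res_\gp)$ is trivial in degree $n+1$, completing the induction. Apart from this, the only care needed is bookkeeping with the connecting homomorphisms and with the identification $\rH^1(K_\gp,\psim^\tau)\simeq\Hom(\Gamma_{K_\gp},\overline{\Q}_p)$.
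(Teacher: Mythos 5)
Your proposal is correct and follows essentially the same route as the paper, which in its own proof simply cites \cite[Prop.~2.8]{BDPozzi} and isolates the one new computation — that $\res_\gp(\eta')$ and $(\eta_\gp-\eta_{\bar\gp})/\log_p(1+p^\nu)$ (the latter being $\frac{\chim^\tau-1}{X}\bmod X$) span distinct lines in $\rH^1(K_\gp,\overline{\Q}_p)$, a fact equivalent to $\cLm(\psim)\ne 0$ because $(\eta'-\eta_\gp+\eta_{\bar\gp})$ is trivial on $\I_\gp$ and takes the value $\cLm(\psim)$ on $\Frob_\gp$. Your dévissage for (i) and your ladder/connecting-map induction for (ii) are exactly the content the authors are deferring to that reference, and your identification of $\mathrm{im}(\partial)$ as the anti-cyclotomic line $\overline{\Q}_p\cdot(\eta_\gp-\eta_{\bar\gp})$ and of $\res_\gp(\eta')=\cLm(\psim)\cdot\ord_\gp+(\eta_\gp-\eta_{\bar\gp})$ matches the paper's computation. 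The only small point worth making explicit is that Proposition~\ref{restinj} is stated for $\psim$, and you are invoking the analogous statement for $\psim^\tau$ (obtained by conjugating by $\tau$, or by running the same proof), which the paper also uses tacitly in defining $\eta'$.
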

\begin{proof}
(i) The proof is similar to \cite[Prop.~2.8]{BDPozzi}, as  $\dim \rH^1(K,\psim)=\dim \rH^1(K,\psim^{\tau})=1$ (by Proposition~\ref{restinj}) and $\dim \rH^2(K,\psim)=\dim \rH^2(K,\psim^{\tau})=0$ (by Lemma~\ref{H2}).

(ii)  As in {\it loc. cit.} we have to check that $\eta'$ and 
$\frac{\eta_{\gp}-\eta_{\bar\gp}}{\log_p(1+p^{\nu})}\equiv\frac{\chim^\tau-1}{X} \mod X$ (see \eqref{eq:univ-coates-wiles2})
generate distinct lines in $ \rH^1(K_{\gp}, \overline{\Q}_p)$. The claim follows  from the fact that  $(\eta'-\eta_{\gp}+\eta_{\bar{\gp}})_{\mid \I_{\gp}}=0$ while  $\cLm(\psim)=(\eta'-\eta_{\gp}+\eta_{\bar{\gp}})(\Frob_{\gp}) \ne 0$.\end{proof}

We recall that the completed local  ring at $f$ of the irreducible component of $\cE$ corresponding to $\Theta_{\psi}$ is naturally isomorphic to $\varLambda_{\gp}$. 

\begin{cor} \label{cor:def-red} There exists  a natural isomorphism  of 
   complete Noetherian local $\varLambda$-algebras   $\cR^K_\rho\xrightarrow{\sim} \varLambda_\gp$. 
\end{cor}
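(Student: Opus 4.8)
The goal is to produce an isomorphism of local $\varLambda$-algebras $\cR^K_\rho \xrightarrow{\sim} \varLambda_\gp$. The plan is to exhibit an explicit $\gp$-ordinary deformation of $\rho$ over $\varLambda_\gp$ having a $\Gamma_K$-unramified rank one quotient at $\gp$ (hence a point of $\cD^K_\rho(\varLambda_\gp)$), which yields a $\varLambda$-algebra map $\cR^K_\rho \to \varLambda_\gp$, and then to prove that this map is an isomorphism by combining a tangent space computation with the modular input coming from $\Theta_\psi$.

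First I would construct the candidate deformation. Consider the character $\Psi=\chim\psim$ on $\Gamma_K$, or rather use the reducible representation on $\varLambda_\gp^2$ built from an extension of $\psi\chi_\gp$ by $\psi^\tau\chi_{\bar\gp}$: concretely, twisting $\rho=\left(\begin{smallmatrix}\psi & \eta\psi^\tau\\ 0 & \psi^\tau\end{smallmatrix}\right)$ appropriately, one wants $\rho_{\varLambda_\gp}=\left(\begin{smallmatrix}\psi\chi_\gp & * \\ 0 & \psi^\tau\chi_{\bar\gp}\end{smallmatrix}\right)$ where the extension class lifts $[\eta]$. The point of Proposition~\ref{def-red-rho}(i) is precisely that $\rH^1(\Gamma_K^{Np},\Psi)$ is free of rank one over $\varLambda_\gp$ and reduces correctly mod $X$, so such a lift of the extension class exists and is essentially unique; this gives a reducible deformation of $\rho$ over $\varLambda_\gp$. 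One then checks it is $\gp$-ordinary with the right unramified quotient $\psi^\tau\chi_{\bar\gp}$ (which is indeed unramified at $\gp$ since $\chi_{\bar\gp}$ is unramified away from $\bar\gp$ and $\psi$ has conductor prime to $p$), and that it has $\tau$-invariant trace because $\det=\psi\psi^\tau\chi_\gp\chi_{\bar\gp}$ is $\tau$-invariant and $\Ind^\Q_K(\psi\chi_\gp)$ is $\epsilon_K$-self-twist. This produces a $\varLambda$-algebra homomorphism $\Phi:\cR^K_\rho\to\varLambda_\gp$.

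To see $\Phi$ is an isomorphism, I would argue as follows. Surjectivity: since $\cR^K_\rho$ carries the universal $\gp$-unramified-quotient deformation, the character $\chi$ giving the unramified quotient produces, via its restriction to a Frobenius away from the level, enough elements to generate $\varLambda_\gp\simeq\varLambda$ over $\varLambda$ — more cleanly, the map $\varLambda\to\varLambda_\gp$ from \eqref{eq:Lambda-iso} is already an isomorphism and $\Phi$ is a $\varLambda$-algebra map, so it is automatically surjective once one knows $\varLambda_\gp$ is generated over $\varLambda$ by the image, which is immediate here since the structural map $\varLambda\to\varLambda_\gp$ is itself surjective (it is the isomorphism \eqref{eq:Lambda-iso}). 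Then it remains only to see $\Phi$ is injective, for which it suffices to check it is injective on tangent spaces: by Proposition~\ref{dim t_R} one has $\dim t^K_\rho=1$ and $t^K_{\rho,0}=\{0\}$, so $\cR^K_\rho$ is generated by one element over $\varLambda$ and is a quotient of $\varLambda\lsem T\rsem$ with $T\in\gm_\varLambda\cR^K_\rho$; since $\dim t^K_{\rho,0}=0$ forces $\cR^K_\rho=\varLambda$ as a $\varLambda$-module (the relative cotangent space vanishes), $\cR^K_\rho$ is a quotient of $\varLambda\simeq\varLambda_\gp$, and $\Phi$ being a nonzero $\varLambda$-algebra map between them, it must be an isomorphism. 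Alternatively, and perhaps more robustly, one identifies $\cR^K_\rho$ directly: by Lemma~\ref{lemma-red}(ii) it is $\cR^{\ord}_\rho/(c(g))$, the reducible-with-unramified-$\gp$-quotient quotient, and the representation $\rho_{\varLambda_\gp}$ above realizes the universal such object because any deformation in $\cD^K_\rho(A)$ is $\Ind$-induced from a character of $\Gamma_K$ lifting $\psi$ whose $\tau$-conjugate controls the rest, and $\varLambda_\gp$ is the universal deformation ring of $\psi$ among $\gp$-ramified characters (this is where \eqref{eq:Lambda-iso} and the discussion preceding it enter).

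\textbf{Main obstacle.} The technical heart is verifying that the reducible lift over $\varLambda_\gp$ constructed from $\rH^1(\Gamma_K^{Np},\Psi)$ genuinely satisfies \emph{all} the conditions defining $\cD^K_\rho$ simultaneously — in particular that the $\Gamma_{K_\gp}$-stable line with unramified quotient persists integrally over $\varLambda_\gp$ (not just generically) and that the $\tau$-invariance of the trace holds on the nose rather than up to a subtlety with the choice of extension of the determinant — together with checking that the induced map is not merely a surjection onto a proper quotient. Concretely one must rule out that $\cR^K_\rho$ could be a strictly smaller Artinian-over-$\varLambda$ thickening; this is exactly what the vanishing $t^K_{\rho,0}=\{0\}$ from Proposition~\ref{dim t_R} rules out, so the proof hinges on feeding that computation in correctly. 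Everything else is a now-standard unwinding of the deformation-theoretic definitions combined with the freeness statement in Proposition~\ref{def-red-rho}(i).
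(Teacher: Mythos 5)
Your proposal is correct and follows essentially the same line as the paper's proof: construct $\Phi\colon\cR^K_\rho\to\varLambda_\gp$ from a generator of the free rank-one $\varLambda_\gp$-module $\rH^1(\Gamma_K^{Np},\Psi)$ (Prop.~\ref{def-red-rho}), use $t^K_{\rho,0}=\{0\}$ from Prop.~\ref{dim t_R} together with Nakayama to obtain $\varLambda\twoheadrightarrow\cR^K_\rho$, and conclude because the composite $\varLambda\to\cR^K_\rho\xrightarrow{\Phi}\varLambda_\gp$ is the isomorphism \eqref{eq:Lambda-iso}. The only imprecision is your closing remark that the vanishing $t^K_{\rho,0}=\{0\}$ is "exactly what rules out" $\cR^K_\rho$ being a proper Artinian quotient of $\varLambda$—that vanishing alone gives only surjectivity of $\varLambda\to\cR^K_\rho$, and one also needs the existence of the $\varLambda$-algebra map $\Phi$ into the integral domain $\varLambda_\gp$ to force injectivity—but you do use both inputs in the body of your argument, so the proof stands.
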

\begin{proof}  Taking a  generator of the free rank $1$ $\varLambda_\gp$-module $\rH^1(\Gamma_K^{Np}, \Psi)$ (see   Proposition~\ref{def-red-rho}) yields a  point in $\cD^K_{\rho}(\varLambda_\gp)$,  hence a $\varLambda$-algebra homomorphism
 $\cR^K_\rho\to \varLambda_\gp$.  By Proposition~\ref{dim t_R} the relative tangent space $t^K_{\rho,0}$ of the 
 structural homomorphism $\varLambda \rightarrow \cR^K_\rho$ vanishes, and  hence $\cR^K_\rho/X\cR^K_\rho=\overline\Q_p$ ({\it i.e.} $\varLambda \rightarrow \cR^K_\rho$ is unramified). As    $\cR^K_\rho$ is separated for the $X$-adic topology, Nakayama's lemma implies that $\varLambda\to \cR^K_\rho$ is  surjective. The injectivity follows from the fact that   the composed map  
$\varLambda\twoheadrightarrow \cR^K_\rho \to \varLambda_\gp$ is given by the natural isomorphism  \eqref{eq:Lambda-iso}. 
\end{proof}

Let   $J$ be the kernel of the $\varLambda$-algebra homomorphism $\cR^{\ord}_{\rho} \twoheadrightarrow \cR_{\rho}^{K}\simeq
\varLambda_{\gp}$. As  $\cR^{\ord}_\rho$ is Noetherian, we know that $J/J^2$ is a module of finite type over $\cR^{\ord}_\rho/J\simeq\varLambda_\gp$. 

  \begin{prop} \label{J}  If $\cLm(\psim)  \ne 0$, then the $\varLambda_\gp$-module $J/J^2$ is torsion of length $\leqslant 1$.
\end{prop}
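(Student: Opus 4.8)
The plan is to use the presentation $\cR^K_{\rho}=\cR^{\ord}_{\rho}/(c(g);g\in\Gamma_K)$ of Lemma~\ref{lemma-red}(ii), where $\rho_{\cR}=\left(\begin{smallmatrix}a&b\\c&d\end{smallmatrix}\right)$ is written in an ordinary basis, so that $J=(c(g);g\in\Gamma_K)$. First I would record two features of $c$. Since the ordinary line is $\Gamma_{K_\gp}$-stable, $\rho_{\cR|\Gamma_{K_\gp}}$ is upper triangular, so $c$ \emph{vanishes identically on} $\Gamma_{K_\gp}$. Secondly, reducing the cocycle relation $c(gh)=c(g)a(h)+d(g)c(h)$ modulo $J^2$, dividing by the unit $\bar d$ and replacing $g$ by $g^{-1}$, one obtains a $1$-cocycle of $\Gamma_K^{Np}$ valued in $J/J^2$ on which $\Gamma_K$ acts through $\Psi^{-1}$; here I use that, by the construction of $\cR^K_\rho$ from a generator of $\rH^1(\Gamma_K^{Np},\Psi)$ in Corollary~\ref{cor:def-red}, the diagonal characters of $\rho_{\cR}\bmod J$ satisfy $\bar a\bar d^{-1}=\Psi=\psim\chim$, and that $\Psi^{-1}=\psim^{\tau}\chim^{\tau}$. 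By construction the values of this cocycle generate $J/J^2$ over $\varLambda_\gp\simeq\cR^{\ord}_\rho/J$, and it still vanishes on $\Gamma_{K_\gp}$.

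Next I would note that $J/J^2$ is a cyclic $\varLambda_\gp$-module: $\Hom_{\varLambda_\gp}(J/J^2,\overline{\Q}_p)$ is identified with the relative tangent space $t^{\ord}_{\rho,0}$, which is one-dimensional by Proposition~\ref{dim t_R}, so Nakayama's lemma gives $J/J^2\simeq\varLambda_\gp/(X^m)$ for some $m\in\{0,1,2,\dots\}\cup\{\infty\}$. The statement to prove is that $m\leqslant 1$ (which in particular excludes the free case $m=\infty$).

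Assume for contradiction that $m\geqslant 2$ (the case $m=\infty$ is treated identically). Reducing the cocycle above modulo $X^2\cdot(J/J^2)$ yields a class in $\rH^1(\Gamma_K^{Np},\Psi_2^{-1})$ whose values generate $\varLambda_\gp/(X^2)$ and which, by the vanishing of $c$ on $\Gamma_{K_\gp}$, lies in the kernel of the restriction to $\rH^1(\Gamma_{K_\gp},\Psi_2^{-1})$. This is where the hypothesis enters: since $\cLm(\psim)\neq 0$, Proposition~\ref{def-red-rho}(ii) asserts that this restriction map is injective, so our class is a coboundary $g\mapsto(\Psi_2^{-1}(g)-1)m_0$ with $m_0\in\varLambda_\gp/(X^2)$. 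As $\psim\neq\mathbf{1}$, some $\Psi_2^{-1}(g_0)-1$ has nonzero reduction modulo $X$, hence is a unit of $\varLambda_\gp/(X^2)$; the values then generate the ideal $(m_0)$, which forces $m_0$ to be a unit.

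To reach the contradiction I would evaluate this coboundary on inertia at $\gp$. By \eqref{eq:univ-coates-wiles2} one may choose $\gamma\in\I_\gp$ with $\chim(\gamma)-1$ of valuation exactly $1$ in $\varLambda_\gp$; since $\psim$ is trivial on $\Gamma_{K_\gp}$, this gives $\Psi_2^{-1}(\gamma)-1=\chim^{\tau}(\gamma)-1=X\cdot(\text{unit})$ in $\varLambda_\gp/(X^2)$. On the other hand the cocycle vanishes on $\I_\gp\subset\Gamma_{K_\gp}$, whence $X\cdot m_0=0$ in $\varLambda_\gp/(X^2)$, contradicting that $m_0$ is a unit. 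Therefore $m\leqslant 1$, i.e. $J/J^2$ is torsion of length at most $1$. The step I expect to be the main technical point is the careful identification of the reduced cocycle — its coefficient module $\Psi^{-1}$, the fact that its values generate $J/J^2$, its unramifiedness outside $Np$, and its vanishing on $\Gamma_{K_\gp}$; once this is in place, the injectivity of Proposition~\ref{def-red-rho}(ii) together with the elementary computation of $\chim$ on $\I_\gp$ finishes the argument.
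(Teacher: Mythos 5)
Your proof is correct, and the Galois--cohomological core is the same as the paper's: the normalized lower-left entry $\bar c$ of $\rho_{\cR}$ is a $\Psi^{-1}$-valued cocycle on $\Gamma_K^{Np}$ that vanishes identically on $\Gamma_{K_\gp}$ and whose values generate $J/J^2$; Proposition~\ref{def-red-rho}(ii) (which is where $\cLm(\psim)\ne 0$ enters) then forces it to be a coboundary, and evaluating that coboundary on $\I_\gp$ using \eqref{eq:univ-coates-wiles2} gives the bound.

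Where you depart from the paper is in the commutative-algebra bookkeeping. The paper proves directly that $\Hom_{\varLambda_\gp}(J/J^2,\varLambda_\gp/(X^n))$ injects (via $h\mapsto h\circ\bar c$) into $\ker\left(\mathrm{B}^1(\Gamma_K^{Np},\Psi_n^{-1})\to\mathrm{B}^1(K_\gp,\Psi_n^{-1})\right)\simeq (X^{n-1})/(X^n)$ for every $n\geqslant 1$, and then invokes the structure theorem for finitely generated $\overline{\Q}_p\lsem X\rsem$-modules. You instead first argue that $J/J^2$ is cyclic (so $J/J^2\simeq\varLambda_\gp/(X^m)$), and then run the coboundary argument only at $n=2$. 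This is a legitimate shortcut, but it rests on the unproved identification $\Hom_{\varLambda_\gp}(J/J^2,\overline{\Q}_p)\simeq t^{\ord}_{\rho,0}$. That identification is in fact correct: since $\pi:\cR^{\ord}_\rho\twoheadrightarrow\varLambda_\gp$ is a retraction of the structure map $\varLambda\simeq\varLambda_\gp\to\cR^{\ord}_\rho$ (Corollary~\ref{cor:def-red}), the conormal sequence for $\varLambda\to\cR^{\ord}_\rho\to\varLambda_\gp$ is split exact and $\Omega^1_{\varLambda_\gp/\varLambda}=0$, whence $J/J^2\simeq\Omega^1_{\cR^{\ord}_\rho/\varLambda}\otimes_{\cR^{\ord}_\rho}\varLambda_\gp$ and $\Hom_{\varLambda_\gp}(J/J^2,\overline{\Q}_p)\simeq\mathrm{Der}_\varLambda(\cR^{\ord}_\rho,\overline{\Q}_p)=t^{\ord}_{\rho,0}$, which has dimension $1$ by Proposition~\ref{dim t_R}. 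You should spell this out rather than asserting it, because it is the one step of your argument that is not in the paper; with it in place, the rest of your reasoning (the coboundary has the form $(\Psi_2^{-1}(g)-1)m_0$, surjectivity of the image forces $m_0\in\varLambda_\gp/(X^2)^\times$, and vanishing on $\I_\gp$ forces $Xm_0=0$) is sound and matches the paper's final computation. On balance, the paper's uniform-in-$n$ argument is a touch more economical since it sidesteps the tangent-space identification entirely, while yours isolates the cyclicity of $J/J^2$ as a separate, geometrically meaningful fact; both are valid.
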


\begin{proof} 
It is enough to show that for  $n\geqslant 1$ the  length of the $\varLambda_\gp$-module
 $\Hom_{\varLambda_\gp}(J/J^2, \varLambda_\gp/(X^n))$  is at most $1$. 
We  use a homological machinery as in \cite[Prop.~4.10]{BDPozzi}.  Let us 
   fix a realization  $\rho_{\cR}(g)= \left( \begin{smallmatrix} 
a(g)& b(g)\\  c(g) & d(g) \end{smallmatrix} \right)$
of the universal representation  in an ordinary basis. 
By Lemma~\ref{lemma-red} the  ideal $J$ of $\cR^{\ord}_{\rho}$ is generated by the set of $c(g)$ for $g \in \Gamma_K$. As
\[ \rho_{\cR} \otimes (\cR^{\ord}_{\rho}/J) = \left( \begin{smallmatrix} 
\psi \chi_{\gp} & \ast\\ 
0 & \psi^\tau \chi_{\gp}^{\tau} \end{smallmatrix} \right)
\]
a direct computation using that the basis is $\Gamma_{K_{\gp}}$-ordinary shows that  the function \begin{align}
\bar{c}: \Gamma_K \to J/J^2 ,\,\, g \mapsto (\psi\chi_{\gp})^{-1}(g) \cdot c(g)\mod{J^2}
\end{align}
belongs to $\ker\left(\rZ^1(\Gamma_K^{Np}, (\chim^\tau\psim^\tau)\otimes_{\varLambda_\gp} (J/J^2))\to
\rZ^1(K_{\gp}, (\chim^\tau\psim^\tau)\otimes_{\varLambda_\gp} (J/J^2))\right)$. As the image of $\bar{c}$ contains 
a set of generators of $J/J^2$ as $\cR^{\ord}_{\rho}/J \simeq \varLambda_\gp$-module, 
the natural map 
\begin{align*}
\Hom_{\varLambda_\gp}(J/J^2, \varLambda_\gp/(X^n))\to  
\ker \left (\rZ^1(\Gamma_K^{Np},\Psi_{n}^{-1})  \to \rZ^1(K_{\gp},\Psi_{n}^{-1})\right ), \,\, h \mapsto h\circ \bar{c} 
\end{align*}
is injective for all $n\geqslant 1$. Moreover by Proposition~\ref{def-red-rho}(ii)  one has 
\begin{align}\label{eq:coboundary}
\ker \left (\rZ^1(\Gamma_K^{Np},\Psi_{n}^{-1})  \to \rZ^1(K_{\gp},\Psi_{n}^{-1})\right )
=\ker \left (\mathrm{B}^1(\Gamma_K^{Np},\Psi_{n}^{-1} ) \to \mathrm{B}^1(K_{\gp},\Psi_{n}^{-1})\right ).
\end{align}
As $\Psi_1|_{\Gamma_{K_{\gp}}}=\psim|_{\Gamma_{K_{\gp}}}=\mathbf{1}$, while 
$\Psi_2|_{\Gamma_{K_{\gp}}}\ne\mathbf{1}$, the coboundary in \eqref{eq:coboundary} is given by the length $1$
$\varLambda_\gp$-module  $(X^{n-1})/(X^n)\simeq \overline{\Q}_p$, proving  the desired assertion.\end{proof}

We begin the study of the congruences between $\Theta_{\psi}$ and a family $\cF$  without CM by $K$,
by  determining the exact congruence ideal at split primes (note that the eigenvalues  of $\cF$ and $\cF\otimes \epsilon_K$ coincide at such primes). 
This  amounts to the study of the image $\cT_\rho^{\spl}$ of  $\cT_{\spl}$ in $\cT_{\spl}^\perp\times\varLambda_\gp$, 
which we will now   relate to the universal deformation ring $\cR_\rho^{\ord}$  from \S\ref{deformation}.

Recall the functorial homomorphism   $\cR^{\ord}_{\rho}  \to \cR^{\perp} \times_{\overline{\Q}_p} \varLambda_\gp$ 
which surjects on both components. As observed in the proof of Lemma~\ref{lem-surj}, the $\varLambda$-algebra  $\cR_\rho^{\ord}$ is generated by the trace of the universal ordinary deformation of $\rho$, hence there exists an integer  $m \geqslant 1$ and a 
commutative diagram of surjective homomorphism of $\varLambda$-algebras:\begin{align}\label{RarrowT} 
\xymatrix{
 \cR^{\ord}_{\rho} \ar@{->>}[d]\ar@{->>}[r] & \cR^\perp \times_{\overline{\Q}_p[X]/(X^m)} \cR^K_\rho \ar@{=}[d] \\
\cT^{\spl}_\rho  \ar@{=}[r] &  \cT_{\spl}^\perp \times_{\overline{\Q}_p[X]/(X^m)} \varLambda_\gp,}
\end{align}
where the   natural surjection $\cR^{\ord}_\rho \twoheadrightarrow \cT^{\mathrm{split}}_\rho$ sends  $\chi_\cR(\Frob_{\gp})$ to $U_p=(U_p(\cF),U_p(\Theta_{\psi}))$.

 \begin{prop}\label{R^ord2comp}
  If $\cLm(\psim) \ne 0$,  there is an isomorphism of  local complete  intersection rings
\[ \cR^{\ord}_{\rho}  \xrightarrow{\sim} \cT^{\spl}_\rho=\cR^\perp \times_{\overline{\Q}_p} \varLambda_\gp.\]
  \end{prop}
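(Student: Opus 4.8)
The plan is to combine the surjection $\cR^{\ord}_{\rho}\twoheadrightarrow\cT^{\spl}_\rho$ of diagram~\eqref{RarrowT}, the tangent space decomposition of Corollary~\ref{system2}, and the conormal bound of Proposition~\ref{J}. Write $I=\ker(\cR^{\ord}_\rho\twoheadrightarrow\cR^\perp)$ (surjective by Lemma~\ref{lem-surj}) and $J=\ker(\cR^{\ord}_\rho\twoheadrightarrow\cR^K_\rho)$. First I would pin down the fibre product. Under the identifications of $t^\perp$ and $t^K_\rho$ with the subspaces of $t^{\ord}_\rho$ annihilating $I$ and $J$ respectively, the hypothesis $\cLm(\psim)\ne0$ gives $t^\perp\cap t^K_\rho=0$ by Corollary~\ref{system2}, so that no nonzero tangent vector of $\cR^{\ord}_\rho$ kills $I+J$. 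Hence $C:=\cR^{\ord}_\rho/(I+J)$ satisfies $\gm_C=\gm_C^2$, and Nakayama forces $C=\overline{\Q}_p$, i.e. $I+J=\gm_{\cR^{\ord}_\rho}$. Now $\cR^{\ord}_\rho/(I\cap J)$ is the image of $\cR^{\ord}_\rho$ in $\cT^\perp_\spl\times\varLambda_\gp$, which by \eqref{RarrowT} is $\cT^\spl_\rho$ and, surjecting onto each factor, is also the fibre product $\cR^\perp\times_{C}\varLambda_\gp=\cR^\perp\times_{\overline{\Q}_p}\varLambda_\gp$ (using Theorem~\ref{isom-cusp} and Corollary~\ref{cor:def-red} to identify the factors). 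In particular the integer $m$ of \eqref{RarrowT} equals $1$, so $\cT^\spl_\rho=\cR^\perp\times_{\overline{\Q}_p}\varLambda_\gp$; it remains to prove $I\cap J=0$.

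Next I would fix a convenient presentation. Since $\cR^\perp=\overline{\Q}_p\lsem Y\rsem$ is a discrete valuation ring and $\varLambda_\gp\cong\overline{\Q}_p\lsem X\rsem$, a direct check identifies $\cR^\perp\times_{\overline{\Q}_p}\varLambda_\gp$ with $\overline{\Q}_p\lsem u,v\rsem/(uv)$, where $u$ (resp. $v$) maps to a uniformizer of $\cR^\perp$ (resp. of $\varLambda_\gp$) and to $0$ in the other factor; in particular $\dim t_{\cT^\spl_\rho}=2=\dim t^{\ord}_\rho$ (the latter by Proposition~\ref{dim t_R}), so $\cR^{\ord}_\rho\twoheadrightarrow\cT^\spl_\rho$ is an isomorphism on cotangent spaces. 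Lifting $u,v$ to $\cR^{\ord}_\rho$ and invoking completeness and Nakayama, I obtain a presentation $\cR^{\ord}_\rho=\overline{\Q}_p\lsem u,v\rsem/\mathfrak{b}$ compatible with the surjection, so that $\mathfrak{b}\subseteq(uv)$. Tracing the commutative diagram \eqref{RarrowT} then identifies $J$ with the image of the ideal $(u)$, whence $J=(\bar u)$ is principal and $\cR^{\ord}_\rho/J\cong\overline{\Q}_p\lsem v\rsem=\varLambda_\gp$ (the image of $\mathfrak{b}$ vanishing since $\mathfrak{b}\subseteq(u)$).

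The heart of the argument is then a short length computation. As $J=(\bar u)$, the conormal module $J/J^2$ is cyclic over $\cR^{\ord}_\rho/J\cong\varLambda_\gp$, and $J\ne J^2$ by Nakayama (otherwise $J=0$ and $\cR^{\ord}_\rho=\varLambda_\gp$, contradicting $\dim t^{\ord}_\rho=2$); Proposition~\ref{J} therefore forces $J/J^2\cong\varLambda_\gp/\gm_{\varLambda_\gp}$, i.e. $\gm_{\varLambda_\gp}$ annihilates $\bar u\bmod J^2$. Writing $\mathfrak{b}=uv\,\mathfrak{a}$ for an ideal $\mathfrak{a}\subseteq\overline{\Q}_p\lsem u,v\rsem$, the relation $\bar u\bar v\in J^2=(\bar u^2)$ unwinds (cancelling $u$, then $v$, in the unique factorization domain $\overline{\Q}_p\lsem u,v\rsem$) to $1-uw\in\mathfrak{a}$ for some $w$; as $1-uw$ is a unit, $\mathfrak{a}=\overline{\Q}_p\lsem u,v\rsem$, i.e. $\mathfrak{b}=(uv)$ and $\cR^{\ord}_\rho\xrightarrow{\ \sim\ }\cT^\spl_\rho$. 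Finally $\cR^{\ord}_\rho\cong\overline{\Q}_p\lsem u,v\rsem/(uv)$ is the quotient of a two-dimensional regular local ring by the single non-zerodivisor $uv$, hence a one-dimensional complete intersection, and it is finite flat over $\varLambda$ since $\cR^\perp$ and $\varLambda_\gp$ are. The main obstacle is the input from Proposition~\ref{J}: the bound $\mathrm{length}_{\varLambda_\gp}(J/J^2)\le1$ is precisely what prevents $\cR^{\ord}_\rho$ from being strictly thicker than the nodal ring $\overline{\Q}_p\lsem u,v\rsem/(uv)$, and it is there — together with the splitting $t^{\ord}_\rho=t^\perp\oplus t^K_\rho$ — that the hypothesis $\cLm(\psim)\ne0$ enters.
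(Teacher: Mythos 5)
Your proposal is correct and rests on the same two inputs as the paper's proof — Corollary~\ref{system2} to deduce $m=1$, and the conormal bound of Proposition~\ref{J} — but it replaces the paper's one-line appeal to Lenstra's variant of Wiles' numerical criterion~\cite{lenstra} by an explicit presentation argument. Once you match cotangent spaces to write $\cR^{\ord}_\rho=\overline{\Q}_p\lsem u,v\rsem/\mathfrak{b}$ with $\mathfrak{b}\subseteq(uv)$, the observation that $J=(\bar u)$ lets Proposition~\ref{J} force $\bar u\bar v\in J^2=(\bar u^2)$, and the UFD cancellation then pins down $\mathfrak{b}=(uv)$. This is in effect a self-contained proof of exactly the instance of Lenstra's criterion that the paper invokes: the congruence ideal attached to $\pi_\psi$ on the node $\cT^{\spl}_\rho\simeq\overline{\Q}_p\lsem u,v\rsem/(uv)$ is $\gm_{\varLambda_\gp}$, so Lenstra's numerical bound $\mathrm{length}(J/J^2)\le\mathrm{length}(\varLambda_\gp/\gm_{\varLambda_\gp})=1$ is what your power-series computation re-derives by hand. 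Logically the two routes are equivalent; yours trades the citation for transparency and makes it visible exactly where the nodal shape of the target is used, while the paper's is shorter by black-boxing the commutative algebra.
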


\begin{proof} Corollary~\ref{system2} implies that $m=1$. The isomorphism 
 is then a consequence of a variant of Wiles' numerical criterion due to Lenstra \cite{lenstra} in view of Proposition~\ref{J}.\end{proof}
 
As  $\cR_\rho^K$ is a quotient of $\cR_\rho^{\red}$, the CM ideal $(X^m)$ contains the reducibility ideal $(X^r)$ and plays a role analogous to that of the Eisenstein ideal. 
  
  \begin{prop}\label{m=r-1}
Assume that $\cLm(\psim) =0$. Then $m=r-1$, and $\cT^{\spl}_\rho = \varLambda \times_{\overline{\Q}_p[X]/(X^{r-1})} \varLambda_\gp$. 
\end{prop}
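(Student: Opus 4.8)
The plan is to adapt the strategy of Proposition~\ref{R^ord2comp} to the degenerate case $\cLm(\psim)=0$, where Corollary~\ref{system2} now fails. The starting point is the commutative diagram \eqref{RarrowT}, which expresses $\cT^{\spl}_\rho$ as the fiber product $\cT^{\perp}_{\spl}\times_{\overline{\Q}_p[X]/(X^m)}\varLambda_\gp$ for some integer $m\geqslant 1$. Since $\cLm(\psim)=0$, Lemma~\ref{reducn=2} forces $r\geqslant 3$, and by Proposition~\ref{extensionpseudo} and Theorem~\ref{isom-cusp} the non-CM component is $\cR^{\perp}=\overline{\Q}_p\lsem X^{1/e}\rsem$ with $\cT^{\perp}=\widetilde\cR^{\perp}$. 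The first task is therefore to pin down $m$, i.e. the exact depth of the congruence between the CM family $\Theta_\psi$ (whose local ring is $\varLambda_\gp\simeq\varLambda$) and the family $\cF$ at split primes.

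First I would compute the congruence modulo powers of $X$ directly from the Galois side. By Corollary~\ref{cor:def-red} the $\Theta_\psi$-deformation of $\rho$ is $\left(\begin{smallmatrix}\psi\chi_\gp & *\\0&\psi^\tau\chi_\gp^\tau\end{smallmatrix}\right)$, while the $\cR^\perp$-valued deformation $\rho_{\cR^\perp}$ has reducibility ideal $(Y^r)$ with $\alpha=\cLm(\psim)=0$ and $\beta=\cLm(\psim^\tau)\ne0$ in the normalization of Lemma~\ref{reducn=2}. Using \eqref{eq:modY3} together with \eqref{eq:univ-coates-wiles2}, I would compare the diagonal characters $a$ (resp. $d$) of $\rho_{\cR^\perp}$ with $\psi\chi_\gp$ (resp. $\psi^\tau\chi_\gp^\tau$) after pulling both back to $\cR^\perp$ via $\varLambda\hookrightarrow\cR^\perp$: both reduce to $\psi$ (resp. $\psi^\tau$) and their first-order terms in $X$ are governed by $\beta\eta_{\bar\gp}$ versus $-\eta_\gp/\log_p(1+p^\nu)$, which are non-proportional, so the two deformations of $\rho$ become congruent only modulo $Y^{?}$. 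The point is that on the reducible (diagonal) quotient $\cT^{\spl}_\rho$ the relevant congruence is measured by the trace, and since $\tr\rho_{\cR^\perp}\equiv\tr(\text{sum of two characters})\pmod{Y^r}$ but the splitting of $\Theta_\psi$ is visible only after one more step, I expect the comparison to yield $m=r-1$. Concretely, on $\cR^\perp=\overline{\Q}_p\lsem X^{1/e}\rsem$ the image of $U_p$ from the $\cF$-side and from the $\Theta_\psi$-side agree modulo $(X^{(r-1)/e})$ but not modulo a higher power; translating through \eqref{RarrowT} and using that $\cT^{\perp}_{\spl}=\cR^\perp$ surjects onto $\overline{\Q}_p[X]/(X^{m})$ only through its $X$-adic (not $Y$-adic) filtration gives $m=r-1$, hence $\cT^{\spl}_\rho=\varLambda\times_{\overline{\Q}_p[X]/(X^{r-1})}\varLambda_\gp$, identifying $\varLambda_\gp$ with $\varLambda$ via \eqref{eq:Lambda-iso}.

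The cleanest route is probably cohomological, paralleling Proposition~\ref{J} and Proposition~\ref{m=r-1}'s companion statements: the ideal $J=\ker(\cR^{\ord}_\rho\twoheadrightarrow\cR^K_\rho\simeq\varLambda_\gp)$ is generated by the lower-left entries $c(g)$, and $m$ is exactly the largest $n$ with $c(g)\equiv0\pmod{Y^n}$ when $\rho_{\cR}$ is specialized to $\cR^\perp$; but the computation in the proof of Lemma~\ref{reducn=2} already shows $c(g)=\cLm(\psim)\cLm(\psim^\tau)\eta'(g)=0$ for $g\in\Gamma_{K_\gp}$ when $\cLm(\psim)=0$, so the first nonzero $c$ appears exactly at order $r$ on $\cR^\perp$; however the congruence ideal on the \emph{fiber product} $\cT^{\spl}_\rho$ is measured on $\overline{\Q}_p[X]/(X^m)$ via the structural map $\cR^\perp\to\varLambda$, which has ramification $e$, and a short computation with the $Y$-valuation versus $X$-valuation (together with the shape of $\widetilde\cR^\perp$ in \eqref{eq:R-tilde}) produces $m=r-1$. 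The main obstacle I anticipate is bookkeeping the interaction between the three filtrations in play — the $Y$-adic filtration on $\cR^\perp$, the $X$-adic filtration coming from $\varLambda$, and the reducibility filtration of order $r$ — and verifying that passing to the diagonal quotient $\cT^{\spl}_\rho$ drops the congruence depth by exactly one (rather than by $e$ or by some other amount); this is where I would be most careful, checking it against the quadratic case $\psim^2=\mathbf{1}$ where everything is étale ($e=1$, $r$ odd does not occur) and against Proposition~\ref{m=r-1}'s sibling where $\cLm(\psim)\ne0$ forces $m=1=r-1$.
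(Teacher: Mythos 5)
You correctly identify the setup — diagram~\eqref{RarrowT}, $r\geqslant3$, $\cR^{\perp}\cong\varLambda$ (hence $e=1$), $\cT^{\perp}\cong\widetilde\cR^{\perp}$ — and you arrive at the expected answer, but both mechanisms you offer for producing the drop from $r$ to $r-1$ are incorrect, and the argument that actually produces it is absent.

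The first-order comparison is off. From~\eqref{eq:modY3} with $\alpha=\cLm(\psim)=0$, the $(1,1)$-entry of $\rho_{\cR^\perp}$ is $\psi(1+\beta\eta_{\gp}Y)+O(Y^2)$ — it is $\eta_\gp$, not $\eta_{\bar\gp}$, that appears — and by~\eqref{eq:univ-coates-wiles} one has $\psi\chi_\gp\equiv\psi\bigl(1-\tfrac{\eta_\gp}{\log_p(1+p^\nu)}X\bigr)\bmod X^2$. These \emph{are} proportional; that is precisely why $m\geqslant 2$. If they were non-proportional, as you assert, you would get $m=1$, contradicting your own conclusion.

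The ``cohomological route'' conflates two different lower-left cocycles. The ideal $(c(g);g\in\Gamma_K)=(Y^r)$ you quote from Lemma~\ref{reducn=2} lives in the basis diagonalizing $\rho_{\cR^\perp}(\gamma_0)$, whereas $J=\ker(\cR^{\ord}_\rho\twoheadrightarrow\cR^K_\rho)$ is, by Lemma~\ref{lemma-red}(ii), generated by lower-left entries in a \emph{$\gp$-ordinary} basis; the two differ by the change of basis $e_1\mapsto e_1+X^s e_2$, and it is exactly this shift that lowers the valuation by one. Your proposed rescue via the $Y$-versus-$X$ ramification is vacuous here since $e=1$. Your sanity check against Proposition~\ref{R^ord2comp} is also wrong: when $\cLm(\psim^\tau)=0\neq\cLm(\psim)$, one has $m=1$ but $r\geqslant 3$, so $m\neq r-1$ in that case.

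What is missing is the computation of the ordinary filtration parameter $s$, where the $\gp$-stable line of $\rho_{\cR^\perp}$ is $\cR^\perp\cdot(e_1+X^s e_2)$. The vanishing of the lower-left entry on $\Gamma_{K_\gp}$ in the ordinary basis, together with injectivity of $\rH^1(K,\psim^\tau)\to\rH^1(K_\gp,\overline{\Q}_p)$ (Proposition~\ref{restinj}), forces $s\leqslant r-1$; and combining $m\geqslant 2$, Lemma~\ref{unicity-decomp}, and the ordinariness constraints on the diagonal characters forces $s\geqslant r-1$, so $s=r-1$. In the ordinary basis the unramified-quotient character $D$ is then $\gp$-unramified modulo $X^{r-1}$ (giving a surjection $\varLambda_\gp\twoheadrightarrow\cR^\perp/(X^{r-1})$, hence $m\geqslant r-1$) but becomes $\gp$-ramified modulo $X^r$ (hence $m<r$). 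Without pinning down $s$, your proposal cannot distinguish $m=r-1$ from $m=r$.
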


\begin{proof}  
  As $\cLm(\psim)=0$, the structural homomorphism   $\varLambda\xrightarrow{\sim} \cR^{\perp}$ is an isomorphism by 
  Theorem~\ref{isom-cusp}.   We recall that $r \geq 3$ by Proposition~\ref{reducn=2},  and that 
\[\rho_{\cR^{\perp}}(g) \equiv \left(\begin{matrix} A(g) &  B(g)\\ \psi(g)\eta'(g) \cdot X^r & D(g)\end{matrix}\right) \pmod{X^{r+1}},\] 
 where $A,B,D$ are polynomials in $X$ of degree $\leqslant r$. As  $\cR^{\perp} = \overline{\Q}_p\lsem X \rsem$ is a discrete valuation ring, the ordinary filtration of $\rho_{\cR^{\perp}}$ is generated by $e_1 + X^s  e_2$ for some $s \geqslant 1$ and, using the corresponding $\gp$-ordinary basis $(e_1 +X^s  e_2, e_2)$, one has
\[\rho_{\cR^{\perp}}(g) \equiv \left(\begin{matrix} A(g)- B(g) \cdot X^s &  B(g) \\ 
   \psi(g)c(g) \cdot X^r+ (A(g)-(D(g)+B(g)\cdot X^s))\cdot X^s & D(g)+ B(g)\cdot X^s\end{matrix}\right) \mod X^{r+1}.\]  

It is crucial to first observe that  $s \leq r-1$. Let us proceed by absurd, assuming that $s \geq r$. The lower left entry of $\rho_{\cR^{\perp}}(g)\mod{X^{r+1}}$  in the ordinary basis $(e_1 +X^s  e_2, e_2)$ equals 
\[\psi(g) \left(\eta'(g) - (\psim^\tau(g)-1)\cdot X^{s-r}\right)\cdot X^r \] 
and it has to be trivial on $\I_{\gp}$, contradicting Proposition~\ref{restinj}, as
$0\ne [\eta']\in \rH^1(K, \psim^\tau)$.  

Therefore $s < r$, hence  $D \mod X^{r}$ is $\gp$-ramified. The $\gp$-ordinarity of $\rho_{\cR^{\perp}}\mod X^{r}$ yields  \begin{align}\label{ordinarinesscond} A(g)\equiv 1 \pmod{X^{r-s}} \text{ and } D(g)\equiv 1 \pmod{X^{s}}, \text{ for all } g\in  \I_{\gp}.
\end{align}

As $m \geqslant 2$ (see  Cor.~\ref{system2}) one has $\tr(\rho_{\cR^{\perp}}) \equiv  \psi\chi_{\gp}+ \psi^{\tau}\chi_{\gp}^{\tau} \mod X^2$.  
 Lemma~\ref{unicity-decomp}  implies 
\[A\equiv \psi\chi_{\gp} \equiv \psi (1+ \eta_{\gp} \cdot X)  \pmod{X^2}, \text{ and } 
D\equiv \psi^{\tau}\chi_{\gp}^{\tau} \equiv \psi^{\tau} (1+ \eta_{\gp}^{\tau} \cdot X)  \pmod{X^2} \]
which together with \eqref{ordinarinesscond} implies that $s=r-1$, as $\eta_{\gp|\I_{\gp}}\neq 0$. 
It then follows from the second relation of \eqref{ordinarinesscond} that $D \mod X^{r-1}$ is unramified at $\gp$,  hence  there exists a morphism $\varLambda_\gp \rightarrow \cR^{\perp}/(X^{r-1})$ sending $\psi^{\tau}\chi_{\gp}^{\tau}$ on $D \mod X^{r-1}$. Using the $\tau$-invariance of $\tr(\rho_{\cR^{\perp}})$, a second application of Lemma~\ref{unicity-decomp} yields that  $A \equiv D^\tau \pmod{X^r}$. Hence $\cR^{\perp}/(X^{r-1})$ is topologically generated by $\{ D(g), g \in \Gamma_K \}$ and therefore the morphism $\varLambda_\gp \rightarrow \cR^{\perp}/(X^{r-1})$ defined above is in fact a surjection, and it yields that $m\geqslant r-1$. 

It remains to show that $m<r$, {\it i.e.} $\tr(\rho_{\cR^{\perp}}) \nequiv  \psi\chi_{\gp}+ \psi^{\tau}\chi_{\gp}^{\tau} \pmod{X^r}$. 
As $A \equiv D^\tau \pmod{X^r}$, it suffices (again by Lemma~\ref{unicity-decomp}) to show that $D \nequiv \psi^{\tau}\chi_{\gp}^{\tau} \pmod{X^r}$ which follows from the fact that $D \mod X^{r}$ is ramified at $\gp$ as  observed earlier.
\end{proof}

Proposition~\ref{m=r-1} implies in particular that if $\cLm(\psim) = 0$ then $U_p(\cF)\equiv U_p(\Theta_{\psi})  \mod X^{r-1}$ 
and we will  now see that this congruence is optimal. 

\begin{prop}\label{U_pcongFtheta} Assume that $\cLm(\psim) = 0$, then $U_p(\cF) \not\equiv U_p(\Theta_{\psi}) \mod X^{r}$.
\end{prop}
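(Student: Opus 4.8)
The plan is to compute $U_p(\cF)\bmod X^r$ directly from the universal $\gp$-ordinary deformation $\rho_{\cR^\perp}$, reusing the fine description of its reduction modulo $X^r$ obtained in the proof of Proposition~\ref{m=r-1}, and to compare it with $U_p(\Theta_\psi)=(\psi^\tau\chi_{\bar\gp})(\Frob_\gp)$. Since $\cLm(\psim)=0$, Proposition~\ref{6expocusp} gives $\cLm(\psim^\tau)\neq 0$, Theorem~\ref{isom-cusp} gives an isomorphism $\varLambda\xrightarrow{\sim}\cR^\perp=\overline{\Q}_p\lsem X\rsem$ (so $e=1$ and $Y=X$), Lemma~\ref{reducn=2} gives $r\geqslant 3$, and Proposition~\ref{m=r-1} gives $\cT^{\spl}_\rho=\varLambda\times_{\overline{\Q}_p[X]/(X^{r-1})}\varLambda_\gp$ with $U_p=(U_p(\cF),U_p(\Theta_\psi))$ the image of $\chi_{\cR}(\Frob_\gp)$ under \eqref{RarrowT}. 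Chasing the two projections in \eqref{RarrowT} identifies $U_p(\cF)$ with the value at $\Frob_\gp$ of the unramified quotient character $\chi_{\cR^\perp}$ of $\rho_{\cR^\perp}$, and $U_p(\Theta_\psi)$ with $(\psi^\tau\chi_{\bar\gp})(\Frob_\gp)$, so the claim reduces to $\chi_{\cR^\perp}(\Frob_\gp)\not\equiv(\psi^\tau\chi_{\bar\gp})(\Frob_\gp)\pmod{X^r}$ inside $\cR^\perp\simeq\varLambda$.

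Next I would write $\rho_{\cR^\perp}\bmod X^r$, in the basis where $\rho_{\cR^\perp}(\gamma_0)$ is diagonal, as the upper-triangular matrix $\left(\begin{smallmatrix}A&B\\0&D\end{smallmatrix}\right)$, where (as in the proof of Proposition~\ref{m=r-1}) $A\equiv D^\tau\pmod{X^r}$, $B\equiv\eta\psi^\tau\pmod X$, and $D$ is a character lifting $\psi^\tau$ with $D\equiv\psi^\tau\chi_{\bar\gp}\pmod{X^{r-1}}$, while the $\gp$-ordinary filtration modulo $X^r$ is generated by $e_1+X^{r-1}e_2$, so that $\chi_{\cR^\perp}(g)\equiv D(g)+B(g)X^{r-1}\pmod{X^r}$ for $g\in\Gamma_{K_\gp}$. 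Writing $D\equiv(\psi^\tau\chi_{\bar\gp})(1+\phi X^{r-1})\pmod{X^r}$ with $\phi\in\rH^1(K,\overline{\Q}_p)=\overline{\Q}_p\eta_\gp\oplus\overline{\Q}_p\eta_{\bar\gp}$, two conditions pin $\phi$ down. First, $\chi_{\cR^\perp}$ is unramified at $\gp$; evaluating $D+BX^{r-1}$ on $\I_\gp$ and using $B|_{\I_\gp}\equiv\eta|_{\I_\gp}\pmod X$ together with $(\psi^\tau\chi_{\bar\gp})|_{\I_\gp}=1$ forces $\phi|_{\I_\gp}=-\eta|_{\I_\gp}$, i.e. the $\eta_\gp$-coefficient of $\phi$ is $-1$. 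Second, the determinant of $\rho_{\cR^\perp}$ restricted to $\Gamma_K$ is the weight-twisted determinant $\psi\psi^\tau\chi_\gp\chi_{\bar\gp}$; comparing it with $DD^\tau\equiv\psi\psi^\tau\chi_\gp\chi_{\bar\gp}\bigl(1+(\phi+\phi^\tau)X^{r-1}\bigr)\pmod{X^r}$ gives $\phi^\tau=-\phi$, i.e. $\phi$ lies on the anti-cyclotomic line $\overline{\Q}_p(\eta_\gp-\eta_{\bar\gp})$. The two conditions together force $\phi=-(\eta_\gp-\eta_{\bar\gp})$.

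It then remains to assemble the comparison. Using $\psim(\gp)=1$, hence $\psi^\tau(\Frob_\gp)=\psi(\gp)$, and $\chi_{\bar\gp}(\Frob_\gp)\equiv 1\pmod X$, one gets $D(\Frob_\gp)\equiv(\psi^\tau\chi_{\bar\gp})(\Frob_\gp)\bigl(1+\phi(\Frob_\gp)X^{r-1}\bigr)\equiv U_p(\Theta_\psi)+\psi(\gp)\phi(\Frob_\gp)X^{r-1}\pmod{X^r}$ and $B(\Frob_\gp)X^{r-1}\equiv\psi(\gp)\eta(\Frob_\gp)X^{r-1}\pmod{X^r}$, so
\[U_p(\cF)-U_p(\Theta_\psi)\equiv\psi(\gp)\,(\phi+\eta)(\Frob_\gp)\,X^{r-1}\pmod{X^r}.\]
By the Remark following Definition~\ref{d:new-L-inv} one has $(\phi+\eta)(\Frob_\gp)=(\eta-\eta_\gp+\eta_{\bar\gp})(\Frob_\gp)=\cLm(\psim^\tau)\neq 0$, and since $\psi(\gp)\in\overline{\Q}_p^\times$ and $X^{r-1}\neq 0$ in $\overline{\Q}_p[X]/(X^r)$ this yields $U_p(\cF)\not\equiv U_p(\Theta_\psi)\pmod{X^r}$. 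The step I expect to demand the most care is the second constraint on $\phi$: pinning down $\det\rho_{\cR^\perp}|_{\Gamma_K}=\psi\psi^\tau\chi_\gp\chi_{\bar\gp}$ on the nose requires matching the $\varLambda$-algebra structure of $\cR^\perp$ with the identifications \eqref{eq:Lambda-iso} and \eqref{eq:univ-coates-wiles}, and one must keep track of the rescaling of the uniformizer $X$ of $\cR^\perp$ made in Lemma~\ref{reducn=2} when invoking the explicit shape of $D\bmod X^{r-1}$ from the proof of Proposition~\ref{m=r-1}.
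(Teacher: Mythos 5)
Your proof is correct in its essentials, but it takes a genuinely different route from the paper. The paper argues by contradiction: assuming $U_p(\cF)\equiv U_p(\Theta_\psi)\bmod X^r$ forces $U_p-\psi(\gp)\in X\cdot\cT^{\spl}_\rho$, hence $\chi_\cR(\Frob_\gp)-\psi(\gp)$ dies in $\cT^{\spl}_{\rho,0}$; since the surjection $\cR^{\ord}_{\rho,0}\twoheadrightarrow\cT^{\spl}_{\rho,0}$ induces an isomorphism on (one-dimensional) tangent spaces, $\chi_\cR(\Frob_\gp)-\psi(\gp)$ would then have to lie in $\gm^2$, contradicting the explicit infinitesimal computation $\chi_\epsilon(\Frob_\gp)-\psi(\gp)=-\epsilon\alpha\cLm(\psim^\tau)\psi(\gp)$ from \eqref{U_p equation1}. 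You instead compute $U_p(\cF)-U_p(\Theta_\psi)$ to exact order $r-1$, exploiting the structure of $\rho_{\cR^\perp}\bmod X^r$ from the proof of Proposition~\ref{m=r-1}, obtaining the cleaner, quantitative output
\[
U_p(\cF)-U_p(\Theta_\psi)\equiv \pm\,\psi(\gp)\,\cLm(\psim^\tau)\,X^{r-1}\pmod{X^r},
\]
from which the claim is immediate. The paper's argument is shorter because the heavy lifting (the tangent space computation at the $\overline{\Q}_p[\epsilon]$ level and the fibred-product description of $\cT^{\spl}_\rho$) has already been done, and it does not require tracking normalizations of the uniformizer of $\cR^\perp$ or of the $\varLambda$-algebra structure beyond mod $X^2$. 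Your argument is more information-rich — it pins down the exact leading coefficient of the $U_p$-discrepancy — at the cost of having to justify, as you rightly flag, two delicate points: (a) that $D\bmod X^{r-1}$ is identified with $\psi^\tau\chi_{\bar\gp}$ under the canonical $\varLambda$-structure (this does follow, because the discrepancy would be an anti-cyclotomic character unramified at $\gp$ with torsion-free target, hence trivial, but it is not spelled out in the paper's proof of Proposition~\ref{m=r-1}); and (b) the sign of the change-of-basis to the ordinary frame $(e_1+X^{r-1}e_2,e_2)$ (the paper's display in Proposition~\ref{m=r-1} has $D+BX^s$ in the $(2,2)$-entry, whereas a direct conjugation gives $D-BX^s$ — but either sign propagates consistently and yields a nonzero multiple of $\cLm(\psim^\tau)$, so the conclusion is unaffected). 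Both proofs ultimately hinge on $\cLm(\psim^\tau)\neq 0$ (Proposition~\ref{6expocusp}), and both use the input $A\equiv D^\tau\bmod X^r$ from Lemma~\ref{unicity-decomp}; they are therefore parallel in content but differ in strategy (contradiction via cotangent space versus direct leading-term evaluation).
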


\begin{proof}  We proceed by absurd assuming  that $U_p(\cF)\equiv U_p(\Theta_{\psi})  \mod X^{r}$. 
As $\cT^{\mathrm{split}}_{\rho}$ contains $(X^{r-1},0)$ and  $(0,X^{r-1})$, we deduce that $U_p- \psi(\gp)\in X \cdot \cT^{\mathrm{split}}_{\rho}$ hence it is mapped to $0$ by 
\begin{align}\label{R0toT0}\cR^{\ord}_{\rho,0} \twoheadrightarrow \cT^{\mathrm{split}}_{\rho,0}= \cT^{\mathrm{split}}_{\rho}/X \cdot \cT^{\mathrm{split}}_{\rho}. \end{align} 
By Propositions~\ref{dim t_R} and \ref{m=r-1}, \eqref{R0toT0} gives rise to an isomorphism of tangent spaces
\[ \Hom_{\mathrm{alg}}(\cT^{\mathrm{split}}_{\rho,0}, \overline{\Q}_p[\epsilon]) \xrightarrow{\sim}\Hom_{\mathrm{alg}}(\cR^{\ord}_{\rho,0}, \overline{\Q}_p[\epsilon])=t^{\ord}_{\rho,0} .\]
To obtain a contradiction it then suffices to show that the image of $\chi_\cR(\Frob_{\gp})- \psi(\gp)$ under any non-zero element of the line 
$t^{\ord}_{\rho,0}$ does not vanish. Recall from \S\ref{tangent} that $t^{\ord}_{\rho,0}$ is the subspace of 
 $t^{\ord}_{\rho} \simeq \Hom(\Gamma_K,\overline{\Q}_p) \simeq \{ \alpha \cdot \eta_{\gp} +\beta \cdot \eta_{\bar{\gp}}, (\alpha,\beta) \in \overline{\Q}{}_p^2\}$  given by $\alpha=-\beta$. 
  Hence, by  \eqref{U_p equation1}   
\[\chi_\epsilon(\Frob_{\gp}) -\psi(\gp) = \epsilon \alpha [(\eta_{\gp}-\eta)(\Frob_{\gp}) -  \eta_{\bar{\gp}}(\Frob_{\gp})]\psi(\gp)=  -\epsilon \alpha\cLm(\psim^\tau) \psi(\gp).\] 
As $\cLm(\psim) = 0$, Proposition~\ref{6expocusp} implies that  $\cLm(\psim^\tau) \ne 0$ proving the claim. 
 \end{proof}

\subsection{The  congruence ideal}\label{s:cong-ideal}

We recall that    
$C^0_\psi=\pi_\psi (\mathrm{Ann}_{\cT}(\ker(\pi_\psi)))$ is  the congruence ideal attached to the $\Theta_{\psi}$-projection $\pi_\psi:\cT \twoheadrightarrow \varLambda_\gp$. 
By Proposition~\ref{def-red-rho} and Corollary~\ref{cor:def-red}, one may attach to $\Theta_{\psi}$ (resp. $\Theta_{\psi^{\tau}}$) 
a $\varLambda_{\gp}$-valued $\Gamma_K$-reducible deformation of $\rho$ (resp.  $\rho'$) whose semi-simplification 
is given by   $\psi\chi_{\gp}\oplus\psi^\tau \chi_{\gp}^{\tau}$ (resp.  $\psi\chi_{\gp}^{\tau}\oplus\psi^\tau \chi_{\gp}$). 
In order to compute $C^0_\psi$ we will first determine $\cT_{\spl}$, then $\cT$, using the  $\varLambda$-algebra homomorphism 
\begin{align}
(\pi_\psi, \pi^\perp, \pi_{\psi^\tau}): \cT\hookrightarrow \varLambda_{\gp} \times_{\overline{\Q}_p}  \cT^\perp\times_{\overline{\Q}_p}  \varLambda_{\gp}. 
\end{align}    

\begin{thm}\label{T-split} 
Assume that $\cLm(\psim) \ne 0$. Then $\cT_{\spl}[U_p]= \varLambda_\gp \times_{\overline{\Q}_p} \left( \cR^\perp \times_{\overline{\Q}_p[X]/(X^{r-1})} \varLambda_{\gp}\right)$. 

\begin{enumerate}[wide]
\item 
If moreover $ 0\ne \cLm(\psim^\tau)  \ne -\cLm(\psim)$,  then 
\[\cT_{\spl}= \left\{(a,b,c)\in \varLambda_\gp \times_{\overline{\Q}_p} \varLambda\times_{\overline{\Q}_p} \varLambda_{\gp}\, \Big{|} \,
\left(\cLm(\psim)+\cLm(\psim^\tau) \right)b'(0)=\cLm(\psim^\tau) a'(0)+ \cLm(\psim) c'(0)  \right\}. \]
\item 
If  moreover $\cLm(\psim^\tau) =0$, then $e=1$, $r\geqslant 3$ and there exists $\xi\in \overline{\Q}{}_p^\times$ such that 
\[\cT_{\spl}= \left\{(a,b,c)\in  \varLambda_\gp \times_{\overline{\Q}_p} \left( \varLambda \times_{\overline{\Q}_p[X]/(X^r)} \varLambda_{\gp}\right) \,\Big{|}  \,
(b-c)^{(r-1)}(0)= \xi\cdot  (a-b)'(0)  \right\}. \]
\item If  moreover  $\cLm(\psim^\tau)=-\cLm(\psim)$, then $e\geqslant 2=r$ and there exists $\xi\in \overline{\Q}{}_p^\times$ such that
\[\cT_{\spl}= \left\{(a,b,c)\in \varLambda_\gp \times_{\overline{\Q}_p} \cR^\perp  \times_{\overline{\Q}_p} \varLambda_{\gp}  \,\Big{|}  \,
 b'(0)= \xi\cdot  (a-c)'(0)  \right\} . \]
\end{enumerate}
\end{thm}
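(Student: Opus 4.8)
The plan is to study $\cT_{\spl}$ and $\cT_{\spl}[U_p]$ through the injection $(\pi_\psi,\pi^\perp,\pi_{\psi^\tau})\colon\cT\hookrightarrow\varLambda_\gp\times_{\overline{\Q}_p}\cT^\perp\times_{\overline{\Q}_p}\varLambda_\gp$, identifying $\cT^\perp_{\spl}=\cR^\perp$ by Theorem~\ref{isom-cusp}. Since the primes $\ell\nmid Np$ split in $K$ have density $\tfrac12$, the Chebotarev density theorem shows that $\cT_{\spl}$ is the $\varLambda$-subalgebra of $\varLambda_\gp\times_{\overline{\Q}_p}\cR^\perp\times_{\overline{\Q}_p}\varLambda_\gp$ topologically generated by the triples $\big(\tr\varrho_1(g),\tr\rho_{\cR^\perp}(g),\tr\varrho_3(g)\big)$, $g\in\Gamma_K$, where $\varrho_1=\psi\chi_\gp\oplus\psi^\tau\chi_{\bar\gp}$ and $\varrho_3=\psi^\tau\chi_\gp\oplus\psi\chi_{\bar\gp}$ are the restrictions to $\Gamma_K$ of the Galois representations of $\Theta_\psi$ and $\Theta_{\psi^\tau}$ (Proposition~\ref{Tcmgor}); likewise $\cT_{\spl}[U_p]$ is generated in addition by the image $(u_1,u_2,u_3)$ of $U_p$, where $u_1=(\psi\chi_{\bar\gp})(\gp)=u_3$ — these agree because $\psim(\gp)=1$ — and $u_2=\chi_{\cT^\perp}(\Frob_\gp)\in\cR^\perp$.

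First I would pin down the three pairwise ``gluing'' data. Proposition~\ref{R^ord2comp}, which applies since $\cLm(\psim)\ne0$, identifies the image of $\cT_{\spl}$ in the $(\Theta_\psi,\cF)$-factors with $\cR^{\ord}_{\rho}\simeq\cR^\perp\times_{\overline{\Q}_p}\varLambda_\gp$, so that pair is glued only along the residue field. Applying Propositions~\ref{R^ord2comp} and \ref{m=r-1} with $\psi^\tau$ in place of $\psi$, the image of $\cT_{\spl}$ in the $(\cF,\Theta_{\psi^\tau})$-factors is $\cR^\perp\times_{\overline{\Q}_p[X]/(X^{r-1})}\varLambda_\gp$, the congruence dropping to mod $X$ exactly when $\cLm(\psim^\tau)\ne0$, in which case $r=2$ by Lemma~\ref{reducn=2}. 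Finally, since the non-split Hecke eigenvalues of a CM form vanish and $U_p$ is diagonal on $\cT^K$, Proposition~\ref{Tcmgor} gives that the image of $\cT_{\spl}$ in the $(\Theta_\psi,\Theta_{\psi^\tau})$-factors is the full fibre product $\varLambda_\gp\times_{\overline{\Q}_p}\varLambda_\gp$. Because the first and third components of $U_p$ coincide, adjoining $U_p$ removes the only additional linkage of the $\Theta_\psi$-factor to the rest while keeping the $\cF$-mediated $X^{r-1}$-congruence between the $\cF$- and $\Theta_{\psi^\tau}$-factors; a Nakayama argument comparing generic ranks and special fibres over the discrete valuation ring $\varLambda$ then yields $\cT_{\spl}[U_p]=\varLambda_\gp\times_{\overline{\Q}_p}(\cR^\perp\times_{\overline{\Q}_p[X]/(X^{r-1})}\varLambda_\gp)$, equivalently $\cR^{\ord}_{\rho}\times_{\cR^\perp}\cR^{\ord}_{\rho'}$. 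The auxiliary claims $e=1$, $r\geq3$ in (ii) and $e\geq2=r$ in (iii) are immediate from Lemma~\ref{reducn=2} and Theorem~\ref{isom-cusp}.

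To cut $\cT_{\spl}$ out of $\cT_{\spl}[U_p]$ I would compute first-order data explicitly, following the proof of Theorem~\ref{cuspidal-tangent}: writing the universal infinitesimal ordinary deformations through $(\alpha,\beta)$ with $\alpha\cLm(\psim^\tau)=\beta\cLm(\psim)$ (see \eqref{U_pL}), expanding $\tr\rho_{\cR^\perp}$, $\chi_\gp$ and $\chi_{\bar\gp}$ to first order via \eqref{eq:univ-coates-wiles}, and evaluating on $\Frob_\gp$ by means of Lemma~\ref{linvariant1} and Proposition~\ref{linvariant2}, one obtains that the first-order components $a'(0),b'(0),c'(0)$ of every split trace satisfy one $\overline{\Q}_p$-linear relation. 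After normalising the uniformizer of $\cR^\perp$ (equivalently the isomorphism $\cT^\perp_{\spl}\simeq\cR^\perp$) this relation is the one displayed in case~(i); in case~(ii), where $r\geq3$, the coupling with the $\cF$-factor first occurs at order $r-1$, producing the $(r-1)$-st derivative of $b-c$; and in case~(iii), where $\varLambda\to\cR^\perp$ is ramified and $b'(0)$ is read as the uniformizer-direction component of the $\cF$-factor, it takes the form $b'(0)=\xi(a-c)'(0)$ with $\xi\in\overline{\Q}_p^\times$ an explicit ratio of $\cL$-invariants and $p$-adic logarithms. Being a $\varLambda$-subalgebra of $\cT_{\spl}[U_p]$ containing $\varLambda$ and all split traces, $\cT_{\spl}$ is contained in the subalgebra cut out by this relation; the latter is finite flat over $\varLambda$ of the same generic rank as $\cT_{\spl}$, and $\cT_{\spl}$ surjects onto its special fibre (the split traces generate it modulo $X$, together with $\varLambda$), so Nakayama's lemma upgrades the inclusion to an equality.

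The step I expect to be the main obstacle is case~(iii): there $\cR^\perp=\overline{\Q}_p\lsem X^{1/e}\rsem$ with $e\geq2$, the structure map $\varLambda\to\cR^\perp$ annihilates the tangent direction, and the relevant ``derivative'' on the $\cF$-factor must be taken in the ramified uniformizer direction, so both the precise form of the relation and the rank/Nakayama bookkeeping require care; extracting the clean constant $\xi$ in this situation is the delicate point. A secondary subtlety, common to all three cases, is checking that this single first-order relation already determines $\cT_{\spl}$ on the nose — that no higher-order constraints intervene — which is exactly why the pairwise images must be known exactly, via Propositions~\ref{R^ord2comp} and \ref{m=r-1}, and not merely modulo $X^2$.
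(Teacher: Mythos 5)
Your proposal is correct and mirrors the paper's proof: the pairwise gluing is pinned down exactly as you say via Propositions~\ref{Tcmgor}, \ref{R^ord2comp} and~\ref{m=r-1}, $\cT_{\spl}[U_p]=A$ follows from $U_p\notin\cT_{\spl}$ once $\cT_{\spl}$ is seen to have codimension one in $A$ (the paper gets this more directly by producing $(X,b,0),(X,0,c)\in\cT_{\spl}$, hence $(X^2,0,0)\in\cT_{\spl}$, rather than by your Nakayama/rank count, but both work), and the defining linear form is extracted from \eqref{dag-trace}, \eqref{U_pL} and Proposition~\ref{Tcmgor}; the paper indeed leaves case~(iii) to the reader, so your identification of it as the delicate point is accurate. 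The one slip is the parenthetical ``equivalently $\cR^{\ord}_\rho\times_{\cR^\perp}\cR^{\ord}_{\rho'}$'': when $\cLm(\psim^\tau)=0$ Proposition~\ref{R^ord2comp} does not apply to $\rho'$ (the CM congruence exponent is $r-1\geq 2$, so the Wiles--Lenstra criterion fails) and there is no identification $\cR^{\ord}_{\rho'}\simeq\cT^{\spl}_{\rho'}$, so that rewriting should be dropped — though nothing in your argument actually relies on it.
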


\begin{proof}
According to Propositions~\ref{Tcmgor}, \ref{R^ord2comp} and \ref{m=r-1} one knows that 
$\cT_{\spl}$ is a $\varLambda$-sub-algebra of the amalgamated product 
$A=\varLambda_\gp \times_{\overline{\Q}_p} \left( \cR^\perp \times_{\overline{\Q}_p[X]/(X^{r-1})} \varLambda_{\gp}\right)$ surjecting to the product of each two amongst the three factors. In particular $\cT_{\spl}$ contains $(X,b,0)$ and $(X,0,c)$, for certain  $b\in \cR^\perp$ and $c\in \varLambda_{\gp}$, hence  their product $(X^2,0,0)$, from which one  deduces that
\[\varLambda_\gp \times_{\overline{\Q}_p[\epsilon]} \left( \cR^\perp \times_{\overline{\Q}_p[X]/(X^{r-1})} \varLambda_{\gp}\right) \subset \cT_{\spl}.\] 
As $\cT_{\spl} \subset \cT_{\spl}[U_p] \subset A$, one deduces that
$\gm_{\cT_{\spl}}$ is given by the kernel of a  $\overline{\Q}_p$-valued linear form on the maximal ideal $\gm_A$, which we will determine as precisely as possible in each case. 
As $U_p\notin \cT_{\spl}$ by Theorem~\ref{existencenon-cm}(i), we deduce that 
the above linear form is non-zero and that 
 $\cT_{\spl}[U_p]=A$, proving the first part of the theorem. Let us now write an 
 equation for $\gm_{\cT_{\spl}}$  in each case. 
 
(i) One sees exactly as in  \cite{BDPozzi} that $\gm_{\cT_{\spl}}\supset (\gm_\varLambda^2)^3$.  
Moreover the image of $\gm_{\cT_{\spl}}$ in $(\gm_\varLambda/\gm_\varLambda^2)^3$ 
is necessarily a plane (as it surjects onto any two amongst the three factors). The precise 
equation follows from \eqref{dag-trace} and Proposition~\ref{Tcmgor}, in view of  \eqref{U_pL}. 

(ii) In this case $\cLm(\psim^\tau)=0\ne \cLm(\psim)$, hence  $e=1$ and $\gm_{\cT_{\spl}}$ contains 
the elements $(X,X,X)$, $(X^2,0,0)$ and $(0,X^r,0)$, generating an ideal which has co-dimension $2$ 
in the  $\overline{\Q}_p$-vector space  $\gm_A$.  Again the surjectivity onto each  two amongst the three factors shows that $\gm_{\cT_{\spl}}$ has co-dimension $1$ in $\gm_A$, hence satisfies 
a linear equation of the desired form. 

(iii) We leave its proof, which is very similar to the above case, to the interested reader. 
\end{proof}

\begin{thm}\label{C0-thm} One has $C^0_\psi=(X)$ if and only if $\cLm(\psim) \ne 0$. 
More precisely
 \begin{enumerate}[wide]
\item If $\cLm(\psim)\cdot \cLm(\psim^\tau) \ne 0$, then $\cT = \varLambda_{\gp} \times_{\overline{\Q}_p} \cR^{\perp} \times_{\overline{\Q}_p} \cR^{\perp} \times_{\overline{\Q}_p} \varLambda_{\gp}.$

\item If  $\cLm(\psim^\tau) =0$, then  $\cT = \varLambda_\gp \times_{\overline{\Q}_p}
\left( \varLambda[Z]/(Z^2-X^r)\times_{\overline{\Q}_p[X]/(X^{r-1})} \varLambda_{\gp}\right)$ 
where the projection to the first  component is given by $\pi_\psi:\cT \twoheadrightarrow \varLambda_\gp$, $r\geqslant 3$ and the  map $\varLambda[Z]/(Z^2-X^r) \twoheadrightarrow \overline{\Q}_p[X]/(X^{r-1})$  is modulo the non-principal ideal $(Z,X^{r-1})$.  In particular $C^0_{\psi^{\tau}}=(X^{r-1})$.
\end{enumerate}
\end{thm}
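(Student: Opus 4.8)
The plan is to first identify the $\varLambda$-algebra $\cT$ as an explicit fibre product, and then read off the congruence ideals by a short annihilator computation. Since $\cE$ is reduced and the minimal primes of $\cT$ correspond to the irreducible components through $f$ --- namely $\Theta_{\psi}$ and $\Theta_{\psi^\tau}$ (with CM by $K$) and the non-CM ones, whose completed local ring at $f$ is $\cT^{\perp}$ --- the triple $(\pi_\psi,\pi^{\perp},\pi_{\psi^\tau})$ embeds $\cT$ into $\varLambda_\gp\times_{\overline{\Q}_p}\cT^{\perp}\times_{\overline{\Q}_p}\varLambda_\gp$. Here $\cT^{\perp}\simeq\widetilde\cR^{\perp}=\cR^{\perp}[Z]/(Z^2-Y^r)$ by Theorem~\ref{geomCdag} and $\cT^{K}\simeq\varLambda_\gp\times_{\overline{\Q}_p}\varLambda_\gp$ by Proposition~\ref{Tcmgor}, so everything comes down to computing the image of $\cT$. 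As $\cT$ is topologically generated over $\varLambda$ by $U_p$ and the $T_\ell$ for $\ell\nmid Np$, and as the subalgebra generated by $U_p$ and the $T_\ell$ at split $\ell$ equals $A:=\varLambda_\gp\times_{\overline{\Q}_p}\bigl(\cR^{\perp}\times_{\overline{\Q}_p[X]/(X^{r-1})}\varLambda_\gp\bigr)$ by Theorem~\ref{T-split} and the Chebotarev Density Theorem, the first step is to show $\cT=A+\bigl(\{0\}\times I\times\{0\}\bigr)$ for a suitable ideal $I$ of $\cT^{\perp}$.

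The second step is to identify $I$. Because $\rho_f$ and the two CM families carry induced Galois representations, $\tr\rho_{\cT}(\Frob_\ell)=0$ along the $\Theta_\psi$- and $\Theta_{\psi^\tau}$-components whenever $\ell$ is inert; thus each such $T_\ell$ has the shape $(0,t_\ell,0)$ with $t_\ell\in\gm_{\cT^{\perp}}$, and $I$ is the ideal of $\cT^{\perp}$ generated by these $t_\ell$ together with the image in $\cR^{\perp}$ of $A\cap(\{0\}\times\cT^{\perp}\times\{0\})$, an ideal which in particular contains $X^{r-1}$. By Theorem~\ref{geomCdag} the involution $\iota$ of $\cT^{\perp}$ sends $T_\ell$ to $\epsilon_K(\ell)T_\ell=-T_\ell$ for inert $\ell$, so $t_\ell$ lies in the $\iota$-anti-invariant part $\cR^{\perp}\cdot Z$ of $\cT^{\perp}$; on the other hand the $T_\ell$ for all $\ell$ topologically generate $\cT^{\perp}$ over $\varLambda$ (Proposition~\ref{tildeRtrace}) while the split ones generate only $\cT^{\perp}_{\spl}\simeq\cR^{\perp}$ (Theorem~\ref{isom-cusp}), so some inert $t_{\ell_0}$ must be a unit multiple of $Z$. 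Hence $Z\in I$, and combined with $X^{r-1}\in I$ and $Z^2=Y^r$ this gives $\cT=\varLambda_\gp\times_{\overline{\Q}_p}\bigl(\cT^{\perp}\times_{\overline{\Q}_p[X]/(X^{r-1})}\varLambda_\gp\bigr)$ with $\cT^{\perp}\to\overline{\Q}_p[X]/(X^{r-1})$ the reduction modulo $(Z,X^{r-1})$ and $\pi_\psi$ the first projection; in case~(ii) one moreover has exactly $I=(Z,X^{r-1})$, while in case~(i) the precise value of $I$ is irrelevant since $\cR^{\perp}$ already absorbs the difference.

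The third step is to specialize and compute $C^0_\psi$, $C^0_{\psi^\tau}$. In case~(i), $\cLm(\psim)\cdot\cLm(\psim^\tau)\ne0$ gives $r=2$ (Lemma~\ref{reducn=2}), so $(Z,X^{r-1})=\gm_{\cT^{\perp}}$ and $\cT^{\perp}=\cR^{\perp}\times_{\overline{\Q}_p}\cR^{\perp}$, whence $\cT=\varLambda_\gp\times_{\overline{\Q}_p}\cR^{\perp}\times_{\overline{\Q}_p}\cR^{\perp}\times_{\overline{\Q}_p}\varLambda_\gp$. In case~(ii), $\cLm(\psim^\tau)=0$ forces $\cLm(\psim)\ne0$ (Proposition~\ref{6expocusp}), and Theorem~\ref{T-split}(ii) gives $e=1$, $r\geqslant3$, so $\cR^{\perp}=\varLambda$ and $\cT^{\perp}=\varLambda[Z]/(Z^2-X^r)$, yielding the asserted description with first projection $\pi_\psi$. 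Now in either case $\ker\pi_\psi$ consists of the $(0,w,\dots)$ in $\cT$, and using that $\cT^{\perp}$ is reduced and $\varLambda_\gp$ a domain its annihilator in $\cT$ is $\{(a,0,\dots,0):a\in\gm_{\varLambda_\gp}\}$, so $C^0_\psi=\pi_\psi\bigl(\mathrm{Ann}_{\cT}(\ker\pi_\psi)\bigr)=(X)$. Symmetrically, in case~(ii), $\ker\pi_{\psi^\tau}$ is cut out by $\gm_{\varLambda_\gp}$ on the $\Theta_\psi$-factor and by the ideal $(Z,X^{r-1})$ on $\cT^{\perp}$, and its annihilator in $\cT$ is $\{(0,0,c):c\in(X^{r-1})\}$, giving $C^0_{\psi^\tau}=(X^{r-1})$. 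Finally, if $\cLm(\psim)=0$ then $\cLm(\psim^\tau)\ne0$ and case~(ii) applied with $\psi$ and $\psi^\tau$ interchanged gives $C^0_\psi=(X^{r-1})$ with $r\geqslant3$, hence $C^0_\psi\ne(X)$; this establishes that $C^0_\psi=(X)$ if and only if $\cLm(\psim)\ne0$.

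The main obstacle is the second step: proving that \emph{all} congruences between the non-CM families $\cF$ and $\cF\otimes\epsilon_K$ already present in $\cT^{\perp}\simeq\widetilde\cR^{\perp}$ are witnessed inside $\cT$, i.e.\ that some inert Hecke operator is a unit times the element $Z$. This rests on combining the ``all traces generate $\widetilde\cR^{\perp}$'' statement (Proposition~\ref{tildeRtrace}) with the $\epsilon_K$-equivariance of the involution $\iota$ (Theorem~\ref{geomCdag}) and with the fact that split traces see only $\cR^{\perp}$ (Theorem~\ref{isom-cusp}); absent this input one could only bound $I$ between $(X^{r-1})$ and $\gm_{\cT^{\perp}}$. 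A secondary, bookkeeping, point is to verify that the gluing map $\cT^{\perp}\to\overline{\Q}_p[X]/(X^{r-1})$ is literally reduction modulo the non-principal ideal $(Z,X^{r-1})$ rather than a deeper principal truncation, which one reads off from the description of the $\Theta_{\psi^\tau}$-projection in Proposition~\ref{m=r-1} (together with Proposition~\ref{R^ord2comp} on the $\Theta_\psi$-side).
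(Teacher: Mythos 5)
Your proof is correct and follows essentially the same route as the paper's: the structural input is Theorem~\ref{T-split}, and the decisive step — showing that $Z$ lies in $\cT$ — rests exactly on the two facts the paper invokes, namely the $\epsilon_K$-equivariance of the involution $\iota$ from Theorem~\ref{geomCdag} (which forces inert Hecke eigenvalues into $\cR^{\perp}\cdot Z$) and the generation statement of Proposition~\ref{tildeRtrace} (which forces some inert trace to be a unit multiple of $Z$). Your bookkeeping via the ideal $I\subset\cT^{\perp}$ and the explicit annihilator computation in Step~3 are slightly more spelled-out than the paper's terse "follows directly from T-split and geomCdag" in case~(i), but the substance is the same argument.
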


\begin{proof} The claim about the congruence ideal  would follow from (i) and (ii). 

(i) follows directly from Theorems~\ref{T-split}(i)(iii) and \ref{geomCdag}. 

(ii) By Theorem~\ref{T-split}(ii) it suffices to show that $(0,Z,0)\in \cT$. As 
$\cLm(\psim)+\cLm(\psim^\tau) \ne 0$  we have $\cR^{\perp}=\varLambda$ in this case. 
Recall the involution $\iota$ of $\widetilde{\cR}^{\perp}$ fixing $\cR^{\perp}=\varLambda$ and sending $Z$ to $-Z$. 

For all $\gamma \in \Gamma_K$  one has  \begin{align}\label{traceiotageneZ} \iota \circ \tr (\tilde{\rho}^{\perp}_{\cR})(\gamma\tau)  = \tr (\tilde{\rho}^{\perp}_{\cR}\otimes \epsilon_K)(\gamma\tau)=-\tr (\tilde{\rho}^{\perp}_{\cR})(\gamma\tau),\end{align}
hence  $ \tr (\rho^{\perp}_{\cR})(\gamma\tau) \in Z\cdot \varLambda$. Finally, Proposition~\ref{tildeRtrace} 
implies that the $\varLambda$-sub-module of $\widetilde{\cR}^{\perp}$ generated by 
$\tr (\tilde{\rho}^{\perp}_{\cR})(\tau \Gamma_K)$ contains $Z$.
\end{proof}

We conclude this subsection by determining the  congruences ideal between $\cF$ and $\Theta_{\psi}$, as well as the congruence ideal $C^0_\cF$ attached to the projection  of $\cT$ on its $\cF$-component. 
\begin{cor} 
\begin{enumerate}[wide]
\item Assume that $r \geqslant 3$ is  odd. 

Then $C^0_\cF=(X^{r-1},Z)\subset \widetilde{\cR}^{\perp}=\varLambda[Z]/(Z^2-X^r)$. 
The  congruence ideal between $\cF$ and $\Theta_{\psi}$ is given  by
$(X^{r-1},Z)$, if $\cLm(\psim)=0$, and by $(X,Z)$, otherwise. 

\item Assume that $r \geqslant 2$ is  even. Then $C^0_\cF=(Y^{r-1})\subset \cR^{\perp}=\overline{\Q}_p\lsem Y\rsem$. 
The  congruence ideal between $\cF$ and $\Theta_{\psi}$ is given  by
$(Y^{\frac{r}{2}})$, if $\cLm(\psim) =0$, and by $(Y)$, otherwise. 
\end{enumerate}
\end{cor}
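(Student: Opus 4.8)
The plan is to bootstrap everything from Theorems~\ref{C0-thm} and \ref{geomCdag}, together with the explicit description of $\widetilde{\cR}^\perp$ in \eqref{eq:R-tilde}, keeping careful track of how the four (or three) components sit inside $\cT$. Recall from the summary preceding \S\ref{section4} that $f$ lies on $\Theta_\psi$, $\Theta_{\psi^\tau}$, $\cF$ and $\cF\otimes\epsilon_K$, with $\cF$ and $\cF\otimes\epsilon_K$ Galois-conjugate precisely when $r$ is odd. The key input is that $\cT^\perp\simeq\widetilde{\cR}^\perp$ by Theorem~\ref{geomCdag}, under which the involution $\iota$ corresponds to the Atkin--Lehner-type involution $T_\ell\mapsto\epsilon_K(\ell)T_\ell$ that swaps the $\cF$- and $\cF\otimes\epsilon_K$-components. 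Thus the projection $\cT^\perp\to\cO_{\cF,f}^\wedge$ is, in the notation of \eqref{eq:R-tilde}, one of the two projections of $\widetilde{\cR}^\perp=\cR^\perp\times_{\overline{\Q}_p}\cR^\perp$ when $r$ is even, and is the identity $\widetilde{\cR}^\perp=\varLambda[Z]/(Z^2-X^r)\to\varLambda[Z]/(Z^2-X^r)$ when $r$ is odd (in the odd case $\cF$ carries the full non-normal ring).

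First I would treat the \textbf{odd $r\geqslant3$} case. Here $\cO_{\cF,f}^\wedge=\widetilde{\cR}^\perp=\varLambda[Z]/(Z^2-X^r)$, so $C^0_\cF=\pi_\cF(\mathrm{Ann}_\cT(\ker\pi_\cF))$ is computed from the fibre-product description of $\cT$ in Theorem~\ref{C0-thm}. When $\cLm(\psim)\neq0$ (so $\cLm(\psim^\tau)$ may or may not vanish, but if it vanishes we are in case (ii) of Theorem~\ref{C0-thm} with the roles of $\psim,\psim^\tau$ possibly swapped — I will need to check both sub-cases), $\cT$ is a fibre product of $\varLambda_\gp$, $\widetilde{\cR}^\perp$ and $\varLambda_\gp$ glued along $\overline{\Q}_p$; the annihilator of the kernel of $\pi_\cF$ consists of tuples vanishing on the other two CM components away from the glueing point, and pushing forward to $\cO_{\cF,f}^\wedge$ gives the ideal of functions vanishing to the order imposed by the glueing. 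A direct computation in $\varLambda[Z]/(Z^2-X^r)$ — the gluing with the two $\varLambda_\gp$'s being along $\overline{\Q}_p[X]/(X^{r-1})$ via the non-principal ideal $(Z,X^{r-1})$, exactly as in Theorem~\ref{C0-thm}(ii) and its symmetric analogue — yields $C^0_\cF=(X^{r-1},Z)$. For the congruence ideal between $\cF$ and $\Theta_\psi$ alone, I would instead use the image $\cT^{\spl}_\rho$ of $\cT_{\spl}$ in $\cT^\perp_{\spl}\times\varLambda_\gp=\cR^\perp\times\varLambda_\gp$ from Propositions~\ref{R^ord2comp} and \ref{m=r-1}: when $\cLm(\psim)=0$ the glueing is along $\overline{\Q}_p[X]/(X^{r-1})$ so the congruence ideal is $(X^{r-1})$ in $\cR^\perp=\varLambda$, and pulling this back to $\widetilde{\cR}^\perp=\varLambda[Z]/(Z^2-X^r)$ — remembering that $\cR^\perp=\varLambda$ sits inside $\widetilde{\cR}^\perp$ but the congruence is measured in $\cO_{\cF,f}^\wedge=\widetilde{\cR}^\perp$ — produces $(X^{r-1},Z)$ (one must add $Z$ since $\cF$ and $\Theta_\psi$ already differ in the $Z$-direction, as $\Theta_\psi$ factors through $Z\mapsto0$); when $\cLm(\psim)\neq0$ the glueing between $\cR^\perp$ and $\varLambda_\gp$ is along $\overline{\Q}_p$ by Proposition~\ref{R^ord2comp}, giving congruence ideal $(X)$ in $\cR^\perp$, hence $(X,Z)$ in $\widetilde{\cR}^\perp$.

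For the \textbf{even $r\geqslant2$} case, $\widetilde{\cR}^\perp=\cR^\perp\times_{\overline{\Q}_p}\cR^\perp$ with $\cR^\perp=\overline{\Q}_p\lsem Y\rsem$ and $X=Y^e$; here $\cO_{\cF,f}^\wedge=\cR^\perp$ is just one of the two factors, and $\iota$ swaps them, so $\cF$ and $\cF\otimes\epsilon_K$ are genuinely distinct components. I would again read off $C^0_\cF$ from the glueing data: the $\cF$- and $(\cF\otimes\epsilon_K)$-components are glued to each other along $\overline{\Q}_p[Y]/(Y^{r/2})$ inside $\widetilde{\cR}^\perp$ — this is exactly the description $\varLambda\times_{\varLambda/(X^{r/2})}\varLambda$ from \eqref{eq:R-tilde} when $e=1$, and its obvious ramified analogue for $e\geqslant2$ — while each is glued to the two $\Theta$-components along $\overline{\Q}_p[X]/(X^{r-1})=\overline{\Q}_p[Y]/(Y^{e(r-1)})$ or along $\overline{\Q}_p$ depending on the $\cLm$-values. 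The annihilator of $\ker\pi_\cF$ in $\cT$ then consists of tuples vanishing on $\cF\otimes\epsilon_K$ and both $\Theta$'s modulo the respective glueing orders, and its image in $\cR^\perp$ is $(Y^{r-1})$ — the binding constraint being the $(\cF,\cF\otimes\epsilon_K)$-glueing of order $r/2$ together with the $\Theta$-glueings, whose least common effect is $Y^{r-1}$ (note $r/2+(r/2-1)=r-1$ accounts for the two adjacent glueings when $\cLm(\psim)=0$, and more care is needed when $\cLm(\psim)\neq0$ where some glueings drop to order $1$). For the $\cF$-vs-$\Theta_\psi$ congruence ideal I would localize to $\cT_{\spl}$ as above: Proposition~\ref{m=r-1} gives glueing order $r-1$ between $\cR^\perp$ and $\varLambda_\gp$ measured in $X$, i.e. order $e(r-1)$ in $Y$, but this is cut down — since we now also quotient by the $(\cF,\cF\otimes\epsilon_K)$ identification, and since at split primes $\cF$ and $\cF\otimes\epsilon_K$ agree — to order $r/2$ in $Y$ (so $(Y^{r/2})$) when $\cLm(\psim)=0$, and to $(Y)$ when $\cLm(\psim)\neq0$ by the $\overline{\Q}_p$-glueing of Proposition~\ref{R^ord2comp}.

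The main obstacle I anticipate is the bookkeeping in the even case: one must simultaneously handle the $(\cF,\cF\otimes\epsilon_K)$ self-glueing along $\overline{\Q}_p[Y]/(Y^{r/2})$ and the possible ramification $e\geqslant2$ of $\cR^\perp/\varLambda$ (which occurs exactly when $\cLm(\psim)+\cLm(\psim^\tau)=0$ by Theorem~\ref{isom-cusp}), and verify that the image in $\cR^\perp$ of the annihilator is the principal ideal claimed rather than something smaller. The cleanest route is probably to write $\cT$ as an explicit subring of $\varLambda_\gp\times\widetilde{\cR}^\perp\times\varLambda_\gp$ (using Theorem~\ref{C0-thm} and Theorem~\ref{T-split}, upgrading the split Hecke algebra to the full one by adjoining $U_p$ as in the proof of Theorem~\ref{C0-thm}), compute $\mathrm{Ann}_\cT(\ker\pi_\cF)$ by linear algebra over $\varLambda$ on the relevant truncations, and project; the non-principality of $(Z,X^{r-1})$ in the odd case, already flagged in Theorem~\ref{C0-thm}(ii), is the one genuinely non-formal point and must be carried through honestly.
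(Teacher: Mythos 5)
Your strategy — reading both ideals off the explicit fibre-product description of $\cT$ from Theorem~\ref{C0-thm}, with the $\cF$-projection identified via Theorem~\ref{geomCdag} and the structure of $\widetilde{\cR}^\perp$ from \eqref{eq:R-tilde} — is the right one, and all four claimed ideals come out correctly. Since the paper states the corollary without proof, this direct computation is precisely what is expected. Two local imprecisions are worth correcting, though neither affects the final answers. First, in the odd case you write that ``the gluing with the two $\varLambda_\gp$'s'' is along $\overline{\Q}_p[X]/(X^{r-1})$; in fact only one of $\Theta_\psi,\Theta_{\psi^\tau}$ is glued to $\widetilde{\cR}^\perp$ along $\overline{\Q}_p[X]/(X^{r-1})$ (namely $\Theta_{\psi^\tau}$ when $\cLm(\psim^\tau)=0$, and $\Theta_\psi$ when $\cLm(\psim)=0$), while the other is glued only along $\overline{\Q}_p$. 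The annihilator computation is nonetheless saved because $(Z,X^{r-1})\subset (X,Z)$, so the stronger constraint dominates. Second, in the even case your intermediate claim that ``each is glued to the two $\Theta$-components along $\overline{\Q}_p[X]/(X^{r-1})$'' is not what happens at the level of $\cO^\wedge_{\cF,f}\times\varLambda_\gp$: writing $\tilde b=u+vZ\in\widetilde{\cR}^\perp$ so that $b_1=u+vY^{r/2}$, $b_2=u-vY^{r/2}$ are the $\cF$- and $(\cF\otimes\epsilon_K)$-coordinates, the constraint linking $\cF$ to $\Theta_\psi$ is $\frac{b_1+b_2}{2}\equiv c\pmod{X^{r-1}}$ with $b_2$ free subject to $b_2\equiv b_1\pmod{Y^{r/2}}$, which reduces to $b_1\equiv c\pmod{Y^{r/2}}$ — exactly the $(Y^{r/2})$ you eventually land on, but your ``cut down by the $(\cF,\cF\otimes\epsilon_K)$ identification'' heuristic should be replaced by this explicit elimination of $b_2$. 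Likewise, in the odd case the pull-back of $(X^{r-1})\subset\cR^\perp$ to $\widetilde{\cR}^\perp$ should not be described as ``adding $Z$'' — rather, one computes the image of $\cT$ directly in $\widetilde{\cR}^\perp\times\varLambda_\gp$ and reads off the kernel $(Z,X^{r-1})$ or $(X,Z)$ from the fibre product.
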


\subsection{The Katz $p$-adic $L$-function at $s=0$}\label{Katz-padic}

Given an ideal $\gc$ of $K$ relatively prime to $p$, Katz  constructed a measure $\mu_{\gc}$ on the 
 ray class group $\cC\ell_K(p^{\infty}\gc)$ whose value on (the $p$-adic avatar of) a Hecke character $\varphi$ 
 of conductor dividing $\gc$ and   infinity type $(k_1,k_2)\in\Z_{\leqslant -1}\times \Z_{\geqslant 0}$ is given by 
(see \cite[Thm.~3.1]{BDP} based on \cite{Katz} and \cite{deShalit}) 
\[(-k_1-1)!  \left(\frac{\Omega_p}{\Omega_\infty}\right)^{k_2-k_1} \left(\frac{2\pi}{\sqrt{D}}\right)^{k_2} (1-\varphi^{-1}(\bar\gp))(1-\varphi(\gp)p^{-1})L_{\gc}(\varphi^{-1},0), \text{ where} \]
 $\Omega_\infty$ (resp. $\Omega_p$) is a complex (resp. $p$-adic) period of the CM elliptic curve attached to $\bar{\gc}^{-1}$.
The above interpolation formula uniquely characterizes $\mu_{\gc}$ as these characters are Zariski dense in $\Spec \cO \lsem \cC\ell_K(p^{\infty}\gc) \rsem$. It does not  apply however to  finite order characters since their infinity type  is $(0,0)$.

The Katz $p$-adic $L$-function of a  finite  order {\it non-trivial anti-cyclotomic }  character $\varphi$ of conductor dividing $\gc$, is 
defined as the following continuous function on $\Z_p^2$
\[L_p(\varphi, s_{\gp}, s_{\bar\gp})= \mu_{\gc}(\varphi \varepsilon_{\gp}^{s_{\gp}}\varepsilon_{\bar\gp}^{s_{\bar\gp}}),\]
where $\varepsilon_{\gp}$ is the character defined in 
\eqref{eq:epsilon} and  $\varepsilon_{\bar\gp}= \varepsilon_{\gp}^\tau$.
In other terms, $L_p(\varphi,s_{\gp},s_{\bar\gp})$ is the $p$-adic Mellin transform of the  push-forward of $\mu_\gc$ by
$\cC\ell_K^{(p)}(p^{\infty}\gc)_{/\tor} \hookrightarrow  \W_{\gp} \times \W_{\bar{\gp}}$.
 As the function $ s\mapsto z_{\gp}^s$  equals the  analytic function $\exp_p(s\cdot p^{-h}\log_p(1+p^\nu))$ on  $p^h  \Z_p$,  and as 
$\cC\ell_K^{(p)}(p^{\infty})_{/\tor} \hookrightarrow  \W_{\gp} \times \W_{\bar{\gp}}\xrightarrow{\sim}
p^{-h}\Z_p\times p^{-h}\Z_p$ consists of pairs with difference  in $\Z_p$, it follows that $L_p(\varphi, s_{\gp}, s_{\bar\gp})$
is analytic on the set of $(s_{\gp}, s_{\bar\gp})\in\Z_p^2$  such that $s_{\gp} + s_{\bar\gp}\in p^h\Z_p$, in particular it is 
locally analytic at $(0,0)$.

Furthermore Katz constructed a $1$-variable improved $p$-adic $L$-function $L_p^*(\varphi, s)$ such that
\begin{align}\label{improved}
L_p(\varphi, s, 0)= (1-(\varphi^{-1} \varepsilon_{\gp}^{-s})(\bar\gp) )\cdot  L_p^*(\varphi, s), 
\end{align}
 (see \cite[\S7.2]{Katz}) and proved a $p$-adic analogue of the Kronecker's Second Limit Formula 
  \begin{align}\label{kronecker}
 L_p^*(\varphi, 0)= -(1-\varphi(\gp)p^{-1}) \cdot  \log_p(\mathfrak{u}_{\varphi}).  
\end{align}  
(see \cite[Cor.~10.2.9]{Katz}, \cite[Thm.~1.5.1]{HT2}), where 
$\mathfrak{u}_{\varphi}\in (\cO_H[\gc^{-1}]^\times \otimes \overline \Q)[\varphi]=(\cO_H^\times \otimes \overline \Q)[\varphi]$ 
is a specific unit, defined using Robert units (see \cite[(1.5.5)]{HT2}).

\begin{rem}
Assume the Heegner hypothesis for the ideal $\gc$, {\it i.e},  $\cO_K/\gc\cO_K \simeq  \Z/C\Z$ and that $D\neq 3,4$. 
Let $Y_1(C)/\Q$ be the open modular curve of level $C$,  $\zeta$ be a $C$-th root of the unity and $g_\zeta(q)=q^{\frac{1}{12}}(1-\zeta) \prod_{n >0}    (1-q^n \zeta)(1-q^n\zeta^{-1}) \in \cO(Y_1(C))^\times$ be the Siegel unit associated to $\zeta$.  The theory of Complex Multiplication  implies that the elliptic curve $(\C/\bar{\gc}^{-1},C^{-1})$ defines a point of $Y_1(C)$ over the ray class field $K_{\gc}$ of conductor $\gc$ of $K$.  The   evaluation of the Siegel units $g_\zeta(q)$ at this   point is an  elliptic unit $u_{\zeta,\gc} \in \cO_{K_{\gc}}^{\times}$,  which can be used to define  $\mathfrak{u}_{\varphi}$. 
\end{rem}

Suppose that  $\varphi(\gp)=1$. The  Euler factor $(1-\varphi(\gp))$ in \eqref{improved}  vanishes yielding a trivial zero 
 \begin{align}\label{trivial-zero}
 L_p(\varphi, 0, 0)=0.
 \end{align}   
As  $\log_p(\mathfrak{u}_{\varphi})\neq 0$ by the Baker--Brumer Theorem, it follows that this is the only case where a trivial zeros of the
Katz $p$-adic $L$-function at $(0,0)$ can occur.

 \begin{rem}
When $\varphi$ is trivial, a case considered in \cite{BDPozzi} which we have excluded from the present paper, then  one has $L_p(\mathbf{1}, 0, 0)= \frac{-1}{2}(1-p^{-1}) \log_p(u_{\bar\gp})\neq 0$, where $u_{\bar\gp}$ is a $\bar\gp$-unit of $K$ of minimal  valuation.   Note that  there is no improved $p$-adic $L$-function is that case. 
\end{rem}

A first insight into what should be the leading term of $L_p(\varphi, s_{\gp}, s_{\bar\gp})$ at the trivial zero point $L_p(\varphi, 0, 0)=0$, is obtained by  differentiating \eqref{improved} \begin{align}\label{derivativegpram}
\frac{\partial L_p(\varphi)}{\partial s_{\gp}}(0,0)=-\varphi(\bar{\gp}) \eta_{\gp}(\Frob_{\bar\gp})\cdot L_p^*(\varphi,0)
=\cL_{\gp} \cdot  L_p^*(\varphi,0)
\end{align}

To gain a further intuition about  the linear term, we consider the Katz  anti-cyclotomic $p$-adic $L$-function $L_p^-(\varphi, s)= L_p(\varphi, s, -s)$.

As $\varphi$ is  anti-cyclotomic of finite order, there exists a finite order Hecke character $\psi$ which can be chosen 
to have  conductor $\gc_{\psi}$ relatively prime to $p$,  such that  $\varphi=\psim=\psi/\psi^\tau$. 

Let $\zeta_{\psim}^{-} \in \Lambda_{\cO}=\cO \lsem \W_{\gp}\rsem$ be the  power series corresponding to  
the push-forward of $\mu_{\gc_{\psim}}$ by 
\[\cC\ell_K(p^{\infty}\gc_{\psim}) \xrightarrow{z\to z/z^{\tau}}\cC\ell_K(p^{\infty}\gc_\psi) \twoheadrightarrow 
\cC\ell_K(\gp^{\infty}\gc_\psi) \to \Lambda_{\cO}^\times,\]
where the last map is the $\psi$-projection (see  \S\ref{cm-chars}).
By definition  $(\varepsilon_{\gp}^s)(\zeta_{\psim}^{-}) =\mu_{\gc_{\psim}}(\psim  \varepsilon_{\gp}^s \varepsilon_{\bar\gp}^{-s})=
L_p^-(\psim, s)$
is analytic in $s\in \Z_p$. 
The vanishing \eqref{trivial-zero} of $L_p^-(\psim, 0)$ implies  $\zeta_{\psim}^-\in \ker\left(\cO \lsem \W_{\gp} \rsem \to \cO\right)$, hence the image of $\zeta_{\psim}^{-}$ under the localization at  $(X)$ morphism $\cO \lsem \W_{\gp} \rsem \hookrightarrow \varLambda_\gp$ lands in $(X)$.

\begin{thm}[Hida--Tilouine] \label{HT-divisibility} 
The anti-cyclotomic $p$-adic $L$-function $\zeta^{-}_{\psim}$ divides the generator of the congruence ideal 
$C^0_\psi$ in $\varLambda_\gp$. 
\end{thm}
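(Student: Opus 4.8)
The plan is to show that the congruences between $\Theta_\psi$ and the remaining Hida families are forced to be "at least as deep" as the anti-cyclotomic Iwasawa module of $\psim$ predicts, and then to identify the characteristic ideal of that module with $(\zeta^-_{\psim})$ via Rubin's anti-cyclotomic Main Conjecture. This is precisely the content of \cite[Thm.~I]{HT}, which one may simply cite; below I sketch the mechanism.

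First I would recast $C^0_\psi$ in deformation-theoretic terms. Since $\cT/\ker(\pi_\psi)\cong\varLambda_\gp$, the quotient $\varLambda_\gp/C^0_\psi$ is the congruence module of the $\Theta_\psi$-projection, so a surjection $\varLambda_\gp/C^0_\psi\twoheadrightarrow\varLambda_\gp/(X^n)$ is the same datum as a lift of $\rho_f$ to $\varLambda_\gp/(X^n)$ congruent to $\Theta_\psi=\Ind^\Q_K(\psi\chi_\gp)$ modulo $X^n$ but not itself dihedral. Conversely, running Ribet's method, a non-zero $\gp$-ordinary class in $\rH^1(\Gamma_K^{Np},\Psi\otimes\varLambda_\gp/(X^n))$ with $\Psi=\chim\psim$ yields a non-split extension of $\psi\chi_\gp$ by $\psi^\tau\chi_{\bar\gp}$ over $\Gamma_K$, hence — once one knows it is modular, i.e.\ comes from a Hida family, which is where the non-CM family furnished by Theorem~\ref{existencenon-cm} enters — a genuine congruence to order $n$. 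Using Proposition~\ref{def-red-rho}, Corollary~\ref{cor:def-red} and the homological bookkeeping of the proof of Proposition~\ref{J}, one sees that such classes are governed by the $\gp$-strict anti-cyclotomic Selmer group of $\psim$; running the construction uniformly in $n$ shows that the characteristic ideal of that Selmer group divides the generator of $C^0_\psi$ in $\varLambda_\gp$.

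Second, I would identify that Selmer group: by Rubin's proof \cite{rubin} (resp.\ Tilouine's \cite{tilouine}) of the anti-cyclotomic Main Conjecture for $K$, built on the Euler system of elliptic (Robert) units and the explicit reciprocity law of Coates--Wiles, together with the $p$-adic Kronecker Second Limit Formula \eqref{kronecker} — which equates the relevant elliptic unit $\mathfrak{u}_{\psim}$ with $L_p^*(\psim,0)\ne0$ (non-vanishing by the Baker--Brumer Theorem~\ref{thm:BB}) — its characteristic ideal is exactly $(\zeta^-_{\psim})$. Combining the two steps gives that $\zeta^-_{\psim}$ divides the generator of $C^0_\psi$.

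The hard part is the modularity input in the first step: one must know that the Galois representations produced from anti-cyclotomic Selmer classes are attained by Hida families, so that they genuinely contribute to $C^0_\psi$, and one must match the $\gp$-ordinary local condition coming from the $U_p$-eigenvalue (which sees the ramified direction $\chi_\gp$) with the Greenberg-type condition defining the Selmer group, carrying out the $(X^n)$-torsion bookkeeping exactly as in the proof of Proposition~\ref{J} — all unconditionally, with no hypothesis on $\cLm(\psim)$. Given these, the statement reduces to the one-sided divisibility of the anti-cyclotomic Main Conjecture, which is exactly \cite[Thm.~I]{HT}.
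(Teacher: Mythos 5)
Your bottom line is correct --- the theorem is a localization of \cite[Thm.~I]{HT} and one may cite it directly --- and the paper does exactly that. But the ``mechanism'' you sketch is not the mechanism behind \cite[Thm.~I]{HT}, nor the one the paper unpacks, and as stated it has a genuine gap.

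The actual proof of $L^{-}_{\psi}\mid H_{\psi}$ in \cite{HT} is a Rankin--Selberg computation, not a Galois-cohomological one. Hida and Tilouine identify the congruence power series $H_{\psi}$ of $\Theta_\psi$ with (a normalization of) the $p$-adic Rankin--Selberg convolution $\mathscr{D}$ of $\Theta_\psi$ against an auxiliary CM family, and then invoke the explicit factorization of that convolution: up to a unit, $H_\psi\cdot\mathscr{D}$ equals $\Psi L_1 L_2 H_\psi / L^{-}_{\psi}$, with $L_1,L_2$ Katz $p$-adic $L$-functions. Integrality of the left-hand side plus coprimality of $L_1,L_2,\Psi$ to $L^{-}_{\psi}$ gives the divisibility. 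No Ribet argument, no Iwasawa module, no Main Conjecture. In fact, invoking Rubin \cite{rubin} or Tilouine \cite{tilouine} here is historically inverted: the Hida--Tilouine divisibility $L^{-}_{\psi}\mid H_{\psi}$ is an \emph{input} to the anticyclotomic Main Conjecture (via Mazur--Tilouine \cite{mazur-tilouine} and Tilouine \cite{tilouine}), not a consequence of it.

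The concrete gap in the route you propose is precisely the step you flag as ``the hard part.'' To turn a class in $\rH^1(\Gamma_K^{Np},\Psi\otimes\varLambda_\gp/(X^n))$ into a congruence inside $\cT$, you must know that the resulting $\Gamma_{\Q}$-representation over $\varLambda_\gp/(X^n)$ is modular, that it is attained by a point of $\cE$ lying on a component other than $\Theta_\psi$, and that the resulting map to $\cT$ is compatible with the $\gp$-ordinary filtration seen by $U_p$. Ribet's lattice construction (and Propositions~\ref{def-red-rho} and \ref{J} of this paper) go the \emph{other} way, from congruences to Galois cohomology classes; reversing the arrow is an $R=T$ statement. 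The paper does prove an $R=T$ theorem for $\rho_f$ (Theorem~\ref{theorem-C}, \S\ref{trianglesec}), but its proof uses the full description of $\cT$ obtained in \S\S\ref{section3}--\ref{section4}, which in turn (via Corollary~\ref{simple-zero}) already presupposes Theorem~\ref{HT-divisibility}. So using it here would be circular. Absent an independent modularity input, your first step does not close, and the clean proof is the analytic one: cite \cite[Thm.~I]{HT} (Rankin--Selberg factorization), localize at $(X)$, and check as in the paper's final paragraph that the auxiliary factors $L_1, L_2, \Psi$ are non-vanishing at $k=l=1$ for a suitable auxiliary character $\phi$ with $\phi(\gp)\ne 1\ne\phi(\bar\gp)$.
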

\begin{proof}   
Recall from \S\ref{cm-chars} the one variable Iwasawa algebra $\Lambda_{\cO}=\cO \lsem \W_{\gp}\rsem $ and 
the Hecke character $\lambda_{\psi,k}$ corresponding to the  classical theta series  $\theta_{\psi,k}$ which is the  
specialization in weight   $k \in \Z_{\geq 1}$ of the CM Hida family $\Theta_{\psi}$. 

 Let $H_{\psi} \in \Lambda_{\cO}$ be the characteristic power series of the congruence module attached to $\Theta_{\psi}$
  introduced by   Hida and Tilouine  in \cite[(6.9)]{HT}. Given a Hecke finite order character $\phi$ of $K$ having a 
    prime to $p$ conductor $\gc_\phi$ and taking values in  $\cO$,  Hida constructed in \cite[Thm.~I]{hida-AIF88} an element   $\mathscr{D}$ of the fraction field of $\Lambda_{\cO}\widehat{\otimes} \Lambda_{\cO}$ such that $H_{\psi}\cdot  \mathscr{D} \in \Lambda_{\cO}\widehat{\otimes} \Lambda_{\cO}$, 
satisfying for all $k > l \geqslant 2 $ the  interpolation 
   \begin{align}\label{p-adic-rankin} 
(\varepsilon_{\gp}^{k-1},\varepsilon_{\gp}^{l-1})(\mathscr{D})\approx \frac{D_p(1+\frac{l-k}{2}, \theta_{\psi,k},\theta^\tau_{\phi,l})}{(\theta_{\psi,k},\theta_{\psi,k})}\approx \frac{L(0,\lambda_{\phi,l}\lambda_{\psi,k}^{-1}) 
L(0,\lambda_{\phi,l}^{\tau}\lambda_{\psi,k}^{-1})}{L(0,\lambda_{\psi,k}^{\tau}\lambda_{\psi,k}^{-1})}, 
\end{align}
up to a factor made explicit in  \cite[Thm.~I]{hida-AIF88}, 
where $D_p$ is the Rankin-Selberg convolution without  Euler factors at $p$, and $(\theta_{\psi,k},\theta_{\psi,k})$ is the Petersson inner product.

Let $L_1$ (resp. $L_2$,  $L^{-}_{\psi}$)  be the unique element of $\Lambda_{\cO}\widehat{\otimes} \Lambda_{\cO}$ whose specialization at $(\varepsilon_{\gp}^{k-1},\varepsilon_{\gp}^{l-1})$ is given by the value $\mu_{\gc_{\psi/\phi}}( \lambda_{\psi,k}\cdot (\lambda_{\phi,l})^{-1})$ (resp. 
$\mu_{\gc_{\psi/\phi^{\tau}}}( \lambda_{\psi,k} \cdot (\lambda_{\phi,l}^{\tau})^{-1})$, $\mu_{\gc_{\mbox{-}}}(\lambda_{\psi,k}\cdot (\lambda_{\psi,k}^{\tau})^{-1})$) of the Katz $p$-adic measure.  Note that the measure used in \cite{HT} differs from $\mu_\gc$ by the involution $g\mapsto g^{-1}$ of $\cC\ell_K(p^{\infty}\gc)$. Note also that by definition $L^{-}_{\psi}$ and $H_{\psi}$ are elements of the embedding of
$\Lambda_{\cO}$ in $ \Lambda_{\cO}\widehat{\otimes} \Lambda_{\cO}$ via the first coordinate.

Let  $\Psi \in \Lambda_{\cO}\widehat{\otimes} \Lambda_{\cO}$ be the Euler product  defined in \cite[p.249 bottom]{HT}. 
By comparing their respective  specializations at each  $k > l \geqslant 2 $, Hida and Tilouine  showed in \cite[Thm.~8.1]{HT}  that 
the two elements $H_{\psi} \cdot \mathscr{D}$ and $\frac{\Psi L_1 L_2 H_{\psi}}{L^{-}_{\psi}}$ differ by a unit, {\it i.e.} their quotient belongs to 
$(\Lambda_{\cO}\widehat{\otimes} \Lambda_{\cO})[1/p]^{\times}$. The 
divisibility $L^{-}_{\psi} \mid H_{\psi}$ in $(\Lambda_{\cO}\widehat{\otimes} \Lambda_{\cO})[1/p]$ (hence in $\Lambda_{\cO}[1/p]$)
follows from \cite[Thm.~8.2]{HT} where it is shown that  $L_1,L_2$ and $\Psi$ are all relatively prime to $L^{-}_{\psi}$. 

The ideal $C^0_\psi$, resp. $(\zeta^{-}_{\psim})$, of $\varLambda_\gp$ is the localization of $(H_{\psi})$, resp. $L^{-}_{\psi}$ at the height one prime ideal $(X)$ of $\Lambda_{\cO}$ corresponding to  $f$, hence the theorem.  

Alternatively, as we are only interested at establishing this divisibility in $\varLambda_\gp$ ({\it i.e.} after localizing at $(X)$), we can instead argue 
as follows. Choosing $\phi$ such that  $\phi(\bar\gp) \ne 1 \ne \phi(\gp)$, the formulas \eqref{improved} and \eqref{kronecker}
imply the non-vanishing of $L_1$ and of $L_2$ when evaluated at $k=l=1$, and the same is true for $\Psi$ by the explicit formula defining it. 
\end{proof}

The description of the congruence ideal in Theorem~\ref{C0-thm} has the following consequence. 

\begin{cor}  \label{simple-zero}  Assume that  $\cLm(\varphi) \ne 0$. Then $C^0_\psi=(\zeta^{-}_{\psim})=(X)$, 
in particular, the order of vanishing of the anti-cyclotomic Katz $p$-adic $L$-functions $L_p^{-}(\varphi, s)$ at $s=0$ is $1$.
\end{cor}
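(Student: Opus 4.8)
The plan is to deduce the statement directly from the determination of the congruence ideal in Theorem~\ref{C0-thm} together with the Hida--Tilouine divisibility of Theorem~\ref{HT-divisibility}, and then to translate the resulting equality of ideals into an order of vanishing by keeping track of the change of variable between the Iwasawa variable $X$ and the cyclotomic variable $s$.

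First I would invoke Theorem~\ref{C0-thm}: since by hypothesis $\cLm(\varphi)=\cLm(\psim)\ne 0$, it gives $C^0_\psi=(X)$, the maximal ideal of the discrete valuation ring $\varLambda_\gp\cong\overline{\Q}_p\lsem X\rsem$ from \eqref{eq:Lambda-iso}. By Theorem~\ref{HT-divisibility}, $\zeta^{-}_{\psim}$ divides a generator of $C^0_\psi$ in $\varLambda_\gp$, so $X\in(\zeta^{-}_{\psim})$, that is $(X)\subseteq(\zeta^{-}_{\psim})$. Conversely, the trivial zero \eqref{trivial-zero} of $L_p^{-}(\psim,s)$ at $s=0$ forces, as recalled at the end of \S\ref{Katz-padic}, the image of $\zeta^{-}_{\psim}$ in $\varLambda_\gp$ to lie in $(X)$, whence $(\zeta^{-}_{\psim})\subseteq(X)$. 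Combining the two inclusions yields $(\zeta^{-}_{\psim})=(X)=C^0_\psi$, and since $\varLambda_\gp$ is a discrete valuation ring one can write $\zeta^{-}_{\psim}=u(X)\cdot X$ with $u(0)\ne 0$.

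It then remains to pass to the order of vanishing of $L_p^{-}(\varphi,s)=(\varepsilon_\gp^s)(\zeta^{-}_{\psim})$ at $s=0$. For this I would observe that, by \eqref{eq:univ-coates-wiles}, evaluating an element of $\varLambda_\gp$ at the character $\varepsilon_\gp^s$ amounts, for $s$ $p$-adically close to $0$, to substituting $X=1-(1+p^\nu)^{-s}=s\log_p(1+p^\nu)+O(s^2)$, an analytic function of $s$ vanishing to order exactly one at $s=0$ because $\log_p(1+p^\nu)\ne 0$. Hence $L_p^{-}(\varphi,s)=u\big(1-(1+p^\nu)^{-s}\big)\cdot\big(1-(1+p^\nu)^{-s}\big)$ has a simple zero at $s=0$, i.e. $\ord_{s=0}L_p^{-}(\varphi,s)=1$. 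Since the substantive work is already contained in Theorems~\ref{C0-thm} and~\ref{HT-divisibility}, no genuine obstacle remains; the only point deserving a modicum of care is precisely this change-of-variable book-keeping, needed to convert an equality of ideals in $\varLambda_\gp$ into an assertion about the order of vanishing of the $p$-adic $L$-function.
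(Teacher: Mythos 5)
Your proof is correct and follows exactly the route the paper intends: the paper states the corollary without proof, as the immediate consequence of Theorem~\ref{C0-thm} (giving $C^0_\psi=(X)$ when $\cLm(\psim)\ne 0$) and Theorem~\ref{HT-divisibility} (giving $(X)=C^0_\psi\subseteq(\zeta^-_{\psim})$), combined with the trivial-zero inclusion $(\zeta^-_{\psim})\subseteq(X)$ already noted at the end of \S\ref{Katz-padic}. Your extra step spelling out the change of variable from $X$ to $s$ is a useful explication of what the paper leaves implicit; the only small slip is the sign in $\phi_s(X)=(1+p^\nu)^{-s}-1$ rather than $1-(1+p^\nu)^{-s}$, which does not affect the order-of-vanishing conclusion.
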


This relates a conjecture of Hida on the adjoint $p$-adic  $L$-function of the CM  family $\Theta_{\psi}$ (see \cite[p.192]{HT}) to the well-established Four Exponentials Conjecture in Transcendence Theory, and shows that it holds for more than half of the characters, {\it i.e.,} for at least one character  in  each couple  $\{\psim,\psim^\tau\}$.

Combining Corollary~\ref{simple-zero} with   \eqref{derivativegpram} leads us to propose the following formula
\begin{align}\label{linear-term}
L_p(\varphi, s_{\gp}, s_{\bar\gp})\overset{?}{=} \big(\cL_{\gp}\cdot s_{\gp}+(\cL(\varphi)-\cL_{\gp}) \cdot s_{\bar\gp} \big)
 \cdot  L_p^*(\varphi,0)+\text {higher order terms}. 
\end{align}

or equivalently, using Katz  cyclotomic $p$-adic $L$-function $L_p(\varphi, s)= L_p(\varphi, s, s)$
\begin{align}\label{cyc-term}
L_p(\varphi, s)\overset{?}{=} \frac{(1-p^{-1})}{\ord_{v_0}(\tau(u_{\bar\gp, \varphi}))}\left|
\begin{matrix} \log_p(u_{\bar\gp, \varphi}) & \log_p(\mathfrak{u}_{\varphi})\\
 \log_p(\tau(u_{\bar\gp, \varphi})) & \log_p(\tau(\mathfrak{u}_{\varphi}))\end{matrix} \right|\cdot s  + O(s^2). 
 \end{align}
 
\begin{rem}
In \cite{benois}  Benois has formulated a  precise conjecture on trivial zeros in a generality that  
covers the case of rank $2$ Artin motives which are critical  in the sense of Deligne. 
His definition of a cyclotomic $\cL$-invariant depends on a choice of a regular sub-module which, in the case of a 
Galois representation which is locally scalar at $p$, can be arbitrarily chosen. We expect however that there should exist a choice for which 
 \eqref{cyc-term}  concords with his conjecture. 
  We should also mention a recent work \cite{kazim-sakamoto} of B\"uy\"ukboduk and  Sakamoto on the leading term, 
  under the additional hypotheses that $p$ is relatively prime  both to the class number of $K$ and to the order of $\psim$, 
  although their  methods are totally  different from ours. 
\end{rem}

For the rest of this section we assume that $\varphi=\psim$ is quadratic and we will check the compatibility between  \eqref{cyc-term} 
and  the Greenberg--Ferrero formula \cite{ferrero-greenberg}. Denote by $F=\Q(\sqrt{d})$ the real quadratic subfield of $H$ and by $K'\neq K$ its other imaginary  quadratic subfield. Recall Gross's factorization of the  Katz cyclotomic $p$-adic $L$-function as product of two 
 Kubota-Leopoldt $p$-adic $L$-functions (see \cite{gross-factor}, and also \cite{greenberg} for an arbitrary conductor)
  \begin{align}\label{Grossfactorization} 
 L_p(\varphi,s)=L_p( \epsilon_{K'}\cdot \omega_p  ,s) L_p(\epsilon_F,1-s), \end{align}
where $\omega_p$ is the Teichm\"{u}ller character. Then $L_p( \epsilon_{K'}\cdot \omega_p  ,s) $ has a trivial zero at $s=0$ 
and 
\[L'_p( \epsilon_{K'}\cdot \omega_p  ,0) = -\cL(\epsilon_{K'})L(\epsilon_{K'},0),\]
where $\cL(\epsilon_{K'})$ is the cyclotomic $\cL$-invariant of the odd Dirichlet character $\epsilon_{K'}$ (see \cite[(15)]{BDPozzi}), which turns out to be equal to $\cL(\varphi)$. By the Class Number Formula $L(\epsilon_{K'},0)=\tfrac{h_{K'}}{|\cO_{K'}^{\times}{}_{/\{\pm 1\}}|}$.

On the other hand, Leopoldt's formula yields that 
\[L_p(\epsilon_F,1)=-(1-p^{-1}) \sum\limits_{a=1}^d  \epsilon_F(a) \log_p(1-\zeta_d^a).\]
 As  $\sum_{a=1}^d  \epsilon_F(a) \log_p(1-\zeta_d^a)\in \cO_F^\times[\epsilon_F]\subset \cO_H^\times[\varphi]$,
 it follows that  \eqref{cyc-term} holds in this case, up to a multiplication by an element of $\Q^\times$.

\section{An $R=T$ modularity lifting theorem for non-Gorenstein local rings}\label{trianglesec}

The goal of this section is to introduce  a universal ring $\cR^{\triangle}$ representing  deformations of $\rho_f$ which are {\it generically ordinary}, and   show that its nilreduction $\cR^{\triangle}_{\red}$ is isomorphic to $\cT$.

\subsection{Ordinary framed deformations of a $2$-dimensional trivial representation}\label{defntriangle}
The deformation functor  $\cD^{\Box}_{\loc}$ associating to $A$ in $\cC$  the set of framed deformations $\rho_A:\Gamma_{\Q_p} \to \GL_2(A)$ of $\mathbf{1}_2$ is representable by  $\cR_{\loc}^{\Box}$ together with  $\rho^{\Box}:\Gamma_{\Q_p} \to \GL_2(\cR_{\loc}^{\Box})$. 

Recall that $\rho_A\in \cD^{\Box}_{\loc}(A)$ is ordinary if and only if there exists a $\Gamma_{\Q_p}$-stable  direct factor $\Fil_A  \subset A^2$ of rank $1$ over $A$ and
such that $\Gamma_{\Q_p}$ acts  on $A^2/\Fil_A$ by an unramified character $\chi_A$.
Being ordinary is not a deformation condition on $\cD^{\Box}_{\loc}$ as it does not satisfy  Schlessinger's Criterion (see for example \cite[\S3.1]{geraghty}).  

Recall that  for any $\overline{\Q}_p$-scheme $S$, an  $S$-point of  $\mathbb{P}^{1}$ is an invertible $\cO_{S}$-sub-module
  $ \Fil_S \subset \cO_{S}^{2}$ which is  locally a direct summand. We introduce a variant, when the residual characteristic is $0$, 
   of a functor defined in \cite[\S3.1]{geraghty}.

\begin{prop} \label{G-prop}
Let $\cD^{\triangle}_{\mathrm{loc}}$ be the subfunctor of $\mathbb{P}^{1}\times_{\overline{\Q}_p} \cR^{\Box}_{\loc}\lsem U\rsem$ whose $A$-points
correspond to a filtration $\Fil_A \in \mathbb{P}^{1}(A)$  and a morphism $\pi_A: \cR^{\Box}_{\loc}\lsem U\rsem \to  A$,  such that 
  $\Fil_A $ is  $\Gamma_{\Q_p}$-stable for the action on $A^2$ defined via  $\cR^{\Box}_{\loc} \to \cR^{\Box}_{\loc}\lsem U\rsem\overset{\pi_A}{\longrightarrow} A$ and moreover $A^2/ \Fil_A$ is unramified with $\Frob_p$ acting by $\phi_A=\pi_A(1+U)\in A^\times$. 

The functor $\cD^{\triangle}_{\mathrm{loc}}$ is representable by a closed subscheme $\cX \subset \mathbb{P}^{1} \times_{\overline{\Q}_p} \cR^{\Box}_{\loc}\lsem U\rsem$. \end{prop}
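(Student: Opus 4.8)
The statement asks us to cut out the "generically ordinary" locus inside $\mathbb{P}^1\times_{\overline{\Q}_p}\cR^{\Box}_{\loc}\lsem U\rsem$ by a closed condition. The plan is to exhibit $\cD^{\triangle}_{\loc}$ as the vanishing locus of an explicit coherent ideal sheaf, following the strategy of \cite[\S3.1]{geraghty} adapted to residual characteristic $0$. First I would work on the affine charts of $\mathbb{P}^1$: on the chart where $\Fil_A$ is generated by $e_1+Ue_2$ (with $U$ the coordinate) and its complementary chart, stability of $\Fil_A$ under $\rho^{\Box}$ pushed forward along $\pi_A$ becomes a set of polynomial equations in $U$ and the matrix entries of $\rho^{\Box}(g)$, $g\in\Gamma_{\Q_p}$. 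Concretely, writing $\rho^{\Box}(g)=\left(\begin{smallmatrix} a(g)&b(g)\\ c(g)&d(g)\end{smallmatrix}\right)$, the stability condition is that for all $g$ the vector $\rho^{\Box}(g)(e_1+Ue_2)$ be proportional to $e_1+Ue_2$, i.e. $c(g)+Ud(g) = U(a(g)+Ub(g))$; this gives, for each $g$, one element $f_g \in \cR^{\Box}_{\loc}\lsem U\rsem$, and the ideal $\mathfrak{a}$ they generate is finitely generated because $\Gamma_{\Q_p}$ is topologically finitely generated and $\rho^{\Box}$ is continuous (so $\{f_g\}$ lies in a finitely generated submodule, and one may take $g$ ranging over topological generators).

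Next I would incorporate the unramifiedness of the quotient together with the prescription $\Frob_p\mapsto \phi_A=\pi_A(1+U)$. Once $\Fil_A$ is stable, $\Gamma_{\Q_p}$ acts on the rank-one quotient $A^2/\Fil_A$ by a character $\chi_A$; demanding this character be unramified means $\chi_A(g)=1$ for $g$ in (topological generators of) the inertia subgroup $I_{\Q_p}$, and demanding $\chi_A(\Frob_p)=\phi_A$ is the single equation $\chi_A(\Frob_p)=\pi_A(1+U)$. In the chosen chart $\chi_A(g)$ is the scalar $a(g)+Ub(g)$ (reading off the action on the generator $e_1+Ue_2$ modulo $\Fil_A$), so these are again polynomial relations in the structure ring. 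Adjoining these to $\mathfrak{a}$ gives a finitely generated ideal $\mathfrak{b}$ on each chart; the two charts glue because the conditions are intrinsic (independent of the chart), yielding a coherent ideal sheaf on $\mathbb{P}^1\times_{\overline{\Q}_p}\cR^{\Box}_{\loc}\lsem U\rsem$, whose zero locus $\cX$ is the desired closed subscheme. One then checks that $\cX$ represents $\cD^{\triangle}_{\loc}$: an $A$-point of $\cX$ is exactly a pair $(\Fil_A,\pi_A)$ satisfying all the above relations, which by construction is precisely the data in the statement, the unramified character $\chi_A=\phi_A$ on $\Frob_p$ being recovered automatically.

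The main subtlety — and the reason one cannot simply invoke Schlessinger as noted in the remark before the proposition — is that $\cD^{\triangle}_{\loc}$ is \emph{not} a deformation functor in the naive sense: the filtration need not be unique nor even exist over the residue field in a way compatible with Schlessinger gluing, which is exactly why it is packaged as a closed subscheme of the auxiliary space $\mathbb{P}^1\times\cR^{\Box}_{\loc}\lsem U\rsem$ rather than a quotient of $\cR^{\Box}_{\loc}$. So the real work is bookkeeping: verifying that the conditions "stable line" + "unramified quotient with prescribed Frobenius" are Zariski-local on $\mathbb{P}^1$ and descend to a genuine coherent ideal, and that the resulting closed subscheme has exactly the right functor of points. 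This is routine once the equations are written down explicitly on charts; no transcendence or Galois-cohomological input is needed here, in contrast with the rest of the paper. I expect the only place demanding care is confirming that passing to topological generators of $\Gamma_{\Q_p}$ (resp. $I_{\Q_p}$) genuinely suffices to cut out the condition for \emph{all} $g$, which follows from continuity of $\rho^{\Box}$ together with the completeness of the rings in $\cC$.
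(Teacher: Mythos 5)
Your proposal follows the same route the paper takes, which is to invoke \cite[Lem.~3.1.1]{geraghty} (the paper's entire proof is that citation); you simply unpack what Geraghty does: write the stability, unramifiedness, and Frobenius-eigenvalue conditions as equations on the two affine charts of $\mathbb{P}^1$, observe that they are polynomial (hence cut out a closed subscheme), and check that the charts glue. Your discussion of why this must be packaged as a closed subscheme of an auxiliary space rather than a quotient of $\cR^{\Box}_{\loc}$ (failure of Schlessinger) is also the correct reason and is noted in the text immediately before the proposition.

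There are, however, two local slips worth fixing. First, a notational collision: you use $U$ both for the affine coordinate on $\mathbb{P}^1$ and for the formal variable in $\cR^{\Box}_{\loc}\lsem U\rsem$; these are different objects and must be given different names (say $V$ for the chart coordinate), since the equation $\chi_A(\Frob_p)=\pi_A(1+U)$ compares one to the other. Second, and more substantively, your formula for the quotient character is off by a coordinate mix-up. If $\Fil_A$ is generated by $e_1 + V e_2$, then $\Gamma_{\Q_p}$ acts \emph{on} $\Fil_A$ by the scalar $a(g)+V b(g)$; this is what you wrote, but the relevant character is the one on the \emph{quotient} $A^2/\Fil_A$. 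Taking $e_2$ as a representative and using $e_1 \equiv -V e_2 \pmod{\Fil_A}$, one finds $\rho^{\Box}(g)\, e_2 = b(g)e_1 + d(g)e_2 \equiv (d(g) - V b(g))\, e_2$, so $\chi_A(g) = d(g) - V b(g)$. Your parenthetical ``reading off the action on the generator $e_1+Ue_2$ modulo $\Fil_A$'' is self-contradictory since that vector generates $\Fil_A$ itself and hence is zero in the quotient. The corrected formula feeds into the same argument and yields the same conclusion, so the overall proof is sound once this is patched.
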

\begin{proof}\
See \cite[Lem.~3.1.1]{geraghty}.\end{proof}

Let $\Spec \cR^{\triangle}_{\loc} \subset \Spec \cR^{\Box}_{\loc}\lsem U\rsem$ be the scheme theoretical image of $\cX$ under the second projection $\pr_2:\mathbb{P}^{1} \times_{\overline{\Q}_p} \cR^{\Box}_{\loc}\lsem U\rsem \to \Spec \cR^{\Box}_{\loc}\lsem U\rsem$, that is to say $\Spec \cR^{\triangle}_{\loc}$ is the Zariski closure of $\pr_2(\cX)$. As $\mathbb{P}^1$ is proper (hence universally closed) over $\overline{\Q}_p$ and as $\cX$ is a closed subscheme of $\mathbb{P}^{1} \times_{\overline{\Q}_p} \cR^{\Box}_{\loc}\lsem U\rsem$,  the natural morphism $\pr_2:\cX \to \Spec \cR^{\triangle}_{\loc}$ is surjective.
\begin{align}\label{carre-X} 
\xymatrix{
 \cX \ar@{->>}^{\pr_2}[d]\ar@{^{(}->}[r] &\mathbb{P}^{1} \times_{\overline{\Q}_p}  \cR^{\Box}_{\loc}\lsem U\rsem \ar@{->>}^{\pr_2}[d] \\
\Spec(\cR^{\triangle}_{\loc}) \ar@{^{(}->}[r] & \Spec(\cR^{\Box}_{\loc}\lsem U\rsem).}
\end{align}
We denote by  $\phi\in (\cR^{\triangle}_{\loc})^\times$ the image of $(1+U)$ under the canonical surjection 
$\cR^{\Box}_{\loc}\lsem U\rsem\twoheadrightarrow \cR^{\triangle}_{\loc}$.

\subsection{Generically ordinary deformations of \texorpdfstring{$\rho_f$}{}}\label{Rtriangleconstru}
Let $(e_1,e_2)$ be a basis of $\overline{\Q}{}_p^2$ in which $\rho_{f\mid \Gamma_K}=\left(
\begin{smallmatrix} \psi & 0 \\ 0 & \psi^\tau \end{smallmatrix}\right)$.    
Let $\rho^{\univ} : \Gamma_{\Q} \to \GL_2(\cR_{\rho_f}^{\univ}) $  be the universal deformation  of $\rho_f$.

Let $\cR^{\triangle}=\cR^{\triangle}_{\loc} \widehat{\otimes}_{\cR^{\Box}_{\loc}} \cR_{\rho_f}^{\univ}$ and $\rho^{\triangle}_\cR:\Gamma_{\Q} \to \GL_2(\cR^{\triangle})$ be the corresponding  universal deformation, where the natural morphism $\cR^{\Box}_{\loc}\to \cR_{\rho_f}^{\univ}$ comes from $ \psi^{-1} \otimes\rho^{\univ}_{|\Gamma_{\Q_p}}$.
We next show  that the schematic points of $\Spec \cR^{\triangle}$ correspond to ordinary representations.

\begin{lemma}\label{genericordinary} Let  $x:\cR^{\triangle} \to L$ be a point over a field $L$. Then the corresponding  representation $\rho_x:\Gamma_{\Q} \to \GL_2(L)$ has an unramified rank $1$ $\Gamma_{\Q_p}$-quotient on which $\Frob_p$ acts by $x(\phi\otimes 1)\in L^\times$.
\end{lemma}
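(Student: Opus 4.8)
The point is that the local ring $\cR^{\triangle}_{\loc}$, and hence $\cR^{\triangle}$, was constructed precisely so that its schematic points carry an ordinary filtration, so the proof should be a matter of unravelling the definitions and spreading out. First I would reduce to the local situation: the composite $\cR^{\Box}_{\loc} \to \cR^{\triangle}_{\loc} \to \cR^{\triangle} \xrightarrow{x} L$ classifies the framed deformation $\psi^{-1}\otimes\rho_{x\mid \Gamma_{\Q_p}}$, so it suffices to produce an unramified rank $1$ quotient of $L^2$ for this local representation with $\Frob_p$ acting by $x(\phi)$, and then twist back by $\psi$ (which is unramified at $p$, so it does not affect the ``unramified quotient'' property and only scales the $\Frob_p$-eigenvalue in a controlled way — one should keep track of whether the statement intends $x(\phi\otimes 1)$ to already incorporate this, which it does, since $\phi$ lives in $\cR^{\triangle}_{\loc}$ which sees only the framed local deformation). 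So the real content is: every point of $\Spec \cR^{\triangle}_{\loc}$ over a field $L$ gives an ordinary local representation with $\Frob_p$-eigenvalue on the unramified quotient equal to the image of $\phi$.

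The second step is the spreading-out / valuative argument. Recall from \S\ref{defntriangle} that $\Spec \cR^{\triangle}_{\loc}$ is the scheme-theoretic image of $\cX$ under $\pr_2$, and that $\pr_2: \cX \to \Spec \cR^{\triangle}_{\loc}$ is \emph{surjective} (because $\mathbb{P}^1$ is proper over $\overline{\Q}_p$). Thus, given $x:\cR^{\triangle}_{\loc}\to L$, there is a point $\tilde x$ of $\cX$ lying over it, possibly after replacing $L$ by an extension $L'$ (surjectivity of a morphism of schemes gives a point of $\cX$ over a field extension of $L$; one may take $L'$ to be the residue field at a point of $\cX$ mapping to the prime $\ker x$). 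By the very definition of $\cX$ as representing $\cD^{\triangle}_{\loc}$, the point $\tilde x$ equips $L'^2$ with a $\Gamma_{\Q_p}$-stable line $\Fil_{L'}\in\mathbb{P}^1(L')$ whose quotient is unramified with $\Frob_p$ acting by $\phi_{L'} = $ the image of $1+U$, which by construction is the image of $\phi$ under $\cR^{\triangle}_{\loc}\to L\hookrightarrow L'$. So $\psi^{-1}\otimes\rho_{x\mid\Gamma_{\Q_p}}$ becomes ordinary after base change to $L'$. Finally I would descend: the existence of a $\Gamma_{\Q_p}$-stable rank $1$ quotient with prescribed unramified action is a property that descends along the faithfully flat (indeed field) extension $L\hookrightarrow L'$ — concretely, the kernel filtration is the unique $\Gamma_{\Q_p}$-stable line on which the action is through the non-unramified character when the two Jordan–Hölder characters differ, and when they coincide one argues via $\rH^0$; alternatively, and more cleanly, one notes that ``$\rho_x$ has an unramified rank $1$ quotient with $\Frob_p \mapsto x(\phi)$'' is equivalent to the vanishing of a finite list of matrix entries / the solvability of a linear system over $L$, which has a solution over $L$ iff it has one over $L'$.

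\textbf{Main obstacle.} The only genuinely delicate point is the descent from $L'$ back to $L$: one must make sure the ordinary filtration, a priori only defined over the extension $L'$, actually descends, i.e. that the unramified quotient is canonical enough. The safe route is to observe that the $\Gamma_{\Q_p}$-stable filtration with unramified quotient, \emph{when it exists}, need not be unique over an arbitrary field (it fails to be unique precisely in the split-scalar situation, which is exactly the degenerate case at hand for $\rho_f$ itself, though not for a general point $x$), so rather than descend the line itself I would phrase the conclusion as the existence of a $\Gamma_{\Q_p}$-equivariant surjection $L^2 \twoheadrightarrow L(\chi)$ with $\chi$ unramified and $\chi(\Frob_p) = x(\phi\otimes 1)$, and prove that this is a closed/linear-algebra condition insensitive to field extension. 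A clean way is to use that $\cX\to\Spec\cR^{\triangle}_{\loc}$ is proper and surjective with geometrically connected fibres over the locus of interest, or simply to invoke that the property in question is detected after faithfully flat base change. I expect this bookkeeping to be short; the structural input — surjectivity of $\pr_2$ from properness of $\mathbb{P}^1$ — is already recorded in the excerpt.
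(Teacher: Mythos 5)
Your proposal follows the same route as the paper: restrict $x$ to $\cR^{\triangle}_{\loc}$, use properness of $\mathbb{P}^1$ to get surjectivity of $\pr_2:\cX\to\Spec\cR^{\triangle}_{\loc}$, lift the point to $\cX(L')$ over some field extension $L'/L$, read off the ordinary filtration from the moduli description of $\cX$, and then descend to $L$ using the key fact that the prescribed $\Frob_p$-eigenvalue already lies in $L$. Where you part company with the paper is in the care you take with the last step. The paper asserts tersely that, because $\pr_2(y')(\phi)=y(\phi)\in L$, ``$\Fil_{L'}$ has an $L$-rational basis,'' which is not literally true for the specific $\Fil_{L'}$ produced (the stable line may fail to be unique exactly in the split-scalar case). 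You correctly flag that what descends is not the line itself but the existence of a $\Gamma_{\Q_p}$-equivariant surjection $L^2\twoheadrightarrow L(\chi)$ with $\chi$ unramified and $\chi(\Frob_p)$ the given $L$-rational scalar, and you justify descent either via $\rH^0$ commuting with flat base change to a field or by viewing the condition as solvability of a linear system. This is a genuine and worthwhile clarification of a point the paper glosses over, while the overall skeleton — surjectivity of $\pr_2$ plus descent — is identical.
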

\begin{proof} By definition of  $\cR^{\triangle}$, $x$ yields a point $y:\cR^{\triangle}_{\loc}\to L$ such that 
$y(\phi)=x(\phi\otimes 1)\in L^\times$.
As $\pr_2:\cX \to \Spec \cR^{\triangle}_{\loc}$ is surjective,  we can lift $y$ to  $y'\in \cX(L')$, with $L'$  a field containing $L$. By Proposition~\ref{G-prop}, $(\rho_x \otimes_L L')_{\mid \Gamma_{\Q_p}}$ stabilizes a line $\Fil_{L'}\subset (L')^2$ and acts on the quotient $(L')^2/\Fil_{L'}$ by an unramified character sending  $\Frob_p$ to  $\pr_2(y')(\phi)\in (L')^\times$. 

The commutativity of the diagram \eqref{carre-X} implies that    $\pr_2(y')(\phi)= y(\phi)\in L$, and therefore 
$\Fil_{L'}$ has an  $L$-rational basis, and $\Frob_p$ acts on the quotient by  $x(\phi\otimes 1)$ as requested. 
 \end{proof}

\subsection{Modularity in the non-Gorenstein case}
Recall that $\cT$ denotes the completed local $\varLambda$-algebra of $\cE$ at $f$. 
The restriction of the deformation $\rho_{\cT}:\Gamma_{\Q} \to \GL_2(\cT)$ to $\Gamma_{\Q_p}$ yields a  morphism $\cR^{\Box}_{\loc}\lsem U\rsem \to \cT$ sending $\psi(\gp)(1+U)$ on $U_p$. Moreover, $\rho_{\cT} \otimes Q(\cT)$ is  ordinary at $p$, yielding
a $Q(\cT)$-point of $\cX$, hence,  in view of  \eqref{carre-X},  we have a following commutative diagram 
\[\xymatrix{ \cR^{\Box}_{\loc}\lsem U\rsem  \ar[d]\ar@{->>}[r] & \cR^{\triangle}_{\loc}  \ar[d] \ar@{-->}[ld]\\
\cT  \ar@{^{(}->}[r] &Q(\cT)  } \]
Since $\cT$ is reduced,  the above diagram can be commutatively completed with a 
morphism $\cR^{\triangle}_{\loc} \to \cT$. By~\eqref{eq:rhoT} there exists a homomorphism 
$\cR_{\rho_f}^{\univ}\to \cT$. As $\cT$ is topologically generated over $\varLambda$ by the image of
$\tr(\rho^{\univ})$ and by $U_p$,  we obtain a natural surjective $\varLambda$-algebra homomorphism 
\begin{align}\label{Rtri=T} \cR^{\triangle} \twoheadrightarrow \cT. \end{align}

\begin{thm} 
The morphism  \eqref{Rtri=T} induces an isomorphism $\cR^{\triangle}_{\red} \xrightarrow{\sim} \cT$, where $\cR^{\triangle}_{\red}$ is the nilreduction of $\cR^{\triangle}$. In particular $\cR^{\triangle}$ is equidimensional of dimension $1$. 
\end{thm}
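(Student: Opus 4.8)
The plan is to prove that the surjection $\cR^{\triangle} \twoheadrightarrow \cT$ of \eqref{Rtri=T} has nilpotent kernel by showing that $\cR^{\triangle}_{\red}$ and $\cT$ have the same underlying set of minimal primes and that the surjection becomes an isomorphism after localizing at each one, then invoking reducedness of $\cT$ and equidimensionality. First I would use Lemma~\ref{genericordinary}: every closed point $x:\cR^{\triangle}\to L$ gives a representation $\rho_x$ that is $p$-ordinary, with the distinguished quotient Frobenius eigenvalue $x(\phi\otimes 1)$. Combined with the modularity of ordinary deformations of $\rho_f$ (via the identification of the $p$-adic eigencurve with Hida families, as recorded in \S\ref{section3}), this forces the image of any irreducible component of $\Spec\cR^{\triangle}$ under \eqref{Rtri=T} to meet $\Spec\cT$; conversely every component of $\Spec\cT$ lifts to a component of $\Spec\cR^{\triangle}$ since $\cT$ is a quotient. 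So \eqref{Rtri=T} induces a bijection on minimal primes, and it remains to check it is an isomorphism locally at each.

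Next I would analyze $\cR^{\triangle}$ component-by-component using the structural description of $\cT$ from Theorem~\ref{C0-thm} together with the deformation-theoretic constructions of \S\ref{deformation}--\S\ref{section3}. The key point is that on each irreducible component the generically-ordinary deformation becomes genuinely ordinary, so that the corresponding localization of $\cR^{\triangle}$ is governed by the ordinary deformation ring $\cR^{\ord}_\rho$ (for the CM components, via $\cR^K_\rho\simeq\varLambda_\gp$ from Corollary~\ref{cor:def-red}) or by $\cR^\perp$ resp. $\widetilde{\cR}^\perp$ (for the non-CM components, via Theorem~\ref{isom-cusp} and Theorem~\ref{geomCdag}). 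The fiber-product presentation of $\cT$ in Theorem~\ref{C0-thm} exactly matches the way the components of $\Spec\cR^{\triangle}$ glue along the reducibility/CM loci, once one checks that the gluing data (the $\overline{\Q}_p$-points where components cross, and the higher-order congruences measured by $r$ and $e$) are reproduced on the $\cR^{\triangle}$ side. Concretely, I would verify that the map $\cR^{\triangle}_{\loc}\to\cR^{\triangle}$ controls $\phi\mapsto U_p$ in the same way it does on $\cT$, so that the obstruction to semi-simplicity of $U_p$ recorded in Theorem~\ref{existencenon-cm}(i) — which is what prevents $\cT$ from being a product — is visibly present in $\cR^{\triangle}$ as well.

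The equidimensionality of dimension $1$ then follows: $\cT$ is equidimensional of dimension $1$ by Hida theory, and since $\cR^{\triangle}_{\red}\simeq\cT$, also $\cR^{\triangle}$ is equidimensional of dimension $1$. The main obstacle I anticipate is the surjectivity-is-injectivity step at the non-CM components, i.e.\ ruling out that $\cR^{\triangle}$ is strictly larger than $\cT$ there: a priori the local deformation functor $\cD^{\triangle}_{\loc}$ of Proposition~\ref{G-prop} can carry nilpotents (it is the scheme-theoretic image of $\cX$ under $\pr_2$, not $\cX$ itself, and Geraghty's functor is genuinely non-reduced in mixed characteristic analogues), so one must show these nilpotents die in $\cR^{\triangle}_{\red}$ and do not contribute extra tangent directions beyond those computed in \S\ref{tangent}. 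I would handle this by bounding $\dim_{\overline{\Q}_p}\mathfrak{m}/\mathfrak{m}^2$ for $\cR^{\triangle}$ at $f$ using the tangent space computations (Proposition~\ref{dim t_R}, Theorem~\ref{cuspidal-tangent}) and matching it against $\dim_{\overline{\Q}_p}\mathfrak{m}_{\cT}/\mathfrak{m}_{\cT}^2$, which is forced by the explicit fiber-product shape in Theorem~\ref{C0-thm}; equality of tangent dimensions plus surjectivity on reduced rings plus agreement of minimal primes yields $\cR^{\triangle}_{\red}\simeq\cT$.
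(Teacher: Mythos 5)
Your overall strategy matches the paper's: reduce to showing that every minimal prime $\mathfrak{q}$ of $\cR^{\triangle}$ contains the kernel of $\cR^{\triangle}\twoheadrightarrow\cT$, then conclude from reducedness of $\cT$ that $\cR^{\triangle}_{\red}\simeq\cT$. You correctly identify Lemma~\ref{genericordinary} as the key input and you correctly observe that after passing to a domain quotient $\cA=\cR^{\triangle}/\mathfrak{q}$ the generically-ordinary deformation becomes genuinely ordinary over $Q(\cA)$, allowing you to match $\cA$ against the explicitly computed rings $\varLambda_\gp$, $\cR^{\perp}$, $\widetilde{\cR}^{\perp}$.

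However, there are two soft spots. First, you invoke ``the modularity of ordinary deformations of $\rho_f$'' to get the bijection on minimal primes, but no such black box is available here: the ordinary deformation functor is not representable in this irregular CM setting, and $\cT$ is not Gorenstein (Prop.~\ref{commalg}), so the Taylor--Wiles/patching machinery does not apply --- this is precisely the point emphasized in the introduction and what makes the theorem nontrivial. What actually does the work is a direct classification: for $\cA=\cR^{\triangle}/\mathfrak{q}$, case-split on whether $\rho_{\cA}|_{\Gamma_K}$ is reducible or not. In the reducible case, $\rho_{\cA}\simeq\Ind^{\Q}_K\psi_{\cA}$ and the $\gp$-ordinariness over $Q(\cA)$ forces $\psi_{\cA}\psi^{-1}\in\{\chi_\gp,\chi_\gp^\tau\}$, so $\cA\simeq\varLambda_\gp$ and $\cT\twoheadrightarrow\cA$ via $\pi_\psi$ or $\pi_{\psi^\tau}$. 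In the irreducible case, one produces a $\cD^{\perp}(\cA)$-point as in Proposition~\ref{rhott}, hence an injection of the discrete valuation ring $\cR^{\perp}$ into $\cA$, and then Theorem~\ref{geomCdag} (the identification $\widetilde{\cR}^{\perp}\simeq\cT^{\perp}$, which was proved by pseudo-character and Hecke-algebra arguments, not by a lifting theorem) pins $\cA\simeq\cR^{\perp}$ or $\widetilde{\cR}^{\perp}$, so $\cT\twoheadrightarrow\cT^{\perp}\twoheadrightarrow\cA$. Spelling this out is what your first paragraph needs.

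Second, the tangent-space comparison you propose as a backup is unnecessary and, as stated, not a valid closing step on its own. Once you have ``each minimal prime $\mathfrak{q}$ contains $\ker(\cR^{\triangle}\to\cT)$'', the kernel lies in the nilradical and you are done: no bound on $\dim\mathfrak{m}/\mathfrak{m}^2$ for $\cR^{\triangle}$ is needed, and the nilpotents that $\cD^{\triangle}_{\loc}$ may carry are immaterial because you only work with the integral domain quotients $\cR^{\triangle}/\mathfrak{q}$. Conversely, equality of embedding dimensions together with a surjection between reduced rings does not by itself force an isomorphism, so that step would not close the argument even if you carried it out. Drop that paragraph and instead make the ``surjection $\cT\twoheadrightarrow\cR^{\triangle}/\mathfrak{q}$ of $\cR^{\triangle}$-algebras for every minimal prime $\mathfrak{q}$'' reduction explicit, as that is where the actual proof lives.
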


\begin{proof}  
It suffices to show that for every minimal prime ideal $\mathfrak{q}$ of $\cR^{\triangle}$ there exists a surjective homomorphism
$ \cT  \twoheadrightarrow \cR^{\triangle}/\mathfrak{q}$  of $\cR^{\triangle}$-algebras. In fact, it will then follow that the kernel of  $\cR^{\triangle} \twoheadrightarrow \cT$ is contained in the nilradical of $\cR^{\triangle}$ (which is given by the intersection of its primes). As $\cT$ is reduced, the desired isomorphism $\cR^{\triangle}_{\red} \xrightarrow{\sim} \cT$ would follow.

To prove the above claim, we let 
$\cA=\cR^{\triangle}/\mathfrak{q}\neq \overline{\Q}_p$ and consider the push-forward 
 $\rho_{\cA}:\Gamma_{\Q} \to \GL_2(\cA)$ of $\rho_\cR^{\triangle}$ along $\cR^{\triangle} \twoheadrightarrow \cA$. As $Q(\cA)$ is a field,  Lemma~\ref{genericordinary} implies that  $\rho_{\cA} \otimes Q(\cA)$ has an unramified free rank $1$ $\Gamma_{\Q_p}$-quotient on which $\Frob_p$ acts by the image of $\phi\otimes 1$ in $\cA^\times$. Note that $\rho_{\cA}$ need not be 
 $p$-ordinary {\it a priori} ({\it a posteriori} this will be the case when   $\cA$ is a discrete valuation ring). 
 One  distinguishes two cases 
\begin{itemize}[wide]
\item  $\rho_{\cA\mid \Gamma_K}$ is reducible. Exactly as in the proof of   Proposition~\ref{Tcmgor}
one shows that  $\rho_{\cA} \simeq \Ind^\Q_K \psi_{\cA}$, where $\psi_{\cA}:\Gamma_K \to \cA^{\times}$ is a character lifting $\psi$. The  $\gp$-ordinariness of $\rho_{\cA} \otimes Q(\cA)$   implies that  $\psi^{-1}\psi_{\cA}$ is equal either to  $\chi_{\gp}$ or to $ \chi_{\gp}^{\tau}$.
In both cases $\cT \twoheadrightarrow  \cA=\varLambda$. 

\item $\rho_{\cA\mid \Gamma_K}$ is irreducible.  As in Proposition~\ref{rhott}, one may construct a point of $\cD^{\perp}(\cA)$, {\it i.e.} a homomorphism of integral domains $\cR^{\perp}\to\cA$, which is necessarily injective as $\cR^{\perp}$ is a discrete valuation ring. 
It follows from Theorem~\ref{geomCdag} that either $\cA\simeq \cR^{\perp}$ when $r$ is even, 
or  $\cA\simeq\widetilde{\cR}^{\perp} $ when $r$ is odd, hence in all cases $\cT \twoheadrightarrow  \cT^{\perp} \twoheadrightarrow   \cA$. \qedhere
\end{itemize}
\end{proof}

The following lemma in commutative algebra justifies the title of the section. 

\begin{prop}\label{commalg} 
The dimension of the $\overline{\Q}_p$-vector space  $\cT/(\gm_\varLambda,\gm_{\cT}^2)$ is  $4$. \\
The eigencurve $\cE$ is not Gorenstein at $f$.
\end{prop}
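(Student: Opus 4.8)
The plan is to deduce both assertions from the explicit fiber-product description of $\cT$ obtained in Theorem~\ref{C0-thm}, combined with the reducibility parameter $r \geq 2$ from Definition~\ref{d:ideal-red}. First I would compute the cotangent space $\cT/(\gm_\varLambda, \gm_\cT^2)$ case by case according to the trichotomy in Theorem~\ref{C0-thm} (equivalently Theorem~\ref{main.geo}), keeping track of whether $r=2$ or $r \geq 3$, i.e.\ whether $\cLm(\psim)\cdot\cLm(\psim^\tau)\neq 0$ or not. In the generic case $\cT \simeq \varLambda_\gp \times_{\overline\Q_p} \cR^\perp \times_{\overline\Q_p} \cR^\perp \times_{\overline\Q_p} \varLambda_\gp$, which is the ring of functions on four smooth branches through the origin in a suitable ambient space; its cotangent space at the maximal ideal is visibly $4$-dimensional, since $\gm_\cT$ is generated by the four ``branch coordinates'' $X_1, X_2, X_3, X_4$ (one uniformizer on each factor, glued to agree modulo $\gm^2$ only through the common residue field, not modulo $\gm_\cT^2$) and the only relations are the quadratic ones $X_iX_j = 0$ for $i\neq j$ together with $X_i - X_j \in \gm_\cT^2$ being \emph{false}. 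More precisely, one checks directly that the images of $X_1,\dots,X_4$ in $\gm_\cT/\gm_\cT^2$ are linearly independent: a relation $\sum a_i X_i \in \gm_\cT^2$ would, upon projecting to each factor, force $a_i = 0$ since each factor is regular of dimension one. Hence $\dim_{\overline\Q_p}\gm_\cT/\gm_\cT^2 = 4$, and as $\gm_\varLambda \cdot \cT \subset \gm_\cT^2$ (because $\varLambda \to \cT$ is ramified along every branch, the uniformizer $X$ of $\varLambda$ being a product or power of branch uniformizers), we get $\cT/(\gm_\varLambda,\gm_\cT^2)$ of dimension $4$ as well.

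For the non-generic cases ($\cLm(\psim^\tau)=0$ with $r\geq 3$, or $\cLm(\psim)+\cLm(\psim^\tau)=0$ with $r=2$), I would argue that the ring $\cT$ is still a fiber product of four one-dimensional regular (or at worst, in the $e\geq 2$ case, still regular after normalization) local $\varLambda$-algebras over $\overline\Q_p$, or of the non-normal piece $\widetilde\cR^\perp = \varLambda[Z]/(Z^2 - X^r)$ glued with two copies of $\varLambda_\gp$. In the odd-$r$ case $\widetilde\cR^\perp$ contributes a $2$-dimensional cotangent space on its own (generated by $X$ and $Z$, with the single relation $Z^2 = X^r \in \gm^2$ since $r\geq 3$, and $X \notin \gm^2$ because $Z^2 = X^r$ forces $X$ to be part of a minimal generating set), and each of the two $\varLambda_\gp$-components contributes one further independent cotangent direction after accounting for the gluing modulo $(X^{r-1})$, which lies in $\gm_\cT^2$ since $r-1 \geq 2$. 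Totalling again gives $4$. The even-$r$ non-generic case with $\cR^\perp = \overline\Q_p\lsem Y^{1/e}\rsem \times_{\overline\Q_p}\overline\Q_p\lsem Y^{1/e}\rsem$ is handled identically: four smooth (in the resolved sense, but the relevant count of generators of $\gm_\cT/\gm_\cT^2$ is insensitive to $e$ since $Y^{1/e}$ is still a single uniformizer) branches, cotangent dimension $4$.

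For the non-Gorenstein assertion, the plan is to invoke the standard criterion: a reduced complete local Noetherian $\overline\Q_p$-algebra $\cT$ of Krull dimension $1$ which is a fiber product $R_1 \times_{\overline\Q_p} \cdots \times_{\overline\Q_p} R_n$ of $n \geq 3$ regular local rings over the common residue field is not Gorenstein, because the socle of $\cT/(t)$ for a system of parameters $t$ has dimension $n-1 \geq 2$ (equivalently, the ``Zariski tangent space of the reduced ring'' has too large dimension relative to the multiplicity; concretely, $\dim_{\overline\Q_p}\cT/(\gm_\varLambda, \gm_\cT^2) = 4 > 2 = 2\cdot\dim\cT$ would contradict Gorenstein-ness for a curve singularity, cf.\ the fact that a Gorenstein reduced curve singularity of embedding dimension $d$ has $\delta$-invariant satisfying strong constraints; more cleanly, a node-like fiber product of $\geq 3$ branches has a one-dimensional dualizing module that is not free). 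I expect the main obstacle to be phrasing this last step cleanly: rather than appealing to a black-box singularity classification, I would compute the socle of $\cT \otimes_\varLambda \overline\Q_p = \cT/\gm_\varLambda\cT$ directly — in the generic case this Artinian ring is $\{(x_1,\dots,x_4) : x_i x_j = 0, x_i \equiv x_j \bmod \text{higher order}\}/\cdots$, an explicit computation shows its socle is $3$-dimensional (spanned by the three differences of branch directions modulo $\gm^2$), hence not $1$-dimensional, so $\cT/\gm_\varLambda\cT$ is not Gorenstein; since $\varLambda$ is regular and $\cT$ is $\varLambda$-flat, Gorenstein-ness of $\cT$ would descend to the fiber, a contradiction. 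The analogous socle computation in the remaining cases (where one factor is replaced by $\widetilde\cR^\perp$ or the $e\geq 2$ piece) yields socle dimension $\geq 2$, again ruling out Gorenstein. Therefore $\cE$ is not Gorenstein at $f$.
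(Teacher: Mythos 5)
The non-Gorenstein half of your argument is essentially the paper's: reduce modulo $\gm_\varLambda$, compute the socle of the Artinian ring $\cT/\gm_\varLambda\cT$, find it $3$-dimensional, and conclude by noting that Gorensteinness is preserved and reflected by quotienting a reduced one-dimensional local ring by a regular element. That part is fine.

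The dimension count, however, contains a genuine error. You assert that $\gm_\varLambda\cdot\cT \subset \gm_\cT^2$ ``because $\varLambda\to\cT$ is ramified along every branch, the uniformizer $X$ of $\varLambda$ being a product or power of branch uniformizers.'' This is false: the two CM branches $\Theta_\psi$ and $\Theta_{\psi^\tau}$ have completed local rings $\varLambda_\gp\simeq\varLambda$, so the map $\varLambda\to\varLambda_\gp$ is \emph{\'etale}, and in the generic case with $e=1$ the non-CM branches $\cR^\perp\simeq\varLambda$ are \'etale as well. Thus the image of $X$ in $\cT$ is $(X,Y,Y,X)$ (or $(X,Y^e,Y^e,X)$), whose first coordinate is a uniformizer of $\varLambda_\gp$, so it is \emph{not} in $\gm_\cT^2$. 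Consequently $\gm_\varLambda\cT + \gm_\cT^2 \supsetneq \gm_\cT^2$, and you must subtract one dimension: $\dim\gm_\cT/(\gm_\varLambda,\gm_\cT^2) = 4-1 = 3$, hence $\dim\cT/(\gm_\varLambda,\gm_\cT^2) = 4$. Note also that even \emph{if} your inclusion held, your arithmetic would be off by one: $\gm_\varLambda\cT\subset\gm_\cT^2$ would give $\cT/(\gm_\varLambda,\gm_\cT^2) = \cT/\gm_\cT^2$, which is $1+\dim\gm_\cT/\gm_\cT^2 = 5$-dimensional, not $4$. The two mistakes roughly cancel, which is why you land on the correct answer, but the intermediate reasoning does not survive scrutiny. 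The paper's proof avoids this entirely by choosing an explicit regular element $a$ (namely $(X,Y,Y,X)$ in case (i), $(X,X,X)$ in case (ii)), checking directly that $\gm_\cT^2 = a\gm_\cT$ (resp. $\gm_\cT^2=\gm_\varLambda\gm_\cT$), and reading off both the socle dimension and the cotangent dimension from the single identity $\cM = \gm_\cT/a\cT$.
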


\begin{proof} The $1$-dimensional reduced local ring $\cT$ is Gorenstein 
if and only if  its quotient $\cT/a \cT$ by a regular element $a \in \cT$ is Gorenstein. 
As $\cT/a \cT$ is Artinian, the claim  is equivalent to showing that its socle  $\cM=\Hom_{\cT}(\overline{\Q}_p,\cT/a \cT)$ is $1$-dimensional. 

Assume first that $\cLm(\psim)\cdot\cLm(\psim^{\tau}) \ne 0$ and  let $a=(X,Y,Y,X)$, where $Y$ denotes a uniformizer of  $\cR^{\perp}$. 
By Theorem~\ref{C0-thm} one has  $\cT = \varLambda \times_{\overline{\Q}_p} \cR^{\perp} \times_{\overline{\Q}_p} \cR^{\perp} \times_{\overline{\Q}_p} \varLambda $ hence $\gm_{\cT}^2=a \cdot \gm_{\cT}$  and $\cM=\gm_{\cT}/a\cT$. 
It follows that $\dim(\cM)=\dim(\gm_{\cT}/(a,\gm_{\cT}^2))=\dim(\gm_{\cT}/\gm_{\cT}^2)-1=
\dim(\gm_{\cT}/(\gm_\varLambda,\gm_{\cT}^2))=3>1$, hence $\cT$ is not Gorenstein.

Assume next  that $\cLm(\psim)\cdot\cLm(\psim^{\tau}) = 0$ and we let $a=(X,X,X)$. Then $ \cT \simeq   \varLambda  \times_{\overline{\Q}_p} \cA$, with
\[\cA=  \varLambda[Z]/(Z^2-X^r)\times_{\overline{\Q}_p[X]/(X^{r-1})} \varLambda.\]
As the  $\overline{\Q}_p$-vector space $\gm_{\cA}/\gm_{\cA}^2$  has basis $\{(X,X),(Z,0),(0,X^{r-1})\}$, one computes  that 
$\gm_{\cA}^2=\gm_\varLambda\gm_{\cA}$ and  $\gm_{\cT}^2=\gm_\varLambda \gm_{\cT}$. 
Again $\cM=\gm_{\cT}/\gm_\varLambda\cT$ 
has dimension $3$, hence $\cT$ is not Gorenstein. 

In both cases $\gm_{\cT}/(\gm_\varLambda,\gm_{\cT}^2)$ is $3$-dimensional, hence 
$\cT/(\gm_\varLambda,\gm_{\cT}^2)$ is  $4$-dimensional. 
\end{proof}

We close this paper by establishing Conjecture 4.1 from  \cite{DLR4} about  the generalized eigenspace $S_{1}^{\dagger}(N)\lsem f\rsem= S_{1}^{\dagger}(N)\lsem \gm_{\cT}\rsem$ attached to $f$ inside the space of weight $1$, level $N$, ordinary $p$-adic modular forms. To be more precise, Darmon, Lauder and Rotger only consider the subspace $S_{1}^{\dagger}(N)[\gm_{\cT}^2]$.
The subspace of classical forms $S_{1}(Np)[\gm_{\cT}^2]$ is two-dimensional having $\{f,\theta_\psi\}$ as basis, and a supplement  
of  this space   is given by the space $S_{1}^{\dagger}(N)[\gm_{\cT}^2]_0$ of normalized generalized  eigenforms, {\it i.e.} forms
 whose  first and $p$-th Fourier coefficients  both vanish.

\begin{cor} \label{DLRconj} The $\overline{\Q}_p$-vector space $S_{1}^{\dagger}(N)[\gm_{\cT}^2]$ has  dimensions $4$. \\
Moreover Conjecture 4.1 from  \cite{DLR4} holds in the CM case, {\it i.e.}  
the two-dimensional vector space $S_{1}^{\dagger}(N)[\gm_{\cT}^2]_0$ is canonically 
isomorphic to $\rH^1(\Q, \ad^0(\rho_f))$. 
\end{cor}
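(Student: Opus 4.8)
The plan is to deduce the corollary directly from the structural results already established, chiefly Theorem~\ref{C0-thm} together with the Hida-theoretic duality \eqref{hidadualitywt1}. First I would recall from \eqref{hidadualitywt1} that $\Hom_{\overline{\Q}_p}(\cT/\gm_{\varLambda}\cT,\overline{\Q}_p)\simeq S_1^{\dagger}(N)\lsem f\rsem$, and more precisely that for any $\varLambda$-submodule the dualities are compatible with Hecke actions, so that $S_1^{\dagger}(N)[\gm_{\cT}^2]$ is $\overline{\Q}_p$-dual to $(\cT/\gm_{\varLambda}\cT)/\gm_{\cT}^2=\cT/(\gm_\varLambda,\gm_{\cT}^2)$. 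By Proposition~\ref{commalg} this quotient is $4$-dimensional, which gives the first assertion at once.

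For the second assertion, I would first note that $S_1^{\dagger}(N)[\gm_{\cT}^2]_0$ has dimension $2$, since it is a complement of the $2$-dimensional classical subspace $S_1(Np)[\gm_{\cT}^2]$ inside the $4$-dimensional space $S_1^{\dagger}(N)[\gm_{\cT}^2]$. The canonical map to $\rH^1(\Q,\ad^0(\rho_f))$ is the one constructed by Darmon--Lauder--Rotger \cite{DLR4}: a generalized eigenform with vanishing first and $p$-th coefficients produces, via the associated rank-one square-zero extension of $\cT$-modules, a cohomology class; equivalently one uses that $\gm_{\cT}/(\gm_\varLambda,\gm_{\cT}^2)$ injects into $t^{\ord}_{\rho_f}$-type tangent data, cut out by the ordinarity condition at $p$, which is exactly $\rH^1(\Q,\ad^0(\rho_f))$ here because $\det\rho_f$ is already the prescribed determinant and the local condition at $p$ imposes no further constraint on the generically ordinary deformations parametrized by $\cR^{\triangle}_{\red}\simeq\cT$ (Theorem~\ref{theorem-C}). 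So I would invoke Theorem~\ref{theorem-C} to identify the relative cotangent space $\gm_{\cT}/(\gm_{\varLambda},\gm_{\cT}^2)$ with a subspace of $\rH^1(\Q,\ad(\rho_f))$ and, after passing to the trace-zero part, with $\rH^1(\Q,\ad^0(\rho_f))$; a dimension count using the global Euler characteristic formula (as in Lemma~\ref{H2}) shows $\dim\rH^1(\Q,\ad^0(\rho_f))=2$ (note $\ad^0(\Ind_K^{\Q}\psi)=\epsilon_K\oplus\Ind_K^{\Q}\psim$ and each summand contributes $1$), matching the dimension of $S_1^{\dagger}(N)[\gm_{\cT}^2]_0$. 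Combined with the DLR construction giving an explicit map in one direction, the equality of dimensions forces it to be an isomorphism.

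The main obstacle I expect is making the identification of the DLR map with the deformation-theoretic one fully canonical, i.e. checking that the cohomology class attached by \cite{DLR4} to a generalized eigenform coincides with the class coming from the tangent space of $\cR^{\triangle}_{\red}=\cT$ under the duality \eqref{hidadualitywt1}. This is essentially a compatibility of two constructions of the same first-order deformation: one reads the $\epsilon$-part of the universal Galois representation modulo $\gm_{\cT}^2$, the other reads the Hecke action on the square-zero extension $\overline{\Q}_p\lsem f\rsem\oplus\overline{\Q}_p\lsem f\rsem\cdot\varepsilon$. Both are governed by the same pseudo-character $\tau_{\cE}$ reduced modulo $(\gm_\varLambda,\gm_{\cT}^2)$, so the compatibility is formal, but it requires unwinding the definition of the DLR map carefully; once that is done, the classicality of the two-dimensional subspace and the ordinarity at $p$ of all four branches of $\cT$ (which is exactly what Theorem~\ref{C0-thm} encodes) guarantee that the image lands in, and fills up, $\rH^1(\Q,\ad^0(\rho_f))$.
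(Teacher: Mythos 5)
Your proof is correct and takes essentially the same route as the paper's: duality \eqref{hidadualitywt1} plus Proposition~\ref{commalg} gives the $4$, subtraction of the classical plane gives the $2$, and the conjecture is reduced to the matching count $\dim\rH^1(\Q,\ad^0(\rho_f))=2$. The only cosmetic difference is that you compute this last dimension directly from the decomposition $\ad^0(\Ind_K^{\Q}\psi)\simeq\epsilon_K\oplus\Ind_K^{\Q}\psim$ and the Euler characteristic, whereas the paper simply cites \cite[Lem.~3.2]{bellaiche-dimitrov} (which asserts precisely this), and your additional paragraph spelling out the compatibility of the DLR map with the deformation-theoretic cotangent space is elaboration the paper leaves implicit.
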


\begin{proof} The first claim is merely a translation of the first claim of Proposition~\ref{commalg}, in view of the duality 
\eqref{hidadualitywt1}. It follows that 
$\dim S_{1}^{\dagger}(N)[\gm_{\cT}^2]_0=4-2=2$ and the conjecture then follows from 
\cite[Lem.~3.2]{bellaiche-dimitrov} asserting that $\dim\rH^1(\Q, \ad^0(\rho_f))=2$ as well. 
\end{proof}

  { \noindent{\it Acknolwedgements:} { \small
The authors are mostly indebted to H.~Hida and J.~Tilouine for numerous discussions related to this project. 
In addition, the second author would like to thank M.~Waldschmidt for an extremely helpful correspondence in Transcendence Theory  and D.~Benois for explaining his conjectures on trivial zeros. 
The first author acknowledges  support from  the EPSRC (grant EP/R006563/1)  and the START-Prize Y966 of the Austrian Science Fund (FWF). The second author is partially supported by  
the Agence Nationale de la Recherche (grants  ANR-18-CE40-0029 and ANR-16-IDEX-0004). Finally, both authors thank the anonymous referees for their carefully reading of the manuscript. } 

\newpage
\section*{Glossary of notation}

\footnotesize

\begin{multicols}{2}
\noindent 
$\cC\ell_K $ \dotfill   ideal class group of $K$\\
$C^0_\psi$  \dotfill  congruence ideal of  $\Theta_{\psi}$, \S\ref{s:cong-ideal} \\
$C^0_\cF$ \dotfill  congruence ideal of  $\cF$, \S\ref{s:cong-ideal}\\ 
$C$ \dotfill Galois group  of $H/K$, \S\ref{L-invariants} \\
$-D$ \dotfill fundamental discriminant of $K$\\
 $e$ \dotfill ramification index of $\varLambda\to \cR^{\perp}$, Def. \ref{d:ideal-red} \\
 $\cE$ \dotfill  eigencurve of tame level $N$, \S\ref{section3} \\
 $\cE^{\perp}$ \dotfill  union of non-CM (by K) irred. components, \S\ref{nonCMsection}\\
$f$ \dotfill the unique $p$-stabilisation of $\theta_\psi$\\
$\Frob_{\gp}$ \dotfill  arithmetic Frobenius in $\Gamma_{K_\gp}$\\
$\cF$ \dotfill eigenfamily without CM by $K$ containing $f$, \S\ref{wihtoutCMgenericpoints}\\
$G$ \dotfill Galois group  of $H/\Q$, \S\ref{L-invariants} \\
$H$ \dotfill splitting field of $\psim$, \S\ref{subs-slope}\\
$\I_{\gp}$ \dotfill inertia subgroup of $\Gamma_{K_\gp}$\\
$K$ \dotfill imaginary quadratic field \\
$K_{\infty}^-$ \dotfill  anti-cyclotomic $\Z_p$-extension of $K$, \S\ref{CM-Ideal} \\
$L_p^{-}$ \dotfill  anti-cyclotomic Katz $p$-adic $L$-function, \S\ref{Katz-padic}\\
$\cL_{\gp}$ \dotfill $\cL$-invariant  \eqref{defn-L1}\\
$\cL(\psim)$ \dotfill  cyclotomic $\cL$-invariant \eqref{defn-L-inv}\\
$\cLm(\psim)$ \dotfill   anti-cyclotomic $\cL$-invariant, Def.~\ref{d:new-L-inv}\\
 $\gp$ \dotfill  place of $K$ above $p$ induced by $\iota_p$\\
 $r$ \dotfill valuation of the reducibility ideal of $\cR^\perp$ \\ 
 $\cR_{\loc}^{\Box}$ \dotfill framed deformation ring, \S\ref{defntriangle}\\
 $\cR^{\univ}_{\rho}$ \dotfill   deformation ring with  $\tau$-invariant trace, \S\ref{ord-def}\\
 $\cR^{\ord}_{\rho}$ \dotfill   $\gp$-ordinary deformation ring  of $\rho$, Def.~\ref{cond1}\\
 $\cR^{\ord}_{\rho'}$ \dotfill   $\gp$-ordinary deformation ring of $\rho'$, \S\ref{daggerfunct}\\
 $\cR^{\red}_{\rho}$ \dotfill   reducible $\gp$-ordinary deformation ring, Def.~\ref{defn:red}\\
 $\cR^{K}_{\rho}$ \dotfill    CM $\gp$-ordinary deformation ring, Def.~\ref{defn:red}\\
 $\cR^{\perp}$ \dotfill   non-CM $\gp$-ordinary deformation ring, Def.~\ref{cond2}\\
 $\widetilde{\cR}^{\perp}$ \dotfill   $\cR^{\perp}[Z]/(Z^2-Y^r)$, \S\ref{reducibility-ideal}\\
 $\cR^{\triangle}$ \dotfill  generically ordinary deformation ring  of $\rho_f$, \S\ref{defntriangle}\\
 $\mathscr{S}(\psim)$ \dotfill slope of $\psim$,  \eqref{eq:slope-def}\\
 $S_{1}^{\dagger}(N)\lsem f\rsem$ \dotfill generalised eigenspace of $f$, \S \ref{nonCMsection} \\
 $\cT$  \dotfill  completed local ring    of $\cE$  at $f$, \S\ref{nonCMsection}  \\
 $\cT^{\perp}$ \dotfill  completed local ring    of $\cE^{\perp}$  at $f$, \S\ref{nonCMsection} \\
 $\cT_{\spl}\subset \cT$ \dotfill  generated  by  $T_\ell$ for
 $\ell\nmid Np$  split in $K$, \S\ref{nonCMsection}\\
 $\cT^\perp_{\spl}$ \dotfill  image of the $\varLambda$-algebra $\cT_{\spl}$ in $\cT^\perp$, \S\ref{nonCMsection}\\
 $u_{\psim}$ \dotfill $\psim$--isotypic unit in $\cO_H^\times \otimes \bar\Q$, \S\ref{subs-slope}\\
$u_{\gp,\psim}$ \dotfill $\psim$-isotypic  $\gp$-unit in $\cO_H[1/\gp]^\times \otimes \bar\Q$ \S\ref{def-eta}\\
$u_{\gp}$  \dotfill $\gp$-unit of $K$\\
 $v_0$ \dotfill  place of $H$ above $\gp$ induced by $\iota_p$ \\
  $\cW$ \dotfill the weight space, \S\ref{section3}\\
  $\mathbb{W}_\gp$ \dotfill Galois group of $\gp$-ramified $\Z_p$-extension, \S\ref{cm-chars}\\
 $X$ \dotfill uniformizer of $\varLambda$, $\varLambda_\gp$ \S\ref{cm-chars}\\
 $Y$ \dotfill uniformizer of $\cR^{\perp}$,  \S\ref{section3}\\
 $\gamma_0$ \dotfill fixed element of $\Gamma_K$,  \S\ref{deformation}\\
$\Gamma_L$ \dotfill  absolute Galois group of a perfect field $L$\\ 
$\Gamma_{K/\Q}$ \dotfill Galois group  of $K/\Q$ \\
$\Gamma_{K_\gp}$ \dotfill decomposition subgroup of $\Gamma_{K}$\\
$\zeta^{-}_{\psim}$ \dotfill $p$-adic inverse Mellin transform of $L_p^{-}(\psim,\cdot)$ \S\ref{Katz-padic}\\
$\eta_{\gp}, \eta_{\bar\gp}$ \dotfill basis of $\rH^1(K, \overline{\Q}_p)$   \S\ref{def-eta}\\
$[\eta], \eta$ \dotfill canonical basis  of $\rH^1(K, \psim)$, Prop.~\ref{restinj}, \S\ref{deformation}\\
$[\eta'], \eta'$ \dotfill canonical basis  of $\rH^1(K, \psim^\tau)$, \S\ref{daggerfunct}\\
$\theta_\psi$ \dotfill weight one theta-series attached to $\psi$\\
$\Theta_\psi,\Theta_{\psi^\tau}$ \dotfill  theta families specializing to $f$, \S\ref{cm-chars}\\
$\iota_p$ \dotfill  embedding of $\bar\Q$ in  $\bar\Q_p$\\
$\kappa$ \dotfill  weight map, \S\ref{section3}\\ 
  $\varLambda$ \dotfill  completed local ring of $\cW$ at $\kappa(f)$, \S\ref{section3}\\    
  $\varLambda_\gp$ \dotfill  completed local ring of  $\Theta_\psi$-component  at $f$, \S\ref{section3}\\
  $\pi_\psi$ \dotfill $\Theta_\psi$-projection, \S\ref{s:cong-ideal}\\
  $\rho_f=\Ind^\Q_K \psi$ \dotfill Artin representation associated to $f$ \\
 $\rho_{\cR}$ \dotfill  ordinary deformation of $\rho$ to $\cR^{\ord}_\rho$, \S\ref{ord-def}\\
 $\rho_{\cR^\perp}$  \dotfill   $\gp$-ordinary deformations of $\rho$ to $\cR^{\perp}$, \S\ref{daggerfunct}\\
 $\rho'_{\cR^\perp}$  \dotfill   $\gp$-ordinary deformations of $\rho'$ to $\cR^{\perp}$, \S\ref{daggerfunct}\\
 $ \widetilde\rho^\perp_{\cR}$ \dotfill  extension of  $\rho_{\cR^\perp}$ to $\Gamma_{\Q}$, Prop.~\ref{extensionpseudo}\\ 
 $\rho_{\cT^\perp}$ \dotfill  deformation of $\rho$ to $\cT^{\perp}$, \S\ref{nonCMsection}\\
 $\rho'_{\cT^\perp}$ \dotfill  deformation of $\rho'$ to $\cT^{\perp}$, \S\ref{nonCMsection}\\
  $\rho_{\cT}$  \dotfill   deformation of $\rho_f$ to $\cT$, \S\ref{nonCMsection}\\
 $\rho_{\cT}^{\perp}$ \dotfill  deformation of $\rho_f$ to $\cT^{\perp}$, \S\ref{nonCMsection}\\
 $\tau$ \dotfill complex conjugation, \S\ref{L-invariants}\\
 $\epsilon_K$ \dotfill Dirichlet character of $K/\Q$, \S\ref{L-invariants} \\
 $\epsilon_\gp$ \dotfill $\gp$-ramified character of type $(-1,0)$, \S\ref{cm-chars} \\
 $\chi_{\cR}$ \dotfill  ordinary character, \S\ref{daggerfunct}\\ 
 $\chi_p$ \dotfill universal cyclotomic character, \S\ref{cm-chars}  \\
 $\chi_\gp$ \dotfill universal  $\gp$-ramified character, \S\ref{cm-chars}\\
 $\chim$ \dotfill universal anti-cyclotomic character, \S\ref{CM-Ideal}\\
  $\psi, \psi^\tau, \psim$ \dotfill finite order Hecke characters of $K$, \S\ref{L-invariants}\\
$\varphi$ \dotfill anti-cyclotomic Hecke character of $K$, \S\ref{Katz-padic} 
 \end{multicols}

\normalsize
\newpage

\bibliographystyle{siam}

\end{document}